%% file: NeurIPS_26/0_article.tex
\documentclass{article}

 \usepackage[preprint]{neurips_2026}

\usepackage[utf8]{inputenc} % allow utf-8 input
\usepackage[T1]{fontenc}    % use 8-bit T1 fonts
\usepackage{hyperref}       % hyperlinks
\usepackage{url}            % simple URL typesetting
\usepackage{booktabs}       % professional-quality tables
\usepackage{amsfonts}       % blackboard math symbols
\usepackage{nicefrac}       % compact symbols for 1/2, etc.
\usepackage{microtype}      % microtypography
\usepackage{xcolor}         % colors

\usepackage{algorithm}
\usepackage{algorithmic}

\usepackage{amsmath}
\usepackage{amssymb}
\usepackage{mathtools}
\usepackage{amsthm}

\theoremstyle{plain}
\newtheorem{theorem}{Theorem}[section]
\newtheorem{proposition}[theorem]{Proposition}
\newtheorem{lemma}[theorem]{Lemma}
\newtheorem{remark}[theorem]{Remark}

\theoremstyle{definition}
\newtheorem{definition}[theorem]{Definition}
\newtheorem{assumption}[theorem]{Assumption}
\newtheorem{example}[theorem]{Example}
\theoremstyle{remark}

\input{NeurIPS_26/5_macros}

\input{NeurIPS_26/6_ColorBox}

\title{Avoiding Bias in Clipped SGD for Overparameterized Models under Generalized Smoothness}

\author{%
  Aleksandr Lobanov \\
  MSU AI Center \\
  \texttt{\normalsize lobanovav@my.msu.ru}\\
  \And 
  Anastasia Koloskova \\
  University of Zurich \\
  \texttt{\normalsize anastasiia.koloskova@uzh.ch}\\
}

\begin{document}

\maketitle

\input{NeurIPS_26/1_abstract}
\input{NeurIPS_26/2_main}

\bibliographystyle{plainnat-fixed}
{\small
\bibliography{3_references.bib}
}

\input{NeurIPS_26/4_appendix}

% \input{NeurIPS_26/For_NSGD}

%%%%%%%%%%%%%%%%%%%%%%%%%%%%%%%%%%%%%%%%%%%%%%%%%%%%%%%%%%%%

% \newpage
% \input{NeurIPS_26/7_Checklist}

\end{document}

%% file: NeurIPS_26/5_macros.tex
\usepackage{amsfonts}
\usepackage{dsfont}
\usepackage{wrapfig}
\usepackage[symbol]{footmisc}

\usepackage{tocloft}
\usepackage{bbm}

\allowdisplaybreaks

\usepackage{mdframed}

\usepackage{xcolor}
\usepackage{colortbl}
\usepackage{color}
\usepackage{graphicx}
\usepackage{multirow}
\usepackage{pifont}

\usepackage[font=small]{caption}
\usepackage[font=small]{subcaption}

\usepackage[algo2e]{algorithm2e} 
\usepackage{verbatim}
\usepackage{xspace} %

\usepackage{enumerate}
\usepackage{enumitem}

\newcommand{\dotprod}[2]{\left\langle #1,#2 \right\rangle}
\newcommand{\Obound}[1]{\mathcal{O}\left( #1 \right)}
\newcommand{\OboundTilde}[1]{\tilde{\mathcal{O}}\left( #1 \right)}

\newcommand{\norms}[1]{\left\| #1 \right\| }
\newcommand{\expect}[1]{\mathbb{E}\left[ #1 \right]}
\newcommand{\expectcond}[2]{\mathbb{E}\left[ #1 \mid #2 \right]}

\newcommand{\clip}[1]{ \text{clip}_c\left( #1 \right)}

\newcommand{\func}[1]{f\left( #1 \right)}
\newcommand{\Gf}[1]{G_\lambda\left( #1 \right)}

\definecolor{PineGreen}{HTML}{008B72}

  \DeclareMathOperator*{\argmin}{arg\,min}

  \renewcommand{\gg}{\mathbf{g}}

  \let\lll\ll
  \renewcommand{\ll}{\mathbf{l}}

  \providecommand{\cH}{\mathcal{H}}

  \providecommand{\cL}{\mathcal{L}}

  \providecommand{\cT}{\mathcal{T}}

  \usepackage{bm}
  \newcommand{\bxi}{\boldsymbol{\xi}}

\usepackage[textsize=tiny]{todonotes}

\providecommand{\mycomment}[3]{\todo[caption={},size=footnotesize,color=#1!20, inline]{\textbf{#2: }#3}}%
\providecommand{\inlinecomment}[3]{%
  {\color{#1}#2: #3}}%
\newcommand\commenter[2]%
{%
  \expandafter\newcommand\csname i#1\endcsname[1]{\inlinecomment{#2}{#1}{##1}}
  \expandafter\newcommand\csname #1\endcsname[1]{\mycomment{#2}{#1}{##1}}
}

\newcommand{\circledOne}{\text{\ding{172}}}
\newcommand{\circledTwo}{\text{\ding{173}}}
\newcommand{\circledThree}{\text{\ding{174}}}
\newcommand{\circledFour}{\text{\ding{175}}}
\newcommand{\circledFive}{\text{\ding{176}}}
\newcommand{\circledSix}{\text{\ding{177}}}
\newcommand{\circledSeven}{\text{\ding{178}}}

\commenter{AL}{blue}

\usepackage{url}

%% file: NeurIPS_26/6_ColorBox.tex
\usepackage[many]{tcolorbox}    	% for COLORED BOXES (tikz and xcolor included)

\definecolor{main}{HTML}{5989cf}    % setting main color to be used
\definecolor{sub}{HTML}{cde4ff}     % setting sub color to be used
\definecolor{thmback}{HTML}{F7F3FF} % very light lavender
\definecolor{thmframe}{HTML}{6E5A9A} % muted purple

\definecolor{corback}{HTML}{F3FAF6} % very light green
\definecolor{corframe}{HTML}{5A8F6B} % muted green
\definecolor{lemback}{HTML}{F7F7F7} % very light gray
\definecolor{lemframe}{HTML}{8A8A8A} % muted gray
\colorlet{sub-desaturated}{sub!40!white}
\colorlet{sub-red}{red!5!white}

\tcbset{
    colback = white,
    before skip = 0.2cm,    % add extra space before the box
    after skip = 0.2cm      % add extra space after the box
}                           % setting global options for tcolorbox

\newtcolorbox{boxA}{
    colback = thmback, 
    colframe = thmframe,
    fontupper = \bf,
    boxrule = 1.5pt,
    top = 0pt,           % no top padding
    bottom = 0pt,        % no bottom 
    left = 0pt,    % no left margin/indent
    right = 0pt
}

\newtcolorbox{boxB}{
    fontupper = \bf\color{black}, % font color
    boxrule = 0.5pt,
    colframe = black,
    top = 0pt,           % no top padding
    bottom = 0pt,
    left = 0pt,    % no left margin/indent
    right = 0pt,
    rounded corners,
    arc = 5pt   % corners roundness
}

\newtcolorbox{boxC}{
    colback = sub-desaturated, % background color
    boxrule = 0pt,  % no borders
    top = 0pt,           % no top padding
    bottom = 0pt,        % no bottom 
    left = 0pt,    % no left margin/indent
    right = 0pt    % no right margin/indent
}

\newtcolorbox{boxRED}{
    colback = sub-red, % background color
    boxrule = 0pt,  % no borders
    top = 0pt,           % no top padding
    bottom = 0pt,
    left = 0pt,    % no left margin/indent
    right = 0pt    % no right margin/indent
}

\newtcolorbox{boxD}{
    colback = corback, 
    colframe = corframe, 
    boxrule = 0pt, 
    toprule = 2pt, % top rule weight
    bottomrule = 2pt, % bottom rule weight
    top = 0pt,           % no top padding
    bottom = 0pt,
    left = 0pt,    % no left margin/indent
    right = 0pt
}

\newtcolorbox{boxE}{
    enhanced, % for a fancier setting,
    boxrule = 0pt, % clearing the default rule
    borderline = {0.75pt}{0pt}{main}, % outer line
    borderline = {0.75pt}{2pt}{sub} % inner line
}

\newtcolorbox{boxF}{
    colback = white,
    enhanced,
    boxrule = 1.5pt, 
    colframe = white, % making the base for dash line
    borderline = {1.5pt}{0pt}{black, dashed}, % add "dashed" for dashed line,  % no borders
    top = 0pt,           % no top padding
    bottom = 0pt,
    left = 0pt,    % no left margin/indent
    right = 0pt    % no right margin/indent
}

\newtcolorbox{boxG}{
    enhanced,
    boxrule = 0pt,
    colback = sub,
    borderline west = {1pt}{0pt}{main}, 
    borderline west = {0.75pt}{2pt}{main}, 
    borderline east = {1pt}{0pt}{main}, 
    borderline east = {0.75pt}{2pt}{main}
}

\newtcolorbox{boxH}{
    colback = white, 
    colframe = black, 
    boxrule = 0pt, 
    leftrule = 6pt % left rule weight
}

\newtcolorbox{boxI}{
    colback = sub, 
    colframe = main, 
    boxrule = 0pt, 
    toprule = 6pt % top rule weight
}

\newtcolorbox{boxJ}{
    sharpish corners, % better drop shadow
    colback = sub, 
    colframe = main, 
    boxrule = 0pt, 
    toprule = 4.5pt, % top rule weight
    enhanced,
    fuzzy shadow = {0pt}{-2pt}{-0.5pt}{0.5pt}{black!35} % {xshift}{yshift}{offset}{step}{options} 
}

\newtcolorbox{boxK}{
    sharpish corners, % better drop shadow
    boxrule = 0pt,
    toprule = 4.5pt, % top rule weight
    enhanced,
    fuzzy shadow = {0pt}{-2pt}{-0.5pt}{0.5pt}{black!35} % {xshift}{yshift}{offset}{step}{options} 
}

\newtcolorbox{boxL}{
    fontupper = \color{main},
    rounded corners,
    arc = 6pt,
    colback = sub, 
    colframe = main!50, 
    boxrule = 0pt, 
    bottomrule = 4.5pt 
}

\newtcolorbox{boxM}{
    rounded corners,
    arc = 6pt,
    colback = blue!5, 
    colframe = black, 
    boxrule = 0pt, 
    bottomrule = 1.5pt,
    toprule = 1.5pt,
    enhanced,
    fuzzy shadow = {0pt}{-3pt}{-0.5pt}{0.5pt}{black!35}
}

\newtcolorbox{boxN}{
    colback = lemback,
    colframe = lemframe,
    boxrule = 1pt,
    top = 0pt,
    bottom = 0pt,
    left = 0pt,
    right = 0pt
}

%% file: NeurIPS_26/1_abstract.tex
\begin{abstract}
    Modern machine learning is dominated by complex, overparameterized architectures capable of interpolating data and achieving zero training loss. For such models, we investigate the convergence properties of two popular modifications to standard SGD: clipped SGD and normalized SGD. We show that under overparameterization and a mild assumption on batch size, both clipped and normalized SGD do not suffer from the bias typically introduced by clipping, converging effectively at the same rate as their deterministic counterparts. This provides a rigorous theoretical justification for the empirical success of gradient clipping methods. In our analysis, we employ the $(L_0,L_1)$-smoothness condition, under which we obtain convergence rates that improve upon the best known results in prior work. Furthermore, we extend our analysis to specific challenging regimes, including  heavy-tailed noise,  $(H_0,H_1)$-smoothness (which is strictly weaker than standard assumptions in optimization literature) and the deterministic regime.

\end{abstract}

%% file: NeurIPS_26/2_main.tex
\section{Introduction}\label{sec:Introduction}
    Modern machine learning (ML) models, particularly deep learning (DL) architectures, are typically heavily overparameterized, often containing more parameters than available training samples. This characteristics allows models to interpolate the training data and achieve near-zero training loss~\citep{Ma_2018}. The training of these models is commonly performed using iterative first-order stochastic optimization methods, most notably variants of stochastic gradient descent (SGD) \citep{Robbins_1951,Nemirovski_1983} and adaptive counterparts, e.g., Adam \citep{Kingma_2014}.

    To ensure training stability, gradient clipping \citep{Mikolov_2012} has become a widely used algorithmic modification in practical optimization. Beyond stability, it serves as an essential component of privacy-preserving training algorithms, such as DP-SGD \citep{Abadi_2016}. However, despite its ubiquity, the theoretical properties of gradient clipping are surprisingly poorly understood. Standard theoretical analyses typically predict that clipped SGD (ClipSGD) \citep{Koloskova_2023} introduces a systematic bias, preventing convergence to the true optimum and instead stagnating at an error floor determined by the stochastic noise of the gradients and clipping threshold.

    This theoretical prediction contradicts the empirical reality where overparameterized models routinely converge to zero loss. Furthermore, classical analyses often rely on the simplifying assumption of global $L$-smoothness \citep{Lan_2020,Bach_2024}.
    In practice, modern optimization landscapes are often non-smooth or exhibit rapidly growing gradients that violate standard Lipschitz continuity conditions. Some methods designed under $L$-smoothness assumption and exhibiting strong theoretical guarantees \citep[see, for example, variance-reduced methods for finite-sum problems in][]{Johnson_2013,Defazio_2014,Reddi_2016,Schmidt_2017} perform significantly worse~in~practice~in DL \citep{Defazio_2019}. To bridge this gap between theory and practice,~recent~works proposed relaxed smoothness assumptions, including $(L_0, L_1)$-smoothness~\citep{Zhang_2019_Why_gradient,Zhang_2020_Improved} and $(H_0, H_1)$-smoothness \citep{Vaswani_2025,Liu_2025,Alimisis_2025}.
    
    In this work, we provide a rigorous theoretical justification for the success of gradient clipping in the interpolation regime. We analyze two clipping algorithms: ClipSGD \citep{Mikolov_2012} and Normalized SGD (NSGD) \citep{Nesterov_1984} under the Strong Growth Condition (SGC) \citep{Vaswani_2019_Fast_and_faster}, interpreting SGC as a natural property of the overparameterization regime where the stochastic gradient norm vanishes at the optimum. We demonstrate that under this condition and mild condition on the batch size, the bias typically associated with clipping disappears, allowing these algorithms to converge with speed comparable to their deterministic counterparts. 

    More specifically, our contributions are summarized as follows:
    \vspace{-0.5em}
    \begin{itemize}

        \item (\textbf{Section~\ref{sec:Stochastic Setup via Constant Step Size}}). We provide the first convergence results for ClipSGD and NSGD under the strong growth condition for both non-convex and convex, $(L_0,L_1)$-smooth objectives. Our analysis is the first to theoretically show that, under the mild condition on the batch size, these algorithms do not suffer from the bias typically introduced by clipping and converge with nearly the same efficiency as their deterministic counterparts, and~outperform~standard~SGD.

        \item (\textbf{Section~\ref{sec:Stochastic Setup via Constant Step Size}}). We extend our analysis of NSGD to settings where stochastic gradients follow heavy-tailed distributions. We provide the first result showing that, even under heavy-tailed noise (combined with the strong growth condition), NSGD maintains its efficiency, converging similarly to its deterministic counterpart.

        \item (\textbf{Section~\ref{sec:Numerical Experiments}}). We provide practical guidelines for batch size selection in overparameterized ResNet-18 models, validated on the CIFAR-10 dataset, thereby connecting our theoretical batch-size condition to practical training choices.

        \item (\textbf{Appendix~\ref{app:Extending the Analysis to H_0 H_1 Smoothness}}). In the stochastic setting, we extend our analysis of clipped SGD and normalized SGD to convex, $(H_0,H_1)$-smooth objectives \citep{Vaswani_2025,Liu_2025}. In the deterministic setup, we analyze Gradient Descent (GD) with a warm-up step size strategy (viewed as a clipping-based algorithm). Under the $(H_0,H_1)$-smoothness condition, our analysis improves upon the best-known convergence guarantees for non-convex, convex, and strongly convex objectives. We further improve~these~rates~under the Polyak-Łojasiewicz (PL) condition \citep{Polyak_1963,Karimi_2016,Yue_2023}.  
    \end{itemize}

\vspace{-0.5em}
\section{Problem Setup}\label{sec:Problem Setup and New Assumption}
\vspace{-0.5em}
In this paper we consider optimization problems of the form:
\begin{equation}
    \label{eq:init_problem}
    f^* := \min_{x \in \mathbb{R}^d} \left\{ f(x) := \mathbb{E}_{\xi \sim \mathcal{D}} \left[\func{x,\xi} \right] \right\},
    \vspace{-0.4em}
\end{equation}
where $f$ is a possibly non-convex, and possibly stochastic function. This problem formulation is general and encompasses a wide range of applications, including the optimization of deterministic functions when $\forall \xi:  \func{x,\xi} \equiv \func{x}$. Moreover, by taking $\mathcal{D}$ to be the discrete uniform distribution over the interval $[1,n]$, problem \eqref{eq:init_problem} can be reformulated as a finite-sum formulation: $f(x) := \frac{1}{n}\sum_{\xi=1}^{n} \left[f_{\xi}(x)\right]$, which also covers many practical scenarios. For example, in the context of supervised machine learning, $f_{\xi}$ represents the loss of the model $x$ on the data point $\xi$. Next, we will narrow the class of problems under consideration by introducing assumptions on the objective function and the gradient oracle. We start with assumptions on  function $f$.
\vspace{-0.8em}
\subsection{Assumptions on the objective function}
\vspace{-0.7em}

Throughout the paper, our analysis is conducted under the $(L_0,L_1)$-smoothness condition. This condition was introduced in \citep{Zhang_2019_Why_gradient,Zhang_2020_Improved} as a natural relaxation of the classical $L$-smoothness assumption \citep[see, e.g.,][]{Nesterov_2013}. In contrast to standard $L$-smoothness, which imposes a uniform global Lipschitz constant on the gradient, $(L_0,L_1)$-smoothness allows the local smoothness parameter to depend on the norm of the gradient $\norms{\nabla \func{x}}$. 
\begin{boxF}
\begin{assumption}\label{ass:L_0_L_1Smooth}
    Let $f$ be $(L_0,L_1)$-smooth, then $\forall x,y \in \mathbb{R}^d$, as long as $\norms{y - x} \leq \frac{1}{L_1}$, we have
    \begin{equation*}
        \norms{\nabla \func{y} - \nabla \func{x}} \leq \left( L_0 + L_1 \norms{\nabla \func{x}} \right) \norms{y - x}.
    \end{equation*}
\end{assumption}
\end{boxF}
We use this as the main assumption in our work. The motivation for Assumption~\ref{ass:L_0_L_1Smooth} comes from the empirical findings of \citep{Zhang_2019_Why_gradient}. The authors observed that, in a range of DL problems, the local smoothness constant decreases during training and is closely related to the gradient norm. Moreover, in their experiments, the curvature at the final stages of training could be dramatically smaller than the curvature at initialization; for instance, in LSTM training on the PTB dataset, it was reported to be up to $1000$ times smaller. This indicates that the $L$-smoothness assumption may be too pessimistic for such problems, whereas Assumption~\ref{ass:L_0_L_1Smooth} offers~a~more~adaptive~and~realistic~alternative. 

We note that Assumption~\ref{ass:L_0_L_1Smooth} is strictly weaker than classical $L$-smoothness. Indeed, $(L_0,L_1)$-smoothness reduces to $L$-smoothness when $L_1=0$. Moreover, the inclusion is strict: the exponential of an inner product is $(L_0,L_1)$-smooth but not $L$-smooth \citep[see Example~1.5,][]{Gorbunov_2024}.

% In addition, for several of our results, we provide extensions to the class of $(H_0,H_1)$-smooth functions \cite{Liu_2025,Alimisis_2025}. This condition relates the local smoothness of the objective to the function suboptimality gap $\func{x} - f^*$.

% \begin{boxF}
% \begin{assumption}[Optional]\label{ass:H_0H_1_Smooth}
%     Let $f$ be $(H_0,H_1)$-smooth, then $\forall x,y \in \mathbb{R}^d$ with ${\|y - x\| \leq \frac{1}{\sqrt{H_1}}}$:
%     \begin{equation*}
%         \norms{\nabla \func{y} - \nabla \func{x}} \leq \left( H_0 + H_1 \left[ \func{x} - f^* \right] \right) \norms{y - x}.
%     \end{equation*}
% \end{assumption}
% \end{boxF}
% The Assumption~\ref{ass:H_0H_1_Smooth} is strictly weaker than the Assumption~\ref{ass:L_0_L_1Smooth}, and consequently also strictly weaker than classical $L$-smoothness. To see this, Proposition~B.1 of \cite{Alimisis_2025} establishes that every $(L_0,L_1)$-smooth function is also $(H_0,H_1)$-smooth with $H_0 = L_0 + L_0 L_1$, $H_1 = \frac{4 L_1^2 + L_1}{2}$. Moreover, Example~1 of \cite{Liu_2025} provides a simple function, motivated by deep neural network training, that satisfies $(H_0,H_1)$-smoothness but fails to satisfy $(L_0,L_1)$-smoothness. Thus, $(H_0,H_1)$-smoothness captures a broader class of objectives than the $(L_0,L_1)$-smoothness condition.
\vspace{-0.8em}
\subsection{Assumptions on the gradient oracle}
\vspace{-0.7em}
Throughout this work, we adopt a standard assumption commonly found in the stochastic optimization literature \citep{Nemirovski_2009, Moulines_2011,Juditsky_2011,Roux_2012,Lan_2012,Ghadimi_2012,Ghadimi_2013,Stich_2018,Koloskova_2023,Lobanov_2024_JOTA}: \textit{the unbiasedness} of the gradient oracle. This oracle, when queried at any point $x \in \mathbb{R}^d$, provides access solely to a stochastic gradient~$\nabla \func{x, \xi}$,~satisfying:
\begin{boxF}
\begin{assumption}
    \label{ass:unbiased}
    The gradient oracle is unbiased, i.e.
    % \vspace{-0.5em}
    \begin{equation}
    \label{eq:Assumption_unbiased_gradient_oracle}
        \mathbb{E}_{\xi} \left[\nabla \func{x, \xi}\right] = \nabla \func{x}.
    \end{equation}
\end{assumption}
\end{boxF}
% \vspace{0.5em}

In this work we also focus on \textit{the interpolation regime}, which is typically observed in large ML models (particularly DL architectures), such as overparameterized neural networks, where the number of parameters exceeds the number of training examples, thereby enabling perfect data fitting \citep{Ma_2018, Allen_2019, Gower_2019,Gower_2021,Cooper_2021,Nakkiran_2021}.
\begin{boxC}
    
\begin{definition}\label{def:interpolation} The interpolation condition holds if
\begin{equation*}
    x^* \in \argmin f \Rightarrow  x^* \in \argmin f(\cdot, \xi) \text{~~~~ a.s.}
\end{equation*}
\end{definition}
\end{boxC}
% \vspace{0.5em}

As a restriction on the stochastic noise of gradient oracle, we employ the generalized strong growth condition \citep{Vaswani_2019_Fast_and_faster,Koloskova_2020,Lobanov_2023_PL}:
\begin{boxF}
\begin{assumption}\label{ass:strong growth condition}
    The generalized strong growth condition holds if $\forall x \in \mathbb{R}^d$, $\rho \geq 1$ and $\sigma \geq 0$
    \begin{equation}
        \label{eq:strong growth condition}
        \expect{\norms{\nabla f(x, \xi)}^2} \leq \rho \norms{\nabla f(x)}^2 + \sigma^2.
    \end{equation}
\end{assumption}
\end{boxF}
% \vspace{0.5em}

We introduced Assumption~\ref{ass:strong growth condition} for generality, as with $\rho = 1$ it reduces to the standard bounded variance assumption $\mathbb{E}[\norms{\nabla \func{x,\xi} - \nabla \func{x}}^2] \leq \sigma^2$, which is widely used in the literature \citep[see, e.g.,][]{Karimireddy_2020,Yang_2022}.  However, it can be observed that the general form \eqref{eq:strong growth condition} does not satisfy the interpolation condition. Consequently, the main results of this work focus on perhaps the most common assumption that satisfies Definition~\ref{def:interpolation}: Assumption~\ref{ass:strong growth condition} with $\sigma = 0$~—~known as \textit{the strong growth condition} (SGC) \citep{Cevher_2019,Vaswani_2019_Painless,Vaswani_2019_Fast_and_faster,Hermant_2025}. See Appendix~\ref{app:On the generalized strong growth condition} for a discussion of the generality of form~\eqref{eq:strong growth condition}.

It is worth noting that in some works \citep[e.g., see][]{Vaswani_2019_Painless,Vaswani_2019_Fast_and_faster}, SGC is classified as a \textit{stationary-point interpolation}, which implies that holds: ${\nabla f(x^*) = 0 \Rightarrow \nabla f(x^*,\xi) = 0}$.

In part of this work, we provide a generalization to the heavy-tailed (HT). We assume a generalized $p$-th central moment \citep{Liu_2025_Heavy_Tailed_Noises}, which can be reduced to Assumption~\ref{ass:strong growth condition}~for~HT~noises. 
\begin{boxF}
\begin{assumption}[Optional]
\label{prop:SGC_under_heavy tailed noise}
    We assume that gradient oracle has a generalized $p$-th central moment, i.e. there exists $\rho \geq 1$, $\sigma \geq 0$ such that, for all $x \in \mathbb{R}^d$ and $p \in (1,2)$:
    \begin{equation*}
        % \label{eq:Heavy-tailed SGC}
        \expect{\norms{\nabla f(x, \xi) - \nabla \func{x}}^p} \leq (\rho - 1) \norms{\nabla f(x)}^p + \sigma^p.
    \end{equation*}
\end{assumption}
\end{boxF}
% \vspace{0.5em}

When $\rho = 1$, Assumption~\ref{prop:SGC_under_heavy tailed noise} reduces to the bounded $p$-th central moment condition, which can be considered a standard in the heavy-tailed setting \citep{Zhang_2020_Why_are_adaptive,Liu_2023,Hubler_2025}. Moreover, Assumption~\ref{prop:SGC_under_heavy tailed noise} can be linked to a form of \eqref{eq:strong growth condition} (a $p$-th moment) by applying finite form of Jensen's inequality: $\expect{\norms{\nabla f(x, \xi)}^p} \leq 2^{p-1} \left( \rho \norms{\nabla \func{x}}^p + \sigma^p\right)$.

In this work, the case $\sigma = 0$ in Assumption~\ref{prop:SGC_under_heavy tailed noise}~is~of primary interest. It is straightforward to see that this case satisfies Definition~\ref{def:interpolation} and can also be classified as stationary-point interpolation. 
%We demonstrate in Example~\ref{example2} that it, too, permits an unbounded second moment/variance, meaning the noise may follow a heavy-tailed distribution. 
And this specific form ${\expect{\norms{\nabla \func{x,\xi}}^p} \leq 2^{p-1}  \rho \norms{\nabla \func{x}}^p}$ we term \textit{the SGC under heavy-tailed noise}.
% \vspace{-0.3em}
% \begin{boxC}
% \begin{example}
%     \label{example2}
%     Consider the function $f(x) = \frac{1}{2} \|x\|^2$ with gradient oracle $\nabla \func{x,\xi} = \left(Z + 1\right) x$ (where $Z$ is a random variable with a symmetric Pareto distribution). Then, for $\alpha \in (p,2]$ we obtain the strong growth condition under heavy-tailed noise $\expect{\norms{\nabla f(x, \xi)}^p} \leq 2^{p-1} \rho \norms{\nabla f(x)}^p$ with $\rho = \frac{\alpha}{\alpha - p} + 1 < \infty$,~and~${\expect{\norms{\nabla f(x, \xi)}^2} = \infty}$.
% \end{example}
% \end{boxC}
% % \vspace{0.5em}

% % \vspace{-0.4em}
% The proofs of examples are deferred to Appendix~\ref{app:Motivation_heavy}.
\vspace{-0.8em}
\section{Related works}
\vspace{-0.7em}
    
    This section discusses the most relevant related work.
    % \vspace{-1em}
    \paragraph{Clipped SGD} has been studied extensively across settings. In non-convex deterministic $(L_0,L_1)$-smooth optimization, \cite{Zhang_2019_Why_gradient} first theoretically demonstrated the benefits of clipping. This was later refined by \cite{Koloskova_2023}, who improved bounds for Clipped GD and analyzed Clipped SGD under stochastic $(L_0,L_1)$-smoothness with bounded variance; however, their guarantees converge only to an error floor $\min\{\sigma,\sigma^2/c\}$, even with mini-batching. Another line of work mitigates clipping bias using error feedback and momentum \citep{Islamov_2025}, but for modified clipping schemes rather than vanilla ClipSGD, which is the focus of our analysis. For convex objectives, deterministic analyses identified linear and sublinear regimes \citep{Lobanov_2024_Linear_Convergence_Rate,Vankov_2025}, while stochastic works established high-probability guarantees under light- and heavy-tailed noise by choosing sufficiently large clipping radius \citep{Gaash_2025,Chezhegov_2025}. Other recent works study convex interpolation under $(L_0,L_1)$- and $(H_0,H_1)$-smoothness, but rely on non-standard metrics \citep{Gorbunov_2024,Alimisis_2025}. \textit{In contrast, we analyze ClipSGD with an arbitrary clipping radius in standard metrics (the expected minimum gradient norm and the expected optimality gap over $N$ iterations), proving convergence to arbitrary accuracy under SGC and a mild batch-size $\Obound{\rho}$, with two regimes for both non-convex and convex objectives.}
    \vspace{-1em}
    \paragraph{Normalized SGD} dates back to \cite{Nesterov_1984} and has since been studied under bounded variance (BV) and heavy-tailed noise. In convex $(L_0,L_1)$-smooth BV settings, \cite{Lobanov_2025} showed that NSGD improves the iteration complexity of Clipped SGD, but at the cost of a much worse oracle complexity, $\mathcal{O}(\varepsilon^{-5})$. For non-convex $L$-smooth BV problems, momentum NSGD achieves $\mathcal{O}(\varepsilon^{-4})$ sample complexity \citep{Cutkosky_2020,Zhao_2021}; the same rate was obtained under non-convex $(L_0,L_1)$-smoothness in \citep{Hubler_2024}, although their analysis does not separate $L_0$ and $L_1$. In the heavy-tailed regime, \citep{Hubler_2025,Liu_2025_Heavy_Tailed_Noises} analyzed momentum NSGD for non-convex $L$- and $(L_0,L_1)$-smooth objectives, respectively, obtaining $\mathcal{O}(\varepsilon^{-(3p-2)/(p-1)})$ sample complexity. \textit{In this work, we identify a regime where NSGD improves the iteration complexity of Clipped SGD without degrading oracle complexity, by exploiting the strong growth condition under heavy-tailed noise. We show that with batch size $\mathcal{O}(\rho^{1/(p-1)})$, NSGD matches its deterministic counterpart. While \citep{Liu_2025_Heavy_Tailed_Noises} uses a related Assumption~\ref{prop:SGC_under_heavy tailed noise}, their analysis is restricted to $\rho=1$ and thus does not capture our results for $\rho>1$. Moreover, their method requires access to the full gradient and deterministic function value to set $\beta$ and $\eta$, which is typically impractical for large-scale models. Our analysis~removes~these~limitations~for~both~$\rho>1$~and~$\sigma>0$.}
\vspace{-2em}
\section{Stochastic Setup via Constant Step Size}\label{sec:Stochastic Setup via Constant Step Size}
\vspace{-0.5em}
    %Algorithms
    In this section, we analyze the convergence of two gradient clipping algorithms in a stochastic
    \begin{minipage}{0.4\textwidth}
     setup: Clipped Stochastic Gradient Descent (ClipSGD, which clips the gradient when its norm exceeds a given radius) and Normalized Stochastic Gradient Descent (NSGD, which performs normalization/clipping at every iteration). For brevity, both algorithms are presented in Algorithm~\ref{algo:clip_algo}. Note that in Step 2 of Algorithm~\ref{algo:clip_algo}, $\nabla \func{x^k,\bxi^k}$ denotes the stochastic gradient computed on a mini-batch of size $B \geq 1$. In Step 3, depending on the chosen method, we define the update direction as follows: for ClipSGD, we use the clipping operator ${\gg(x^k, \bxi^k) = \alpha_{\bxi^k} \nabla f(x^k, \bxi^k)}$, where $\alpha_{\bxi^k} = \min \left\{ 1, c/\norms{\nabla \func{x^k, \bxi^k}} \right\}$; for NSGD, we use the normalized gradient 
    \end{minipage}
    \begin{minipage}{0.2\textwidth}
    \end{minipage}
    \begin{minipage}{0.58\textwidth}
    \vspace{-1em}
    \begin{algorithm}[H]
        \caption{Clipping Algorithms in a Stochastic Setup}
        \label{algo:clip_algo}
        \begin{algorithmic}
            \STATE {\bfseries Method:} \textcolor{blue}{ClipSGD} or \textcolor{red}{NSGD}
            \STATE {\bfseries Input:} initial point $x_0 \in \mathbb{R}^d$, iterations number $N$, batch size $B$, step size $\eta > 0$ and parameters $c>0$, $\lambda > 0$   
            \FOR{$k=0$ {\bfseries to} $N-1$}
                \STATE \hspace{-1.4em} {1.} Draw fresh i.i.d. samples $\xi^k_1,...,\xi^k_B$
                \vspace{0.2em}
                \STATE \hspace{-1.4em} {2.} $\nabla f(x^k, \bxi^k) = \frac{1}{B} \sum_{i=1}^B \nabla f(x^k, \xi^k_i)$ 
                \vspace{0.4em}
                \STATE \hspace{-1.31em} 
                {3.} Compute $\gg(x^k, \bxi^k)$ for the selected algorithm.
                \vspace{0.2em}
                \STATE 
                \hspace{-1.31em}
                {3.1.} For \textcolor{blue}{ClipSGD}:\quad \,${\gg(x^k, \bxi^k) = \alpha_{\bxi^k} \nabla f(x^k, \bxi^k)}$ 
                % \vspace{0.1em}
                \STATE \hspace{-1.31em} {3.2.} For \textcolor{red}{NSGD}: \quad\quad $\gg(x^k, \bxi^k) =  \frac{\nabla f(x^k, \bxi^k)}{\|\nabla f(x^k, \bxi^k)\| + \lambda}$
                \vspace{0.2em}
                \STATE \hspace{-1.4em}
                {4.} $x^{k+1} \gets x^{k} - \eta \cdot \gg(x^k, \bxi^k)$
            \ENDFOR
            \STATE {\bfseries Return:} $x^{N}$
        \end{algorithmic}
    \end{algorithm}
    \end{minipage}
    % \vspace{-1em}
    $\gg(x^k, \bxi^k) = \frac{\nabla f(x^k, \bxi^k)}{\norms{\nabla f(x^k, \bxi^k)} + \lambda}$ with parameter $\lambda >0$. For each methods in Algorithm~\ref{algo:clip_algo}, we consider a constant step size $\eta>0$.
    % \vspace{-0.1em}
    In our theoretical analysis, we show that under the SGC (Assumption~\ref{ass:strong growth condition} with $\sigma = 0$), ClipSGD and NSGD converge to a stationary point for non-convex functions (or to a minimizer for convex functions) with nearly the same efficiency as their deterministic counterparts. 
    
    %We emphasize “nearly” because, in stochastic setup, there is no guarantee of monotonic decrease in~gradient~norm.
    
    %Moreover, we observe that compared to ClipSGD, NSGD can improve the iteration complexity specifically in terms of the $L_1$ components without increasing the batch size. Thus, we identify a regime where NSGD outperforms ClipSGD in both iteration and oracle~complexity~criteria.
\vspace{-1em}
\subsection{Non-convex functions}\label{subsec:Non convex functions}
\vspace{-1em}
        In this part, we focus on non-convex functions that satisfy $(L_0,L_1)$-smoothness (see Assumption~\ref{ass:L_0_L_1Smooth}). We start by presenting our novel convergence result for Clipped SGD.
        \begin{boxA}
        \begin{theorem}[Non-convex, ClipSGD]\label{th:NC_ClipSGD}
            Let $f$ is $(L_0,L_1)$-smooth (Assumption~\ref{ass:L_0_L_1Smooth}) and gradient oracle is unbiased (Assumption~\ref{ass:unbiased}) and satisfies SGC (Assumption~\ref{ass:strong growth condition} with $\sigma = 0$), then ClipSGD (Algorithm~\ref{algo:clip_algo}) with step size $\eta \leq \left[9 \left( L_0 + L_1 c  \right)  \right]^{-1}$, and batch size $B \geq 72 (\rho - 1)$~guarantees:
            \begin{equation*}
                \min_{k \in [0,N-1]} \expect{ \norms{\nabla \func{x^k}}} \leq \sqrt{\frac{18 F_0}{\eta N}}  +\frac{18 F_0}{\eta c N},
            \end{equation*}
            where $N$ is the number of iterations, $F_0 = \func{x^0} - f^*$.
        \end{theorem}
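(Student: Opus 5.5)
The plan is a one-step descent argument for ClipSGD combined with a case split on whether the minibatch gradient is clipped, and then a telescoping sum. First, we use the standard consequence of Assumption~\ref{ass:L_0_L_1Smooth}: for $\norms{y-x}\le 1/L_1$,
\[
f(y)\le f(x)+\dotprod{\nabla f(x)}{y-x}+\frac{L_0+L_1\norms{\nabla f(x)}}{2}\norms{y-x}^2 .
\]
Since $\norms{\eta\,\gg(x^k,\bxi^k)}\le \eta c\le 1/(9L_1)$, this applies with $y=x^{k+1}$. The smoothness factor contains $\norms{\nabla f(x^k)}$ rather than $c$. We handle it by splitting on $\norms{\nabla f(x^k)}\le c$ or $>c$; in the second case we write $\eta^2 L_1\norms{\nabla f}\,c^2\le \eta c\norms{\nabla f}\cdot \eta L_1 c$, which is absorbed using $\eta L_1 c\le 1/9$.

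Second, we bound the conditional expectation of the linear term $-\eta\,\expect{\alpha_{\bxi}\dotprod{\nabla f}{\nabla f(x,\bxi)}}$. The mini-batch preserves unbiasedness, and under SGC its variance is $\expect{\norms{\nabla f(x,\bxi)-\nabla f(x)}^2}\le \frac{\rho-1}{B}\norms{\nabla f(x)}^2\le \frac{1}{72}\norms{\nabla f(x)}^2$. This is the key consequence of $B\ge 72(\rho-1)$. The stochastic gradient is then relatively close to the true gradient, so we introduce the event $E=\{\norms{\nabla f(x,\bxi)-\nabla f(x)}\le \tfrac12\norms{\nabla f(x)}\}$. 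By Chebyshev, $P(E^c)\le 4/72$. On $E$, $\dotprod{\nabla f}{\nabla f(x,\bxi)}\ge \frac12\norms{\nabla f}^2$ and $\norms{\nabla f(x,\bxi)}\le \frac32\norms{\nabla f}$, hence $\alpha_{\bxi}\ge \min\{1,\tfrac{2c}{3\norms{\nabla f}}\}$. On $E^c$ the inner product can be negative. There we bound $\alpha_{\bxi}\,|\dotprod{\nabla f}{\nabla f(x,\bxi)}|\le \norms{\nabla f}\min\{\norms{\nabla f(x,\bxi)},c\}$ and control its expectation by Cauchy--Schwarz, $\expect{\mathbb 1_{E^c}\norms{\nabla f(x,\bxi)}}\le \sqrt{P(E^c)}\,\sqrt{\expect{\norms{\nabla f(x,\bxi)}^2}}$, which is a small multiple of $\norms{\nabla f}$ by the constants. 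Combining the two events gives
\[
\expect{f(x^{k+1})\mid x^k}\le f(x^k)-\frac{\eta}{18}\min\bigl\{\norms{\nabla f(x^k)}^2,\;c\norms{\nabla f(x^k)}\bigr\}.
\]
In the quadratic regime the second-order term $\tfrac{L_0\eta^2}{2}\expect{\norms{\nabla f(x,\bxi)}^2}\le \tfrac{L_0\eta^2}{2}(1+\tfrac1{72})\norms{\nabla f}^2$ is absorbed via $\eta L_0\le 1/9$.

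Third, we take full expectations, telescope, and use $f(x^N)\ge f^*$ to get $\frac1N\sum_k \expect{\min\{\norms{\nabla f_k}^2, c\norms{\nabla f_k}\}}\le \frac{18F_0}{\eta N}$. The function $\phi(t)=\min\{t^2,ct\}$ is convex and increasing. By Jensen, $\phi(\min_k\expect{\norms{\nabla f_k}})\le \frac{18F_0}{\eta N}$. Inverting $\phi$ yields $t\le \max\{\sqrt{\cdot},\,\cdot/c\}\le \sqrt{\frac{18F_0}{\eta N}}+\frac{18F_0}{\eta cN}$, which is the claimed bound.

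The main obstacle is the second step: lower-bounding the expected clipped inner product when clipping is random, so that the negative contribution from $E^c$ and the reduced $\alpha_{\bxi}$ on $E$ still leave a clean factor $1/18$ with exactly the constants $9$ and $72$. If the Chebyshev/Cauchy--Schwarz split turns out too lossy, the fallback is the decomposition used by \citet{Koloskova_2023}: write $\alpha_{\bxi}\nabla f(x,\bxi)=\alpha_{\bxi}\nabla f(x)+\alpha_{\bxi}(\nabla f(x,\bxi)-\nabla f(x))$ and bound the second part by the standard deviation. This gives an error of order $\eta c\sqrt{(\rho-1)/B}\,\norms{\nabla f}$, which the batch condition makes at most a constant fraction of the main term.
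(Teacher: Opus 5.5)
Your skeleton is the same as the paper's: the descent inequality with the locality check, a split on $\|\nabla f(x^k)\|$ versus $c$, a Chebyshev good event when the gradient is large, and telescoping. However, the final step is wrong as written, because $\phi(t)=\min\{t^2,ct\}$ is \emph{not} convex. It equals $t^2$ on $[0,c]$ and $ct$ on $[c,\infty)$, so its slope drops from $2c$ to $c$ at $t=c$. For instance, $\phi(c)=c^2>\tfrac12\bigl(\phi(c/2)+\phi(3c/2)\bigr)=\tfrac78c^2$. Hence $\phi(\mathbb{E}G_k)\le\mathbb{E}\phi(G_k)$, with $G_k:=\|\nabla f(x^k)\|$, can fail, and your inversion step is unjustified. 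The conclusion survives, but you need a different last step. The paper splits with indicators: $c\,\mathbb{E}[G_k\mathbf{1}_{\{G_k\ge c\}}]\le\mathbb{E}\phi(G_k)$ and $\mathbb{E}[G_k\mathbf{1}_{\{G_k<c\}}]\le(\mathbb{E}[G_k^2\mathbf{1}_{\{G_k<c\}}])^{1/2}$. Averaging over $k$ and using concavity of the square root together with $\frac1N\sum_k\mathbb{E}\phi(G_k)\le A:=18F_0/(\eta N)$ gives $\frac1N\sum_k\mathbb{E}G_k\le\sqrt{A}+A/c$. Alternatively, apply Jensen to the convex minorant $\psi(t)=\frac{c^2}{4}\bigl(\sqrt{1+4t/c}-1\bigr)^2\le\phi(t)$, which is exactly the inverse of $A\mapsto\sqrt{A}+A/c$.

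Second, the one-step constant $1/18$ does not follow from your listed estimates when $\|\nabla f(x^k)\|<c$. Write $G=\|\nabla f(x^k)\|$ and let $g$ be the mini-batch gradient. If $G$ is just below $c$, your bound on $E$ gives only $\alpha\ge 2c/(3G)\approx 2/3$, i.e.\ $\alpha\langle\nabla f,g\rangle\ge G^2/3$, so the good event contributes $-\tfrac{17}{54}\eta G^2$. Cauchy--Schwarz on $E^c$ costs $\sqrt{1/18}\sqrt{73/72}\,\eta G^2\approx0.237\,\eta G^2$, and the second-order term costs about $0.056\,\eta G^2$. That leaves a decrease of only $\approx0.021\,\eta G^2<\eta G^2/18$, i.e.\ a final constant of roughly $47$ instead of $18$. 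Your large-gradient case is fine: using $\min\{\|g\|,c\}\le c$ on $E^c$ you get $\tfrac{11}{54}\eta cG$.

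The paper avoids events entirely in the small-gradient regime. Since $\nabla f(x^k)=\mathrm{clip}_c(\nabla f(x^k))$, it writes $-\langle a,b\rangle=\tfrac12(\|a-b\|^2-\|a\|^2-\|b\|^2)$ with $b=\mathrm{clip}_c(g)$. The term $-\tfrac{\eta}{2}\|b\|^2$ absorbs the second-order term, since $\eta(L_0+L_1c)\le1$. Nonexpansiveness of the projection, $\|\mathrm{clip}_c(g)-\mathrm{clip}_c(\nabla f)\|\le\|g-\nabla f\|$, then gives a decrease of $(\tfrac12-\tfrac1{144})\eta G^2$ directly. If you prefer to keep your event-based route, replace Cauchy--Schwarz by $\mathbb{E}[\mathbf{1}_{E^c}\|g-\nabla f\|]\le\frac{2}{G}\mathbb{E}\|g-\nabla f\|^2\le G/36$, valid because $\|g-\nabla f\|>G/2$ on $E^c$. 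This gives $\mathbb{E}[\mathbf{1}_{E^c}\|g\|]\le G/12$, which closes the $1/18$ with room to spare.
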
       
        \end{boxA}
        The proof of Theorem~\ref{th:NC_ClipSGD} is provided in Appendix~\ref{app:NC_ClipSGD} and corresponds to the case $\sigma = 0$. Note that by selecting the maximum step size $\eta = \left[9 \left( L_0 + L_1 c  \right)  \right]^{-1}$, we obtain the following convergence result: $\min_{k \in [0,N-1]} \expect{ \norms{\nabla \func{x^k}}}$ is upper bounded by $\Obound{\sqrt{\frac{L_0 F_0}{ N}} + \sqrt{\frac{L_1 c F_0}{ N}} + \frac{L_0 F_0}{c N} + \frac{L_1 F_0}{N}}$.

        \vspace{-1.5em}
        \paragraph{Comparison to the prior works.} First, the convergence result presented in Theorem~\ref{th:NC_ClipSGD} matches (when $B = \Obound{\rho}$) the convergence of ClipGD in the non-convex \textit{deterministic setting} under $(L_0, L_1)$-smoothness \citep[see Theorem 2.1,][]{Koloskova_2023}. Second, the first term in Theorem~\ref{th:NC_ClipSGD} coincides with the convergence rate of SGD  (\textit{without clipping}) under the $L$-smoothness, SGC assumptions with step size $\eta \leq (\rho L)^{-1}$ \citep[see Theorem 3,][]{Vaswani_2019_Fast_and_faster}: $\Obound{(\eta N)^{-1/2}}$. Therefore, it is straightforward to observe that Theorem~\ref{th:NC_ClipSGD} improves over SGD in both: Iteration complexity (due to a larger step size---an advantage from using mini-batches, and through the refined assumption of $(L_0,L_1)$-smoothness); Oracle complexity (in scenarios analogous to those in \cite{Zhang_2019_Why_gradient} where $L_0 \lll L$ and $L_1 \lll L$, and potentially due to the addition of the term $\frac{F_0}{\eta c N}$ which depends on the clipping radius $c$; note, however, that the term $\Obound{(\eta N)^{-1/2}}$ remains dominant).
        \begin{boxD}
        \begin{proposition}[Generalized SGC]\label{cor:NC_ClipSGD}
            If Assumption~\ref{ass:strong growth condition} holds for $\sigma \geq 0$, then Theorem~\ref{th:NC_ClipSGD} takes the following form with $\eta \leq \left[9 \left( L_0 + L_1 c  \right)  \right]^{-1}$ and $B \geq 72 (\rho - 1)$ guarantees an error:
            \begin{equation*}
                \min_{k \in [0,N-1]} \expect{ \norms{\nabla \func{x^k}}} \lesssim \sqrt{\frac{F_0}{\eta N}}  +\frac{F_0}{\eta c N} + \frac{\sigma}{\sqrt{B}} +  \frac{\sigma^2}{cB}.
            \end{equation*}
        \end{proposition}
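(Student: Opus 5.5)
We would repeat the proof of Theorem~\ref{th:NC_ClipSGD} and carry the additive $\sigma^2$ term through every step. The first step is to transfer Assumption~\ref{ass:strong growth condition} to the mini-batch. Since the $\xi^k_i$ are i.i.d. and unbiased (Assumption~\ref{ass:unbiased}),
\begin{equation*}
\expect{\norms{\nabla f(x,\bxi) - \nabla f(x)}^2} \leq \frac{(\rho-1)\norms{\nabla f(x)}^2 + \sigma^2}{B},
\end{equation*}
so that with $B \geq 72(\rho-1)$ the mini-batch deviation satisfies $\expect{\norms{\nabla f(x,\bxi)-\nabla f(x)}^2} \leq \frac{1}{72}\norms{\nabla f(x)}^2 + \frac{\sigma^2}{B}$.

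Next comes the one-step descent. The step length satisfies $\norms{x^{k+1}-x^k} \leq \eta c \leq 1/(9L_1)$, so Assumption~\ref{ass:L_0_L_1Smooth} gives the usual descent inequality:
\begin{equation*}
f(x^{k+1}) \leq f(x^k) - \eta\dotprod{\nabla f(x^k)}{\gg_k} + \frac{\eta^2(L_0+L_1\norms{\nabla f(x^k)})}{2}\norms{\gg_k}^2 .
\end{equation*}
We split according to whether $\norms{\nabla f(x^k)} \leq c/2$ (the small-gradient regime) or $\norms{\nabla f(x^k)} > c/2$ (the large-gradient regime), and condition on the event that the mini-batch deviation is at most $\norms{\nabla f(x^k)}/2$. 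On that event, clipping is either inactive or the clipped direction is well aligned with the true gradient. We then lower bound the inner product by $\min\{\norms{\nabla f}^2,\, c\norms{\nabla f}\}$ up to constants, as in Theorem~\ref{th:NC_ClipSGD}. The curvature term is absorbed using $\eta \leq [9(L_0+L_1c)]^{-1}$ together with the bound $L_1\norms{\nabla f}\norms{\gg_k} \leq L_1 c\,\norms{\nabla f}$ when clipping is active.

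The complementary event, together with the bias of the clipped estimator, is where $\sigma$ enters. We expect this step to be the main obstacle, because the bias must be controlled without any $\sigma$-free cancellation. We would handle it in the style of Koloskova et al. On the unclipped part, the deviation is bounded by Cauchy--Schwarz, which gives $\eta\norms{\nabla f}\sqrt{\expect{\norms{\nabla f(x,\bxi)-\nabla f(x)}^2}}$. The $\sigma^2/B$ part of this contributes $\eta\norms{\nabla f}\,\sigma/\sqrt{B}$. In the large-gradient regime, Markov's inequality on the deviation bounds the misaligned probability by $\expect{\norms{\nabla f(x,\bxi)-\nabla f(x)}^2}/\norms{\nabla f}^2$, and the resulting loss is at most $\eta c$ times this probability, i.e. $\eta c\,\sigma^2/(B\norms{\nabla f}^2)$ plus a $\rho$-part that is absorbed by $B \geq 72(\rho-1)$. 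Combining both regimes, we aim for
\begin{equation*}
\expect{f(x^{k+1})} \leq \expect{f(x^k)} - \frac{\eta}{18}\expect{\min\{\norms{\nabla f(x^k)}^2, c\norms{\nabla f(x^k)}\}} + \mathcal{O}\left(\eta\frac{\sigma}{\sqrt B}\norms{\nabla f(x^k)} + \eta\frac{\sigma^2}{cB}\norms{\nabla f(x^k)}\right).
\end{equation*}
If the residual instead appears as $\eta\sigma^2/B$ or $\eta c\sigma^2/(B\norms{\nabla f})$, we would compare it to the main term via Young's inequality and pass to the threshold condition below.

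Finally, we telescope over $k=0,\dots,N-1$ and use the notation $G=\min_k \expect{\norms{\nabla f(x^k)}}$. By Jensen and convexity of $t \mapsto \min\{t^2, ct\}$ up to constants, we obtain $\min\{G^2, cG\} \lesssim \frac{F_0}{\eta N} + G\left(\frac{\sigma}{\sqrt B} + \frac{\sigma^2}{cB}\right)$. Now we argue by cases. If $G \lesssim \frac{\sigma}{\sqrt B} + \frac{\sigma^2}{cB}$, the claim holds. Otherwise the noise term is absorbed into the left side, and the two subcases give $G \lesssim \sqrt{F_0/(\eta N)}$ and $G \lesssim F_0/(\eta c N)$ respectively. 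Summing the cases yields the stated bound.
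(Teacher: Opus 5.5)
\textbf{There is a gap in your final step.} Your target recursion charges the noise as $\eta s\|\nabla f(x^k)\|$, with $s=\sigma/\sqrt{B}+\sigma^2/(cB)$. Telescoping then gives
\[
\min\{\bar G^2,c\bar G\}\lesssim F_0/(\eta N)+\bar G s,
\qquad
\bar G=\tfrac1N\sum_k\mathbb{E}\|\nabla f(x^k)\|.
\]
It has to be this average on the right-hand side, not $\min_k$. In the branch $\bar G\ge c$, absorbing $\bar G s$ into $c\bar G$ requires $s$ to be a small multiple of $c$. That does not follow from $\bar G$ being large compared to $s$.

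Since $c>0$ is arbitrary, take $c\le\sigma/\sqrt B$. Then $s\ge\sigma^2/(cB)\ge\sigma/\sqrt B\ge c$, so the noise term $\bar G s$ is at least $c\bar G$, which exceeds your main term $\tfrac1{18}c\bar G$. The inequality then says nothing about $\bar G$. Yet the statement still claims $\bar G\lesssim\sqrt{F_0/(\eta N)}+F_0/(\eta cN)+\sigma^2/(cB)$ in this regime, which is exactly the small-radius case the paper isolates ($c\le 6\sqrt2\sigma/\sqrt B$).

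The information is lost when you inflate the bad-event cost. Off the alignment event the loss is $\eta c\|\nabla f(x^k)\|\cdot\mathbb{P}(\mathrm{bad})$, and Markov gives $\mathbb{P}(\mathrm{bad})\le\frac1{18}+\frac{4\sigma^2}{B\|\nabla f(x^k)\|^2}$. So the $\sigma$-part is $4\eta c\sigma^2/(B\|\nabla f(x^k)\|)$, not $\eta c\sigma^2/(B\|\nabla f\|^2)$. Because $\|\nabla f(x^k)\|>c/2$ in that regime, this is at most $8\eta\sigma^2/B$. Rewriting it as $\mathcal{O}(\eta\frac{\sigma^2}{cB}\|\nabla f(x^k)\|)$ loses a factor of order $\|\nabla f(x^k)\|^2/c^2$, which is unbounded.

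\textbf{The repair.} Your fallback remark about an $\eta\sigma^2/B$ residual points the right way, but that residual must not be fed back into the absorption argument; keep every $\sigma$-contribution additive instead.
\begin{itemize}
\item In the large-gradient regime, use the bound $8\eta\sigma^2/B$ just computed.
\item In the small-gradient regime $\|\nabla f(x^k)\|\le c/2$, do not condition on the deviation event. Its complement can cost $\frac1{18}\eta c\|\nabla f\|$, which $\eta\|\nabla f\|^2$ cannot absorb once $\|\nabla f\|<c/9$. Instead use $\nabla f(x^k)=\mathrm{clip}_c(\nabla f(x^k))$ and nonexpansiveness of the projection. Either apply the polarization identity as the paper does, giving $-\frac\eta2\|\nabla f\|^2+\frac\eta2\mathbb{E}\|g_k-\nabla f(x^k)\|^2$ with $g_k$ the mini-batch gradient. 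Or use your Cauchy--Schwarz bias bound followed by $\eta\|\nabla f\|\sigma/\sqrt B\le\frac\eta4\|\nabla f\|^2+\eta\sigma^2/B$; the curvature term then adds another $\mathcal{O}(\eta\sigma^2/B)$ via $\mathbb{E}\|\mathrm{clip}_c(g_k)\|^2\le\mathbb{E}\|g_k\|^2$.
\end{itemize}
This yields
\[
\mathbb{E}f(x^{k+1})\le\mathbb{E}f(x^k)-c'\eta\,\mathbb{E}\,h(\|\nabla f(x^k)\|)+C\eta\sigma^2/B,
\]
where $h$ is the Huber function ($t^2/2$ for $t\le c$, $ct-c^2/2$ for $t>c$). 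It is convex, nondecreasing, and within a factor $2$ of $\min\{t^2,ct\}$, so it gives you the convexity ``up to constants'' you invoked. Jensen then gives $\min\{\bar G^2,c\bar G\}\lesssim A:=F_0/(\eta N)+\sigma^2/B$, hence $\bar G\lesssim\sqrt A+A/c$. This is exactly the four claimed terms, with no absorption step.

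Once repaired, your route genuinely differs from the paper's. The paper splits on $c$ versus $6\sqrt2\sigma/\sqrt B$. For $\|\nabla f(x^k)\|\ge\max\{c,6\sqrt2\sigma/\sqrt B\}$ it uses the variance-scaled threshold $3\sqrt{\mathrm{Var}}$, so the bad event has probability at most $1/9$ and no $\sigma$-residual appears at all. Your version replaces that case split, and the separate handling of the two index sets, with a single recursion carrying an additive $\eta\sigma^2/B$ and one Jensen step.
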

        \end{boxD}
        It is not hard to see that with a sufficiently large batch size, Proposition~\ref{cor:NC_ClipSGD} guarantees convergence to the desired accuracy $\varepsilon: \min_{k \in [0,N-1]} \expect{ \norms{\nabla \func{x^k}}} \leq \varepsilon$, thereby improving upon the prior result \citep[see Appendix C.4.1,][]{Koloskova_2023}, which only ensured convergence to an error floor (asymptote) independent of the batch size $B$.

        Next,~we~present~the~results~for~Normalized~SGD.
        \begin{boxA}
        \begin{theorem}[Non-convex, NSGD]\label{th:NC_NSGD}
            Let $f$ is $(L_0,L_1)$-smooth (Assumption~\ref{ass:L_0_L_1Smooth}) and gradient oracle is unbiased (Assumption~\ref{ass:unbiased}) and satisfies SGC (Assumption~\ref{ass:strong growth condition} with $\sigma = 0$), then NSGD (Algorithm~\ref{algo:clip_algo}) with any $\lambda > 0$, step size $\eta \leq \frac{\lambda}{L_0 + L_1 \lambda}$, batch size $B \geq 64 (\rho - 1)$ guarantees:
            \begin{equation*}
            \min_{k\in [0, N-1]}\expect{\norms{\nabla \func{x^k}}} \leq \frac{4F_0}{\eta N} + 6 \lambda,
        \end{equation*}
            where $N$ is the number of iterations, $F_0 = \func{x^0} - f^*$.
        \end{theorem}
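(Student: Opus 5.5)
We follow the standard descent-lemma route for normalized methods under $(L_0,L_1)$-smoothness, and use the batch-size condition to control the misalignment between the minibatch gradient and $\nabla \func{x^k}$. Throughout, write $g_k = \nabla \func{x^k,\bxi^k}$ and $\nabla_k = \nabla \func{x^k}$.

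\textbf{Step 1 (one-step descent).} Since $\norms{\eta\, \gg(x^k,\bxi^k)} \leq \eta \leq \lambda/(L_0+L_1\lambda) \leq 1/L_1$, the $(L_0,L_1)$-descent inequality (obtained by integrating Assumption~\ref{ass:L_0_L_1Smooth} along the segment) applies. It gives
\begin{equation*}
f(x^{k+1}) \leq f(x^k) - \eta \frac{\dotprod{\nabla_k}{g_k}}{\norms{g_k}+\lambda} + \frac{(L_0+L_1\norms{\nabla_k})\eta^2}{2}\frac{\norms{g_k}^2}{(\norms{g_k}+\lambda)^2}.
\end{equation*}
We bound the last fraction by $\norms{g_k}/(\norms{g_k}+\lambda)$. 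The step-size choice makes $(L_0+L_1\norms{\nabla_k})\eta$ at most of order $\lambda + \eta L_1\norms{\nabla_k}$, so the quadratic term is at most $\tfrac{\eta}{2}(\lambda + \norms{\nabla_k})\cdot\frac{\norms{g_k}}{\norms{g_k}+\lambda}$. The $\norms{\nabla_k}$ part of this must then be absorbed by the linear term.

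\textbf{Step 2 (lower-bounding the inner-product term).} This is the main obstacle, because the normalization makes the expectation nonlinear. Write $\dotprod{\nabla_k}{g_k} = \norms{g_k}^2 - \dotprod{g_k - \nabla_k}{g_k}$, or use $\dotprod{\nabla_k}{g_k} \geq \norms{\nabla_k}\norms{g_k} - 2\norms{\nabla_k}\norms{g_k-\nabla_k}$. Independent sampling together with SGC gives
\begin{equation*}
\expect{\norms{g_k-\nabla_k}^2} \leq \frac{\rho-1}{B}\norms{\nabla_k}^2 \leq \frac{1}{64}\norms{\nabla_k}^2,
\end{equation*}
hence $\expect{\norms{g_k-\nabla_k}} \leq \norms{\nabla_k}/8$. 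Split on the event $\mathcal{E}=\{\norms{g_k-\nabla_k}\leq \norms{\nabla_k}/2\}$. On $\mathcal{E}$ we have $\norms{g_k}\in[\tfrac12\norms{\nabla_k},\tfrac32\norms{\nabla_k}]$ and $\dotprod{\nabla_k}{g_k}\geq \tfrac12\norms{\nabla_k}^2$, so the ratio is at least $\frac{\norms{\nabla_k}^2}{3\norms{\nabla_k}+2\lambda}$. Off $\mathcal{E}$ the ratio is at least $-\norms{\nabla_k}$, and Chebyshev gives $\mathbb{P}(\mathcal{E}^c)\leq 1/16$. Combining, and using $\frac{a^2}{3a+2\lambda}\geq \frac{a}{5}-\frac{2\lambda}{15}$-type bounds, yields
\begin{equation*}
\expect{\frac{\dotprod{\nabla_k}{g_k}}{\norms{g_k}+\lambda}} \geq c_1\norms{\nabla_k} - c_2\lambda.
\end{equation*}
The constants must be tuned so that the quadratic term from Step 1, bounded in expectation by $\tfrac{\eta}{2}(\lambda + c_3\norms{\nabla_k})$ with $c_3<c_1$, is absorbed, leaving $\expect{f(x^{k+1})\mid x^k} \leq f(x^k) - \tfrac{\eta}{4}\norms{\nabla_k} + \tfrac{3\eta}{2}\lambda$. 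If the Chebyshev split is too lossy, the fallback is to handle the cross term directly with Cauchy--Schwarz against $\norms{g_k}/(\norms{g_k}+\lambda)\leq1$.

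\textbf{Step 3 (telescoping).} Take total expectations, sum over $k=0,\dots,N-1$, and use $f(x^N)\geq f^*$. This gives
\begin{equation*}
\frac{1}{N}\sum_{k}\expect{\norms{\nabla_k}} \leq \frac{4F_0}{\eta N}+6\lambda.
\end{equation*}
Bounding the minimum by the average gives the claim.
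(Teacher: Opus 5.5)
Your Step 1 is fine and matches the paper. Since $\eta L_0\le\lambda$ and $\eta L_1\le 1$, you get $\eta(L_0+L_1\norms{\nabla_k})\le \norms{\nabla_k}+\lambda$, so the quadratic term is at most $\frac{\eta}{2}(\norms{\nabla_k}+\lambda)$.

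\textbf{The gap is in Step 2.} The Chebyshev split cannot produce a linear coefficient $c_1$ larger than this $c_3=\frac12$.

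On $\mathcal{E}$ your lower bound $\frac{\norms{\nabla_k}^2}{3\norms{\nabla_k}+2\lambda}$ never exceeds $\frac13\norms{\nabla_k}$. Weighting it by $\mathbb{P}(\mathcal{E})\ge\frac{15}{16}$ and paying $\frac1{16}\norms{\nabla_k}$ off $\mathcal{E}$ gives exactly $\frac14\norms{\nabla_k}-\frac{5}{24}\lambda$.

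Meanwhile $c_3=\frac12$ cannot be lowered. Suppose $L_0$ is small compared with $L_1\lambda$ and $\eta=\lambda/(L_0+L_1\lambda)$. Then $\eta L_1\approx 1$. If also $g_k\approx\nabla_k$ and $\norms{\nabla_k}$ is much larger than $\lambda$, the factor $\norms{g_k}/(\norms{g_k}+\lambda)$ is close to $1$. So the quadratic term really is about $\frac{\eta}{2}\norms{\nabla_k}$.

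Hence $c_3<c_1$ is unattainable on this route, and you get no descent at all. To reach $-\frac{\eta}{4}\norms{\nabla_k}+\frac{3\eta}{2}\lambda$, you need, conditionally on $x^k$, $\expect{\dotprod{\nabla_k}{g_k}/(\norms{g_k}+\lambda)}\ge\frac34\norms{\nabla_k}-\lambda$.

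Your alternative inequality $\dotprod{\nabla_k}{g_k}\ge\norms{\nabla_k}\norms{g_k}-2\norms{\nabla_k}\norms{g_k-\nabla_k}$ does not close easily either. It leaves a term with the factor $\norms{\nabla_k}/(\norms{g_k}+\lambda)$, which is not bounded by $1$.

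\textbf{What works} is the fallback you mention in one line. It is exactly the paper's argument and needs no event splitting. Pointwise,
\[
\frac{\dotprod{\nabla_k}{g_k}}{\norms{g_k}+\lambda}
= \frac{\norms{g_k}^2}{\norms{g_k}+\lambda} - \frac{\dotprod{g_k-\nabla_k}{g_k}}{\norms{g_k}+\lambda}
\ge \left(\norms{g_k}-\lambda\right) - \norms{g_k-\nabla_k}
\ge \norms{\nabla_k} - 2\norms{g_k-\nabla_k} - \lambda .
\]
This uses $\frac{\lambda\norms{g_k}}{\norms{g_k}+\lambda}\le\lambda$, Cauchy--Schwarz together with $\frac{\norms{g_k}}{\norms{g_k}+\lambda}\le 1$, and the triangle inequality.

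This bound loses only an additive term linear in the noise, rather than a multiplicative factor on a good event. So $\expect{\norms{g_k-\nabla_k}}\le\frac18\norms{\nabla_k}$ gives $\frac34\norms{\nabla_k}-\lambda$. Subtracting the quadratic term $\frac12(\norms{\nabla_k}+\lambda)$ yields the one-step bound $-\frac{\eta}{4}\norms{\nabla_k}+\frac{3\eta}{2}\lambda$. Your Step 3 then gives the stated $\frac{4F_0}{\eta N}+6\lambda$.
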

        \end{boxA}
        The proof of Theorem~\ref{th:NC_NSGD} is provided in Appendix~\ref{app:NC_NSGD} and corresponds to the case $\sigma = 0$. 
        \vspace{-1em}
        \paragraph{Comparison to ClipSGD (Theorem~\ref{th:NC_ClipSGD}).} From Theorem~\ref{th:NC_NSGD}, we observe that Normalized SGD converges to hyperparameter $\lambda >0$; however, the smaller $\lambda$ is, the more iterations are required, as $\lambda$ directly influences the step size. Note that for NSGD to reach the desired accuracy $\varepsilon: \min_{k \in [0,N-1]} \expect{ \norms{\nabla \func{x^k}}} \leq \varepsilon$, it is necessary to take $\lambda \leq \frac{\varepsilon}{12}$. Then, by setting the maximum allowable hyperparameter $\lambda = \frac{\varepsilon}{12}$ and the maximum step size $\eta = \frac{\varepsilon}{ 12 L_0 + L_1 \varepsilon}$, we obtain the iteration complexity: $N = \Obound{\frac{L_0 F_0}{\varepsilon^2} + \frac{L_1 F_0}{\varepsilon}}$, which coincides with ClipSGD (see the discussion following Theorem~\ref{th:NC_ClipSGD} with $c= \lambda \simeq \varepsilon$). Thus, Normalized SGD asymptotically behaves like Clipped SGD with a small clipping radius $c \simeq \varepsilon$, demonstrating improved coefficients. However, if the clipping radius is sufficiently large $c \geq \varepsilon$, then NSGD improves the iteration complexity compared to ClipSGD (specifically the term involving $L_1$) without increasing the oracle call numbers. In other words, in this regime, Normalized SGD outperforms ClipSGD in iteration and oracle complexity. 
        \vspace{-1em}
        \paragraph{Comparison to the prior works.} First, when $L_1 = 0$, Theorem~\ref{th:NC_NSGD} achieves oracle complexity matching \textit{the deterministic counterpart with momentum} under $L$-smoothness \citep[see Theorem 1,][with $\sigma = 0$]{Cutkosky_2020}, up to an $\Obound{\rho}$ factor (since we employ mini-batches), i.e., $T = \Obound{\varepsilon^{-2}}$. We also note that due to the relaxed $(L_0, L_1)$-smoothness assumption (in particular, because $L_0 \lll L$ and $L_1 \lll L$, and includes an additive $\Obound{L_1 \varepsilon^{-1}}$ term) the iteration complexity in Theorem~\ref{th:NC_NSGD} improves upon that of \citep{Cutkosky_2020}. Second, since our analysis relies on the SGC, the complexity $T = \Obound{\rho L_0 \varepsilon^{-2} + \rho L_1 \varepsilon^{-1}}$  established in Theorem~\ref{th:NC_NSGD} is strictly better than the oracle complexity $T = N \cdot B = \Obound{L \varepsilon^{-4}}$ (with the batch size $B = \Obound{\varepsilon^{-2}}$) of \textit{NSGD under $L$-smoothness} \citep[see Theorem 1,][]{Zhao_2021}. Third, by employing mini-batches and leveraging the SGC, Theorem~\ref{th:NC_NSGD} improves both the iteration and oracle complexity of \textit{NSGD with momentum under $(L_0,L_1)$-smoothness} \citep[see Theorem 2,][]{Hubler_2024}. Unlike \citep{Hubler_2024}, our analysis separates the influence of $L_0$, $L_1$, which yields benefits~in~regimes~${L_0 \lll L_1}$.
        \begin{boxD}
        \begin{proposition}[Heavy-tailed]\label{cor:NC_NSGD_heavy_tailed}
            Let $f$ is $(L_0,L_1)$-smooth (Assumption~\ref{ass:L_0_L_1Smooth}) and gradient oracle is unbiased (Assumption~\ref{ass:unbiased}) and satisfies SGC under heavy-tailed noise (Assumption~\ref{prop:SGC_under_heavy tailed noise} with $\sigma = 0$), then NSGD with any $\lambda > 0$, $\eta \leq \frac{\lambda}{L_0 + L_1 \lambda}$ and $B \geq 2^{\frac{3p +1}{p-1}} (\rho-1)^{\frac{1}{p-1}}$ guarantees an error:
            \vspace{-0.5em}
            \begin{equation*}
            \vspace{-0.5em}
            \min_{k\in [0, N-1]}\expect{\norms{\nabla \func{x^k}}} \leq \frac{4F_0}{\eta N} + 6 \lambda,
        \end{equation*}
            where  $N$ is the number of iterations, $F_0 = \func{x^0} - f^*$.
        \end{proposition}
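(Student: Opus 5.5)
The argument is the one behind Theorem~\ref{th:NC_NSGD}; only the step that controls the mini-batch noise changes, since second moments are no longer available. Write $g_k=\nabla f(x^k)$, $\hat g_k=\nabla f(x^k,\bxi^k)$ and $\gg_k=\hat g_k/(\|\hat g_k\|+\lambda)$. Because $\|x^{k+1}-x^k\|\le\eta<1/L_1$ whenever $\eta\le\lambda/(L_0+L_1\lambda)$, Assumption~\ref{ass:L_0_L_1Smooth} yields the descent inequality
\[
f(x^{k+1})\le f(x^k)-\eta\langle g_k,\gg_k\rangle+\tfrac{\eta^2}{2}(L_0+L_1\|g_k\|)\|\gg_k\|^2 .
\]
Since $\|\gg_k\|^2\le\|\gg_k\|\le 1$, the quadratic term is at most $\tfrac{\eta^2}{2}(L_0+L_1\|g_k\|)$. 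Using $\eta\le\lambda/(L_0+L_1\lambda)$, this splits into a $\lambda$-type error of order $\eta\lambda/2$ and a part $\tfrac{\eta}{2}\cdot\tfrac{L_1\eta}{1}\|g_k\|\le\tfrac{\eta}{2}\|g_k\|$, which is absorbed into the progress term. These steps only use $\|\gg_k\|\le 1$ and carry over verbatim.

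For the inner product I use the standard normalized-SGD bound. Adding and subtracting $g_k$ and using $\|\hat g_k-g_k\|\le\|g_k-\hat g_k\|$, one gets
\[
\langle g_k,\gg_k\rangle\ \ge\ \frac{\|g_k\|^2}{\|g_k\|+\lambda}-\frac{2\|g_k\|\,\|\hat g_k-g_k\|}{\|\hat g_k\|+\lambda}\quad\text{(up to constants)},
\]
or, in the case split of the cited analyses, $\langle g,\hat g\rangle/(\|\hat g\|+\lambda)\ge \|g\|/3-\lambda/3-\tfrac{8}{3}\|\hat g-g\|$. Either form gives
\[
\eta\Bigl(\tfrac{\|g_k\|}{3}-c_1\lambda-c_2\,\|\hat g_k-g_k\|\Bigr)\le f(x^k)-\mathbb{E}_k f(x^{k+1}),
\]
so the noise appears only through its first absolute moment $\mathbb{E}_k\|\hat g_k-g_k\|$.

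\textbf{Key new step.} The main obstacle is bounding the mini-batch error when only $p$-th moments exist. I would apply Jensen, $\mathbb{E}\|\hat g_k-g_k\|\le(\mathbb{E}\|\hat g_k-g_k\|^p)^{1/p}$, and then the von Bahr--Esseen type inequality for sums of independent zero-mean vectors with $p\in(1,2]$:
\[
\mathbb{E}\Bigl\|\tfrac1B\sum_{i=1}^B Z_i\Bigr\|^p\le 2^{2-p}B^{1-p}\,\mathbb{E}\|Z_1\|^p .
\]
The constant may need to be $2^{p}$ or so in $\mathbb{R}^d$; I would use a safe dimension-free version such as $2^{2}B^{1-p}$, which only affects the power of $2$ in the batch condition. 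Combined with Assumption~\ref{prop:SGC_under_heavy tailed noise} at $\sigma=0$, this gives
\[
\mathbb{E}_k\|\hat g_k-g_k\|\le 2^{2/p}\bigl((\rho-1)B^{1-p}\bigr)^{1/p}\|g_k\| .
\]
The condition $B\ge 2^{\frac{3p+1}{p-1}}(\rho-1)^{\frac1{p-1}}$ is exactly what is needed for $(\rho-1)B^{1-p}\le 2^{-(3p+1)}$. Then $c_2\,\mathbb{E}_k\|\hat g_k-g_k\|\le c_2\,2^{(1-3p)/p}\|g_k\|\le 2^{2/p}2^{-(3p+1)/p}c_2\|g_k\|=2^{-3+1/p}c_2\|g_k\|$, which is at most, say, $\|g_k\|/12$ when the constants are tracked as in Theorem~\ref{th:NC_NSGD}. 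I would verify that the powers of $2$ match, and the theorem's specific exponent suggests they were chosen precisely this way. In the $p=2$ limit the condition reduces to the $64(\rho-1)$ of Theorem~\ref{th:NC_NSGD}, which is consistent.

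\textbf{Conclusion.} The noise term is therefore absorbed, leaving
\[
\tfrac{\eta}{4}\,\mathbb{E}\|g_k\|\le \mathbb{E}[f(x^k)-f(x^{k+1})]+\tfrac{3}{2}\eta\lambda .
\]
Telescoping over $k=0,\dots,N-1$, using $f\ge f^*$, and bounding the minimum by the average gives $\min_k\mathbb{E}\|g_k\|\le 4F_0/(\eta N)+6\lambda$, as stated.
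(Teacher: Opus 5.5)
Your overall architecture is the paper's: the descent inequality, $\|\hat g_k\|/(\|\hat g_k\|+\lambda)\le 1$, the quadratic term bounded by $\frac{\eta}{2}(\|g_k\|+\lambda)$, the first moment of the noise via Jensen plus a von Bahr--Esseen bound, and telescoping. However, the inner-product step you commit to does not give the descent you conclude with.

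With $\langle g_k,\hat g_k\rangle/(\|\hat g_k\|+\lambda)\ge\|g_k\|/3-\lambda/3-\frac83\|\hat g_k-g_k\|$, the progress is only $\frac{\eta}{3}\|g_k\|$. You have just bounded the quadratic term by $\frac{\eta^2 L_1}{2}\|g_k\|+\frac{\eta}{2}\lambda$, with $\frac{\eta^2 L_1}{2}\le\frac{\eta}{2}$. This bound is not loose in the allowed regime: for $\eta=\lambda/(L_0+L_1\lambda)$ and $L_0\ll L_1\lambda$ one has $\eta L_1\approx 1$. The net coefficient of $\|g_k\|$ in your one-step bound is then $-\frac{\eta}{3}+\frac{\eta^2L_1}{2}>0$ once $\eta L_1>\frac23$, so nothing is left to absorb the noise.

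Your first form does have coefficient one on $\|g_k\|$. But its noise term carries the random, unbounded factor $\|g_k\|/(\|\hat g_k\|+\lambda)$, so it is not of the form $c_2\|\hat g_k-g_k\|$, and you do not say how to take its expectation.

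The missing idea is to expand around $\hat g_k$ rather than $g_k$: write $\langle g_k,\hat g_k\rangle=\|\hat g_k\|^2+\langle g_k-\hat g_k,\hat g_k\rangle$. Then, deterministically,
\[
\frac{\langle g_k,\hat g_k\rangle}{\|\hat g_k\|+\lambda}\ \ge\ \frac{\|\hat g_k\|^2}{\|\hat g_k\|+\lambda}-\|\hat g_k-g_k\|\ \ge\ \|\hat g_k\|-\lambda-\|\hat g_k-g_k\|\ \ge\ \|g_k\|-\lambda-2\|\hat g_k-g_k\|.
\]
Combined with the quadratic term, this yields $f(x^{k+1})-f(x^k)\le-\frac{\eta}{2}\|g_k\|+2\eta\|\hat g_k-g_k\|+\frac32\eta\lambda$. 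This is exactly the inequality the paper reuses from Theorem~\ref{th:NC_NSGD}.

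Second, the statement fixes both the batch threshold and the constants $4$ and $6$. You therefore need precisely $\mathbb{E}_k\|\hat g_k-g_k\|\le\frac18\|g_k\|$, so the von Bahr--Esseen constant matters.

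With the paper's constant $2$ (Lemma~\ref{lem:heavy_tailed_batch}; your first guess $2^{2-p}$ is even better), the stated $B$ gives $\mathbb{E}_k\|\hat g_k-g_k\|^p\le 2(\rho-1)B^{1-p}\|g_k\|^p\le 8^{-p}\|g_k\|^p$. Jensen then gives $\frac18\|g_k\|$, and $2\eta\cdot\frac18\|g_k\|=\frac{\eta}{4}\|g_k\|$ leaves $-\frac{\eta}{4}\|g_k\|+\frac32\eta\lambda$.

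Your ``safe'' constant $4$ only gives $2^{1/p-3}\|g_k\|$, which exceeds $\frac18\|g_k\|$ for every $p<2$. Your claimed bound $\le\|g_k\|/12$ also fails: with $c_2=2$ the product is $2^{1/p-2}>2^{-3/2}$.

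Finally, your $p=2$ sanity check is off. The threshold becomes $2^7(\rho-1)=128(\rho-1)$, not $64(\rho-1)$. The extra factor $2$ is exactly the von Bahr--Esseen constant, compared with the exact variance identity used in Theorem~\ref{th:NC_NSGD}.
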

        \end{boxD}
        Proposition~\ref{cor:NC_NSGD_heavy_tailed} shows that the strong growth condition (SGC) produces a similar effect in the heavy-tailed setup. Specifically, Proposition~\ref{cor:NC_NSGD_heavy_tailed} establishes an oracle complexity for $\lambda = \varepsilon/12$~and~maximum~step~size: $\Obound{\rho^{\frac{1}{p-1}} L_0 F_0 \varepsilon^{-2} + \rho^{\frac{1}{p-1}} L_1 F_0 \varepsilon^{-1}}$. In comparison, \citep[see Corollary 3,][]{Hubler_2025} derived a complexity for normalized stochastic gradient descent (NSGD) under $L$-smoothness $\Obound{\frac{L F_0}{\varepsilon^2} \left( \frac{\sigma}{\varepsilon} \right)^{\frac{p}{p-1}}}$, which requires a substantially larger number of oracle calls.

        In Appendices~\ref{app:NC_NSGD} and \ref{app:NSGD_heavy_tailed_noise}, we provide proofs for the cases satisfying the generalized SGC (Assumption~\ref{ass:strong growth condition}) and the generalized $p$-th central moment condition (Assumption~\ref{prop:SGC_under_heavy tailed noise}), respectively.
\vspace{-1em}
\subsection{Convex functions}\label{subsec:Convex functions}
\vspace{-0.5em}

        In this subsection, we consider the class of convex and smoothness functions $f(\cdot,\xi)$.
        \begin{boxF}
        \begin{assumption}\label{ass:Convexity_Smoothness_L0_L1}
            Let $f(\cdot,\xi)$ be $(\cL_0, \cL_1)$-smooth and convex for almost every $ \xi\in \mathcal{D}$. That is, $\forall x,y,z \in \mathbb{R}^d$ such that $\norms{y-z} \leq \frac{1}{\cL_1}$, we have with $\cL_{\xi}(z) := \cL_0 + \cL_1 \norms{\nabla \func{z,\xi}}$:
            \vspace{-0.5em}
            \begin{equation*}
                 \func{x,\xi} + \dotprod{\nabla \func{x,\xi}}{y-x} \leq \func{y,\xi}   \leq \func{z,\xi} + \dotprod{\nabla \func{z,\xi}}{y-z} + \frac{\cL_{\xi}(z)}{2} \norms{y - z}^2.
            \end{equation*}
        \end{assumption}
        \end{boxF}
        We note that Assumption~\ref{ass:Convexity_Smoothness_L0_L1} is widely used in the literature under interpolation conditions \citep{Woodworth_2021, Gorbunov_2024,Alimisis_2025}. Assumption~\ref{ass:Convexity_Smoothness_L0_L1} can be seen to reduce to Assumption~\ref{ass:L_0_L_1Smooth} under Assumption~\ref{ass:strong growth condition} (SGC) with $\sigma = 0$, $L_0 = \cL_0$ and $L_1 = \sqrt{\rho}\cL_1$ (see Appendix~\ref{subsec:generalized_smoothness_conditions} for details). Below, we present the convergence results for ClipSGD.
        \begin{boxA}
        \begin{theorem}[Convex, ClipSGD]\label{th:Convex_ClipSGD}
            Let $f(x,\xi)$ be $(\cL_0,\cL_1)$-smooth and convex (Assumption~\ref{ass:Convexity_Smoothness_L0_L1}), gradient oracle is unbiased (Assumption~\ref{ass:unbiased}) and satisfies SGC (Assumption~\ref{ass:strong growth condition} with $\sigma = 0$). Then ClipSGD (Algorithm~\ref{algo:clip_algo}) with step size $\eta \leq [9 ( \cL_0 + \sqrt{\rho} \cL_1 c ) ]^{-1}$~and~batch~size~${B \geq 36 (\rho - 1)}$~yields:
            \vspace{-0.5em}
            \begin{equation*}
                \expect{\func{x^{N}}} - f^* \leq \max\left\{\left(1 - \frac{\eta c}{36 R_0}\right)^{N/2} F_0 , \frac{18 R_0^2}{N \eta}\right\},
            \end{equation*}
            % where $\Tilde{R} = \max_{k \in [0,N-1]} \norms{x^k - x^*}$, $F_0 = \func{x^0} - f^*$.
            where  $N$ is the number of iterations, $F_0 = \func{x^0} - f^*$ and $R_0 = \norms{x^0 - x^*}$.
        \end{theorem}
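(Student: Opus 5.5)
We track the distance to a minimizer, $r_k := \norms{x^k - x^*}$ with $x^* \in \argmin f$, and expand
\begin{equation*}
r_{k+1}^2 = r_k^2 - 2\eta\alpha_{\bxi^k}\dotprod{\nabla f(x^k,\bxi^k)}{x^k-x^*} + \eta^2\alpha_{\bxi^k}^2\norms{\nabla f(x^k,\bxi^k)}^2 .
\end{equation*}
Under the SGC with $\sigma=0$ and convexity, every minimizer of $f$ is a stationary point of each $f(\cdot,\xi)$. Assumption~\ref{ass:Convexity_Smoothness_L0_L1} then gives a per-sample cocoercivity-type inequality. Since $\eta\alpha\norms{\nabla f}\le \eta c$ is small, we can apply the upper quadratic bound at $z=x^*$ with step $1/\cL_\xi$-type radius. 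This should yield
\begin{equation*}
\dotprod{\nabla f(x,\xi)}{x-x^*} \ge f(x,\xi)-f^* + \frac{\norms{\nabla f(x,\xi)}^2}{2(\cL_0+\cL_1\norms{\nabla f(x,\xi)})},
\end{equation*}
which lets the quadratic term be absorbed.

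The main difficulty is the random clipping factor $\alpha_{\bxi^k}$, which is correlated with the direction. We handle it with the batch-size condition. Split on the event $\norms{\nabla f(x^k,\bxi^k)}\le c$ versus its complement, and compare $\nabla f(x^k,\bxi^k)$ with $\nabla f(x^k)$. Under SGC, $\expect{\norms{\nabla f(x,\bxi)-\nabla f(x)}^2}\le \frac{\rho-1}{B}\norms{\nabla f(x)}^2$, so with $B\ge 36(\rho-1)$ the mini-batch gradient lies within $\frac16\norms{\nabla f(x^k)}$ of the true gradient in mean square. Following the deterministic clipped-GD template of \citep{Koloskova_2023}, and using the ClipSGD descent analysis of Theorem~\ref{th:NC_ClipSGD}, we aim for the one-step bound
\begin{equation*}
\expect{r_{k+1}^2} \le \expect{r_k^2} - \eta\,\expect{\min\left\{1,\tfrac{c}{\norms{\nabla f(x^k)}}\right\}\left(f(x^k)-f^*\right)}\cdot \tfrac{1}{9}.
\end{equation*}
The key ingredients are:
\begin{itemize}
\item the bias correction $\dotprod{\nabla f(x,\bxi)-\nabla f(x)}{x-x^*}$ has zero mean when $\alpha=1$;
\item when clipping is active, we use $\norms{\nabla f(x,\bxi)}\ge \frac{5}{6}\norms{\nabla f(x)}$ with high weight and Markov/Chebyshev for the bad event;
\item the step-size condition $\eta\le[9(\cL_0+\sqrt\rho\cL_1c)]^{-1}$ with $L_1=\sqrt\rho\cL_1$ controls the squared term.
\end{itemize}
This step is where I expect most of the technical work. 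If the direct split is too lossy, I would try first replacing $\alpha_{\bxi}$ by $\min\{1,c/\norms{\nabla f(x)}\}$ plus an error controlled by $\norms{\nabla f(x,\bxi)-\nabla f(x)}$.

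Summing then gives monotone decrease, $r_k\le R_0$ in expectation, together with $\eta\sum_k\expect{\min\{1,c/\norms{\nabla f(x^k)}\}F_k}\lesssim R_0^2$, where $F_k=f(x^k)-f^*$. We also need monotonicity in expectation of $F_k$, which follows from the $(L_0,L_1)$ descent lemma as in Theorem~\ref{th:NC_ClipSGD}. We then split into two regimes.

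\textbf{Small gradients.} If $\norms{\nabla f(x^k)}\le c$ for most of the iterations, then $\eta\sum_k F_k\lesssim R_0^2$. Monotonicity gives $F_N\le 18R_0^2/(N\eta)$.

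\textbf{Large gradients.} Otherwise, convexity gives $F_k\le \norms{\nabla f(x^k)}\,r_k\le \norms{\nabla f(x^k)}R_0$. The per-step decrease in function value is then at least $\frac{\eta c}{36R_0}F_k$, i.e. $F_{k+1}\le(1-\frac{\eta c}{36R_0})F_k$. This holds in at least half of the iterations, producing the exponent $N/2$.

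Taking the maximum of the two regimes yields the stated bound. Jensen's inequality is used to pass expectations through the concave $\min$ and the product.
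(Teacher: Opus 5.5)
There are two genuine gaps. The step you flag as the main technical work is also not the one the argument needs.

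First, your large-gradient regime needs $\|x^k-x^*\|\le R_0$ \emph{almost surely}. You use $F_k\le\|\nabla f(x^k)\|\,\|x^k-x^*\|\le R_0\|\nabla f(x^k)\|$ inside $\mathbb{E}[\,\cdot\mid x^k]$ to turn $\mathbb{E}[F_{k+1}\mid x^k]\le F_k-\frac{\eta c}{18}\|\nabla f(x^k)\|$ into a contraction. Your summation only gives $\mathbb{E}\|x^k-x^*\|^2\le R_0^2$, which does not suffice.

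The missing idea is that the distance is non-increasing for \emph{every} realization of the mini-batch, and your first paragraph already has the ingredients. Average the per-sample inequality over the batch, using Jensen for the convex nondecreasing $\phi(t)=t^2/(\mathcal{L}_0+\mathcal{L}_1 t)$ and the triangle inequality. This gives $\|\nabla f(x,\bxi)\|^2\le 2(\mathcal{L}_0+\mathcal{L}_1\|\nabla f(x,\bxi)\|)\langle\nabla f(x,\bxi),x-x^*\rangle$. Since $\alpha_{\bxi}\le1$ and $\alpha_{\bxi}\|\nabla f(x,\bxi)\|\le c$, the step size yields $\|x^{k+1}-x^*\|^2\le\|x^k-x^*\|^2-\frac{16}{9}\eta\alpha_{\bxi^k}\langle\nabla f(x^k,\bxi^k),x^k-x^*\rangle\le\|x^k-x^*\|^2$ surely. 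The last step uses only that, by convexity and interpolation, this inner product is at least $f(x^k,\bxi^k)-f(x^*,\bxi^k)\ge0$. No batch-size condition and no quantitative expected decrease of the distance are needed.

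Your target $\mathbb{E}\|x^{k+1}-x^*\|^2\le\mathbb{E}\|x^k-x^*\|^2-\frac{\eta}{9}\mathbb{E}[\min\{1,c/\|\nabla f(x^k)\|\}F_k]$ is therefore superfluous, and it is not evidently true. The event $\{\alpha_{\bxi}=1\}$ depends on the noise, so $\langle\nabla f(x,\bxi)-\nabla f(x),x-x^*\rangle$ does not have zero mean on it. Moreover, the deviation event controls $\|\nabla f(x,\bxi)-\nabla f(x)\|$ but not how the batch gap correlates with $\alpha_{\bxi}$.

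Both rates should instead come from function-value descent, each time using the almost-sure bound $\|x^k-x^*\|\le R_0$. For $\|\nabla f(x^k)\|\ge c$, the alignment/Markov argument gives $\mathbb{E}[F_{k+1}\mid x^k]\le(1-\frac{\eta c}{18R_0})F_k$. For $\|\nabla f(x^k)\|<c$, nonexpansiveness of clipping plus the mini-batch variance bound gives $\mathbb{E}[F_{k+1}\mid x^k]\le F_k-\frac{\eta}{9}\|\nabla f(x^k)\|^2\le F_k-\frac{\eta}{9R_0^2}F_k^2$.

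Second, your way of combining the regimes does not work. Whether $\|\nabla f(x^k)\|\ge c$ is random, so ``small gradients in most iterations'' versus ``otherwise'' is not a deterministic dichotomy. Monotonicity of $\mathbb{E}[F_k]$ does not give $F_N\le F_k$ along a path. And Jensen for a concave function gives $\mathbb{E}[\phi(X)]\le\phi(\mathbb{E}X)$, the wrong direction for lower-bounding an expected decrease.

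The paper writes both cases as $\mathbb{E}[F_{k+1}\mid\mathcal{F}_k]\le F_k-aF_kI_k-bF_k^2(1-I_k)$. Here $I_k$ is the indicator of $\{\|\nabla f(x^k)\|\ge c\}$, $a=\frac{\eta c}{18R_0}$ and $b=\frac{\eta}{9R_0^2}$. It then applies the elementary inequality $a\mathbb{E}[XI]+b\mathbb{E}[X^2(1-I)]\ge\min\{\frac a2\mathbb{E}X,\,b(\mathbb{E}X)^2\}$. To prove it, set $r=\mathbb{E}[X(1-I)]$, use $\mathbb{E}[X^2(1-I)]\ge r^2$, and minimize $a(A-r)+br^2$ over $r\in[0,A]$.

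This yields the deterministic recursion $A_{k+1}\le A_k-\min\{\alpha A_k,\beta A_k^2\}$ for $A_k=\mathbb{E}F_k$, with $\alpha=\frac{\eta c}{36R_0}$ and $\beta=\frac{\eta}{9R_0^2}$. Split at the deterministic first time $\tau$ with $A_\tau<\alpha/\beta$. If $\tau>N/2$, the linear phase gives $(1-\alpha)^{N/2}F_0$. Otherwise $1/A_{k+1}-1/A_k\ge\beta$ holds over at least $N/2$ steps, giving $2/(\beta N)=18R_0^2/(\eta N)$. This is exactly where the constant $36$ and the exponent $N/2$ in the statement come from; your sketch produces neither.
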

        \end{boxA}
        The proof of Theorem~\ref{th:Convex_ClipSGD} is provided in Appendix~\ref{app:Convex_ClipSGD}.  It is straightforward to see that if the step size is chosen as $\eta = [9 ( \cL_0 + \sqrt{\rho} \cL_1 c ) ]^{-1}$, the iteration complexity $N$ takes the following form: $\max \left\{\Obound{\frac{[\cL_0 + \sqrt{\rho} \cL_1 c ]R_0 }{c} \log \left( \frac{1}{\varepsilon} \right)}, \Obound{\frac{(\cL_0 + \sqrt{\rho} \cL_1 c) R_0^2}{\varepsilon}} \right\}$, where $R_0 = \norms{x^0 - x^*}$.
        % \vspace{-1em}
        \paragraph{Comparison to the prior works.} First, Theorem~\ref{th:Convex_ClipSGD} reveals two convergence regimes, closely matching the convergence rate of ClipGD in the convex \textit{deterministic setting} \citep[see Theorem 3.1,][]{Lobanov_2024_Linear_Convergence_Rate}. The key distinction is that due to the monotonicity of the gradient norm in the deterministic case, ClipGD initially exhibits a linear rate, which slows down to a sublinear rate after the threshold $c$ is reached. Second, Theorem~\ref{th:Convex_ClipSGD} holds for an arbitrary clipping radius $c > 0$. In contrast, \citep{Chezhegov_2025,Gaash_2025} require a specific, sufficiently large clipping radius $c$ to establish convergence guarantees for \textit{ClipSGD in the convex $(L_0, L_1)$-smooth setting}. This choice of clipping radius effectively ignores the term corresponding to the exponential decrease of the function (although we note that the sublinear term ultimately dominates). Third, the prior best known convergence rate of ClipSGD for convex $(L_0, L_1)$-smooth functions with \textit{an arbitrary $c$} \citep[see Theorem 3.1,][]{Lobanov_2025} requires strong assumptions: a uniform bounded gradient norm. We successfully addressed these issues through an improved analysis and the application of SGC. Fourth, Theorem~\ref{th:Convex_ClipSGD} provides guarantees in terms of the standard metric $\expect{f(x^N)} - f^*$, whereas the analysis in \citep{Gorbunov_2024} relies on a non-standard metric.

        We now present the convergence results for NSGD.
        \begin{boxA}
        \begin{theorem}[Convex, NSGD]\label{th:Convex_NSGD}
            Let $f(x,\xi)$ be $(\cL_0,\cL_1)$-smooth and convex (Assumption~\ref{ass:Convexity_Smoothness_L0_L1}), gradient oracle is unbiased (Assumption~\ref{ass:unbiased}) and satisfies SGC (Assumption~\ref{ass:strong growth condition} with $\sigma = 0$). Then NSGD (Algorithm~\ref{algo:clip_algo}) with $\lambda > 0$, step size $\eta \leq \frac{\lambda}{\cL_0 + \sqrt{\rho} \cL_1 \lambda}$ and batch size $B \geq 64 (\rho - 1)$ yields:
            \vspace{-0.5em}
            \begin{equation*}
                \expect{\func{x^{N}}} - f^* \leq \left( 1 - \frac{\eta}{4 R_0} \right)^N F_0  + 6 \lambda R_0,
            \end{equation*}
            where  $N$ is the number of iterations, $F_0 = \func{x^0} - f^*$ and $R_0 = \norms{x^0 - x^*}$.
        \end{theorem}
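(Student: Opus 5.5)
Following the pattern of the convex ClipSGD analysis, we will track $r_k := \norms{x^k - x^*}^2$ and turn its one-step decrease into a recursion for $\expect{f(x^k)} - f^*$. Expanding the NSGD step gives
\begin{equation*}
r_{k+1} = r_k - \frac{2\eta \dotprod{\nabla f(x^k,\bxi^k)}{x^k - x^*}}{\norms{\nabla f(x^k,\bxi^k)} + \lambda} + \eta^2 \frac{\norms{\nabla f(x^k,\bxi^k)}^2}{(\norms{\nabla f(x^k,\bxi^k)}+\lambda)^2}.
\end{equation*}
The plan has three steps: (i) lower bound the conditional expectation of the middle term by roughly $\frac{\eta (f(x^k)-f^*)}{\norms{\nabla f(x^k)}+\lambda}$; (ii) absorb the last term; (iii) convert the normalized gap into the stated linear-plus-$\lambda R_0$ form.

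For the inner product, convexity of each $f(\cdot,\xi_i)$ gives $\dotprod{\nabla f(x^k,\bxi^k)}{x^k - x^*} \geq f(x^k,\bxi^k) - f(x^*,\bxi^k) \geq 0$. Interpolation makes the right-hand side nonnegative for every sample, so we may divide by the random denominator without sign issues.

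To decouple numerator and denominator, we will use a concentration event. By SGC with batch size $B$, $\expect{\norms{\nabla f(x,\bxi) - \nabla f(x)}^2} \leq \frac{\rho-1}{B}\norms{\nabla f(x)}^2 \leq \frac{1}{64}\norms{\nabla f(x)}^2$ for $B \geq 64(\rho-1)$. On the event $E_k=\{\norms{\nabla f(x^k,\bxi^k)-\nabla f(x^k)} \leq \frac12\norms{\nabla f(x^k)}\}$, the denominator is at most $\frac32\norms{\nabla f(x^k)}+\lambda$. Markov's inequality gives $P(E_k^c)\leq 1/16$.

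The main obstacle is that the numerator and the event are correlated. I would first try bounding $\expect{(f(x^k,\bxi)-f^*)\mathbf 1_{E_k^c}}$ via the smoothness-based inequality $f(x,\xi)-f^* \lesssim \norms{\nabla f(x,\xi)}\,\norms{x-x^*}$ and Cauchy–Schwarz against the variance bound. If that gives only a constant fraction loss, it is absorbed into the factor $4$. The last term is at most $\eta^2 \min\{1, \norms{\nabla f(x^k,\bxi^k)}/(\norms{\nabla f(x^k,\bxi^k)}+\lambda)\}$. The step size restriction $\eta\le \lambda/(\cL_0+\sqrt\rho\cL_1\lambda)$, combined with the reduction in Assumption~\ref{ass:Convexity_Smoothness_L0_L1} to $(L_0,L_1)$-smoothness with $L_1=\sqrt\rho\cL_1$, lets it be dominated by half of the descent term whenever $\norms{\nabla f(x^k)}\gtrsim\lambda$.

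Next, we case split on the gradient size. If $\norms{\nabla f(x^k)}\ge\lambda$, the descent satisfies $\expect{r_{k+1}}\le \expect{r_k}$, with decrease at least $\frac{\eta}{4}\cdot\frac{f(x^k)-f^*}{\norms{\nabla f(x^k)}}$. Using $\norms{\nabla f(x^k)} \le$ (gap)$/$(distance)-type bounds from convexity, $f(x^k)-f^*\le \norms{\nabla f(x^k)}\,\norms{x^k-x^*}$, this is at least $\frac{\eta}{4}\cdot\frac{f(x^k)-f^*}{R_0}$ scaled appropriately. In particular, distances stay bounded by $R_0$ in expectation.

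To get a recursion on function values, we will also use the descent lemma for $f$ under $(L_0,L_1)$-smoothness. This shows $f$ decreases in expectation by $\gtrsim \eta\norms{\nabla f}$ when the gradient exceeds $\lambda$, and $f(x^k)-f^* \le R_0\norms{\nabla f(x^k)}$. Together these yield $\expect{f(x^{k+1})}-f^*\le (1-\frac{\eta}{4R_0})(\expect{f(x^k)}-f^*)$ in that regime. If instead $\norms{\nabla f(x^k)}<\lambda$, then $f(x^k)-f^*\le \lambda R_0$ up to the constant $6$, using bounded distance. Once this happens, the descent lemma shows later iterates cannot exceed $6\lambda R_0$ beyond an $\eta\lambda$ overshoot. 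Unrolling gives $(1-\frac{\eta}{4R_0})^N F_0+6\lambda R_0$.
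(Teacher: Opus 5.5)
\textbf{Gap: the distance bound.} The conversion $F_k\le R_0\norms{\nabla f(x^k)}$ must hold for the random iterate $x^k$ before you take full expectation, so you need $\norms{x^k-x^*}\le R_0$ almost surely. An in-expectation bound such as $\expect{\norms{x^k-x^*}^2}\le R_0^2$ only gives $\expect{F_k}\le\expect{\norms{\nabla f(x^k)}\,\norms{x^k-x^*}}$. That does not factor as $R_0\,\expect{\norms{\nabla f(x^k)}}$, which is the quantity the descent step produces.

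Your route cannot deliver the almost-sure bound. The concentration-event treatment of the middle term (left at ``I would first try\dots'') yields at best a supermartingale property of $r_k$, and only in the regime $\norms{\nabla f(x^k)}\gtrsim\lambda$. In the small-gradient regime you never control $r_k$, yet you still invoke ``bounded distance'' there. The reduction to $(L_0,L_1)$-smoothness of $f$ with $L_1=\sqrt{\rho}\,\cL_1$ does not help either: it concerns the full gradient, so it cannot compare $\norms{g_k}^2$ with $\dotprod{g_k}{x^k-x^*}$, where $g_k:=\nabla f(x^k,\bxi^k)$.

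The missing idea is a per-realization self-bounding inequality. Apply per-sample smoothness at $y=x-\nabla f(x,\xi)/(\cL_0+\cL_1\norms{\nabla f(x,\xi)})$, and use interpolation ($f(x^*,\xi)\le f(y,\xi)$) together with convexity. This gives $\norms{\nabla f(x,\xi)}^2\le 2(\cL_0+\cL_1\norms{\nabla f(x,\xi)})\dotprod{\nabla f(x,\xi)}{x-x^*}$. Jensen's inequality for the convex nondecreasing $\phi(t)=t^2/(\cL_0+\cL_1t)$ then transfers it to the mini-batch gradient $g_k$. Substituting into your own expansion of $r_{k+1}$ gives, for every realization,
\[
r_{k+1}\le r_k-\frac{2\eta\,\dotprod{g_k}{x^k-x^*}}{\norms{g_k}+\lambda}\Bigl(1-\eta\,\frac{\cL_0+\sqrt{\rho}\,\cL_1\norms{g_k}}{\norms{g_k}+\lambda}\Bigr)\le r_k .
\]
The last step uses $\dotprod{g_k}{x^k-x^*}\ge 0$, $(\cL_0+\sqrt{\rho}\,\cL_1G)/(G+\lambda)\le(\cL_0+\sqrt{\rho}\,\cL_1\lambda)/\lambda$ for all $G\ge 0$, and $\eta\le\lambda/(\cL_0+\sqrt{\rho}\,\cL_1\lambda)$. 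No event $E_k$ and no case split are needed.

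\textbf{Second problem: the regime split at the end does not work as written.} With threshold $\norms{\nabla f(x^k)}\ge\lambda$, the one-step bound $-\frac{\eta}{4}\norms{\nabla f(x^k)}+\frac32\eta\lambda$ can be positive (e.g.\ at $\norms{\nabla f(x^k)}=\lambda$). So there is no pure contraction by $(1-\frac{\eta}{4R_0})$ in that regime. The claim ``once the gradient is below $\lambda$, later iterates cannot exceed $6\lambda R_0$'' is a pathwise, deterministic-GD statement. A bound in conditional expectation cannot deliver it: the regime indicator is random, and the iterate can re-enter the large-gradient regime.

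The fix is to drop the split. From the descent lemma, the triangle inequality, Jensen, and $B\ge64(\rho-1)$, the bound $\expectcond{F_{k+1}}{x^k}\le F_k-\frac{\eta}{4}\norms{\nabla f(x^k)}+\frac32\eta\lambda$ holds for every gradient size. Together with the almost-sure $F_k\le R_0\norms{\nabla f(x^k)}$, it gives $\expectcond{F_{k+1}}{x^k}\le(1-\frac{\eta}{4R_0})F_k+\frac32\eta\lambda$. Unrolling with $\sum_{i\ge0}(1-\frac{\eta}{4R_0})^i\le\frac{4R_0}{\eta}$ yields exactly the $6\lambda R_0$ floor.
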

        \end{boxA}

        The proof of Theorem~\ref{th:Convex_NSGD} is provided in Appendix~\ref{app:Convex_NSGD}. Similar to the non-convex case, Theorem~\ref{th:Convex_NSGD} shows that NSGD converges to an error floor of $6 \lambda R_0$. To ensure convergence to a desired accuracy $\varepsilon$, i.e., $\mathbb{E}[f(x^N)] - f^* \leq \varepsilon$, the hyperparameter must satisfy $\lambda \leq \frac{\varepsilon}{12 R_0}$. Setting $\lambda$ to its maximum admissible value $\lambda = \frac{\varepsilon}{12 R_0}$ and the step size to $\eta = \frac{\varepsilon}{12 \cL_0 R_0 + \sqrt{\rho}\cL_1 \varepsilon}$, NSGD achieves the following iteration complexity:
        $
            N = \Obound{\left( \cL_0 R_0^2/\varepsilon +  \sqrt{\rho}\cL_1 R_0 \right) \log \frac{F_0}{\varepsilon}}
        $ with batch size $B = \Obound{\rho}$. 
        \vspace{-1em}
        \paragraph{Comparison to ClipSGD (Theorem~\ref{th:Convex_ClipSGD}).} Similar to the non-convex setting, the convergence behavior of NSGD mirrors that of ClipSGD with $c \simeq \varepsilon / R_0$. The distinction lies in the fact that for ClipSGD, the term involving $\cL_1$ differ between its two convergence regimes, yielding a bound proportional to $\max\left\{ \mathcal{O}\big(\sqrt{\rho}\cL_1 R_0 \log(1/\varepsilon)\big), \mathcal{O}\big(\sqrt{\rho}\cL_1 R_0 \big) \right\}$. Specifically, for small $\varepsilon$, the $\mathcal{O}\big(\sqrt{\rho}\cL_1 R_0  \log(1/\varepsilon)\big)$ term dominates in ClipSGD, which aligns with the rate of NSGD. However, for sufficiently large $\varepsilon$, the $\mathcal{O}\big(\sqrt{\rho}\cL_1 R_0 \big)$ term becomes dominant for ClipSGD, a rate that can potentially be worse than that of NSGD. We also note that for a large clipping radius $c \gtrsim \varepsilon / R_0$, NSGD is more efficient than ClipSGD in terms of both iteration $N$ and oracle $T = N \cdot B$ complexity.
        \vspace{-2em}
        \paragraph{Comparison to the prior works.} First, unlike ClipSGD, Theorem~\ref{th:Convex_NSGD} shows that the NSGD achieves the same efficiency as its \textit{deterministic counterpart} \citep[see Theorem 3.3,][]{Lobanov_2024_Linear_Convergence_Rate}. However, the analysis in \citep{Lobanov_2024_Linear_Convergence_Rate} introduces the hyperparameter under the condition $\lambda > 0: \|\nabla f(x^k)\| \geq \lambda$ (for all $k$). In this work, we explicitly characterize the convergence behavior for any $\lambda > 0$. Second, \citep[Theorem 4.1,][]{Lobanov_2025} relies on a strong assumption of bounded gradients and requires a sufficiently large batch size of order $B=\mathcal{O}(\varepsilon^{-3})$. In contrast, the results presented in Theorem~\ref{th:Convex_NSGD} eliminate the need for the bounded gradient. Furthermore, by leveraging the SGC, the required batch size is $B=\mathcal{O}(\rho)$.
        \begin{boxD}
        \begin{proposition}[Heavy-tailed]\label{cor:Convex_NSGD_HT}
            Let $f(\cdot,\xi)$ be $(\cL_0,\cL_1)$-smooth and convex (Assumption~\ref{ass:Convexity_Smoothness_L0_L1}), gradient oracle is unbiased (Assumption~\ref{ass:unbiased}) and satisfies SGC under heavy-tailed noise (Assumption~\ref{prop:SGC_under_heavy tailed noise} with $\sigma = 0$). Then NSGD (Algorithm~\ref{algo:clip_algo}) with hyperparameter $\lambda > 0$, step size $\eta \leq \frac{\lambda}{\cL_0 + \sqrt{\rho}\cL_1 \lambda}$ and batch size $B \geq 2^{\frac{3p +1}{p-1}} (\rho-1)^{\frac{1}{p-1}}$ guarantees convergence rate:
            \vspace{-0.5em}
            \begin{equation*}
                \expect{\func{x^{N}}} - f^* \leq \left( 1 - \frac{\eta}{4 R_0} \right)^N F_0 + 6\lambda R_0.
            \end{equation*}
        \end{proposition}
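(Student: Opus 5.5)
The proof follows the argument for Theorem~\ref{th:Convex_NSGD}. The only change is the concentration step for the mini-batch gradient, which now has to work with $p$-th moments. Write $g_k=\nabla f(x^k,\bxi^k)$ and $\nabla_k=\nabla f(x^k)$.

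\textbf{Batch-noise control.} Since the samples are i.i.d. and mean-zero, a von Bahr--Esseen type inequality for $p\in(1,2]$ gives
\begin{equation*}
\expect{\norms{g_k-\nabla_k}^p \mid x^k}\le \frac{2^{2-p}}{B^{p-1}}\,(\rho-1)\norms{\nabla_k}^p .
\end{equation*}
This is the bound that holds for vector-valued martingale differences with a constant like $2^{2-p}$. Jensen then yields
\begin{equation*}
\expect{\norms{g_k-\nabla_k}\mid x^k}\le \left(\frac{4(\rho-1)}{B^{p-1}}\right)^{1/p}\norms{\nabla_k}.
\end{equation*}
The condition $B\ge 2^{\frac{3p+1}{p-1}}(\rho-1)^{\frac1{p-1}}$ is exactly what makes $\frac{4(\rho-1)}{B^{p-1}}\le 2^{2-3p-1}$. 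Hence $\expect{\norms{g_k-\nabla_k}\mid x^k}\le \tfrac18\norms{\nabla_k}$, up to the constant bookkeeping. This is the same relative-error bound that the SGC with $B\ge 64(\rho-1)$ produced in the proof of Theorem~\ref{th:Convex_NSGD}, since there $\sqrt{(\rho-1)/B}\le 1/8$. From here on, the convex NSGD proof should go through with only first-moment control of $g_k-\nabla_k$.

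\textbf{Reusing the convex argument.} First I verify that the proof of Theorem~\ref{th:Convex_NSGD} uses only the first moment $\expect{\norms{g_k-\nabla_k}}$ and never the second moment. It has three ingredients.

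\begin{enumerate}
\item \emph{Per-step smoothness.} Every step satisfies $\norms{x^{k+1}-x^k}\le\eta\le 1/\cL_1$, so the $(L_0,L_1)$-descent inequality applies with $L_0=\cL_0$ and $L_1=\sqrt{\rho}\cL_1$. Here I should double-check that the reduction in Appendix~\ref{subsec:generalized_smoothness_conditions} still works under heavy tails. I expect it only uses the SGC through $\expect{\norms{\nabla f(x,\xi)}}\le\sqrt{\rho}\norms{\nabla f(x)}$. Under our assumption with $p>1$, Jensen gives $\expect{\norms{\nabla f(x,\xi)}}\le (2^{p-1}\rho)^{1/p}\norms{\nabla f(x)}$. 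This may need an adjusted constant, or I can note that $\expect{\norms{\nabla f(x,\xi)}}\le(1+(\rho-1)^{1/p})\norms{\nabla f(x)}\le\ldots$.
\item \emph{Inner-product lower bound.} This is the standard bound for normalized steps,
\begin{equation*}
\dotprod{\nabla_k}{\tfrac{g_k}{\norms{g_k}+\lambda}}\ \ge\ \frac{\norms{\nabla_k}^2}{\norms{\nabla_k}+\lambda}\cdot(\text{const})-2\norms{g_k-\nabla_k}.
\end{equation*}
Taking conditional expectation and using the batch-noise bound absorbs the error into a constant fraction of $\norms{\nabla_k}$ once $\norms{\nabla_k}\ge\lambda$. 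When $\norms{\nabla_k}<\lambda$, the step contributes at most $O(\eta\lambda)$.
\item \emph{From gradients to function gap.} Convexity gives $f(x^k)-f^*\le \norms{\nabla_k}\norms{x^k-x^*}$. Together with $\norms{x^k-x^*}\le R_0$, obtained as in the original proof via the distance recursion, this produces the recursion $\expect{F_{k+1}}\le(1-\tfrac{\eta}{4R_0})\expect{F_k}+O(\eta\lambda)$. Unrolling it gives $(1-\frac{\eta}{4R_0})^NF_0+6\lambda R_0$.
\end{enumerate}

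\textbf{Main obstacle.} The main obstacle is the batch-noise step. I need a Banach-space (Hilbert-space) $p$-moment inequality for sums of independent mean-zero vectors with an explicit constant. In Hilbert space the constant $2^{2-p}$ is available, and this choice of constant fixes the exact exponent in the batch condition. A second point to check is the distance bound $\norms{x^k-x^*}\le R_0$, if the original proof derived it using second moments of $g_k$. Normalized steps have norm at most $1$, so $\norms{x^{k+1}-x^*}^2\le\norms{x^k-x^*}^2-2\eta\dotprod{\tfrac{g_k}{\norms{g_k}+\lambda}}{x^k-x^*}+\eta^2$. The inner product is handled by per-sample convexity and interpolation with first moments only, so I expect this step to carry over unchanged.
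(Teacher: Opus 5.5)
Your route is the paper's: replace the second-moment bound \eqref{eq:variance_with_batch_size} by the $p$-th moment martingale inequality (Lemma~\ref{lem:heavy_tailed_batch}), pass to first moments by Jensen, pick $B$ so that $\mathbb{E}[\|g_k-\nabla_k\|\mid x^k]\le\frac18\|\nabla_k\|$, and rerun the convex NSGD proof. That $\frac18$ is the relative error that $B\ge 64(\rho-1)$ gives in Theorem~\ref{th:Convex_NSGD}. The distance bound in that proof (Lemma~\ref{lem:monotone_nsgd}) does use only per-sample convexity, smoothness and interpolation, as you suspected. Three points need repair.

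\textbf{Constants.} Do not loosen $2^{2-p}$ to $4$. With $4$, the batch condition only gives $\bigl(4(\rho-1)/B^{p-1}\bigr)^{1/p}\le 2^{1/p-3}=\frac{2^{1/p}}{8}>\frac18$. The net coefficient of $\|\nabla_k\|$ is then $-\frac{\eta}{2}\bigl(1-2^{1/p-1}\bigr)$, which is smaller than $\frac{\eta}{4}$ in absolute value and vanishes as $p\to 1$, so you lose $\frac{\eta}{4R_0}$ and $6\lambda R_0$. The threshold $2^{\frac{3p+1}{p-1}}(\rho-1)^{\frac{1}{p-1}}$ is calibrated to the constant $2$ of Lemma~\ref{lem:heavy_tailed_batch}: $\frac{2(\rho-1)}{B^{p-1}}\le 2^{-3p}$ gives exactly $\frac18$. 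Your sharper $2^{2-p}\le 2$ also works if you keep it.

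Similarly, ingredient 2 fails if run as sketched, i.e.\ splitting at $\|\nabla_k\|\ge\lambda$ and using $\frac{t^2}{t+\lambda}\ge\frac t2$. After subtracting the noise, that split leaves $\frac{\eta}{4}\|\nabla_k\|$. The curvature term, however, is only bounded by $\frac{\eta}{2}(\|\nabla_k\|+\lambda)$, and at the largest admissible step it can nearly reach that. So no descent remains. The paper avoids the case split by using $\frac{\|g_k\|^2}{\|g_k\|+\lambda}\ge\|g_k\|-\lambda$ and the triangle inequality. This gives $-\frac{\eta}{2}\|\nabla_k\|+2\eta\|g_k-\nabla_k\|+\frac32\eta\lambda$, which is where the $\frac14$ and the $6$ come from.

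\textbf{The factor $\sqrt{\rho}$ in the step size.} Your worry in ingredient 1 is justified, and the proposal leaves it unresolved. The reduction in Appendix~\ref{subsec:generalized_smoothness_conditions} to $L_1=\sqrt{\rho}\,\cL_1$ relies on $\mathbb{E}\|\nabla f(x,\xi)\|\le\bigl(\mathbb{E}\|\nabla f(x,\xi)\|^2\bigr)^{1/2}\le\sqrt{\rho}\,\|\nabla f(x)\|$. That is the second-moment SGC, which the heavy-tailed assumption does not supply, and the inequality can fail. Take $f(x,\xi)=\frac d2 x_\xi^2$ with $\xi$ uniform on $\{1,\dots,d\}$. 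The heavy-tailed assumption holds with $\sigma=0$ and $\rho=1+(d-1)^{p/2}$. Yet at $x=(1,\dots,1)$ one has $\mathbb{E}\|\nabla f(x,\xi)\|=\sqrt{d}\,\|\nabla f(x)\|>\sqrt{\rho}\,\|\nabla f(x)\|$ for $d\ge 3$.

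The paper's own proof does not address this either; it just says the remaining steps are identical. So on this point you are level with it, but neither argument justifies the step size exactly as stated. What does go through is the same proof with your own constant $K_p:=1+(\rho-1)^{1/p}$ in place of $\sqrt{\rho}$. Since $\mathbb{E}\|\nabla f(x,\xi)\|\le\|\nabla f(x)\|+\bigl(\mathbb{E}\|\nabla f(x,\xi)-\nabla f(x)\|^p\bigr)^{1/p}$, averaging the per-sample descent inequality gives curvature $\cL_0+\cL_1\mathbb{E}\|\nabla f(x,\xi)\|\le\cL_0+K_p\cL_1\|\nabla f(x)\|$. The step-size comparison $\eta\le\frac{\|\nabla_k\|+\lambda}{\cL_0+K_p\cL_1\|\nabla_k\|}$, the locality check, and Lemma~\ref{lem:monotone_nsgd} (because $K_p\ge 1$) then carry over verbatim. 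Alternatively, keep $\sqrt{\rho}$ and additionally assume the second-moment SGC.
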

        \end{boxD}Proposition~\ref{cor:Convex_NSGD_HT} shows that in the convex setting, the SGC under heavy-tailed noise yields the effect: NSGD achieves efficiency matching that of its deterministic counterpart. The proof of Proposition~\ref{cor:Convex_NSGD_HT} is provided in Appendix~\ref{app:NSGD_heavy_tailed_noise}. We also present generalizations of Theorem~\ref{th:Convex_NSGD} to the case $\sigma \geq 0$ (generalized strong growth condition, Assumption~\ref{ass:strong growth condition}) in Appendix~\ref{app:Convex_NSGD}.

\section{Discussion and Limitation}\label{sec:Discussion and Limitation}
\vspace{-1em}
First, due to space constraints in the main text, we refer the reader to Appendix~\ref{app:Extending the Analysis to H_0 H_1 Smoothness}. There, we extend the analysis of Section~\ref{sec:Stochastic Setup via Constant Step Size} to convex $(\cH_0,\cH_1)$-smooth (which is strictly weaker than standard assumptions in optimization literature) functions and provide an improved analysis of GD with a warm-up step-size strategy under $(H_0,H_1)$-smoothness. The latter covers several regimes, including non-convex, convex, and strongly convex objectives, as well as under the PL condition.

Second, the results of Theorems~\ref{th:Convex_ClipSGD} and~\ref{th:Convex_NSGD}, together with Proposition~\ref{cor:Convex_NSGD_HT}, show that the iteration complexity scales as $N \sim \sqrt{\rho}$. This dependence stems from assuming convexity and smoothness at the level of individual realizations, i.e., for almost every $\xi \in \mathcal{D}$ (see Assumption~\ref{ass:Convexity_Smoothness_L0_L1}). It can be avoided if Assumption~\ref{ass:Convexity_Smoothness_L0_L1} is imposed directly on the mini-batch oracle associated with $\bxi$. Third, the GD step size in Theorems~\ref{th:GD_NC}-\ref{th:GD_strongly_convex} requires knowledge of $f^*$; however, this information is often readily available in modern machine learning applications \citep[e.g., for the squared loss in regression and the logistic loss in classification, $f^* = 0$][]{Bartlett_2006}.

Finally, as directions for future work, we identify the following avenues: extending the results of Subsection~\ref{subsec:Non convex functions} to $(H_0,H_1)$-smooth objectives;  generalizing Theorem~\ref{th:Convex_ClipSGD} to the case $\sigma>0$; extending our ClipSGD guarantees to heavy-tailed noise (Assumption~\ref{prop:SGC_under_heavy tailed noise}). Another promising direction is to understand how the results of Subsection~\ref{subsec:Convex functions} change when accelerated~methods~are~considered.
\vspace{-1em}
\section{Numerical Experiments}\label{sec:Numerical Experiments}
\vspace{-1em}
This section presents experiments to validate our theoretical results.

\begin{figure}[t]
    \centering
    \begin{subfigure}[b]{0.48\textwidth}
        \centering
        \includegraphics[width=\textwidth]{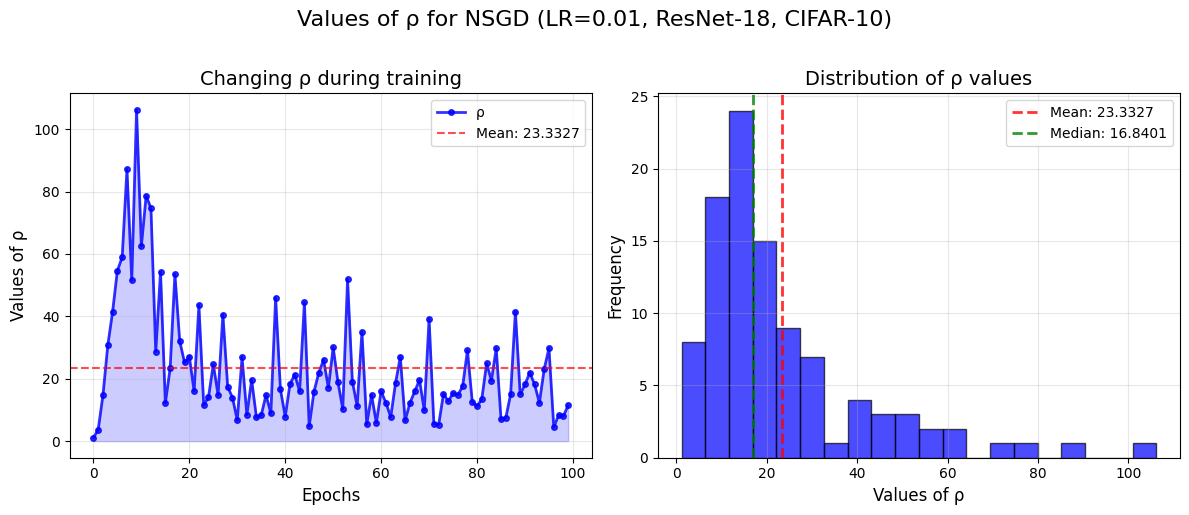}
    \end{subfigure}
    \hfill
    \begin{subfigure}[b]{0.48\textwidth}
        \centering
        \includegraphics[width=\textwidth]{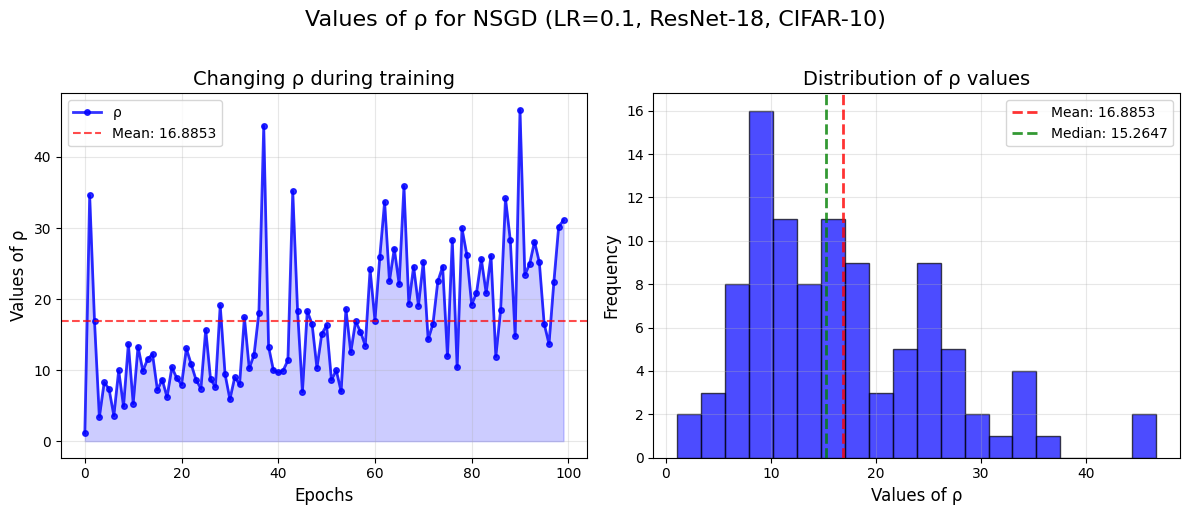}
    \end{subfigure}
    
    \vspace{0.5em}
    
    \begin{subfigure}[b]{0.48\textwidth}
        \centering
        \includegraphics[width=\textwidth]{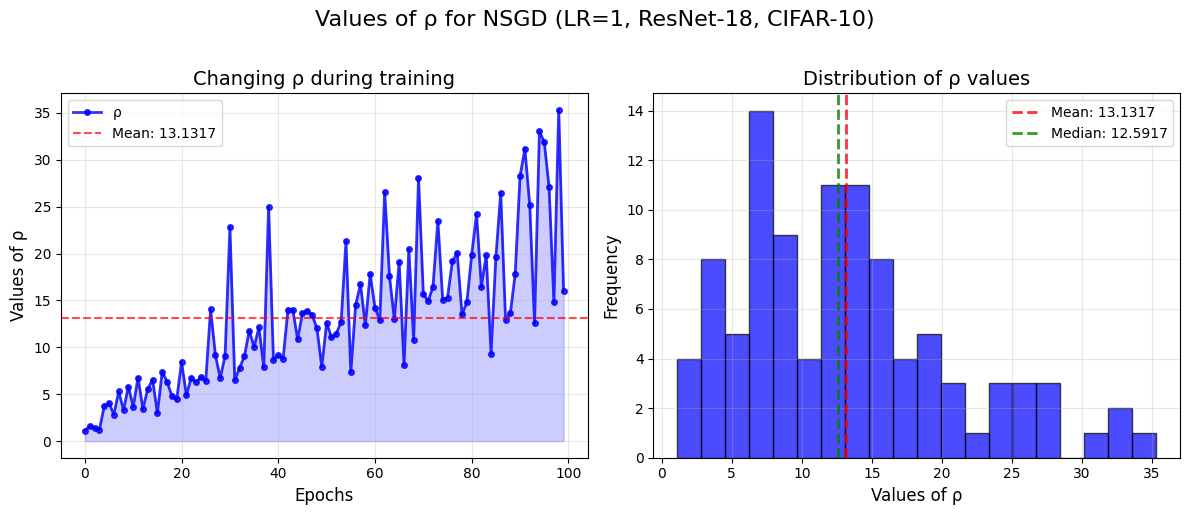}
    \end{subfigure}
    \hfill
    \begin{subfigure}[b]{0.48\textwidth}
        \centering
        \includegraphics[width=\textwidth]{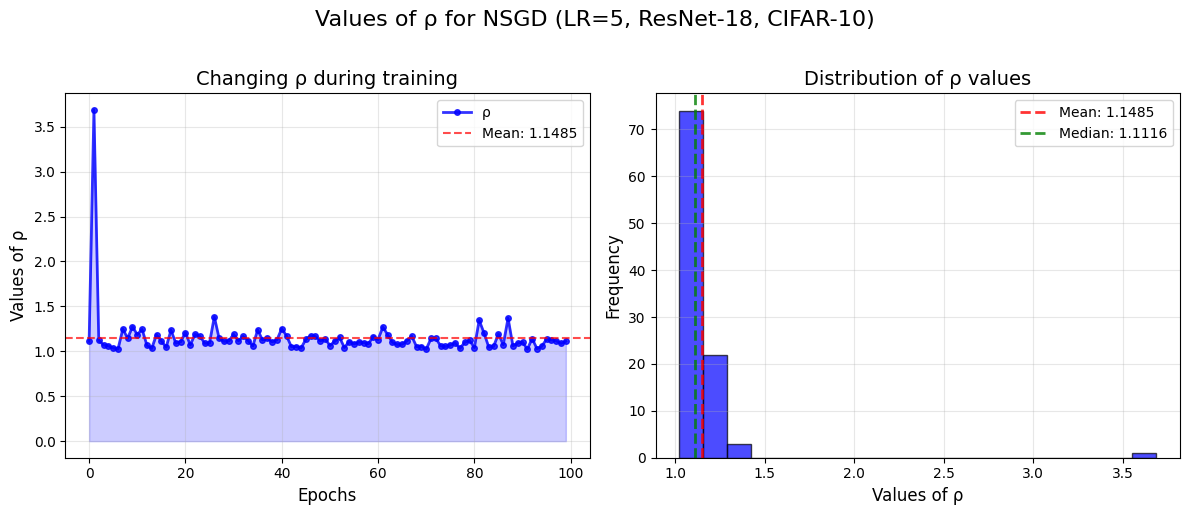}
    \end{subfigure}
    
    \vspace{0.5em}
    
    \begin{subfigure}[b]{0.48\textwidth}
        \centering
        \includegraphics[width=\textwidth]{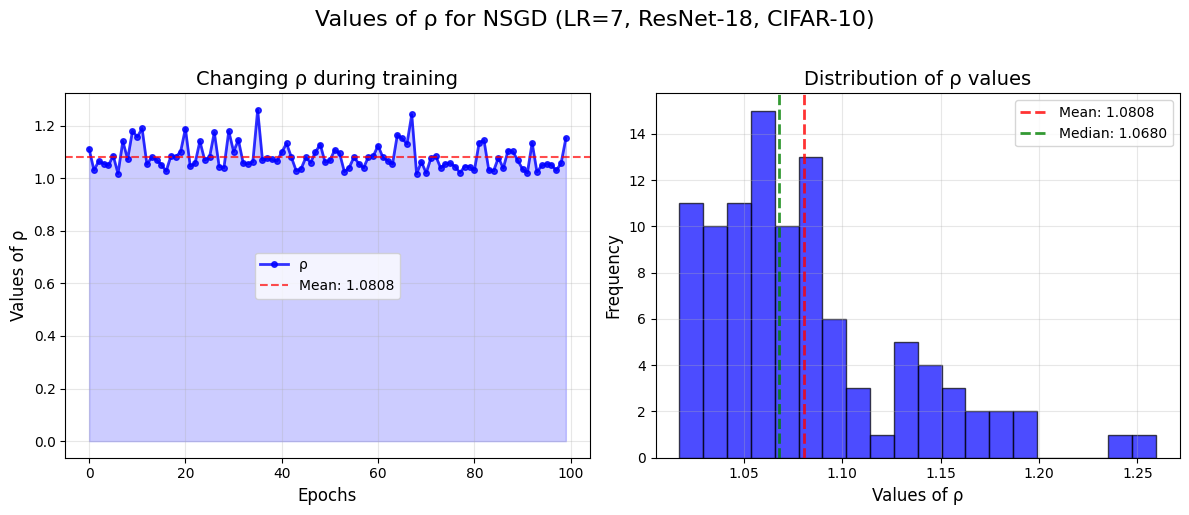}
    \end{subfigure}
    \hfill
    \begin{subfigure}[b]{0.48\textwidth}
        \centering
        \includegraphics[width=\textwidth]{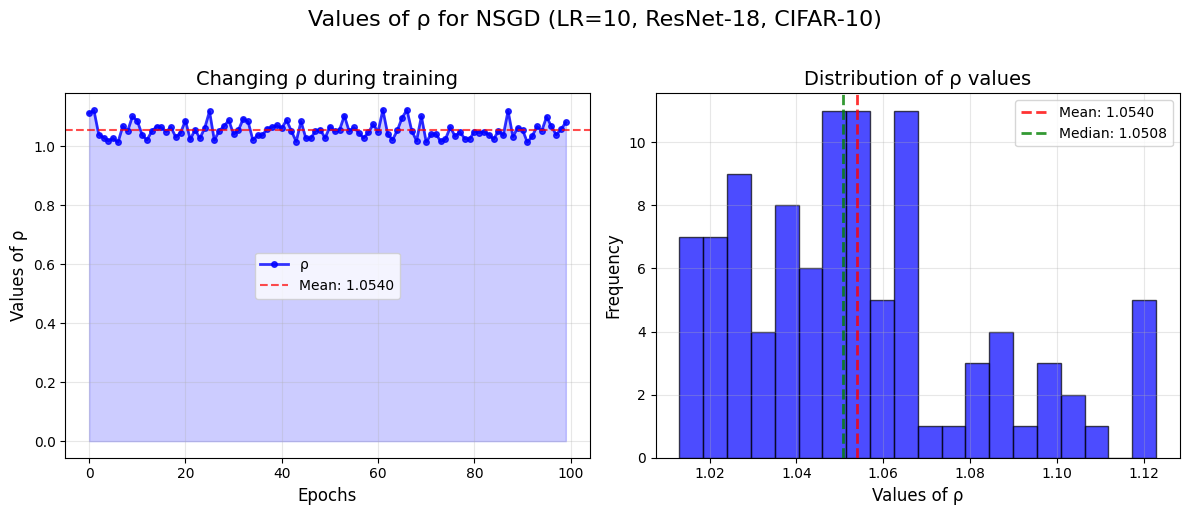}
    \end{subfigure}
    
    \caption{Illustration of the values of $\rho$ obtained from NSGD runs with different learning rates.}
    \label{fig:rho}
    \vspace{-1em}
\end{figure}

We train a ResNet-18 model on the CIFAR-10 dataset \cite{Krizhevsky_2009} for 100 epochs using ClipSGD and NSGD (the methods studied in this work; see Algorithm~\ref{algo:clip_algo} and Theorems~\ref{th:NC_ClipSGD},\ref{th:NC_NSGD}) and standard SGD (a method extensively analyzed under the strong growth condition \citep[Assumption~\ref{ass:strong growth condition}); see Theorem 3 in ][]{Vaswani_2019_Fast_and_faster}. The CIFAR-10 dataset \cite{Krizhevsky_2009} consists of 60,000 color images of size 32×32, including 50,000 training images and 10,000 test images. ResNet-18 \cite{He_2016} is a deep convolutional neural network composed of 18 layers. The total number of model parameters exceeds 11 million, which confirms that the network operates in a highly overparameterized regime. The considered task corresponds to a non-convex optimization problem arising in multi-class image classification with a deep neural network model.

Before proceeding to compare the best runs of the algorithms, we first empirically estimate the constant $\rho$ from the strong growth condition (see Assumption~\ref{ass:strong growth condition}) for the given model and dataset. To estimate $\rho$, we run NSGD with different learning rates (LR) $[0.01, 0.1, 1, 5, 7, 10]$ and a batch size of $B = 128$. At each epoch, we compute $\rho$ using the following finite-sum approximation: $\rho \simeq \left(1/n \sum_{\xi = 1}^n \norms{\nabla f_\xi(x)}^2\right) / \left(\norms{1/n \sum_{\xi = 1}^n \nabla f_\xi(x) }^2\right)$. where $n$ is the total number of data points. The resulting values of $\rho$ for different learning rates are illustrated in Figure~\ref{fig:rho}.

In Figure~\ref{fig:rho}, we present the evolution of $\rho$ over epochs, as well as its distribution in a frequency–value format. In all experiments, we use a total of $100$ epochs. It is easy to observe that as the learning rate increases, the values of $\rho$ tend to decrease. Additionally, for (LR) $= 0.01$, we observe a clear trend: as the method approaches a solution, the values of $\rho$ decrease. Across all runs, the maximum observed value of $\rho$ is $106.16$ (see (LR) $=0.01$). Therefore, in the subsequent experiments, we choose $B = 128$, since the batch size should exceed the empirically estimated value of $\rho$.  

In Figure~\ref{fig:Best_comparison}, we compare the best-performing runs of ClipSGD, NSGD and SGD, that is, the runs with tuned hyperparameters. For ClipSGD we tested the following learning rates (LR) $[0.01, 0.1, 0.5, 1, 5, 7, 10]$, and the clipping radius (R) $[0.0001, 0.001, 0.01, 0.1, 0.25]$; For NSGD we tested the following learning rates (LR) $[0.01, 0.1, 0.5, 1, 5, 7, 10]$; For SGD we tested the following learning rates (LR) $[6.25 \cdot 10^{-5}, 0.000125, 0.000625, 0.001, 0.00125, 0.01, 0.1]$. It is easy to observe that even with carefully tuned parameters, ClipSGD and NSGD demonstrate superior performance compared to SGD in overparameterized models. We also observe that ClipSGD and NSGD exhibit essentially identical convergence behavior. This can be explained by the fact that, for NSGD, the learning rate (LR) matches (LR) $\cdot$ (R) in ClipSGD (corresponding to the regime of large stochastic gradients). Finally, we emphasize that the best-performing run of SGD is achieved with (LR)$= 0.01$, which is $100$ times smaller than the effective step size used in ClipSGD ((LR) $\cdot$ (R) in the large-gradient regime) and in NSGD. This observation indicates that, even with batching, the (LR) for SGD is smaller by approximately a factor of $\rho$ compared to ClipSGD and NSGD \citep[see][for results with $\eta \leq (\rho L)^{-1}$ in the non-batch setting]{Vaswani_2019_Fast_and_faster}. This shows that, empirically, deep learning training exhibits the same effect predicted by Theorems \ref{th:NC_ClipSGD} and \ref{th:NC_NSGD}: larger learning rates can be safely used for ClipSGD and NSGD than for standard SGD,~helping~explain~their~practical~effectiveness.
%Which shows that empirically for deep learning training we observe the same effect as predicted by our theory in Theorems \ref{th:NC_ClipSGD} and \ref{th:NC_NSGD}, highlighting that larger (LR) can be safely used for ClipSGD and NSGD compared to standard SGD, and explaining effectiveness of ClipSGD and NSGD in practice.

\begin{figure}[t]
    \centering
    \includegraphics[width=1\linewidth]{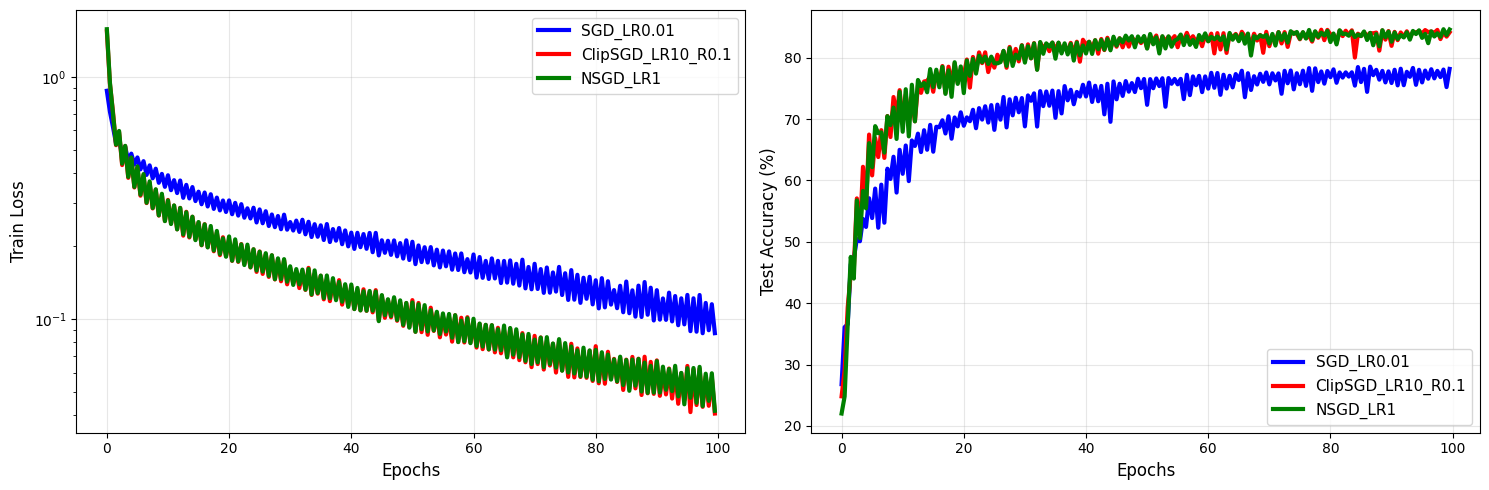}
    \caption{Comparison of ClipSGD, NSGD and SGD with tuned hyperparameters.}
    \label{fig:Best_comparison}
    \vspace{-1em}
\end{figure}

We would like to emphasize once again that all experiments were conducted with a batch size of $B = 128$, for which we empirically estimated $\rho = 106.16$. This demonstrates that our results hold under a mild assumption on the batch size.
\vspace{-1em}
\section{Conclusion}\label{sec:Conclusion}
\vspace{-0.5em}
%In this work, we propose %and theoretically analyze 
% In this work, we propose a novel unified framework that generalizes various existing smoothness conditions (Section~\ref{sec:Problem Setup and New Assumption}). We analyze the convergence of ClipSGD and NSGD with constant step size in the overparameterization regime (Section~\ref{sec:Stochastic Setup via Constant Step Size}), improving upon prior results for both non-convex and convex functions. Notably, under a mild batch size assumption, we prove that both ClipSGD and NSGD eliminate clipping bias and converge to the true optimum. Finally, we derive improved convergence rates for Gradient Descent using a warm-up step size strategy (Section~\ref{sec:Deterministic Setup via Warm-Up Step Size}).
% \section*{Impact Statement}
% Our work focuses on the theoretical analysis of clipped and normalized SGD, and there are no specific societal consequences that must be highlighted here.

We developed a convergence theory for clipping-based stochastic methods in the interpolation regime. Under the strong growth condition and relaxed $(L_0,L_1)$-smoothness, we showed that ClipSGD and NSGD avoid the bias typically associated with clipping and, with a mild mini-batch requirement, achieve nearly deterministic convergence behavior. Our results cover both non-convex and convex objectives, allow arbitrary clipping radius for ClipSGD, and identify regimes where NSGD improves the iteration complexity of ClipSGD without increasing oracle complexity. We further extended the NSGD analysis to heavy-tailed gradients and provided complementary results for $(H_0,H_1)$-smooth objectives and GD with warm-up step sizes. Experiments on overparameterized ResNet-18 models trained on CIFAR-10 support the theory, showing that the required batch sizes are mild and that clipping-based methods can use substantially larger effective step sizes than standard SGD.

\section*{Acknowledgments}
The authors are grateful to Alexander Gasnikov for useful discussions and to Eduard Gorbunov for his careful reading of an earlier version of the article and for pointing out several inaccuracies. AL thanks Yuriy Dorn for his support. This work was done in part while AK was visiting the Simons Institute for the Theory of Computing.

%% file: NeurIPS_26/4_appendix.tex
\newpage
\appendix

%%%%%%%%%%%%%%%%%%%%%%%%%%%%%%%%%%%%%%%%%%%%%%%%%%%%%%%%%%%%%%%%%%%%%%%%%%%%%%%
\vbox{%
    \hsize\textwidth
    \linewidth\hsize
    \vskip 0.1in
    \vskip 0.19in
    \vskip -\parskip
    \hrule height 1pt
    \vskip 0.09in
    \vskip 0.09in
    %\centering
    \begin{center}
        \LARGE \bf APPENDIX 
    \end{center}
    %\vskip 0.1in
    \vskip 0.19in
    \vskip -\parskip
    \hrule height 1pt
    \vskip 0.09in}
%%%%%%%%%%%%%%%%%%%%%%%%%%%%%%%%%%%%%%%%%%%%%%%%%%%%%%%%%%%%%%%%%%%%%%%%%%%%%%%

%%%%%%%%%%%%%%%%%%%%%%%%%%%%%%%%%%%%%%%%%%%%%%%%%%%%%%%%%%%%%%%%%%%%%%%%%%%%%%%
%%%%%%%%%%%%%%%%%%%%%%%%%%%%%%%%%%%%%%%%%%%%%%%%%%%%%%%%%%%%%%%%%%%%%%%%%%%%%%%

\tableofcontents

\section{Auxiliary Results}\label{app:Auxiliary Results}

In this section, we provide auxiliary technical results that are used in our analysis.

\paragraph{Basic inequalities.} For all $a,b \in \mathbb{R}^d$ ($d \geq 1$) and $\lambda>0$, the following inequalities hold:
\begin{equation}
    \label{eq:quadratic_difference}
    \dotprod{a}{b}  = \frac{\norms{a}^2}{2} + \frac{\norms{b}^2}{2} - \frac{\norms{b - a}^2}{2},
\end{equation}
\begin{equation}
    \label{eq:scalar_product_bound}
    \dotprod{a}{b} \leq \| a \| \cdot \| b \|,
\end{equation}
\begin{equation}\label{eq:Fenchel_Young_inequality}
    \dotprod{a}{b} \leq \frac{\norms{a}^2}{2 \lambda} + \frac{\lambda \norms{b}^2}{2}.
\end{equation}

\paragraph{$(L_0,L_1)$-smoothness.} Assumption \ref{ass:L_0_L_1Smooth} has the equivalent form $\forall x,y \in \mathbb{R}^d$ with $\norms{y-x} \leq \frac{1}{L_1}$:
\begin{equation}
    \label{eq:L0L1_descent_inequality}
    f(y) - f(x) \leq \dotprod{\nabla f(x)}{ y - x} + \frac{L_0 + L_1 \norms{\nabla f(x)}}{2} \norms{y-x}^2.
\end{equation}

\begin{assumption}[Convexity]\label{ass:Convexity}
The function $f$ is convex, i.e., for all $x,y\in\mathbb{R}^d$,
\begin{equation}
    \label{eq:Convexity}
    \func{x} - \func{y} \leq \dotprod{\nabla \func{x}}{ x - y}, \quad\quad\quad \forall x,y \in \mathbb{R}^d. 
\end{equation} 
\end{assumption}

% \paragraph{Variance decomposition.} If $\xi$ is random vector in $\mathbb{R}^d$ with bounded second moment, then 
% \begin{equation}\label{eq:Variance_decomposition}
%     \expect{\norms{\xi - \expect{\xi}}^2}  = \expect{\norms{\xi + a}^2} - \norms{\expect{\xi} + a}^2,
% \end{equation}
% for any deterministic vector $a\in \mathbb{R}^d$.

\subsection{On the generalized strong growth condition}\label{app:On the generalized strong growth condition}
\paragraph{Generalized strong growth condition.} In this work, we assume that the generalized strong growth condition holds, i.e.
\begin{equation*}
    \expect{\norms{\nabla \func{x, \xi}}^2} \leq \rho \norms{\nabla \func{x}}^2 + \sigma^2. 
\end{equation*}

Subtracting the norm of the full gradient from both sides, we have that
\begin{equation*}
    \expect{\norms{\nabla \func{x, \xi}}^2} - \norms{\nabla \func{x}}^2 \leq (\rho - 1) \norms{\nabla \func{x}}^2 + \sigma ^2.
\end{equation*}

We know that
\begin{align*}
    \expect{\norms{\nabla \func{x, \xi} - \nabla \func{x}}^2} &= \expect{\norms{\nabla \func{x, \xi}}^2 - 2 \dotprod{\nabla \func{x, \xi}}{\nabla \func{x}} + \norms{\nabla \func{x}}^2}\\
    &= \expect{\norms{\nabla \func{x, \xi}}^2} - \norms{\nabla \func{x}}^2.
\end{align*}

Therefore, on the left-hand side we now have exactly the variance, and hence it is equivalent to
\begin{equation}
    \label{eq:gradient_noise_variance}
    \expect{\norms{\nabla \func{x, \xi} - \nabla \func{x}}^2} \leq (\rho - 1) \norms{\nabla \func{x}}^2 + \sigma ^2.
\end{equation}

\paragraph{Strong growth condition.} We note that interpolation regime is achieved when $\sigma = 0$, i.e.
\begin{equation*}
    \expect{\norms{\nabla \func{x, \xi}}^2} \leq \rho \norms{\nabla \func{x}}^2. 
\end{equation*}
From \eqref{eq:gradient_noise_variance}, we see that in this case the variance has the following upper bound:
\begin{equation*}
    \expect{\norms{\nabla \func{x, \xi} - \nabla \func{x}}^2} \leq (\rho - 1) \norms{\nabla \func{x}}^2.
\end{equation*}
\textbf{This means that $\rho$ cannot be smaller than 1!} If $\rho$ is smaller than $1$, it means that expectation of the positive number has to be negative and no function satisfies this.

\subsection{On the generalized smoothness conditions}\label{subsec:generalized_smoothness_conditions}

\paragraph{$(L_0,L_1)$-Smoothness.} In this work, we rely on two smoothness assumptions: Assumption~\ref{ass:L_0_L_1Smooth} for the non-convex setting (see Section~\ref{subsec:Non convex functions}) and Assumption~\ref{ass:Convexity_Smoothness_L0_L1} for the convex setting (see Section~\ref{subsec:Convex functions}). It is straightforward to verify that, under the strong growth condition (Assumption~\ref{ass:strong growth condition} with $\sigma = 0$), Assumption~\ref{ass:Convexity_Smoothness_L0_L1} implies Assumption~\ref{ass:L_0_L_1Smooth} with $L_0 = \cL_0$ and $L_1 = \sqrt{\rho} \cL_1$:
\begin{align}\label{eq:new_smoothness_with_rho}
    \norms{\nabla \func{y} - \nabla \func{x}} &\overset{\circledOne}{=} \norms{\expect{\nabla \func{y,\xi} - \nabla \func{x,\xi}}}\nonumber \\
    &\overset{\circledTwo}{\leq} \expect{\norms{\nabla \func{y,\xi} - \nabla \func{x,\xi}}}\nonumber\\
    &\overset{\circledThree}{\leq} \left( \cL_0 + \cL_1 \expect{\norms{\nabla \func{x,\xi}}}  \right) \norms{y - x}\nonumber\\
    &= \left( \cL_0 + \cL_1 \expect{\left[ \norms{\nabla \func{x,\xi}}^2 \right]^{1/2}}  \right) \norms{y - x}\nonumber\\
    &\overset{\circledFour}{\leq} \left( \cL_0 + \cL_1 \left[ \expect{\norms{\nabla \func{x,\xi}}^2 }\right]^{1/2}  \right) \norms{y - x}\nonumber\\
    &\overset{\circledFive}{\leq} \left( \cL_0 + \sqrt{\rho} \cL_1 \norms{\nabla \func{x}}  \right) \norms{y - x},
\end{align}
where in $\circledOne$ we used the unbiasedness relation
$\nabla \func{x} = \expect{\nabla \func{x,\xi}}$ and
$\nabla \func{y} = \expect{\nabla \func{y,\xi}}$ (Assumption~\ref{ass:unbiased}), in $\circledTwo$, we applied Jensen's inequality to the norm, in $\circledThree$, we used the $(\cL_0,\cL_1)$-smoothness assumption
for each realization $\xi$ (Assumption~\ref{ass:Convexity_Smoothness_L0_L1}), in $\circledFour$, we used Jensen's inequality, namely
$\expect{\norms{Z}} \leq \left(\expect{\norms{Z}^2}\right)^{1/2}$, in $\circledFive$, we applied the strong growth condition (Assumption~\ref{ass:strong growth condition}). 

Similarly, Assumption~\ref{ass:Convexity_Smoothness_L0_L1} also implies the
deterministic descent inequality \eqref{eq:L0L1_descent_inequality} with $L_0 = \cL_0$ and $L_1 = \sqrt{\rho} \cL_1$,
\begin{align*}
    \func{y}-\func{x} =
    \expect{\func{y,\xi}-\func{x,\xi}}
    &\overset{\circledOne}{\leq}
    \expect{
        \dotprod{\nabla \func{x,\xi}}{y-x}
        +
        \frac{
            \cL_0+\cL_1\norms{\nabla \func{x,\xi}}
        }{2}
        \norms{y-x}^2
    }
    \nonumber\\
    &=
    \dotprod{
        \expect{\nabla \func{x,\xi}}
    }{
        y-x
    }
    +
    \frac{
        \cL_0+\cL_1\expect{\norms{\nabla \func{x,\xi}}}
    }{2}
    \norms{y-x}^2
    \nonumber\\
    &\overset{\circledTwo}{=}
    \dotprod{\nabla \func{x}}{y-x}
    +
    \frac{
        \cL_0+\cL_1\expect{\norms{\nabla \func{x,\xi}}}
    }{2}
    \norms{y-x}^2
    \nonumber\\
    &\overset{\circledThree}{\leq}
    \dotprod{\nabla \func{x}}{y-x}
    +
    \frac{
        \cL_0+\cL_1
        \left[
            \expect{\norms{\nabla \func{x,\xi}}^2}
        \right]^{1/2}
    }{2}
    \norms{y-x}^2
    \nonumber\\
    &\overset{\circledFour}{\leq}
    \dotprod{\nabla \func{x}}{y-x}
    +
    \frac{
        \cL_0+\sqrt{\rho}\cL_1\norms{\nabla \func{x}}
    }{2}
    \norms{y-x}^2,
    \label{eq:new_descent_with_rho}
\end{align*}
where in $\circledOne$ we used
$(\cL_0,\cL_1)$-smoothness inequality from
Assumption~\ref{ass:Convexity_Smoothness_L0_L1}; in $\circledTwo$ we used
unbiasedness relation
$\nabla \func{x}=\expect{\nabla \func{x,\xi}}$
(Assumption~\ref{ass:unbiased}); in $\circledThree$ we used Jensen's
inequality,
$\expect{\norms{Z}}\leq(\expect{\norms{Z}^2})^{1/2}$; and in
$\circledFour$ we applied the strong growth condition
(Assumption~\ref{ass:strong growth condition} with $\sigma=0$).

\paragraph{$(H_0,H_1)$-Smoothness.} In this work, we extend our analysis to $(H_0,H_1)$-smooth functions (see Assumption~\ref{ass:H_0H_1_Smooth}). Unlike $(L_0,L_1)$-smoothness, the $(H_0,H_1)$-smoothness condition depends on the function gap rather than on the gradient norm. As a result, taking expectation directly yields the corresponding deterministic condition. Consequently, if $(\cH_0,\cH_1)$-smoothness (see Assumption~\ref{ass:H_0H_1_Smooth_for_xi}) holds for each realization, then the deterministic $(H_0,H_1)$-smoothness condition in Assumption~\ref{ass:H_0H_1_Smooth} is also satisfied with the same constants $H_0 = \cH_0$ and $H_1 = \cH_1$.

\subsection{Auxiliary lemma for \texorpdfstring{$(\cL_0,\cL_1)$}{TEXT}-smoothness}
\begin{boxN}
\begin{lemma}
    Let $f(\cdot,\xi)$ be $(\cL_0,\cL_1)$-smooth and convex for each realization $\xi \in \mathcal{D}$ (Assumption~\ref{ass:Convexity_Smoothness_L0_L1}), and let the interpolation condition (Definition~\ref{def:interpolation}) hold, then~${\forall x \in \mathbb{R}^d}$ we have upper bound for norm of mini-batch gradient  $\norms{\nabla \func{x,\bxi}} = \norms{\frac{1}{B} \sum_{i=1}^B \nabla \func{x,\xi_i}}$:
    \begin{equation}\label{eq:Auxiliary_lemma}
        \norms{\nabla \func{x,\bxi}}^2 \leq 2 \left( \cL_0 + \cL_1 \norms{\nabla \func{x, \bxi}} \right) \dotprod{\nabla \func{x, \bxi}}{x - x^*}.
    \end{equation}
\end{lemma}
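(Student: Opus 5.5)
The plan is to prove the bound for a single convex function that is $(\cL_0,\cL_1)$-smooth with respect to its own gradient and is minimized at $x^*$. I then apply it to the mini-batch function $g(\cdot) := f(\cdot,\bxi) = \frac1B\sum_{i=1}^B f(\cdot,\xi_i)$.

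Two properties of $g$ come for free. First, $g$ is convex as an average of convex functions. Second, by the interpolation condition (Definition~\ref{def:interpolation}), $x^*$ minimizes every $f(\cdot,\xi_i)$ almost surely. Hence it minimizes $g$, so $g(x^*) = \min g$ and $g(y)\ge g(x^*)$ for every $y$.

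The core step is the standard ``descent-to-minimum'' argument. Fix $x$ with $\nabla g(x)\neq 0$ (otherwise \eqref{eq:Auxiliary_lemma} is trivial), set $L_x := \cL_0+\cL_1\norms{\nabla g(x)}$, and take the gradient step $y = x - \frac{1}{L_x}\nabla g(x)$. This step satisfies $\norms{y-x} = \frac{\norms{\nabla g(x)}}{\cL_0+\cL_1\norms{\nabla g(x)}} \le \frac1{\cL_1}$, so the upper quadratic bound of Assumption~\ref{ass:Convexity_Smoothness_L0_L1} (with $z=x$) is admissible. It yields
\begin{equation*}
g(x^*) \le g(y) \le g(x) - \frac{1}{2L_x}\norms{\nabla g(x)}^2,
\end{equation*}
that is, $\norms{\nabla g(x)}^2 \le 2L_x\,(g(x)-g(x^*))$. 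Finally, the lower (convexity) inequality of Assumption~\ref{ass:Convexity_Smoothness_L0_L1}, applied at the pair $(x,x^*)$, gives $g(x)-g(x^*)\le \dotprod{\nabla g(x)}{x-x^*}$. Substituting this produces exactly \eqref{eq:Auxiliary_lemma}.

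The main obstacle is justifying the upper quadratic bound for $g$ with the curvature $\cL_0+\cL_1\norms{\nabla g(x)}$ evaluated at the \emph{mini-batch} gradient. Averaging the per-sample bounds gives curvature $\cL_0+\cL_1\cdot\frac1B\sum_i\norms{\nabla f(x,\xi_i)}$ instead. By the triangle inequality this quantity is \emph{larger} than $\cL_0+\cL_1\norms{\nabla g(x)}$, so it points in the wrong direction.

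I would first try to read Assumption~\ref{ass:Convexity_Smoothness_L0_L1} as imposed on the mini-batch oracle $f(\cdot,\bxi)$ itself, which is the reading the Discussion section suggests. Under that reading the argument above is complete. If that reading is not allowed, the fallback is to run the same argument per sample, obtaining $\norms{\nabla f(x,\xi_i)}^2 \le 2(\cL_0+\cL_1\norms{\nabla f(x,\xi_i)})\dotprod{\nabla f(x,\xi_i)}{x-x^*}$. I would then combine these using $\norms{\nabla g(x)}^2\le\frac1B\sum_i\norms{\nabla f(x,\xi_i)}^2$. This gives the lemma but with $\max_i\norms{\nabla f(x,\xi_i)}$ in place of $\norms{\nabla g(x)}$, which is a weaker statement than the one claimed.
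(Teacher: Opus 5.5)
Your per-sample argument is correct, but neither of your aggregation routes proves the lemma as stated.

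The primary route needs an upper quadratic bound for the averaged function $f(\cdot,\bxi)$ with curvature $\cL_0+\cL_1\|\nabla f(x,\bxi)\|$. That is not a hypothesis here: the lemma assumes Assumption~\ref{ass:Convexity_Smoothness_L0_L1} only for each realization $\xi$. As you note yourself, averaging the per-sample bounds yields curvature $\cL_0+\cL_1\frac1B\sum_i\|\nabla f(x,\xi_i)\|$, which goes the wrong way. The remark in the Discussion about imposing the assumption on the mini-batch oracle describes an alternative assumption for removing the $\sqrt{\rho}$ factor; it is not something this lemma grants. Your fallback only gives the weaker bound with $\max_i\|\nabla f(x,\xi_i)\|$. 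This is because you average bounds on $\|\nabla f(x,\xi_i)\|^2$ while each one is still multiplied by its own curvature factor.

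The missing idea is to move the curvature into the denominator \emph{before} averaging. Define $\phi(t):=t^2/(\cL_0+\cL_1 t)$. Your per-sample step gives $\langle\nabla f(x,\xi_i),x-x^*\rangle\ge\frac12\phi(\|\nabla f(x,\xi_i)\|)$. The left-hand side is linear in the gradient, so averaging over $i$ gives exactly $\langle\nabla f(x,\bxi),x-x^*\rangle\ge\frac{1}{2B}\sum_{i=1}^B\phi(\|\nabla f(x,\xi_i)\|)$.

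On $[0,\infty)$ the function $\phi$ is nondecreasing and convex, since $\phi'(t)=t(2\cL_0+\cL_1t)/(\cL_0+\cL_1t)^2\ge0$ and $\phi''(t)=2\cL_0^2/(\cL_0+\cL_1t)^3\ge0$. Jensen's inequality, then the triangle inequality with monotonicity of $\phi$, give $\frac1B\sum_i\phi(\|\nabla f(x,\xi_i)\|)\ge\phi\bigl(\frac1B\sum_i\|\nabla f(x,\xi_i)\|\bigr)\ge\phi(\|\nabla f(x,\bxi)\|)$. Hence $\langle\nabla f(x,\bxi),x-x^*\rangle\ge\frac12\phi(\|\nabla f(x,\bxi)\|)$, and rearranging this is exactly \eqref{eq:Auxiliary_lemma}.

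This is the paper's argument. It never needs any smoothness property of the averaged function, only the per-realization assumption.
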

\end{boxN}
\begin{proof}
    By the $(\cL_0,\cL_1)$-smoothness of $f(\cdot,\xi)$, for every realization 
    $\xi \in \mathcal{D}$ (see Assumption~\ref{ass:Convexity_Smoothness_L0_L1}) we have the following descent inequality:
    \begin{equation*}
        \func{y,\xi}
        \leq
        \func{x,\xi}
        +
        \dotprod{\nabla \func{x,\xi}}{y-x}
        +
        \frac{\cL_0+\cL_1\norms{\nabla \func{x,\xi}}}{2}
        \norms{y-x}^2,
    \end{equation*}
    whenever the locality condition required by $(\cL_0,\cL_1)$-smoothness is satisfied.
    Let
    \[
        y
        =
        x
        -
        \frac{\nabla \func{x,\xi}}
        {\cL_0+\cL_1\norms{\nabla \func{x,\xi}}}.
    \]
    Then, for $\cL_1>0$,
    \begin{equation*}
        \norms{y-x}
        =
        \frac{\norms{\nabla \func{x,\xi}}}
        {\cL_0+\cL_1\norms{\nabla \func{x,\xi}}}
        \leq
        \frac{1}{\cL_1},
    \end{equation*}
    and hence the locality condition is satisfied. If $\cL_1=0$, the corresponding 
    descent inequality is global and the argument below remains valid.

    By interpolation (see Definition~\ref{def:interpolation}), $x^*$ is a minimizer of each realization $f(\cdot,\xi)$.
    Therefore, $\func{x^*,\xi} \leq \func{y,\xi}$. Applying the descent inequality 
    with the above choice of $y$, we obtain
    \begin{align*}
        \func{x^*,\xi}
        &\leq
        \func{x,\xi}
        -
        \frac{\norms{\nabla \func{x,\xi}}^2}
        {\cL_0+\cL_1\norms{\nabla \func{x,\xi}}}
        +
        \frac{\cL_0+\cL_1\norms{\nabla \func{x,\xi}}}{2}
        \left(
            \frac{\norms{\nabla \func{x,\xi}}}
            {\cL_0+\cL_1\norms{\nabla \func{x,\xi}}}
        \right)^2 \\
        &=
        \func{x,\xi}
        -
        \frac{\norms{\nabla \func{x,\xi}}^2}
        {2\left(\cL_0+\cL_1\norms{\nabla \func{x,\xi}}\right)}.
    \end{align*}
    Hence,
    \begin{equation*}
        \func{x,\xi}
        -
        \func{x^*,\xi}
        \geq
        \frac{\norms{\nabla \func{x,\xi}}^2}
        {2\left(\cL_0+\cL_1\norms{\nabla \func{x,\xi}}\right)}.
    \end{equation*}
    On the other hand, by convexity of $f(\cdot,\xi)$ (see Assumption~\ref{ass:Convexity_Smoothness_L0_L1}),
    \begin{equation*}
        \func{x,\xi}
        -
        \func{x^*,\xi}
        \leq
        \dotprod{\nabla \func{x,\xi}}{x-x^*}.
    \end{equation*}
    Combining the last two inequalities gives, for each realization $\xi$,
    \begin{equation*}
        \dotprod{\nabla \func{x,\xi}}{x-x^*}
        \geq
        \frac{\norms{\nabla \func{x,\xi}}^2}
        {2\left(\cL_0+\cL_1\norms{\nabla \func{x,\xi}}\right)}.
    \end{equation*}

    Averaging this inequality over the mini-batch, we get
    \begin{align*}
        \dotprod{\nabla \func{x,\bxi}}{x-x^*}
        &=
        \frac{1}{B}
        \sum_{i=1}^B
        \dotprod{\nabla \func{x,\xi_i}}{x-x^*} \geq
        \frac{1}{2B}
        \sum_{i=1}^B
        \frac{\norms{\nabla \func{x,\xi_i}}^2}
        {\cL_0+\cL_1\norms{\nabla \func{x,\xi_i}}}.
    \end{align*}

    Now define, for $t\geq 0$,
    \[
        \phi(t)
        =
        \frac{t^2}{\cL_0+\cL_1 t}.
    \]
    This function is nondecreasing and convex, since
    \begin{align*}
        \phi'(t)
        &=
        \frac{t(2\cL_0+\cL_1 t)}
        {(\cL_0+\cL_1 t)^2}
        \geq 0, \\
        \phi''(t)
        &=
        \frac{2\cL_0^2}
        {(\cL_0+\cL_1 t)^3}
        \geq 0.
    \end{align*}
    Therefore, by Jensen's inequality and then by the triangle inequality together 
    with the monotonicity of $\phi$, we have
    \begin{align*}
        \frac{1}{B}
        \sum_{i=1}^B
        \phi\left(\norms{\nabla \func{x,\xi_i}}\right)
        &\geq
        \phi\left(
            \frac{1}{B}
            \sum_{i=1}^B
            \norms{\nabla \func{x,\xi_i}}
        \right) \\
        &\geq
        \phi\left(
            \norms{
                \frac{1}{B}
                \sum_{i=1}^B
                \nabla \func{x,\xi_i}
            }
        \right) \\
        &=
        \phi\left(\norms{\nabla \func{x,\bxi}}\right) \\
        &=
        \frac{\norms{\nabla \func{x,\bxi}}^2}
        {\cL_0+\cL_1\norms{\nabla \func{x,\bxi}}}.
    \end{align*}
    Consequently,
    \begin{equation*}
        \dotprod{\nabla \func{x,\bxi}}{x-x^*}
        \geq
        \frac{\norms{\nabla \func{x,\bxi}}^2}
        {2\left(\cL_0+\cL_1\norms{\nabla \func{x,\bxi}}\right)}.
    \end{equation*}
    Rearranging the last inequality yields
    \begin{equation*}
        \norms{\nabla \func{x,\bxi}}^2
        \leq
        2\left(
            \cL_0
            +
            \cL_1\norms{\nabla \func{x,\bxi}}
        \right)
        \dotprod{\nabla \func{x,\bxi}}{x-x^*}.
    \end{equation*}
\end{proof}

\subsection{Monotone behavior of the distance to optimum for ClipSGD}\label{app:Monotone behavior of the distance to optimum for ClipSGD}

\begin{boxN}
\begin{lemma}\label{lem:monotone_clipsgd}
Let $f(\cdot,\xi)$ be $(\cL_0,\cL_1)$-smooth and convex for each realization $\xi$ (Assumption~\ref{ass:Convexity_Smoothness_L0_L1}), and let the interpolation condition from Definition~\ref{def:interpolation} hold. Let $R_0:=\norms{x^0-x^*}$ and let $\rho\geq1$ be the strong growth parameter. Then ClipSGD with step size
$
    \eta
    \leq
    \left[
        9
        \left(
            \cL_0
            +
            \sqrt{\rho}\cL_1 c
        \right)
    \right]^{-1}
$ guarantees
\begin{equation*}
    \norms{x^{k+1}-x^*}^2
    \leq
    \norms{x^k-x^*}^2.
\end{equation*}
Consequently,
\begin{equation*}
    \norms{x^k-x^*}
    \leq
    \norms{x^0-x^*}
    =
    R_0
    \quad
    \text{for all } k\geq 0.
\end{equation*}
\end{lemma}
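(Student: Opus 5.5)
Write $g := \nabla f(x^k,\bxi^k)$ for the mini-batch gradient and $\alpha := \alpha_{\bxi^k} = \min\{1, c/\norms{g}\}$. We expand the update deterministically, for every realization of the batch:
\begin{equation*}
    \norms{x^{k+1}-x^*}^2 = \norms{x^k-x^*}^2 - 2\eta\alpha \dotprod{g}{x^k-x^*} + \eta^2\alpha^2\norms{g}^2 .
\end{equation*}
So it suffices to show $\eta\alpha\norms{g}^2 \leq 2\dotprod{g}{x^k-x^*}$. The auxiliary lemma \eqref{eq:Auxiliary_lemma}, applied at $x = x^k$ with the mini-batch $\bxi^k$, gives
\begin{equation*}
    \norms{g}^2 \leq 2(\cL_0 + \cL_1\norms{g})\dotprod{g}{x^k-x^*}.
\end{equation*}
In particular, the inner product is nonnegative. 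It therefore remains to verify
\begin{equation*}
    \eta\alpha(\cL_0+\cL_1\norms{g}) \leq 1 .
\end{equation*}

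\textbf{Case $\norms{g}\leq c$.} Here $\alpha=1$, and
\begin{equation*}
    \eta(\cL_0+\cL_1\norms{g}) \leq \eta(\cL_0+\cL_1 c) \leq \eta(\cL_0+\sqrt{\rho}\cL_1 c) \leq 1/9,
\end{equation*}
using $\rho\geq 1$.

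\textbf{Case $\norms{g}>c$.} Here $\alpha = c/\norms{g}$, and
\begin{equation*}
    \eta\alpha(\cL_0+\cL_1\norms{g}) = \eta\left(\frac{c\,\cL_0}{\norms{g}} + \cL_1 c\right) \leq \eta(\cL_0+\cL_1 c) \leq 1/9,
\end{equation*}
since $c/\norms{g}<1$.

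In both cases the required inequality holds with a large margin. Hence $\norms{x^{k+1}-x^*}^2 \leq \norms{x^k-x^*}^2 - \eta\alpha\bigl(2-\tfrac{1}{9}\bigr)\dotprod{g}{x^k-x^*} \leq \norms{x^k-x^*}^2$ almost surely. Induction on $k$ then gives $\norms{x^k-x^*}\leq R_0$.

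The only delicate point is the applicability of the auxiliary lemma to the mini-batch average. This needs interpolation, so that $x^*$ minimizes each $f(\cdot,\xi_i)$, together with the convexity and monotonicity argument for $\phi$ already carried out there. No expectation is needed, so the SGC and the batch size play no role beyond the $\sqrt{\rho}$ in the step size, which only makes the step size smaller.
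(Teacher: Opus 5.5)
Your proof is correct and follows the paper's argument essentially step for step. You expand $\norms{x^{k+1}-x^*}^2$, apply the mini-batch self-bounding inequality \eqref{eq:Auxiliary_lemma}, and split on $\norms{g}\le c$ versus $\norms{g}>c$ to obtain $\eta\alpha(\cL_0+\cL_1\norms{g})\le 1/9$. The differences are cosmetic: you read off $\dotprod{g}{x^k-x^*}\ge 0$ directly from the self-bounding inequality, whereas the paper uses $0\le F_k(\bxi^k)\le \dotprod{g}{x^k-x^*}$. Also, your final coefficient should be $2-\tfrac{2}{9}=\tfrac{16}{9}$ rather than $2-\tfrac19$, which does not affect the conclusion.
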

\end{boxN}

\begin{proof}
We prove the statement by induction. The claim is trivial for $k=0$. Assume that
\begin{equation*}
    \norms{x^k-x^*}
    \leq
    R_0.
\end{equation*}
We show that
\begin{equation*}
    \norms{x^{k+1}-x^*}
    \leq
    \norms{x^k-x^*}.
\end{equation*}

For a mini-batch $\bxi^k$, define
\begin{equation*}
    F_k(\bxi^k)
    :=
    \func{x^k,\bxi^k}
    -
    \func{x^*,\bxi^k}.
\end{equation*}
By the interpolation condition, $x^*$ minimizes each stochastic realization almost surely, and hence it also minimizes the mini-batch objective. Therefore,
\begin{equation}\label{eq:Fk_polozitelen}
    F_k(\bxi^k)\geq 0.
\end{equation}
Moreover, by convexity \eqref{eq:Convexity},
\begin{equation}
    \label{eq:convexity_batch_L0L1_Clip}
    F_k(\bxi^k)
    \leq
    \dotprod{\nabla \func{x^k,\bxi^k}}{x^k-x^*}.
\end{equation}

The ClipSGD update has the form
\begin{equation*}
    x^{k+1}
    =
    x^k
    -
    \eta\alpha_{\bxi^k}\nabla \func{x^k,\bxi^k},
\end{equation*}
where
\begin{equation*}
    \alpha_{\bxi^k}
    =
    \min
    \left\{
        1,
        \frac{c}{\norms{\nabla \func{x^k,\bxi^k}}}
    \right\}.
\end{equation*}
Then
\begin{align}
    \norms{x^{k+1}-x^*}^2
    &=
    \norms{
        x^k-x^*
        -
        \eta\alpha_{\bxi^k}\nabla \func{x^k,\bxi^k}
    }^2
    \nonumber \\
    &=
    \norms{x^k-x^*}^2
    -
    2\eta\alpha_{\bxi^k}
    \dotprod{\nabla \func{x^k,\bxi^k}}{x^k-x^*}
    +
    \eta^2\alpha_{\bxi^k}^2
    \norms{\nabla \func{x^k,\bxi^k}}^2.
    \label{eq:ClipSGD_L0L1_monotone_start}
\end{align}

By the self-bounding property of convex $(\cL_0,\cL_1)$-smooth functions applied to the mini-batch objective, and using $\rho\geq 1$, we have
\begin{align}
    \label{eq:self_bounding_L0L1_Clip}
    \norms{\nabla \func{x^k,\bxi^k}}^2
    &\overset{\eqref{eq:Auxiliary_lemma}}{\leq}
    2
    \left(
        \cL_0
        +
        \cL_1\norms{\nabla \func{x^k,\bxi^k}}
    \right)
    \dotprod{\nabla \func{x^k,\bxi^k}}{x^k-x^*}\nonumber\\
    &\leq 2
    \left(
        \cL_0
        +
        \sqrt{\rho}\cL_1\norms{\nabla \func{x^k,\bxi^k}}
    \right)
    \dotprod{\nabla \func{x^k,\bxi^k}}{x^k-x^*}.
\end{align}
Substituting~\eqref{eq:self_bounding_L0L1_Clip} into~\eqref{eq:ClipSGD_L0L1_monotone_start}, we obtain
\begin{align}
    \norms{x^{k+1}-x^*}^2
    &\leq
    \norms{x^k-x^*}^2
    -
    2\eta\alpha_{\bxi^k}
    \dotprod{\nabla \func{x^k,\bxi^k}}{x^k-x^*}
    \nonumber \\
    &\quad\quad\quad
    +
    2\eta^2\alpha_{\bxi^k}^2
    \left(
        \cL_0
        +
        \sqrt{\rho}\cL_1\norms{\nabla \func{x^k,\bxi^k}}
    \right)
   \dotprod{\nabla \func{x^k,\bxi^k}}{x^k-x^*}.
    \label{eq:ClipSGD_L0L1_monotone_main}
\end{align}

We now consider two cases.

\textbf{Case 1: $\norms{\nabla \func{x^k,\bxi^k}}\leq c$.}
In this case, $\alpha_{\bxi^k}=1$. Using~\eqref{eq:ClipSGD_L0L1_monotone_main}, we get
\begin{align*}
    \norms{x^{k+1}-x^*}^2
    &\leq
    \norms{x^k-x^*}^2
    -
    2\eta \dotprod{\nabla \func{x^k,\bxi^k}}{x^k-x^*}\\
    &\quad\quad\quad +
    2\eta^2
    \left(
        \cL_0+\sqrt{\rho}\cL_1 c
    \right)
    \dotprod{\nabla \func{x^k,\bxi^k}}{x^k-x^*}
    \\
    &\leq
    \norms{x^k-x^*}^2
    -
    2\eta \dotprod{\nabla \func{x^k,\bxi^k}}{x^k-x^*}
    +
    \frac{2}{9}\eta \dotprod{\nabla \func{x^k,\bxi^k}}{x^k-x^*}
    \\
    &=
    \norms{x^k-x^*}^2
    -
    \frac{16}{9}\eta \dotprod{\nabla \func{x^k,\bxi^k}}{x^k-x^*}\\
    &\overset{\eqref{eq:convexity_batch_L0L1_Clip}}{\leq} \norms{x^k-x^*}^2
    -
    \frac{16}{9}\eta F_k(\bxi^k)\\
    &\overset{\eqref{eq:Fk_polozitelen}}{\leq}
    \norms{x^k-x^*}^2.
\end{align*}

\textbf{Case 2: $\norms{\nabla \func{x^k,\bxi^k}}>c$.}
In this case,
\begin{equation*}
    \alpha_{\bxi^k}
    =
    \frac{c}{\norms{\nabla \func{x^k,\bxi^k}}}.
\end{equation*}
Since $\alpha_{\bxi^k}\leq 1$, we have
\begin{align*}
    \alpha_{\bxi^k}
    \left(
        \cL_0
        +
        \sqrt{\rho}\cL_1\norms{\nabla \func{x^k,\bxi^k}}
    \right)
    &=
    \alpha_{\bxi^k}\cL_0
    +
    \sqrt{\rho}\cL_1 c
    \\
    &\leq
    \cL_0
    +
    \sqrt{\rho}\cL_1 c.
\end{align*}
Using this bound in~\eqref{eq:ClipSGD_L0L1_monotone_main}, we obtain
\begin{align*}
    \norms{x^{k+1}-x^*}^2
    &\leq
    \norms{x^k-x^*}^2
    -
    2\eta\alpha_{\bxi^k}\dotprod{\nabla \func{x^k,\bxi^k}}{x^k-x^*}
    \\
    &\quad\quad\quad
    +
    2\eta^2\alpha_{\bxi^k}
    \left(
        \cL_0+\sqrt{\rho}\cL_1 c
    \right)
    \dotprod{\nabla \func{x^k,\bxi^k}}{x^k-x^*}
    \\
    &\leq
    \norms{x^k-x^*}^2
    -
    2\eta\alpha_{\bxi^k}\dotprod{\nabla \func{x^k,\bxi^k}}{x^k-x^*}
    +
    \frac{2}{9}\eta\alpha_{\bxi^k}\dotprod{\nabla \func{x^k,\bxi^k}}{x^k-x^*}
    \\
    &=
    \norms{x^k-x^*}^2
    -
    \frac{16}{9}\eta\alpha_{\bxi^k}\dotprod{\nabla \func{x^k,\bxi^k}}{x^k-x^*}\\
    &\overset{\eqref{eq:convexity_batch_L0L1_Clip}}{\leq} \norms{x^k-x^*}^2
    -
    \frac{16}{9}\eta \alpha_{\bxi^k} F_k(\bxi^k)\\
    &\overset{\eqref{eq:Fk_polozitelen}}{\leq}
    \norms{x^k-x^*}^2.
\end{align*}

Combining both cases, we conclude that
\begin{equation*}
    \norms{x^{k+1}-x^*}^2
    \leq
    \norms{x^k-x^*}^2.
\end{equation*}
Thus, by induction,
\begin{equation*}
    \norms{x^k-x^*}
    \leq
    \norms{x^0-x^*}
    =
    R_0
    \quad
    \text{for all } k\geq 0.
\end{equation*}

Finally, the same step-size condition implies
\begin{equation*}
    \norms{x^{k+1}-x^k}
    =
    \eta\norms{\clip{\nabla \func{x^k,\bxi^k}}}
    \leq
    \eta c
    \leq
    \frac{1}{\sqrt{\rho}\cL_1},
\end{equation*}
which ensures that the ClipSGD update stays within the locality radius required by the corresponding $(L_0,L_1)$-smoothness condition with $L_1=\sqrt{\rho}\cL_1$.
This completes the proof.
\end{proof}

\subsection{Monotone behavior of the distance to optimum for NSGD}\label{app:Monotone behavior of the distance to optimum for NSGD}

\begin{boxN}
\begin{lemma}\label{lem:monotone_nsgd}
Let $f(\cdot,\xi)$ be $(\cL_0,\cL_1)$-smooth and convex for each realization $\xi$ (Assumption~\ref{ass:Convexity_Smoothness_L0_L1}), and let the interpolation condition from Definition~\ref{def:interpolation} hold. Let $R_0:=\norms{x^0-x^*}$ and let $\rho\geq1$ be the strong growth parameter. Then NSGD with any $\lambda>0$ and step size
$
\eta
    \leq
    \frac{\lambda}{
        \cL_0
        +
        \sqrt{\rho}\cL_1\lambda
    }
$
guarantees
\begin{equation*}
    \norms{x^{k+1}-x^*}^2
    \leq
    \norms{x^k-x^*}^2.
\end{equation*}
Consequently,
\begin{equation*}
    \norms{x^k-x^*}
    \leq
    \norms{x^0-x^*}
    =
    R_0
    \quad
    \text{for all } k\geq 0.
\end{equation*}
\end{lemma}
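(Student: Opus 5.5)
We mirror the argument of Lemma~\ref{lem:monotone_clipsgd}: the claim is a deterministic, pathwise statement about a single step. No expectation or strong growth condition is needed beyond $\rho\geq 1$. Write $g_k := \nabla \func{x^k,\bxi^k}$ and $F_k(\bxi^k) := \func{x^k,\bxi^k}-\func{x^*,\bxi^k}$. By interpolation, $F_k(\bxi^k)\geq 0$; by convexity, $F_k(\bxi^k)\leq \dotprod{g_k}{x^k-x^*}$, so in particular $\dotprod{g_k}{x^k-x^*}\geq 0$. Expanding the NSGD update $x^{k+1}=x^k-\eta\, g_k/(\norms{g_k}+\lambda)$ gives
\begin{equation*}
    \norms{x^{k+1}-x^*}^2 = \norms{x^k-x^*}^2 - \frac{2\eta}{\norms{g_k}+\lambda}\dotprod{g_k}{x^k-x^*} + \frac{\eta^2}{(\norms{g_k}+\lambda)^2}\norms{g_k}^2 .
\end{equation*}

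Next we apply the auxiliary self-bounding inequality \eqref{eq:Auxiliary_lemma} to the mini-batch gradient, weakening $\cL_1$ to $\sqrt{\rho}\cL_1$ since $\rho\geq 1$:
\begin{equation*}
    \norms{g_k}^2\leq 2\left(\cL_0+\sqrt{\rho}\cL_1\norms{g_k}\right)\dotprod{g_k}{x^k-x^*}.
\end{equation*}
Substituting this into the expansion, it suffices to show
\begin{equation*}
    \frac{\eta\left(\cL_0+\sqrt{\rho}\cL_1\norms{g_k}\right)}{\norms{g_k}+\lambda}\leq 1 .
\end{equation*}
Once this holds, the quadratic term is at most the linear term, so $\norms{x^{k+1}-x^*}^2\leq \norms{x^k-x^*}^2-\frac{\eta}{\norms{g_k}+\lambda}\dotprod{g_k}{x^k-x^*}\leq\norms{x^k-x^*}^2$.

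The key scalar inequality, which is the only real obstacle, follows from the step size. Since $\eta\leq \lambda/(\cL_0+\sqrt{\rho}\cL_1\lambda)$, we have $\eta\cL_0\leq\lambda$ and $\eta\sqrt{\rho}\cL_1\leq 1$. Therefore $\eta\cL_0+\eta\sqrt{\rho}\cL_1\norms{g_k}\leq \lambda+\norms{g_k}$, and no case split on $\norms{g_k}$ versus $\lambda$ is needed. This actually yields the stronger margin $\tfrac12\cdot 2$, leaving a strict decrease term $-\eta\dotprod{g_k}{x^k-x^*}/(\norms{g_k}+\lambda)\leq 0$.

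Induction over $k$ then gives $\norms{x^k-x^*}\leq R_0$. Finally, we note that $\norms{x^{k+1}-x^k}=\eta\norms{g_k}/(\norms{g_k}+\lambda)<\eta\leq 1/(\sqrt{\rho}\cL_1)$. Hence the step stays within the locality radius of the $(L_0,L_1)$-smoothness condition with $L_1=\sqrt{\rho}\cL_1$, as in the ClipSGD lemma.
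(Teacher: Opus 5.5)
Your proof follows the paper's route. You expand $\norms{x^{k+1}-x^*}^2$, bound $\norms{g_k}^2$ with the self-bounding inequality~\eqref{eq:Auxiliary_lemma} (weakening $\cL_1$ to $\sqrt{\rho}\cL_1$), and reduce everything to $\eta(\cL_0+\sqrt{\rho}\cL_1\norms{g_k})\leq \norms{g_k}+\lambda$. Your derivation of that scalar inequality from $\eta\cL_0\leq\lambda$ and $\eta\sqrt{\rho}\cL_1\leq 1$ is a slightly more direct version of the paper's comparison $\frac{\cL_0+\sqrt{\rho}\cL_1 G}{G+\lambda}\leq\frac{\cL_0+\sqrt{\rho}\cL_1\lambda}{\lambda}$.

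There is one slip you should remove. The intermediate bound $\norms{x^{k+1}-x^*}^2\leq\norms{x^k-x^*}^2-\frac{\eta}{\norms{g_k}+\lambda}\dotprod{g_k}{x^k-x^*}$ and the remark about a ``stronger margin'' do not follow from your argument. The factor $2$ in~\eqref{eq:Auxiliary_lemma} exactly cancels the factor $2$ in the cross term. So the ratio bound $\leq 1$ only shows that the quadratic term is at most the full linear term $\frac{2\eta}{\norms{g_k}+\lambda}\dotprod{g_k}{x^k-x^*}$, which gives a net change $\leq 0$ and nothing more. A leftover decrease of $\frac{\eta}{\norms{g_k}+\lambda}\dotprod{g_k}{x^k-x^*}$ would require the ratio $\eta(\cL_0+\sqrt{\rho}\cL_1\norms{g_k})/(\norms{g_k}+\lambda)$ to be at most $1/2$. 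The step-size rule does not guarantee this: at the maximal step size the ratio tends to $\cL_0/(\cL_0+\sqrt{\rho}\cL_1\lambda)$ as $\norms{g_k}\to 0$, which exceeds $1/2$ whenever $\cL_0>\sqrt{\rho}\cL_1\lambda$. Drop that middle expression. The lemma only needs non-increase, which the rest of your argument correctly establishes.
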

\end{boxN}

\begin{proof}
We prove the statement by induction. The claim is trivial for $k=0$. Assume that
\begin{equation*}
    \norms{x^k-x^*}
    \leq
    R_0.
\end{equation*}
We show that
\begin{equation*}
    \norms{x^{k+1}-x^*}
    \leq
    \norms{x^k-x^*}.
\end{equation*}

For a mini-batch $\bxi^k$, define
\begin{equation*}
    F_k(\bxi^k)
    :=
    \func{x^k,\bxi^k}
    -
    \func{x^*,\bxi^k}.
\end{equation*}
By the interpolation condition, $x^*$ minimizes each stochastic realization almost surely, and hence it also minimizes the mini-batch objective. Therefore,
\begin{equation}\label{eq:Fk_polozitelen_FOR_NSGD}
    F_k(\bxi^k)\geq 0.
\end{equation}
Moreover, by convexity,
\begin{equation}
    \label{eq:convexity_batch_L0L1_NSGD}
    F_k(\bxi^k)
    \leq
    \dotprod{\nabla \func{x^k,\bxi^k}}{x^k-x^*}.
\end{equation}

Introduce the NSGD direction
\begin{equation*}
    \Gf{x^k,\bxi^k}
    \coloneqq
    \frac{\nabla \func{x^k,\bxi^k}}
    {\norms{\nabla \func{x^k,\bxi^k}}+\lambda}.
\end{equation*}
Then the update rule of NSGD can be written as
\begin{equation*}
    x^{k+1}
    =
    x^k
    -
    \eta \Gf{x^k,\bxi^k}.
\end{equation*}
Hence,
\begin{align}
    \norms{x^{k+1}-x^*}^2
    &=
    \norms{x^k-x^*-\eta\Gf{x^k,\bxi^k}}^2
    \nonumber \\
    &=
    \norms{x^k-x^*}^2
    -
    2\eta
    \dotprod{\Gf{x^k,\bxi^k}}{x^k-x^*}
    +
    \eta^2\norms{\Gf{x^k,\bxi^k}}^2
    \nonumber \\
    &=
    \norms{x^k-x^*}^2
    -
    \frac{2\eta}{
        \norms{\nabla \func{x^k,\bxi^k}}+\lambda
    }
    \dotprod{\nabla \func{x^k,\bxi^k}}{x^k-x^*}
    \nonumber \\
    &\quad\quad\quad
    +
    \eta^2
    \left(
        \frac{
            \norms{\nabla \func{x^k,\bxi^k}}
        }{
            \norms{\nabla \func{x^k,\bxi^k}}+\lambda
        }
    \right)^2.
    \label{eq:NSGD_L0L1_monotone_start}
\end{align}

By the self-bounding property of convex $(\cL_0,\cL_1)$-smooth functions applied to the mini-batch objective, and using $\rho\geq 1$, we have
\begin{align}
    \label{eq:self_bounding_L0L1_NSGD}
    \norms{\nabla \func{x^k,\bxi^k}}^2
    &\overset{\eqref{eq:Auxiliary_lemma}}{\leq}
    2
    \left(
        \cL_0
        +
        \cL_1\norms{\nabla \func{x^k,\bxi^k}}
    \right)
    \dotprod{\nabla \func{x^k,\bxi^k}}{x^k-x^*}\nonumber\\
    &\leq 2
    \left(
        \cL_0
        +
        \sqrt{\rho}\cL_1\norms{\nabla \func{x^k,\bxi^k}}
    \right)
    \dotprod{\nabla \func{x^k,\bxi^k}}{x^k-x^*}..
\end{align}
Substituting~\eqref{eq:self_bounding_L0L1_NSGD} into~\eqref{eq:NSGD_L0L1_monotone_start}, we obtain
\begin{align}
    \norms{x^{k+1}-x^*}^2
    &\leq
    \norms{x^k-x^*}^2
    -
    \frac{2\eta}{
        \norms{\nabla \func{x^k,\bxi^k}}+\lambda
    }
    \dotprod{\nabla \func{x^k,\bxi^k}}{x^k-x^*}
    \nonumber \\
    &\quad\quad\quad
    +
    \frac{
        2\eta^2
        \left(
            \cL_0
            +
            \sqrt{\rho}\cL_1\norms{\nabla \func{x^k,\bxi^k}}
        \right)
    }{
        \left(
            \norms{\nabla \func{x^k,\bxi^k}}+\lambda
        \right)^2
    }
   \dotprod{\nabla \func{x^k,\bxi^k}}{x^k-x^*}
    \nonumber \\
    &=
    \norms{x^k-x^*}^2\nonumber\\
    &\quad\quad\quad-
    \frac{
        2\eta \dotprod{\nabla \func{x^k,\bxi^k}}{x^k-x^*}
    }{
        \norms{\nabla \func{x^k,\bxi^k}}+\lambda
    }
    \left(
        1
        -
        \eta
        \frac{
            \cL_0
            +
            \sqrt{\rho}\cL_1\norms{\nabla \func{x^k,\bxi^k}}
        }{
            \norms{\nabla \func{x^k,\bxi^k}}+\lambda
        }
    \right).
    \label{eq:NSGD_L0L1_monotone_main}
\end{align}

It remains to show that the term in parentheses is non-negative. For any $G\geq 0$,
\begin{align*}
    \frac{
        \cL_0+\sqrt{\rho}\cL_1\lambda
    }{\lambda}
    -
    \frac{
        \cL_0+\sqrt{\rho}\cL_1G
    }{G+\lambda}
    =
    \frac{
        \cL_0G+\sqrt{\rho}\cL_1\lambda^2
    }{
        \lambda(G+\lambda)
    }
    \geq
    0.
\end{align*}
Applying this inequality with $G=\norms{\nabla \func{x^k,\bxi^k}}$, and using the step-size condition, gives
\begin{equation*}
    1
    -
    \eta
    \frac{
        \cL_0+\sqrt{\rho}\cL_1\norms{\nabla \func{x^k,\bxi^k}}
    }{
        \norms{\nabla \func{x^k,\bxi^k}}+\lambda
    }
    \geq
    0.
\end{equation*}
Substituting this into~\eqref{eq:NSGD_L0L1_monotone_main}, as well as using $F_k(\bxi^k)
    \leq
    \dotprod{\nabla \func{x^k,\bxi^k}}{x^k-x^*}$ from \eqref{eq:convexity_batch_L0L1_NSGD} $F_k(\bxi^k)\geq 0$ from \eqref{eq:Fk_polozitelen_FOR_NSGD}, we conclude that
\begin{equation*}
    \norms{x^{k+1}-x^*}^2
    \leq
    \norms{x^k-x^*}^2.
\end{equation*}
Thus, by induction,
\begin{equation*}
    \norms{x^k-x^*}
    \leq
    \norms{x^0-x^*}
    =
    R_0
    \quad
    \text{for all } k\geq 0.
\end{equation*}

Finally, the same step-size condition implies
\begin{equation*}
    \norms{x^{k+1}-x^k}
    =
    \eta
    \norms{
        \frac{\nabla \func{x^k,\bxi^k}}
        {\norms{\nabla \func{x^k,\bxi^k}}+\lambda}
    }
    \leq
    \eta
    \leq
    \frac{1}{\sqrt{\rho}\cL_1},
\end{equation*}
which ensures that the NSGD update stays within the locality radius required by the corresponding $(L_0,L_1)$-smoothness condition with $L_1=\sqrt{\rho}\cL_1$.
This completes the proof.
\end{proof}

\subsection{Variance for mini-batch stochastic gradient}

If we use the mini-batch stochastic gradient $\nabla f(x, \bxi) = \frac{1}{B} \sum_{i = 1}^B \nabla f(x, \xi_i)$ instead of the stochastic gradient $\nabla \func{x, \xi}$, then the variance \eqref{eq:gradient_noise_variance} takes the following form:
    \begin{align}
            \expect{\norms{\nabla \func{x,\bxi} - \nabla \func{x}}^2} &\overset{\circledOne}{=} \expect{\norms{ \frac{1}{B} \sum_{i=1}^B \nabla \func{x,\xi_i} - \nabla \func{x}}^2} \nonumber\\
            &\overset{\circledTwo}{=} \expect{\norms{ \frac{1}{B} \sum_{i=1}^B \left( \nabla \func{x,\xi_i} - \nabla \func{x} \right)}^2}\nonumber\\
            &= \frac{1}{B^2}  \expect{\norms{ \sum_{i=1}^B \left( \nabla \func{x,\xi_i} - \nabla \func{x} \right)}^2}\nonumber\\
            &\overset{\circledThree}{=} \frac{1}{B^2}  \sum_{i=1}^B \sum_{j=1}^B\expect {\dotprod{ \nabla \func{x,\xi_i} - \nabla \func{x} }{\nabla \func{x,\xi_j} - \nabla \func{x}}}\nonumber\\
            &= \frac{1}{B^2}  \sum_{i=1}^B \expect{\norms{   \nabla \func{x,\xi_i} - \nabla \func{x} }^2} \nonumber\\
            &\quad\quad\quad+  \frac{1}{B^2} \sum_{i \neq j}^B \dotprod{ \expect{\nabla \func{x,\xi_i}} - \nabla \func{x} }{\expect{\nabla \func{x,\xi_j}} - \nabla \func{x}} \nonumber\\
            &\overset{\circledFour}{=}  \frac{1}{B^2} \sum_{i=1}^B \expect{\norms{   \nabla \func{x,\xi_i} - \nabla \func{x} }^2}\nonumber\\
            &\overset{\eqref{eq:gradient_noise_variance}}{\leq} \frac{1}{B^2} \sum_{i=1}^B \left[ (\rho - 1) \norms{\nabla \func{x}}^2 + \sigma^2 \right]\nonumber\\
            &= \frac{\rho - 1}{B} \norms{\nabla \func{x}}^2 + \frac{\sigma^2}{B},\label{eq:variance_with_batch_size}
        \end{align}
        where in $\circledOne$ we used $\nabla \func{x, \bxi} = \frac{1}{B} \sum_{i =1}^B \nabla \func{x, \xi_i}$, in $\circledTwo$ we used $\nabla \func{x} = \frac{1}{B} \sum_{i =1}^B \nabla \func{x}$, in $\circledThree$ we utilized the definition of the Euclidean norm, in $\circledFour$ we used that $\expect{\nabla \func{x, \xi_i}} = \nabla \func{x}$.

\subsection{Motivation for the generalized strong growth condition under heavy-tailed noise}\label{app:Motivation_heavy}

\begin{boxC}
\begin{example}
    Consider the function $f(x) = \frac{1}{2} \norms{x}^2$ with gradient oracle $\nabla \func{x,\xi} = \left(Z + 1\right) x$ (where $Z$ is a random variable with a symmetric Pareto distribution). Then, for $\alpha \in (p,2]$ we obtain the strong growth condition under heavy-tailed noise $\expect{\norms{\nabla f(x, \xi)}^p} \leq 2^{p-1} \rho \norms{\nabla f(x)}^p$ with $\rho = \frac{\alpha}{\alpha - p} + 1 < \infty$,~and~${\expect{\norms{\nabla f(x, \xi)}^2} = \infty}$.
\end{example}
\end{boxC}
\begin{proof}
    We start by estimating the $p$-th moment $\expect{\norms{\nabla \func{x, \xi}}^p} \overset{\circledOne}{=} \expect{\norms{Zx + x}^p}$:
    \begin{align*}
         \expect{\norms{Zx + x}^p} &\overset{\circledTwo}{\leq}  2^{p-1}\left(\expect{|Z|^p}+ 1 \right) \norms{x}^p \overset{\circledThree}{=} 2^{p-1} \left( \alpha \int_{1}^{\infty} |z|^{p-\alpha-1} dz + 1 \right) \norms{x}^p \\
         &\overset{\circledFour}{=} 2^{p-1} \underbrace{\left(\frac{ \alpha}{\alpha - p} + 1 \right)}_{=:\rho < \infty} \norms{\nabla \func{x}}^p,
    \end{align*}
    where in $\circledOne$ we used $\nabla \func{x,\xi} = \left(Z + 1\right) x$, in $\circledTwo$ we applied Jensen's inequality for finite form, in $\circledThree$ we applied the definition of expectation with probability density function of a symmetric Pareto random variable $f_Z(z) = \frac{\alpha}{2} |z|^{-(\alpha + 1)}$ with $|z| \geq 1$ and $|z+1| \leq 2 |z|$ , in $\circledFour$ we noted that $\int_{1}^{\infty} |z|^{p - \alpha - 1}  dz =  \frac{1}{\alpha - p} < \infty$ since $p < \alpha$, and used $\nabla \func{x} = x$.

    To analyze the second moment, we follow the same steps, substituting $p$ with $2$. In this case, we obtain $\int_{1}^{\infty} |z|^{2 - \alpha - 1}  dz = \infty$, since $\alpha \leq (p,2]$. Consequently, the second moment is unbounded: $\expect{\norms{\nabla \func{x, \xi}}^2} = \infty$.
    
\end{proof}

\section{Missing Proofs for Stochastic Part (ClipSGD \texorpdfstring{$\&$}{TEXT} NSGD)}

\subsection{Missing proofs for clipped stochastic gradient descent method}

    This section provides the proofs for ClipSGD in the non-convex and convex settings. The analysis follows the same general line as \cite{Koloskova_2023}, but the stochastic noise is controlled through the generalized strong growth condition and a mini-batch gradient. The main proof ingredient is an alignment argument: once the batch size is proportional to the growth parameter $\rho$, the clipped stochastic direction remains sufficiently aligned with the full gradient.

    \subsubsection{Non-convex setup under \texorpdfstring{$(L_0,L_1)$}{TEXT}-smooth and generalized strong growth conditions}\label{app:NC_ClipSGD}
 
        The proof is divided into two regimes: $c \leq  \frac{6 \sqrt{2} \sigma}{\sqrt{B}}$ and $c >  \frac{ 6 \sqrt{2} \sigma}{\sqrt{B}}$. In each regime, we further split according to the deterministic gradient norm $\norms{\nabla \func{x^k}}$.

\begin{proof}
Throughout the proof, we use the shorthand
\[
        F_k := \func{x^k}-f^*.
    \]
In the one-step estimates below, expectations are taken with respect to the fresh mini-batch $\bxi^k$ and are written explicitly as conditional expectations given the current iterate $x^k$. After taking full expectation, we return to the notation $\expect{\cdot}$.

        We start from the descent inequality for $(L_0,L_1)$-smooth functions:
        \begin{align}
            \func{x^{k+1}} - \func{x^k} &\overset{\circledOne}{\leq} \dotprod{\nabla \func{x^k}}{x^{k+1} - x^k} + \frac{L_0 + L_1 \norms{\nabla \func{x^k}}}{2} \norms{x^{k+1} - x^{k}}^2 \nonumber \\
            &\overset{\circledTwo}{=} - \eta \dotprod{\nabla \func{x^k}}{\clip{\nabla \func{x^k, \bxi^k}}} \nonumber\\
            &\quad \quad \quad+ \frac{\eta^2 \left(L_0 + L_1 \norms{\nabla \func{x^k}}\right)}{2} \norms{\clip{\nabla \func{x^k, \bxi^k}}}^2, 
            \label{eq:NC_clipSGD_smooth}
        \end{align}
        Here, in~$\circledOne$ we used the descent inequality~\eqref{eq:L0L1_descent_inequality}; its locality condition $\norms{x^{k+1}-x^k}\leq \frac{1}{L_1}$ is verified in Remark~\ref{rem:Step_size_nc_ClipSGD}. In $\circledTwo$ we used the ClipSGD update $x^{k+1} = x^k - \eta \cdot \clip{\nabla \func{x^k, \bxi^k}}$.

        We now split the proof into two regimes.
        
        \paragraph{First regime: $c \leq  \frac{6 \sqrt{2} \sigma}{\sqrt{B}}$.} We consider two cases: ${\norms{\nabla \func{x^k}} \geq \frac{6 \sqrt{2} \sigma}{\sqrt{B}}}$ and ${\norms{\nabla \func{x^k}} < \frac{6 \sqrt{2} \sigma}{\sqrt{B}}}$.

        \fbox{1.1) The case ${\norms{\nabla \func{x^k}} \geq \frac{6 \sqrt{2} \sigma}{\sqrt{B}}}$.} 

        Define the bad event
        \[
            \delta = \mathbbm{1} {\left\{  \norms{\nabla \func{x^k,\bxi^k} - \nabla \func{x^k}} > 3 \sqrt{\frac{\rho - 1}{B} \norms{\nabla \func{x^k}}^2+ \frac{\sigma^2}{B}} \right\}}.
        \]
        We decompose the first term in~\eqref{eq:NC_clipSGD_smooth} according to this event:
        \begin{align}
            \expectcond{- \eta \dotprod{\nabla \func{x^k}}{\clip{\nabla \func{x^k, \bxi^k}}}}{x^k} &\overset{\circledast}{=}  \expectcond{- \eta \alpha_{\bxi^k} \dotprod{\nabla \func{x^k}}{\nabla \func{x^k, \bxi^k}}}{x^k}\nonumber\\
            &\hspace{-8em}\leq \mathbb{P}\left(\delta=0\mid x^k\right) \underbrace{\expectcond{- \eta \alpha_{\bxi^k} \dotprod{\nabla \func{x^k}}{\nabla \func{x^k, \bxi^k}}}{x^k,\delta=0}}_{=: T_1} \nonumber\\
            &\hspace{-8em}\quad\quad\quad+ \mathbb{P}\left(\delta=1\mid x^k\right) \underbrace{\expectcond{- \eta \alpha_{\bxi^k} \dotprod{\nabla \func{x^k}}{\nabla \func{x^k, \bxi^k}}}{x^k,\delta=1}}_{=: T_2},\label{eq:NC_clipSGD_T1_and_T2_FIRST_part}
        \end{align}
        where in $\circledast $ we used $\clip{\nabla \func{x^k, \bxi^k}} = \alpha_{\bxi^k} \cdot \nabla \func{x^k, \bxi^k}$. We first estimate the probability of the bad event. By Markov's inequality,
        \begin{align}
            \mathbb{P}\left(\delta=1\mid x^k\right) &= \mathbb{P}\left(\norms{\nabla \func{x^k,\bxi^k} - \nabla \func{x^k}}^2 > 9 \left(\frac{\rho - 1}{B} \norms{\nabla \func{x^k}}^2 +\frac{\sigma^2}{B}\right)\mid x^k\right)\nonumber\\
            &\leq \frac{\expectcond{\norms{\nabla \func{x^k,\bxi^k} - \nabla \func{x^k}}^2}{x^k}}{9 (\rho - 1) \norms{\nabla \func{x^k}}^2 + \sigma^2} \cdot B \nonumber\\
            &\overset{\eqref{eq:variance_with_batch_size}}{\leq} \frac{1}{9}.
            \label{eq:NC_clipSGD_prob1_FIRST_part}
        \end{align}
        Consequently, $\mathbb{P}\left(\delta=0\mid x^k\right) = 1 - \mathbb{P}\left(\delta=1\mid x^k\right) \geq \frac{8}{9}$. We next bound the contribution of the good event:
        \begin{align}
            &\expectcond{- \eta \alpha_{\bxi^k} \dotprod{\nabla \func{x^k}}{\nabla \func{x^k, \bxi^k}}}{x^k,\delta=0} \nonumber\\
            &=  \expectcond{- \eta \alpha_{\bxi^k} \dotprod{\nabla \func{x^k}}{\nabla \func{x^k, \bxi^k} \pm \nabla \func{x^k}}}{x^k,\delta=0} \nonumber\\
            &=  \expectcond{- \eta \alpha_{\bxi^k} \norms{\nabla \func{x^k}}^2 - \eta \alpha_{\bxi^k} \dotprod{\nabla \func{x^k}}{\nabla \func{x^k, \bxi^k} - \nabla \func{x^k}}}{x^k,\delta=0}\nonumber\\
            &\overset{\eqref{eq:scalar_product_bound}}{\leq}  \expectcond{- \eta \alpha_{\bxi^k} \norms{\nabla \func{x^k}}^2 + \eta \alpha_{\bxi^k} \norms{\nabla \func{x^k}}\norms{\nabla \func{x^k, \bxi^k} - \nabla \func{x^k}}}{x^k,\delta=0}\nonumber\\
            &\overset{\circledOne}{\leq} \expectcond{- \eta \alpha_{\bxi^k} \norms{\nabla \func{x^k}}^2 + 3 \eta \alpha_{\bxi^k} \norms{\nabla \func{x^k}} \sqrt{\frac{\rho - 1}{B} \norms{\nabla \func{x^k}}^2 + \frac{\sigma^2}{B}}}{x^k,\delta=0}\nonumber\\
            &\overset{\circledTwo}{\leq} \expectcond{- \eta \alpha_{\bxi^k} \norms{\nabla \func{x^k}}^2 + 3 \eta \alpha_{\bxi^k}  \sqrt{\frac{\rho - 1}{B} + \frac{1}{72}} \cdot \norms{\nabla \func{x^k}}^2}{x^k,\delta=0}\nonumber\\
            &\overset{\circledThree}{\leq} \expectcond{- \eta \alpha_{\bxi^k} \norms{\nabla \func{x^k}}^2 + 3\eta \alpha_{\bxi^k} \sqrt{\frac{1}{72} + \frac{1}{72}} \cdot \norms{\nabla \func{x^k}}^2}{x^k,\delta=0}\nonumber\\
            &= \expectcond{- \eta \alpha_{\bxi^k} \norms{\nabla \func{x^k}}^2 + \frac{\eta \alpha_{\bxi^k}}{2} \norms{\nabla \func{x^k}}^2}{x^k,\delta=0}\nonumber\\
            &=\expectcond{- \frac{\eta \alpha_{\bxi^k}}{2}  \norms{\nabla \func{x^k}}^2}{x^k,\delta=0}\nonumber\\
            &\overset{\circledFour}{\leq} - \frac{\eta c}{4}  \norms{\nabla \func{x^k}},
            \label{eq:NC_clipSGD_T1_FIRST_part}
        \end{align}
        Here, in $\circledOne$ we used the definition of the event $\{\delta=0\}$, in $\circledTwo$ we used $\frac{\sigma}{\sqrt{B}} \leq \frac{\norms{\nabla \func{x^k}}}{6 \sqrt{2}}$, and in $\circledThree$ we used $B \geq 72 (\rho - 1)$. In $\circledFour$ we used the bound $\norms{\nabla \func{x^k, \bxi^k}}\leq 2\norms{\nabla \func{x^k}}$, which implies $\alpha_{\bxi^k}\geq c/(2\norms{\nabla \func{x^k}})$. On the complementary event, we only use the boundedness induced by clipping:
        \begin{align}
            \expectcond{- \eta \alpha_{\bxi^k} \dotprod{\nabla \func{x^k}}{\nabla \func{x^k, \bxi^k}}}{x^k,\delta=1} \overset{\eqref{eq:scalar_product_bound}}{\leq} &\eta \norms{\nabla \func{x^k}} \expectcond{ \alpha_{\bxi^k} \norms{\nabla \func{x^k, \bxi^k}}}{x^k,\delta=1} \nonumber\\
            &\overset{\circledOne}{\leq} \eta c \norms{\nabla \func{x^k}},\label{eq:NC_clipSGD_T2_FIRST_part}
        \end{align}
        where in $\circledOne$ we used $\norms{\clip{\nabla \func{x^k, \bxi^k}}} \leq c$.

        Combining the good and bad events yields
        \begin{align}
            &\expectcond{- \eta \dotprod{\nabla \func{x^k}}{\clip{\nabla \func{x^k, \bxi^k}}}}{x^k} \overset{\eqref{eq:NC_clipSGD_T1_and_T2_FIRST_part}}{\leq} \mathbb{P}\left(\delta=0\mid x^k\right) \underbrace{\expectcond{- \eta \alpha_{\bxi^k} \dotprod{\nabla \func{x^k}}{\nabla \func{x^k, \bxi^k}}}{x^k,\delta=0}}_{=: T_1} \nonumber\\
            &\quad\quad\quad+ \mathbb{P}\left(\delta=1\mid x^k\right) \underbrace{\expectcond{- \eta \alpha_{\bxi^k} \dotprod{\nabla \func{x^k}}{\nabla \func{x^k, \bxi^k}}}{x^k,\delta=1}}_{=: T_2}\nonumber\\
            &\overset{\eqref{eq:NC_clipSGD_T1_FIRST_part}, \eqref{eq:NC_clipSGD_T2_FIRST_part}}{\leq}
            -\frac{\eta c}{4} \norms{\nabla \func{x^k}}\cdot \mathbb{P}\left(\delta=0\mid x^k\right)  + \eta c \norms{\nabla \func{x^k}}\cdot \mathbb{P}\left(\delta=1\mid x^k\right)\nonumber\\
            &= -\frac{\eta c}{4} \norms{\nabla \func{x^k}}\cdot \left( 1 -  \mathbb{P}\left(\delta=1\mid x^k\right)\right)  + \eta c \norms{\nabla \func{x^k}}\cdot \mathbb{P}\left(\delta=1\mid x^k\right)\nonumber\\
            &\overset{\eqref{eq:NC_clipSGD_prob1_FIRST_part}}{\leq} -\eta c \left( \frac{8}{9} \cdot\frac{1}{4} -  \frac{1}{9} \right)\norms{\nabla \func{x^k}}\nonumber\\
            &= -\frac{\eta c}{9} \norms{\nabla \func{x^k}}.
            \label{eq:NC_clipSGD_first_term_smooth_FIRST_part}
        \end{align}
        Substituting this estimate into \eqref{eq:NC_clipSGD_smooth}, we obtain:
        \begin{align}
            \expectcond{F_{k+1}}{x^k} - F_k &\overset{\eqref{eq:NC_clipSGD_smooth}}{\leq} 
            - \expectcond{\eta \dotprod{\nabla \func{x^k}}{\clip{\nabla \func{x^k, \bxi^k}}}}{x^k} \nonumber\\
            &\quad\quad\quad+ \frac{\eta^2 \left(L_0 + L_1 \norms{\nabla \func{x^k}} \right)}{2} \expectcond{\norms{\clip{\nabla \func{x^k, \bxi^k}}}^2}{x^k}\nonumber\\
            &\overset{\eqref{eq:NC_clipSGD_first_term_smooth_FIRST_part}}{\leq} -\frac{\eta c}{9} \norms{\nabla \func{x^k}} + \frac{\eta^2 \left(L_0 + L_1 \norms{\nabla \func{x^k}} \right)}{2} \expectcond{\norms{\clip{\nabla \func{x^k, \bxi^k}}}^2}{x^k}\nonumber\\
            &\overset{\circledOne}{\leq} -\frac{\eta c}{9} \norms{\nabla \func{x^k}} + \frac{\eta^2 \left(L_0 + L_1 \norms{\nabla \func{x^k}} \right)}{2} \cdot c^2\nonumber\\
            &\overset{\circledTwo}{\leq} -\frac{\eta c}{9} \norms{\nabla \func{x^k}} + \frac{\eta^2 \left(L_0 + L_1 c \right)}{2} \cdot c \cdot \norms{\nabla \func{x^k}} \nonumber\\
            &\overset{\circledThree}{\leq} -\frac{\eta c}{9} \norms{\nabla \func{x^k}} + \frac{\eta c}{18} \norms{\nabla \func{x^k}} \nonumber\\
            &= -\frac{\eta c}{18} \norms{\nabla \func{x^k}}, \nonumber
        \end{align}
        where in $\circledOne$ we used $\norms{\clip{\nabla \func{x^k, \bxi^k}}} \leq c$, in $\circledTwo$ we used $\norms{\nabla \func{x^k}} \geq c$, in $\circledThree$ we chose $\eta \leq \left[9 \left( L_0 + L_1 c  \right)  \right]^{-1}$.

        Hence, in the first case,
        \begin{equation}
            \norms{\nabla \func{x^k}} \leq \frac{18 \left( F_k - \expectcond{F_{k+1}}{x^k}  \right)}{\eta c}.
            \label{eq:NC_clipSGD_case1_Itog_FIRST_part}
        \end{equation}

        \fbox{1.2) The case ${\norms{\nabla \func{x^k}} < \frac{6 \sqrt{2} \sigma}{\sqrt{B}}}$.} 

        This case is immediate from the case condition:
        \begin{equation*}
            \norms{\nabla \func{x^k}} \leq \frac{6 \sqrt{2} \sigma}{\sqrt{B}}.
        \end{equation*}

        \paragraph{Combining the first regime.}
        Since both terms on the right-hand side are non-negative, the bounds from the two cases imply, for all $k\in[0,N-1]$,
        \begin{equation*}
            \norms{\nabla \func{x^k}} \leq \frac{18 \left( F_k - \expectcond{F_{k+1}}{x^k}  \right)}{\eta c} + \frac{6 \sqrt{2} \sigma}{\sqrt{B}}.
        \end{equation*}
        Taking full expectation, summing over all indices $k \in [0,N-1]$, and using telescoping of $\expect{F_k}$ gives
        \begin{equation}
            \min_{k \in [0,N-1]} \expect{\norms{\nabla \func{x^k}}} \leq \frac{1}{N} \sum_{k =0}^{N-1} \expect{\norms{\nabla \func{x^k}}} \leq \frac{18 F_0}{\eta c N} + \frac{6 \sqrt{2} \sigma}{\sqrt{B}}. 
            \label{eq:NC_clipSGD_ITOG_First_Part}
        \end{equation}

        \paragraph{Second regime: $c >  \frac{6 \sqrt{2} \sigma}{\sqrt{B}}$.} We now consider two cases: ${\norms{\nabla \func{x^k}} \geq c}$ and ${\norms{\nabla \func{x^k}} < c}$.

        \fbox{2.1) The case ${\norms{\nabla \func{x^k}} \geq c}$.} 

        This case is covered by the alignment argument used in \fbox{1.1)}. Indeed, the condition of \fbox{1.1)} is
        \begin{equation*}
            \norms{\nabla \func{x^k}} \geq \frac{6 \sqrt{2} \sigma}{\sqrt{B}} \geq c,
        \end{equation*}
        while the present case satisfies
        \begin{equation*}
            \norms{\nabla \func{x^k}} \geq c > \frac{6 \sqrt{2} \sigma}{\sqrt{B}}.
        \end{equation*}
        Thus, the same proof applies without repetition.
        
        Therefore,
        \begin{equation}
           c \cdot \norms{\nabla \func{x^k}} \overset{\eqref{eq:NC_clipSGD_case1_Itog_FIRST_part}}{\leq} \frac{18 \left( F_k - \expectcond{F_{k+1}}{x^k}  \right)}{\eta}.
            \label{eq:NC_clipSGD_case1_Itog_SECOND_part}
        \end{equation}

        \fbox{2.2) The case ${\norms{\nabla \func{x^k}} < c}$.}

        In this case, $\nabla \func{x^k} = \clip{\nabla \func{x^k}}$. From \eqref{eq:NC_clipSGD_smooth}, we have:
        \begin{align*}
            \expectcond{\func{x^{k+1}}}{x^k} - \func{x^k} &\overset{\eqref{eq:NC_clipSGD_smooth}}{\leq} - \eta \expectcond{\dotprod{\nabla \func{x^k}}{\clip{\nabla \func{x^k, \bxi^k}}}}{x^k} \\
            &\quad\quad\quad+ \frac{\eta^2 \left(L_0 + L_1 \norms{\nabla \func{x^k}} \right)}{2} \expectcond{\norms{\clip{\nabla \func{x^k, \bxi^k}}}^2}{x^k}\\
            &\overset{\circledOne}{\leq} - \eta \expectcond{\dotprod{\nabla \func{x^k}}{\clip{\nabla \func{x^k, \bxi^k}}}}{x^k} \\
            &\quad\quad\quad+ \frac{\eta^2 \left(L_0 + L_1 c \right)}{2} \expectcond{\norms{\clip{\nabla \func{x^k, \bxi^k}}}^2}{x^k}\\
            &\overset{\circledTwo}{=} -\frac{\eta}{2} \norms{\nabla \func{x^k}}^2 - \frac{\eta}{2} \expectcond{\norms{\clip{\nabla \func{x^k, \bxi^k}}}^2}{x^k} \\
            &\quad\quad\quad+ \frac{\eta}{2} \expectcond{\norms{\clip{\nabla \func{x^k, \bxi^k}} - \nabla \func{x^k}}^2}{x^k}\\
            &\quad \quad \quad+ \frac{\eta^2 \left(L_0 + L_1 c \right)}{2} \expectcond{\norms{\clip{\nabla \func{x^k, \bxi^k}}}^2}{x^k}\\
            &\overset{\circledThree}{=} -\frac{\eta}{2} \norms{\nabla \func{x^k}}^2 + \frac{\eta}{2} \expectcond{\norms{\clip{\nabla \func{x^k, \bxi^k}} - \clip{\nabla \func{x^k}}}^2}{x^k}\\
            &\quad \quad \quad- \frac{\eta}{2} \left( 1 -\eta \left(L_0 + L_1 c \right) \right) \expectcond{\norms{\clip{\nabla \func{x^k, \bxi^k}}}^2}{x^k}\\
            &\overset{\circledFour}{\leq} -\frac{\eta}{2} \norms{\nabla \func{x^k}}^2 + \frac{\eta}{2} \expectcond{\norms{\clip{\nabla \func{x^k, \bxi^k}} - \clip{\nabla \func{x^k}}}^2}{x^k},
        \end{align*}
        where in $\circledOne$ we used $\norms{\nabla \func{x^k}} < c$, in $\circledTwo$ we used inequality \eqref{eq:quadratic_difference} with $a =  \nabla \func{x^{k}}$ and $b =  \clip{\nabla \func{x^k, \bxi^k}}$, in $\circledThree$ we used $\nabla \func{x^k} = \clip{\nabla \func{x^k}}$, in $\circledFour$ we chose $\eta \leq \left[9 \left( L_0 + L_1 c  \right)  \right]^{-1}$.

        Since clipping is the projection onto the Euclidean ball of radius $c$, it is a Lipschitz operator with Lipschitz constant $1$. Therefore,
        \begin{align}
            \expectcond{\func{x^{k+1}}}{x^k} - \func{x^k} &\leq -\frac{\eta}{2} \norms{\nabla \func{x^k}}^2 + \frac{\eta}{2} \expectcond{\norms{\nabla \func{x^k, \bxi^k} - \nabla \func{x^k}}^2}{x^k}\nonumber\\
            &\overset{\eqref{eq:variance_with_batch_size}}{\leq} -\frac{\eta}{2} \norms{\nabla \func{x^k}}^2 + \frac{\eta }{2} \left(  \frac{(\rho - 1)}{B} \norms{\nabla \func{x^k}}^{2} + \frac{\sigma^2}{B} \right) \nonumber\\
            &\overset{\circledOne}{\leq}  -\frac{\eta}{2} \norms{\nabla \func{x^k}}^2 + \frac{\eta}{144} \norms{\nabla \func{x^k}}^{2} + \frac{\eta\sigma^2}{2 B}\nonumber\\
            &\leq  -\frac{\eta}{2} \norms{\nabla \func{x^k}}^2 + \frac{\eta}{9} \norms{\nabla \func{x^k}}^{2} + \frac{\eta\sigma^2}{2 B}\nonumber\\
            &=  -\frac{7\eta}{18} \norms{\nabla \func{x^k}}^2 + \frac{\eta\sigma^2}{2 B}\nonumber\\
            &\leq  -\frac{\eta}{18} \norms{\nabla \func{x^k}}^2 + \frac{\eta\sigma^2}{2 B}\nonumber,
        \end{align}
        where in $\circledOne$ we chose $B \geq 72 (\rho - 1)$. 

        Hence, in the second case,
        \begin{equation}
            \norms{\nabla \func{x^k}}^2 \leq \frac{18 \left( F_k - \expectcond{F_{k+1}}{x^k} \right)}{\eta} + \frac{9 \sigma^2}{B},
            \label{eq:NC_clipSGD_case2_Itog_SECOND_part}
        \end{equation}

        \paragraph{Combining the second regime.}
        Define $\cT_1 := \left\{ k \in [0,N-1] \;| \;\norms{\nabla \func{x^k}} \geq c \right\}$ and $\cT_2 := [0,N-1] \setminus \cT_1$. Taking full expectation and summing the corresponding bounds over $k=0,\ldots,N-1$, we obtain:
        \begin{align*}
            \frac{1}{N} &\left( \sum_{k \in \cT_1} c \cdot \expect{\norms{\nabla \func{x^k}}} + \sum_{k \in \cT_2} \expect{\norms{\nabla \func{x^k}}^2} \right) \\
            &\overset{\eqref{eq:NC_clipSGD_case1_Itog_SECOND_part}, \eqref{eq:NC_clipSGD_case2_Itog_SECOND_part}}{\leq} \frac{1}{N} \left(\sum_{k \in \cT_1} \frac{18 \left( \expect{F_k} - \expect{F_{k+1}} \right)}{\eta} +  \sum_{k \in \cT_2} \frac{18 \left( \expect{F_k} - \expect{F_{k+1}} \right)}{\eta} + \sum_{k \in \cT_2} \frac{9 \sigma^2}{B} \right)\\
            & = \frac{1}{N} \left(  \sum_{k=0}^{N-1} \frac{18 \left( \expect{F_k} - \expect{F_{k+1}} \right)}{\eta} + \sum_{k \in \cT_2} \frac{9 \sigma^2}{B} \right)\\
            &\leq  \frac{18  F_0 }{\eta N} +  \frac{9 \sigma^2}{B}.
        \end{align*}
        In particular,
        \begin{equation*}
            \frac{1}{N} \sum_{k \in \cT_1} c \cdot \expect{\norms{\nabla \func{x^k}}} \leq \frac{18 F_0}{\eta N} +  \frac{9 \sigma^2}{B},
        \end{equation*}
        and
        \begin{equation*}
            \frac{1}{N} \sum_{k \in \cT_2}  \expect{\norms{\nabla \func{x^k}}^2} \leq \frac{18 F_0}{\eta N} +  \frac{9 \sigma^2}{B},
        \end{equation*}
        Using $x^2 \geq 2 \epsilon x - \epsilon^2$ for any $\epsilon,x > 0$, and defining
        \[
            A = \frac{18 F_0}{\eta N} +  \frac{18 \sigma^2}{B},
        \]
        we get:
        \begin{equation*}
           \frac{1}{N} \sum_{k \in \cT_2} \left( 2 \epsilon \expect{\norms{\nabla \func{x^k}}} - \epsilon^2  \right) \leq \frac{1}{N} \sum_{k \in \cT_2}  \expect{\norms{\nabla \func{x^k}}}^2 \overset{\circledOne}{\leq} \frac{1}{N} \sum_{k \in \cT_2}  \expect{\norms{\nabla \func{x^k}}^2} \leq A,
        \end{equation*}
        where in $\circledOne$ we applied Jensen's inequality, and thus we have
        \begin{equation*}
            \frac{1}{N} \sum_{k \in \cT_2}  \expect{\norms{\nabla \func{x^k}}} \leq \frac{A}{2\epsilon} + \frac{\epsilon}{2}.
        \end{equation*}
        Choosing $\epsilon = \sqrt{A}$, we get
        \begin{equation*}
            \frac{1}{N} \sum_{k \in \cT_2}  \expect{\norms{\nabla \func{x^k}}} \leq \sqrt{A} \leq \sqrt{\frac{18 F_0}{\eta N} +  \frac{18 \sigma^2}{B}} \overset{\circledOne}{\leq} \sqrt{\frac{18 F_0}{\eta N}} + \frac{6 \sqrt{2} \sigma}{\sqrt{B}},
        \end{equation*}
        where in $\circledOne$ we used $\sqrt{a+b} \leq \sqrt{a} + \sqrt{b}$ for $a,b \geq 0$.
        
        Combining the contributions of $\cT_1$ and $\cT_2$ gives:
        \begin{align}
            \min_{k \in [0,N-1]} \expect{ \norms{\nabla \func{x^k}}} &\leq \frac{1}{N} \sum_{k =0}^{N-1} \expect{ \norms{\nabla \func{x^k}}}  \nonumber\\
            &= \frac{1}{N} \left(  \sum_{k \in \cT_1}  \expect{ \norms{\nabla \func{x^k}}} +  \sum_{k \in \cT_2} \expect{\norms{\nabla \func{x^k}}} \right)\nonumber\\
            &\leq \sqrt{\frac{18 F_0}{\eta N}}  +\frac{18 F_0}{\eta c N} + \frac{6 \sqrt{2} \sigma}{\sqrt{B}} +  \frac{18 \sigma^2}{2cB}.
            \label{eq:NC_clipSGD_ITOG_SECOND_Part}
        \end{align}

        The second-regime bound \eqref{eq:NC_clipSGD_ITOG_SECOND_Part} also covers the first-regime bound \eqref{eq:NC_clipSGD_ITOG_First_Part}. This proves the stated convergence rate for step size $\eta \leq \left[9 \left( L_0 + L_1 c  \right)  \right]^{-1}$ and batch size $B \geq 72 (\rho - 1)$.
\end{proof}

        \begin{remark}
            \label{rem:Step_size_nc_ClipSGD}
            In the first step of~\eqref{eq:NC_clipSGD_smooth}, we use the descent lemma for $(L_0,L_1)$-smooth functions~\eqref{eq:L0L1_descent_inequality} \citep[see Appendix~A.1 in][]{Zhang_2020_Improved}; the condition $\norms{x^{k+1}-x^k}\leq 1/L_1$ required for its application is guaranteed by the chosen step size $\eta \leq \left[9 \left( L_0 + L_1 c  \right)  \right]^{-1}$:
            \begin{equation*}
                \norms{x^{k+1}-x^k} \overset{\circledOne}{=} \eta \norms{\clip{\nabla \func{x^k, \bxi^k}}} \overset{\circledTwo}{\leq} \eta c \overset{\circledThree}{\leq} \frac{c}{9 \left( L_0 + L_1 c  \right)} \overset{\circledFour}{\leq} \frac{c}{9 L_1 c} \overset{\circledFour}{\leq} \frac{1}{L_1},
            \end{equation*}
            where in $\circledOne$ we used $x^{k+1} = x^k - \eta \cdot \clip{\nabla \func{x^k, \bxi^k}}$, in $\circledTwo$ we used $\norms{\clip{\nabla \func{x^k, \bxi^k}}} = \norms{\min\left\{1, \frac{c}{\norms{\nabla \func{x^k, \bxi^k}}} \right\} \nabla \func{x^k, \bxi^k}} \leq c$, in $\circledThree$ we used $\eta \leq \left[9 \left( L_0 + L_1 c  \right)  \right]^{-1}$, where in $\circledFour$ we used the fact that, for $0<b\leq a$, one has $\frac{1}{a}\leq \frac{1}{b}$.
        \end{remark}

    \subsubsection{Convex setup under \texorpdfstring{$(\cL_0,\cL_1)$}{TEXT}-smooth and strong growth conditions}\label{app:Convex_ClipSGD}

       We now prove the convex ClipSGD result. The proof follows the same alignment argument as in the non-convex case, but convexity allows us to convert gradient-norm bounds into a recursion for the optimality gap.
    
\begin{proof}
Throughout the proof, we use the shorthand
\[
        F_k := \func{x^k}-f^*.
    \]
In the one-step estimates below, expectations are taken with respect to the fresh mini-batch $\bxi^k$ and are written explicitly as conditional expectations given the current iterate $x^k$. After taking full expectation, we return to the notation $\expect{\cdot}$.
By convexity, the Cauchy--Schwarz inequality, and the monotonicity of the distance to the optimum from Lemma~\ref{lem:monotone_clipsgd}, we have
\begin{equation}
    F_k
    \leq
    \dotprod{\nabla \func{x^k}}{x^k-x^*}
    \leq
    \norms{\nabla \func{x^k}}\norms{x^k-x^*}
    \leq
    R_0\norms{\nabla \func{x^k}}.
    \label{eq:function_gap_gradient_relation_ClipSGD}
\end{equation}

       We start from the descent inequality for $(L_0,L_1)$-smooth functions with constants $L_0 = \cL_0$ and $L_1 = \sqrt{\rho} \cL_1$ (see \eqref{eq:new_smoothness_with_rho} in Section~\ref{subsec:generalized_smoothness_conditions}):
        \begin{align}
            \func{x^{k+1}} - \func{x^k} &\overset{\circledOne}{\leq} \dotprod{\nabla \func{x^k}}{x^{k+1} - x^k} + \frac{\cL_0 + \sqrt{\rho} \cL_1 \norms{\nabla \func{x^k}}}{2} \norms{x^{k+1} - x^{k}}^2 \nonumber \\
            &\overset{\circledTwo}{=} - \eta \dotprod{\nabla \func{x^k}}{\clip{\nabla \func{x^k, \bxi^k}}} \nonumber\\
            &\quad \quad \quad+ \frac{\eta^2 \left(\cL_0 + \sqrt{\rho} \cL_1 \norms{\nabla \func{x^k}}\right)}{2} \norms{\clip{\nabla \func{x^k, \bxi^k}}}^2 , \label{eq:Convex_clipSGD_smooth}
        \end{align}
        Here, in~$\circledOne$ we used the descent inequality~\eqref{eq:L0L1_descent_inequality}; its locality condition $\norms{x^{k+1}-x^k}\leq \frac{1}{\sqrt{\rho} \cL_1}$ is verified in Remark~\ref{rem:Step_size_convex_ClipSGD}. In $\circledTwo$ we used the ClipSGD update $x^{k+1} = x^k - \eta \cdot \clip{\nabla \func{x^k, \bxi^k}}$.

        We now consider two cases depending on the deterministic gradient norm.

        \fbox{1) The case $\norms{\nabla \func{x^k}} \geq c$.} 

        Define the bad event
        \[
            \delta = \mathbbm{1}_{\left\{  \norms{\nabla \func{x^k,\bxi^k} - \nabla \func{x^k}} > 3 \sqrt{\frac{\rho - 1}{B}} \norms{\nabla \func{x^k}} \right\}}.
        \]
        We decompose the first term in~\eqref{eq:Convex_clipSGD_smooth} according to this event:
        \begin{align}
            \expectcond{- \eta \dotprod{\nabla \func{x^k}}{\clip{\nabla \func{x^k, \bxi^k}}}}{x^k} &\overset{\circledast}{=}  \expectcond{- \eta \alpha_{\bxi^k} \dotprod{\nabla \func{x^k}}{\nabla \func{x^k, \bxi^k}}}{x^k}\nonumber\\
            &\hspace{-8em}\leq \mathbb{P}\left(\delta=0\mid x^k\right) \underbrace{\expectcond{- \eta \alpha_{\bxi^k} \dotprod{\nabla \func{x^k}}{\nabla \func{x^k, \bxi^k}}}{x^k,\delta=0}}_{=: T_1} \nonumber\\
            &\hspace{-8em}\quad\quad\quad+ \mathbb{P}\left(\delta=1\mid x^k\right) \underbrace{\expectcond{- \eta \alpha_{\bxi^k} \dotprod{\nabla \func{x^k}}{\nabla \func{x^k, \bxi^k}}}{x^k,\delta=1}}_{=: T_2},\label{eq:Convex_clipSGD_T1_and_T2}
        \end{align}
        where in $\circledast $ we used $\clip{\nabla \func{x^k, \bxi^k}} = \alpha_{\bxi^k} \cdot \nabla \func{x^k, \bxi^k}$. We first estimate the probability of the bad event. By Markov's inequality,
        \begin{align}
            \mathbb{P}\left(\delta=1\mid x^k\right) &= \mathbb{P}\left(\norms{\nabla \func{x^k,\bxi^k} - \nabla \func{x^k}}^2 > 9 \frac{\rho - 1}{B} \norms{\nabla \func{x^k}}^2\mid x^k\right) \nonumber\\
            &\leq \frac{\expectcond{\norms{\nabla \func{x^k,\bxi^k} - \nabla \func{x^k}}^2}{x^k}}{9 (\rho - 1) \norms{\nabla \func{x^k}}^2} \cdot B \nonumber\\
            &\overset{\eqref{eq:variance_with_batch_size}}{\leq} \frac{1}{9}.
            \label{eq:Convex_clipSGD_prob1}
        \end{align}
        Consequently, $\mathbb{P}\left(\delta=0\mid x^k\right) = 1 - \mathbb{P}\left(\delta=1\mid x^k\right) \geq \frac{8}{9}$. We next bound the contribution of the good event:
        \begin{align}
            &\expectcond{- \eta \alpha_{\bxi^k} \dotprod{\nabla \func{x^k}}{\nabla \func{x^k, \bxi^k}}}{x^k,\delta=0} \nonumber\\
            &=  \expectcond{- \eta \alpha_{\bxi^k} \dotprod{\nabla \func{x^k}}{\nabla \func{x^k, \bxi^k} \pm \nabla \func{x^k}}}{x^k,\delta=0} \nonumber\\
            &=  \expectcond{- \eta \alpha_{\bxi^k} \norms{\nabla \func{x^k}}^2 - \eta \alpha_{\bxi^k} \dotprod{\nabla \func{x^k}}{\nabla \func{x^k, \bxi^k} - \nabla \func{x^k}}}{x^k,\delta=0}\nonumber\\
            &\overset{\eqref{eq:scalar_product_bound}}{\leq}  \expectcond{- \eta \alpha_{\bxi^k} \norms{\nabla \func{x^k}}^2 + \eta \alpha_{\bxi^k} \norms{\nabla \func{x^k}}\norms{\nabla \func{x^k, \bxi^k} - \nabla \func{x^k}}}{x^k,\delta=0}\nonumber\\
            &\overset{\circledOne}{\leq} \expectcond{- \eta \alpha_{\bxi^k} \norms{\nabla \func{x^k}}^2 + 3 \eta \alpha_{\bxi^k} \sqrt{\frac{\rho - 1}{B}} \norms{\nabla \func{x^k}}^2}{x^k,\delta=0}\nonumber\\
            &\overset{\circledTwo}{\leq} \expectcond{- \eta \alpha_{\bxi^k} \norms{\nabla \func{x^k}}^2 + \frac{\eta \alpha_{\bxi^k}}{2}  \norms{\nabla \func{x^k}}^2}{x^k,\delta=0}\nonumber\\
            &=\expectcond{- \frac{\eta \alpha_{\bxi^k}}{2}  \norms{\nabla \func{x^k}}^2}{x^k,\delta=0}\nonumber\\
            &\overset{\circledThree}{\leq} - \frac{\eta c}{4}  \norms{\nabla \func{x^k}},
            \label{eq:Convex_clipSGD_T1}
        \end{align}
        Here, in $\circledOne$ we used the definition of the event $\{\delta=0\}$, in $\circledTwo$ we used $B \geq 36 (\rho - 1)$, and in $\circledThree$ we used $\norms{\nabla \func{x^k, \bxi^k}}\leq 2\norms{\nabla \func{x^k}}$, which implies $\alpha_{\bxi^k}\geq c/(2\norms{\nabla \func{x^k}})$. On the complementary event, we only use the boundedness induced by clipping:
        
        \begin{align}
            \expectcond{- \eta \alpha_{\bxi^k} \dotprod{\nabla \func{x^k}}{\nabla \func{x^k, \bxi^k}}}{x^k,\delta=1} &\overset{\eqref{eq:scalar_product_bound}}{\leq} \eta \norms{\nabla \func{x^k}} \expectcond{ \alpha_{\bxi^k} \norms{\nabla \func{x^k, \bxi^k}}}{x^k,\delta=1} \nonumber\\
            &\overset{\circledOne}{\leq} \eta c \norms{\nabla \func{x^k}},\label{eq:Convex_clipSGD_T2}
        \end{align}
        where in $\circledOne$ we used $\norms{\clip{\nabla \func{x^k, \bxi^k}}} \leq c$.

        Combining the good and bad events yields
        \begin{align}
            &\expectcond{- \eta \dotprod{\nabla \func{x^k}}{\clip{\nabla \func{x^k, \bxi^k}}}}{x^k} \overset{\eqref{eq:Convex_clipSGD_T1_and_T2}}{\leq} \mathbb{P}\left(\delta=0\mid x^k\right) \underbrace{\expectcond{- \eta \alpha_{\bxi^k} \dotprod{\nabla \func{x^k}}{\nabla \func{x^k, \bxi^k}}}{x^k,\delta=0}}_{=: T_1} \nonumber\\
            &\quad\quad\quad+ \mathbb{P}\left(\delta=1\mid x^k\right) \underbrace{\expectcond{- \eta \alpha_{\bxi^k} \dotprod{\nabla \func{x^k}}{\nabla \func{x^k, \bxi^k}}}{x^k,\delta=1}}_{=: T_2}\nonumber\\
            &\overset{\eqref{eq:Convex_clipSGD_T1}, \eqref{eq:Convex_clipSGD_T2}}{\leq}
            -\frac{\eta c}{4} \norms{\nabla \func{x^k}}\cdot \mathbb{P}\left(\delta=0\mid x^k\right)  + \eta c \norms{\nabla \func{x^k}}\cdot \mathbb{P}\left(\delta=1\mid x^k\right)\nonumber\\
            &= -\frac{\eta c}{4} \norms{\nabla \func{x^k}}\cdot \left( 1 -  \mathbb{P}\left(\delta=1\mid x^k\right)\right)  + \eta c \norms{\nabla \func{x^k}}\cdot \mathbb{P}\left(\delta=1\mid x^k\right)\nonumber\\
            &\overset{\eqref{eq:Convex_clipSGD_prob1}}{\leq} -\eta c \left( \frac{8}{9} \cdot\frac{1}{4} -  \frac{1}{9} \right)\norms{\nabla \func{x^k}}\nonumber\\
            &= -\frac{\eta c}{9} \norms{\nabla \func{x^k}}.
            \label{eq:Convex_clipSGD_first_term_smooth}
        \end{align}
        Substituting this estimate into \eqref{eq:Convex_clipSGD_smooth}, we obtain:
        \begin{align}
            \expectcond{F_{k+1}}{x^k} - F_k &\overset{\eqref{eq:Convex_clipSGD_smooth}}{\leq} 
            - \expectcond{\eta \dotprod{\nabla \func{x^k}}{\clip{\nabla \func{x^k, \bxi^k}}}}{x^k}\nonumber\\
            &\quad \quad \quad+ \frac{\eta^2 \left(\cL_0 + \sqrt{\rho} \cL_1  \norms{\nabla \func{x^k}} \right)}{2} \expectcond{\norms{\clip{\nabla \func{x^k, \bxi^k}}}^2}{x^k}\nonumber\\
            &\overset{\eqref{eq:Convex_clipSGD_first_term_smooth}}{\leq} -\frac{\eta c}{9} \norms{\nabla \func{x^k}} + \frac{\eta^2 \left(\cL_0 + \sqrt{\rho} \cL_1 \norms{\nabla \func{x^k}} \right)}{2} \expectcond{\norms{\clip{\nabla \func{x^k, \bxi^k}}}^2}{x^k}\nonumber\\
            &\overset{\circledOne}{\leq} -\frac{\eta c}{9} \norms{\nabla \func{x^k}} + \frac{\eta^2 \left(\cL_0 + \sqrt{\rho} \cL_1  \norms{\nabla \func{x^k}} \right)}{2} \cdot c^2\nonumber\\
            &\overset{\circledTwo}{\leq} -\frac{\eta c}{9} \norms{\nabla \func{x^k}} + \frac{\eta^2 \left(\cL_0 + \sqrt{\rho} \cL_1  c \right)}{2} \cdot c \cdot \norms{\nabla \func{x^k}} \nonumber\\
            &\overset{\circledThree}{\leq} -\frac{\eta c}{9} \norms{\nabla \func{x^k}} + \frac{\eta c}{18} \norms{\nabla \func{x^k}} \nonumber\\
            &= -\frac{\eta c}{18} \norms{\nabla \func{x^k}}\nonumber\\
            &\overset{\eqref{eq:function_gap_gradient_relation_ClipSGD}}{\leq} -\frac{\eta c}{18 R_0} F_k, \label{eq:Convex_clipSGD_case1_descent}
        \end{align}
        where in $\circledOne$ we used $\norms{\clip{\nabla \func{x^k, \bxi^k}}} \leq c$, in $\circledTwo$ we used $\norms{\nabla \func{x^k}} \geq c$, in $\circledThree$ we chose $\eta \leq \left[9 \left( \cL_0 + \sqrt{\rho} \cL_1 c  \right)  \right]^{-1}$.

        Hence, in the first case,
        \begin{equation}
            \expectcond{F_{k+1}}{x^k} \leq \left( 1 - \frac{\eta c}{18 R_0} \right) F_k.
            \label{eq:Convex_clipSGD_case1_ITOG}
        \end{equation}

        \fbox{2) The case $\norms{\nabla \func{x^k}} < c$.}

        In this case, $\nabla \func{x^k} = \clip{\nabla \func{x^k}}$. From \eqref{eq:Convex_clipSGD_smooth}, we have:
        \begin{align*}
            \expectcond{\func{x^{k+1}}}{x^k} - \func{x^k} &\overset{\eqref{eq:Convex_clipSGD_smooth}}{\leq} - \eta \expectcond{\dotprod{\nabla \func{x^k}}{\clip{\nabla \func{x^k, \bxi^k}}}}{x^k} \nonumber\\
            &\quad \quad \quad+ \frac{\eta^2 \left(\cL_0 + \sqrt{\rho} \cL_1 \norms{\nabla \func{x^k}} \right)}{2} \expectcond{\norms{\clip{\nabla \func{x^k, \bxi^k}}}^2}{x^k}\\
            &\overset{\circledOne}{\leq} - \eta \expectcond{\dotprod{\nabla \func{x^k}}{\clip{\nabla \func{x^k, \bxi^k}}}}{x^k} \nonumber\\
            &\quad \quad \quad+ \frac{\eta^2 \left(\cL_0 + \sqrt{\rho} \cL_1  c \right)}{2} \expectcond{\norms{\clip{\nabla \func{x^k, \bxi^k}}}^2}{x^k}\\
            &\overset{\circledTwo}{=} -\frac{\eta}{2} \norms{\nabla \func{x^k}}^2 - \frac{\eta}{2} \expectcond{\norms{\clip{\nabla \func{x^k, \bxi^k}}}^2}{x^k} \\
            &\quad\quad\quad+ \frac{\eta}{2} \expectcond{\norms{\clip{\nabla \func{x^k, \bxi^k}} - \nabla \func{x^k}}^2}{x^k}\\
            &\quad \quad \quad+ \frac{\eta^2 \left(\cL_0 + \sqrt{\rho} \cL_1  c \right)}{2} \expectcond{\norms{\clip{\nabla \func{x^k, \bxi^k}}}^2}{x^k}\\
            &\overset{\circledThree}{=} -\frac{\eta}{2} \norms{\nabla \func{x^k}}^2 + \frac{\eta}{2} \expectcond{\norms{\clip{\nabla \func{x^k, \bxi^k}} - \clip{\nabla \func{x^k}}}^2}{x^k}\\
            &\quad \quad \quad- \frac{\eta}{2} \left( 1 -\eta \left(\cL_0 + \sqrt{\rho} \cL_1 c \right) \right) \expectcond{\norms{\clip{\nabla \func{x^k, \bxi^k}}}^2}{x^k}\\
            &\overset{\circledFour}{\leq} -\frac{\eta}{2} \norms{\nabla \func{x^k}}^2 + \frac{\eta}{2} \expectcond{\norms{\clip{\nabla \func{x^k, \bxi^k}} - \clip{\nabla \func{x^k}}}^2}{x^k},
        \end{align*}
        where in $\circledOne$ we used $\norms{\nabla \func{x^k}} < c$, in $\circledTwo$ we used inequality \eqref{eq:quadratic_difference} with $a = \nabla \func{x^{k}}$ and $b =  \clip{\nabla \func{x^k, \bxi^k}}$, in $\circledThree$ we used $\nabla \func{x^k} = \clip{\nabla \func{x^k}}$, in $\circledFour$ we chose $\eta \leq \left[9 \left( \cL_0 + \sqrt{\rho} \cL_1 c  \right)  \right]^{-1}$.

        Since clipping is the projection onto the Euclidean ball of radius $c$, it is a Lipschitz operator with Lipschitz constant $1$. Therefore,
        \begin{align}
            \expectcond{\func{x^{k+1}}}{x^k} - \func{x^k} &\leq -\frac{\eta}{2} \norms{\nabla \func{x^k}}^2 + \frac{\eta}{2} \expectcond{\norms{\nabla \func{x^k, \bxi^k} - \nabla \func{x^k}}^2}{x^k}\nonumber\\
            &\overset{\eqref{eq:variance_with_batch_size}}{\leq} -\frac{\eta}{2} \norms{\nabla \func{x^k}}^2 + \frac{\eta (\rho - 1)}{2 B} \norms{\nabla \func{x^k}}^{2}\nonumber\\
            &\overset{\circledOne}{\leq}  -\frac{\eta}{2} \norms{\nabla \func{x^k}}^2 + \frac{\eta}{72} \norms{\nabla \func{x^k}}^{2}\nonumber\\
            &\leq  -\frac{\eta}{2} \norms{\nabla \func{x^k}}^2 + \frac{\eta}{9} \norms{\nabla \func{x^k}}^{2}\nonumber\\
            &=  -\frac{7\eta}{18} \norms{\nabla \func{x^k}}^2\nonumber\\
            &\leq  -\frac{\eta}{9} \norms{\nabla \func{x^k}}^2\nonumber\\
            &\overset{\eqref{eq:function_gap_gradient_relation_ClipSGD}}{\leq}  -\frac{\eta}{9 R_0^2} \left( \func{x^k} - f^*\right)^2,
            \label{eq:Convex_clipSGD_case2_descent}
        \end{align}
        where in $\circledOne$ we chose $B \geq 36 (\rho - 1)$. 

        \textbf{In summary.} Let $\mathcal{F}_k$ be the sigma-algebra generated by the history up to
$x^k$, and define
\begin{equation*}
    G_k:=\norms{\nabla \func{x^k}},
    \qquad
    I_k:=\mathbbm{1}_{\{G_k\geq c\}}.
\end{equation*}
The random variable $I_k$ is $\mathcal{F}_k$-measurable. Set
\begin{equation*}
    a:=\frac{\eta c}{18R_0},
    \qquad
    b:=\frac{\eta}{9R_0^2}.
\end{equation*}
The two conditional estimates derived above can be combined as
\begin{equation}
    \label{eq:combined_conditional_recursion}
    \expectcond{F_{k+1}}{\mathcal{F}_k}
    \leq
    F_k-aF_k I_k-bF_k^2(1-I_k).
\end{equation}
Indeed, on the event $I_k=1$ this is exactly the large-gradient estimate,
whereas on the event $I_k=0$ it is exactly the small-gradient estimate.

Taking expectations in~\eqref{eq:combined_conditional_recursion}, we obtain
\begin{equation}
    \label{eq:expected_combined_recursion}
    A_{k+1}
    \leq
    A_k
    -
    a\expect{F_k I_k}
    -
    b\expect{F_k^2(1-I_k)},
    \qquad
    A_k:=\expect{F_k}.
\end{equation}

We now use the following elementary inequality. For any nonnegative random
variable $X$ and any indicator random variable $I$,
\begin{equation}
    \label{eq:elementary_indicator_inequality}
    a\expect{XI}
    +
    b\expect{X^2(1-I)}
    \geq
    \min\left\{
        \frac{a}{2}\expect{X},
        b\left(\expect{X}\right)^2
    \right\}.
\end{equation}
To prove it, let $A=\expect{X}$ and $r=\expect{X(1-I)}$. Since
\begin{equation*}
    \expect{X^2(1-I)}
    =
    \expect{\left(X(1-I)\right)^2}
    \geq
    \left(\expect{X(1-I)}\right)^2
    =
    r^2,
\end{equation*}
we have
\begin{equation*}
    a\expect{XI}
    +
    b\expect{X^2(1-I)}
    \geq
    a(A-r)+br^2.
\end{equation*}
Minimizing the right-hand side over $r\in[0,A]$, we get: if
$A\leq a/(2b)$, the minimum is attained at $r=A$ and equals $bA^2$; if
$A\geq a/(2b)$, the minimum is attained at $r=a/(2b)$ and equals
\begin{equation*}
    aA-\frac{a^2}{4b}
    \geq
    \frac{a}{2}A.
\end{equation*}
This proves~\eqref{eq:elementary_indicator_inequality}.

Applying~\eqref{eq:elementary_indicator_inequality} to $X=F_k$ and $I=I_k$,
inequality~\eqref{eq:expected_combined_recursion} gives
\begin{equation}
    \label{eq:deterministic_gap_recursion}
    A_{k+1}
    \leq
    A_k
    -
    \min\left\{
        \frac{a}{2}A_k,
        bA_k^2
    \right\}.
\end{equation}
In particular, $(A_k)_{k\geq0}$ is non-increasing.

Define
\begin{equation*}
    \alpha:=\frac{a}{2}=\frac{\eta c}{36R_0},
    \qquad
    \beta:=b=\frac{\eta}{9R_0^2}.
\end{equation*}
Then~\eqref{eq:deterministic_gap_recursion} becomes
\begin{equation}
    \label{eq:alpha_beta_gap_recursion}
    A_{k+1}
    \leq
    A_k-\min\{\alpha A_k,\beta A_k^2\}.
\end{equation}

Let
\begin{equation*}
    \tau
    :=
    \min\left\{
        k\in\{0,\ldots,N-1\}
        :
        A_k<\frac{\alpha}{\beta}
    \right\},
\end{equation*}
with the convention $\tau=N$ if the set is empty. Since $(A_k)$ is
non-increasing, the indices $k<\tau$ correspond to the linear part of
\eqref{eq:alpha_beta_gap_recursion}, and the indices $k\geq\tau$ correspond
to the quadratic part.

If $\tau>N/2$, then for at least $N/2$ iterations,
\begin{equation*}
    A_{k+1}\leq (1-\alpha)A_k.
\end{equation*}
Using monotonicity for the remaining iterations, we get
\begin{equation}
    \label{eq:linear_phase_bound}
    A_N
    \leq
    (1-\alpha)^{N/2}A_0
    =
    \left(
        1-\frac{\eta c}{36R_0}
    \right)^{N/2}F_0.
\end{equation}

If $\tau\leq N/2$, then for $k=\tau,\ldots,N-1$,
\begin{equation*}
    A_{k+1}\leq A_k-\beta A_k^2.
\end{equation*}
If $A_N=0$, the claim is trivial. Otherwise,
\begin{equation*}
    \frac{1}{A_{k+1}}-\frac{1}{A_k}
    =
    \frac{A_k-A_{k+1}}{A_kA_{k+1}}
    \geq
    \beta\frac{A_k}{A_{k+1}}
    \geq
    \beta.
\end{equation*}
Summing from $k=\tau$ to $N-1$ and using $N-\tau\geq N/2$, we obtain
\begin{equation}
    \label{eq:quadratic_phase_bound}
    A_N
    \leq
    \frac{2}{\beta N}
    =
    \frac{18R_0^2}{\eta N}.
\end{equation}

Combining~\eqref{eq:linear_phase_bound} and
\eqref{eq:quadratic_phase_bound}, we get
\begin{equation*}
    \expect{\func{x^N}}-f^*
    =
    A_N
    \leq
    \max\left\{
        \left(
            1-\frac{\eta c}{36R_0}
        \right)^{N/2}F_0,
        \frac{18R_0^2}{\eta N}
    \right\}.
\end{equation*}

\end{proof}

    \begin{remark}
            \label{rem:Step_size_convex_ClipSGD}
            In the first step of~\eqref{eq:Convex_clipSGD_smooth}, we use the descent lemma for $(L_0,L_1)$-smooth functions~\eqref{eq:L0L1_descent_inequality} \citep[see Appendix~A.1 in][]{Zhang_2020_Improved}; the condition $\norms{x^{k+1}-x^k}\leq \frac{1}{\sqrt{\rho} \cL_1}$ required for its application is guaranteed by the chosen step size $\eta \leq \left[9 \left( \cL_0 + \sqrt{\rho} \cL_1 c  \right)  \right]^{-1}$:
            \begin{equation*}
                \norms{x^{k+1}-x^k} \overset{\circledOne}{=} \eta \norms{\clip{\nabla \func{x^k, \bxi^k}}} \overset{\circledTwo}{\leq} \eta c \overset{\circledThree}{\leq} \frac{c}{9 \left( \cL_0 + \sqrt{\rho} \cL_1 c  \right)} \overset{\circledFour}{\leq} \frac{c}{9 \sqrt{\rho} \cL_1 c} \overset{\circledFour}{\leq} \frac{1}{\sqrt{\rho} \cL_1},
            \end{equation*}
            where in $\circledOne$ we used $x^{k+1} = x^k - \eta \cdot \clip{\nabla \func{x^k, \bxi^k}}$, in $\circledTwo$ we used $\norms{\clip{\nabla \func{x^k, \bxi^k}}} = \norms{\min\left\{1, \frac{c}{\norms{\nabla \func{x^k, \bxi^k}}} \right\} \nabla \func{x^k, \bxi^k}} \leq c$, in $\circledThree$ we used $\eta \leq \left[9 \left( \cL_0 + \sqrt{\rho} \cL_1 c  \right)  \right]^{-1}$, where in $\circledFour$ we used the fact that, for $0<b\leq a$, one has $\frac{1}{a}\leq \frac{1}{b}$.
        \end{remark}

\subsection{Missing proofs for normalized stochastic gradient descent method}

This section gives the corresponding proofs for NSGD. The hyperparameter $\lambda>0$ plays a role analogous to the clipping radius, but it also appears directly in the normalization denominator. As a result, the admissible step size and the final error floor depend on $\lambda$.

    \subsubsection{Non-convex setup under \texorpdfstring{$(L_0,L_1)$}{TEXT}-smooth and generalized strong growth conditions}\label{app:NC_NSGD}
\begin{proof}
For simplicity of presentation, define
\[
    \Gf{x^k, \bxi^k}
    \coloneqq
    \frac{\nabla \func{x^k, \bxi^k}}{\norms{\nabla \func{x^k, \bxi^k}} + \lambda},
    \qquad
        F_k := \func{x^k}-f^*.
    \]
In the one-step estimates below, expectations are taken with respect to the fresh mini-batch $\bxi^k$ and are written explicitly as conditional expectations given the current iterate $x^k$. After taking full expectation, we return to the notation $\expect{\cdot}$.

     We start from the descent inequality for $(L_0,L_1)$-smooth functions:
     \begin{align}
            \func{x^{k+1}} - \func{x^k} &\overset{\circledOne}{\leq} \dotprod{\nabla \func{x^k}}{x^{k+1} - x^k} + \frac{L_0 + L_1 \norms{\nabla \func{x^k}}}{2} \norms{x^{k+1} - x^k}^2\nonumber\\
            &\overset{\circledTwo}{=} -\eta \dotprod{\nabla \func{x^k}}{\Gf{x^k, \bxi^k}} + \frac{\eta^2 \left(L_0 + L_1 \norms{\nabla \func{x^k}} \right)}{2} \norms{\Gf{x^k, \bxi^k}}^2\nonumber\\
            &\overset{\circledThree}{\leq} -\eta \frac{\dotprod{\nabla \func{x^k}}{\nabla \func{x^k, \bxi^k}}}{\norms{\nabla \func{x^k, \bxi^k}} + \lambda} + \frac{\eta^2 \left(L_0 + L_1 \norms{\nabla \func{x^k}} \right)}{2}\nonumber\\ 
            &= -\eta \frac{\dotprod{\nabla \func{x^k} \pm \nabla \func{x^k, \bxi^k} }{\nabla \func{x^k, \bxi^k}}}{\norms{\nabla \func{x^k, \bxi^k}} + \lambda} + \frac{\eta^2 \left(L_0 + L_1 \norms{\nabla \func{x^k}} \right)}{2}\nonumber\\ 
            &= -\eta \frac{\norms{\nabla \func{x^k, \bxi^k}}^2}{\norms{\nabla \func{x^k, \bxi^k}} + \lambda} -\eta \frac{\dotprod{\nabla \func{x^k} - \nabla \func{x^k, \bxi^k} }{\nabla \func{x^k, \bxi^k}}}{\norms{\nabla \func{x^k, \bxi^k}} + \lambda} \nonumber\\
            &\quad\quad\quad+ \frac{\eta^2 \left(L_0 + L_1 \norms{\nabla \func{x^k}} \right)}{2} \nonumber\\
            &\overset{\circledFour}{\leq} -\eta \frac{\norms{\nabla \func{x^k, \bxi^k}}^2}{\norms{\nabla \func{x^k, \bxi^k}} + \lambda} +  \eta \frac{\norms{\nabla \func{x^k} - \nabla \func{x^k, \bxi^k} } \norms{\nabla \func{x^k, \bxi^k}}}{\norms{\nabla \func{x^k, \bxi^k}} + \lambda} \nonumber\\
            &\quad\quad\quad+ \frac{\eta^2 \left(L_0 + L_1 \norms{\nabla \func{x^k}} \right)}{2} \nonumber\\
            &\overset{\circledThree}{\leq} -\eta \frac{\norms{\nabla \func{x^k, \bxi^k}}^2}{\norms{\nabla \func{x^k, \bxi^k}} + \lambda} +  \eta \norms{\nabla \func{x^k} - \nabla \func{x^k, \bxi^k} } \nonumber\\
            &\quad\quad\quad+ \frac{\eta^2 \left(L_0 + L_1 \norms{\nabla \func{x^k}} \right)}{2} \nonumber\\
            &= -\eta \norms{\nabla \func{x^k, \bxi^k}} + \eta \frac{\lambda \norms{\nabla \func{x^k, \bxi^k}}}{\norms{\nabla \func{x^k, \bxi^k}} + \lambda} +  \eta \norms{\nabla \func{x^k} - \nabla \func{x^k, \bxi^k} } \nonumber\\
            &\quad\quad\quad+ \frac{\eta^2 \left(L_0 + L_1 \norms{\nabla \func{x^k}} \right)}{2} \nonumber\\
            &\overset{\circledThree}{\leq} -\eta \norms{\nabla \func{x^k, \bxi^k}} + \eta \lambda +  \eta \norms{\nabla \func{x^k} - \nabla \func{x^k, \bxi^k} } \nonumber\\
            &\quad\quad\quad+ \frac{\eta^2 \left(L_0 + L_1 \norms{\nabla \func{x^k}} \right)}{2} \nonumber\\
            &\overset{\circledFive}{\leq}  -\eta \norms{\nabla \func{x^k}} + 2 \eta \norms{\nabla \func{x^k, \bxi^k} - \nabla \func{x^k}}  \nonumber\\
            &\quad\quad\quad+ \frac{\eta^2 \left(L_0 + L_1 \norms{\nabla \func{x^k}} \right)}{2} + \eta \lambda \nonumber\\
            &\overset{\circledSix}{\leq} -\eta \norms{\nabla \func{x^k}} + 2 \eta \norms{\nabla \func{x^k, \bxi^k} - \nabla \func{x^k}}  \nonumber\\
            &\quad\quad\quad+ \frac{\eta}{2} \left( \norms{\nabla \func{x^k}} + \lambda \right) + \eta \lambda \nonumber\\
            &= -\frac{\eta}{2} \norms{\nabla \func{x^k}} + 2 \eta \norms{\nabla \func{x^k, \bxi^k} - \nabla \func{x^k}} + \frac{3}{2} \eta \lambda,
            \label{eq:NC_NSGD_smooth}
        \end{align}
        Here, in~$\circledOne$ we used the descent inequality~\eqref{eq:L0L1_descent_inequality}; its locality condition $\norms{x^{k+1}-x^k}\leq \frac{1}{L_1}$ is verified in Remark~\ref{rem:Step_size_NC_NSGD}. In $\circledTwo$ we used the NSGD update $x^{k+1} = x^k - \eta \cdot \Gf{x^k, \bxi^k}$. In $\circledThree$ we used $\norms{\Gf{x^k, \bxi^k}}\leq 1$, in $\circledFour$ the Cauchy--Schwarz inequality~\eqref{eq:scalar_product_bound}, and in $\circledFive$ the triangle inequality. Finally, in $\circledSix$ we used the step-size condition $\eta \leq \frac{\lambda}{L_0 + L_1 \lambda} $ together with $\frac{\lambda}{L_0 + L_1 \lambda} \leq \frac{\norms{\nabla \func{x^k}} + \lambda}{L_0 + L_1 \norms{\nabla \func{x^k}}}$, which follows from
        \begin{align*}
            \frac{\norms{\nabla \func{x^k}} + \lambda}{L_0 + L_1 \norms{\nabla \func{x^k}}} - \frac{\lambda}{L_0 + L_1 \lambda} &= \frac{(\norms{\nabla \func{x^k}} + \lambda) (L_0 + L_1 \lambda) - \lambda (L_0 + L_1 \norms{\nabla \func{x^k}})}{(L_0 + L_1 \norms{\nabla \func{x^k}}) (L_0 + L_1 \lambda)} \\
            &= \frac{L_0 \norms{\nabla \func{x^k}} + \lambda^2 L_1}{(L_0 + L_1 \norms{\nabla \func{x^k}}) (L_0 + L_1 \lambda)} \geq 0,
        \end{align*}
        where $L_0, L_1, \lambda ,\norms{\nabla \func{x^k}} \geq 0$.

        Taking conditional expectation in \eqref{eq:NC_NSGD_smooth} given $x^k$, we obtain:
        \begin{align}
            \expectcond{F_{k+1}}{x^k} - F_k &\overset{\eqref{eq:NC_NSGD_smooth}}{\leq} -\frac{\eta}{2} \norms{\nabla \func{x^k}} + 2 \eta \expectcond{\norms{\nabla \func{x^k, \bxi^k} - \nabla \func{x^k}}}{x^k} + \frac{3}{2} \eta \lambda\nonumber\\
            &= -\frac{\eta}{2} \norms{\nabla \func{x^k}} + 2 \eta \expectcond{\left(\norms{\nabla \func{x^k, \bxi^k} - \nabla \func{x^k}}^2 \right)^{1/2}}{x^k} + \frac{3}{2} \eta \lambda\nonumber\\
            &\overset{\circledOne}{\leq} -\frac{\eta}{2} \norms{\nabla \func{x^k}} + 2 \eta \left(\expectcond{\norms{\nabla \func{x^k, \bxi^k} - \nabla \func{x^k}}^2}{x^k} \right)^{1/2} + \frac{3}{2} \eta \lambda\nonumber\\
            &\overset{\eqref{eq:variance_with_batch_size}}{\leq} -\frac{\eta}{2} \norms{\nabla \func{x^k}} + 2 \eta \left(\frac{1}{B} \left[ (\rho - 1) \norms{\nabla \func{x^k}}^2 + \sigma^2 \right] \right)^{1/2} + \frac{3}{2} \eta \lambda\nonumber\\
            &\overset{\circledTwo}{\leq} -\frac{\eta}{2} \norms{\nabla \func{x^k}} + 2 \eta \sqrt{\frac{(\rho - 1)}{B}}   \norms{\nabla \func{x^k}} + \frac{2\eta \sigma}{\sqrt{B}} + \frac{3}{2} \eta \lambda\nonumber\\
            &\overset{\circledThree}{\leq} -\frac{\eta}{2} \norms{\nabla \func{x^k}} + \frac{\eta}{4}   \norms{\nabla \func{x^k}} + \frac{2\eta \sigma}{\sqrt{B}} + \frac{3}{2} \eta \lambda\nonumber\\
            &= -\frac{\eta}{4} \norms{\nabla \func{x^k}} + \frac{2\eta \sigma}{\sqrt{B}} + \frac{3}{2} \eta \lambda, \label{NC_NSGD_SeMI_Itog}
        \end{align}
        where in $\circledOne$ we applied Jensen's inequality, in $\circledTwo$ we used $\sqrt{a^2 + b^2} \leq a + b$ when $a,b \geq 0$, in $\circledThree$ we chose $B \geq 64 (\rho - 1)$. Thus, we have obtained the inequality:
        \begin{equation*}
            \label{eq:NC_NSGD_ITOG}
            \norms{\nabla \func{x^k}} \overset{\eqref{NC_NSGD_SeMI_Itog}}{\leq} \frac{4 \left(F_k - \expectcond{F_{k+1}}{x^k}\right)}{\eta} + \frac{8 \sigma}{\sqrt{B}} + 6 \lambda.
        \end{equation*}

        Taking full expectation and summing this inequality over all indices $0 \leq k \leq N-1$ gives:
        \begin{equation*}
            \min_{k\in [0, N-1]}\expect{\norms{\nabla \func{x^k}}} \leq \frac{1}{N} \sum_{k = 0}^{N-1} \expect{\norms{\nabla \func{x^k}}} \leq \underbrace{\frac{4F_0}{\eta N} + \frac{8 \sigma}{\sqrt{B}}}_{\circledOne} + \underbrace{6 \lambda}_{\circledTwo}.
        \end{equation*}
        To guarantee convergence to the desired accuracy $\min_{k \in [0, N-1]} \mathbb{E}\left[|\nabla f(x^k)|^2\right] \leq \varepsilon$, it is sufficient to bound the first two terms and the third term by $\varepsilon/2$. This yields the constraint:
        \begin{equation*}
            \circledTwo:\quad  \lambda \leq \frac{\varepsilon}{12}.
        \end{equation*}
        Taking $\lambda = \frac{\varepsilon}{12}$ and the maximum admissible step size $\eta = \frac{\lambda}{L_0 + L_1 \lambda} = \left[ \frac{12 L_0}{\varepsilon} + L_1 \right]^{-1}$, we obtain the following sufficient condition under batch size $B \geq 64 (\rho - 1)$:
        \begin{equation*}
            \circledOne: \quad \frac{4F_0}{\eta N} + \frac{8 \sigma}{\sqrt{B}} = \frac{48 L_0}{\varepsilon N} F_0 + \frac{4 L_1}{N} F_0 + \frac{8 \sigma}{\sqrt{B}} \leq \frac{\varepsilon}{2}.
        \end{equation*}

        In the case $\sigma = 0$, the above conditions are satisfied by
        \begin{equation*}
            \lambda = \frac{\varepsilon}{12}; \quad N \geq \frac{48 L_0}{\varepsilon^2} F_0 + \frac{4 L_1}{\varepsilon} F_0; \quad B \geq 64 (\rho - 1),
        \end{equation*}
        and the corresponding number of oracle calls $\# T$:
        \begin{equation*}
            T = N \cdot B = \mathcal{O} \left( \frac{(\rho - 1) L_0}{\varepsilon^2} F_0 + \frac{(\rho - 1) L_1}{\varepsilon} F_0 \right).
        \end{equation*}
\end{proof}

    \begin{remark}
            \label{rem:Step_size_NC_NSGD}
            In the first step of~\eqref{eq:NC_NSGD_smooth}, we use the descent lemma for $(L_0,L_1)$-smooth functions~\eqref{eq:L0L1_descent_inequality} \citep[see Appendix~A.1 in][]{Zhang_2020_Improved}; the condition $\norms{x^{k+1}-x^k}\leq 1/L_1$ required for its application is guaranteed by the chosen step size $\eta \leq \frac{\lambda}{L_0 + L_1 \lambda}$:
            \begin{equation*}
                \norms{x^{k+1}-x^k} \overset{\circledOne}{=} \eta \norms{\Gf{x^k, \bxi^k}} \overset{\circledTwo}{\leq} \eta \overset{\circledThree}{\leq} \frac{\lambda}{L_0 + L_1 \lambda} \overset{\circledFour}{\leq} \frac{\lambda}{ L_1 \lambda} = \frac{1}{L_1},
            \end{equation*}
            where in $\circledOne$ we used $x^{k+1} = x^k - \eta \cdot \Gf{x^k, \bxi^k}$, in $\circledTwo$ we used $\norms{\Gf{x^k, \bxi^k}} = \norms{\frac{\nabla \func{x^k, \bxi^k}}{\norms{\nabla \func{x^k, \bxi^k}} + \lambda}} \leq 1$, in $\circledThree$ we used $\eta \leq \frac{\lambda}{L_0 + L_1 \lambda}$, where in $\circledFour$ we used the fact that, for $0<b\leq a$, one has $\frac{1}{a}\leq \frac{1}{b}$.
        \end{remark}

    \subsubsection{Convex setup under \texorpdfstring{$(\cL_0,\cL_1)$}{TEXT}-smooth and strong growth conditions}\label{app:Convex_NSGD}
    
    We now prove the convex NSGD result. The proof follows the non-convex argument up to the one-step descent estimate, and then uses convexity to obtain a recursion for the optimality gap.

\begin{proof}
For simplicity of presentation, define
\[
    \Gf{x^k, \bxi^k}
    \coloneqq
    \frac{\nabla \func{x^k, \bxi^k}}{\norms{\nabla \func{x^k, \bxi^k}} + \lambda},
    \qquad
        F_k := \func{x^k}-f^*.
    \]
In the one-step estimates below, expectations are taken with respect to the fresh mini-batch $\bxi^k$ and are written explicitly as conditional expectations given the current iterate $x^k$. After taking full expectation, we return to the notation $\expect{\cdot}$.
By convexity, the Cauchy--Schwarz inequality, and the monotonicity of the distance to the optimum from Lemma~\ref{lem:monotone_nsgd}, we have
\begin{equation}
    F_k
    \leq
    \dotprod{\nabla \func{x^k}}{x^k-x^*}
    \leq
    \norms{\nabla \func{x^k}}\norms{x^k-x^*}
    \leq
    R_0\norms{\nabla \func{x^k}}.
    \label{eq:function_gap_gradient_relation_NSGD}
\end{equation}

    We start from the descent inequality for $(L_0,L_1)$-smooth functions with constants $L_0 = \cL_0$ and $L_1 = \sqrt{\rho} \cL_1$ (see Section~\ref{subsec:generalized_smoothness_conditions}):
     \begin{align}
            \func{x^{k+1}} - \func{x^k} &\overset{\circledOne}{\leq} \dotprod{\nabla \func{x^k}}{x^{k+1} - x^k} + \frac{\cL_0 + \sqrt{\rho} \cL_1 \norms{\nabla \func{x^k}}}{2} \norms{x^{k+1} - x^k}^2\nonumber\\
            &\overset{\circledTwo}{=} -\eta \dotprod{\nabla \func{x^k}}{\Gf{x^k, \bxi^k}} + \frac{\eta^2 \left(\cL_0 + \sqrt{\rho} \cL_1 \norms{\nabla \func{x^k}} \right)}{2} \norms{\Gf{x^k, \bxi^k}}^2\nonumber\\
            &\overset{\circledThree}{\leq} -\eta \frac{\dotprod{\nabla \func{x^k}}{\nabla \func{x^k, \bxi^k}}}{\norms{\nabla \func{x^k, \bxi^k}} + \lambda} + \frac{\eta^2 \left(\cL_0 + \sqrt{\rho} \cL_1 \norms{\nabla \func{x^k}} \right)}{2}\nonumber\\ 
            &= -\eta \frac{\dotprod{\nabla \func{x^k} \pm \nabla \func{x^k, \bxi^k} }{\nabla \func{x^k, \bxi^k}}}{\norms{\nabla \func{x^k, \bxi^k}} + \lambda} + \frac{\eta^2 \left(\cL_0 + \sqrt{\rho} \cL_1 \norms{\nabla \func{x^k}} \right)}{2}\nonumber\\ 
            &= -\eta \frac{\norms{\nabla \func{x^k, \bxi^k}}^2}{\norms{\nabla \func{x^k, \bxi^k}} + \lambda} -\eta \frac{\dotprod{\nabla \func{x^k} - \nabla \func{x^k, \bxi^k} }{\nabla \func{x^k, \bxi^k}}}{\norms{\nabla \func{x^k, \bxi^k}} + \lambda} \nonumber\\
            &\quad\quad\quad+ \frac{\eta^2 \left(\cL_0 + \sqrt{\rho} \cL_1 \norms{\nabla \func{x^k}} \right)}{2} \nonumber\\
            &\overset{\circledFour}{\leq} -\eta \frac{\norms{\nabla \func{x^k, \bxi^k}}^2}{\norms{\nabla \func{x^k, \bxi^k}} + \lambda} +  \eta \frac{\norms{\nabla \func{x^k} - \nabla \func{x^k, \bxi^k} } \norms{\nabla \func{x^k, \bxi^k}}}{\norms{\nabla \func{x^k, \bxi^k}} + \lambda} \nonumber\\
            &\quad\quad\quad+ \frac{\eta^2 \left(\cL_0 + \sqrt{\rho} \cL_1 \norms{\nabla \func{x^k}} \right)}{2} \nonumber\\
            &\overset{\circledThree}{\leq} -\eta \frac{\norms{\nabla \func{x^k, \bxi^k}}^2}{\norms{\nabla \func{x^k, \bxi^k}} + \lambda} +  \eta \norms{\nabla \func{x^k} - \nabla \func{x^k, \bxi^k} } \nonumber\\
            &\quad\quad\quad+ \frac{\eta^2 \left(\cL_0 + \sqrt{\rho} \cL_1 \norms{\nabla \func{x^k}} \right)}{2} \nonumber\\
            &= -\eta \norms{\nabla \func{x^k, \bxi^k}} + \eta \frac{\lambda \norms{\nabla \func{x^k, \bxi^k}}}{\norms{\nabla \func{x^k, \bxi^k}} + \lambda} +  \eta \norms{\nabla \func{x^k} - \nabla \func{x^k, \bxi^k} } \nonumber\\
            &\quad\quad\quad+ \frac{\eta^2 \left(\cL_0 + \sqrt{\rho} \cL_1 \norms{\nabla \func{x^k}} \right)}{2} \nonumber\\
            &\overset{\circledThree}{\leq} -\eta \norms{\nabla \func{x^k, \bxi^k}} + \eta \lambda +  \eta \norms{\nabla \func{x^k} - \nabla \func{x^k, \bxi^k} } \nonumber\\
            &\quad\quad\quad+ \frac{\eta^2 \left(\cL_0 + \sqrt{\rho} \cL_1 \norms{\nabla \func{x^k}} \right)}{2} \nonumber\\
            &\overset{\circledFive}{\leq}  -\eta \norms{\nabla \func{x^k}} + 2 \eta \norms{\nabla \func{x^k, \bxi^k} - \nabla \func{x^k}}  \nonumber\\
            &\quad\quad\quad+ \frac{\eta^2 \left(\cL_0 + \sqrt{\rho} \cL_1 \norms{\nabla \func{x^k}} \right)}{2} + \eta \lambda \nonumber\\
            &\overset{\circledSix}{\leq} -\eta \norms{\nabla \func{x^k}} + 2 \eta \norms{\nabla \func{x^k, \bxi^k} - \nabla \func{x^k}}  \nonumber\\
            &\quad\quad\quad+ \frac{\eta}{2} \left( \norms{\nabla \func{x^k}} + \lambda \right) + \eta \lambda \nonumber\\
            &= -\frac{\eta}{2} \norms{\nabla \func{x^k}} + 2 \eta \norms{\nabla \func{x^k, \bxi^k} - \nabla \func{x^k}} + \frac{3}{2} \eta \lambda,
            \label{eq:Convex_NSGD_smooth}
        \end{align}
        Here, in~$\circledOne$ we used the descent inequality~\eqref{eq:L0L1_descent_inequality}; its locality condition $\norms{x^{k+1}-x^k}\leq \frac{1}{L_1}$ with $L_1 = \sqrt{\rho} \cL_1$ is verified in Remark~\ref{rem:Step_size_Convex_NSGD}. In $\circledTwo$ we used the NSGD update $x^{k+1} = x^k - \eta \cdot \Gf{x^k, \bxi^k}$. In $\circledThree$ we used $\norms{\Gf{x^k, \bxi^k}}\leq 1$, in $\circledFour$ the Cauchy--Schwarz inequality~\eqref{eq:scalar_product_bound}, and in $\circledFive$ the triangle inequality. Finally, in $\circledSix$ we used the step-size condition $\eta \leq \frac{\lambda}{\cL_0 + \sqrt{\rho} \cL_1 \lambda} $ together with $\frac{\lambda}{\cL_0 + \sqrt{\rho} \cL_1 \lambda} \leq \frac{\norms{\nabla \func{x^k}} + \lambda}{\cL_0 + \sqrt{\rho} \cL_1 \norms{\nabla \func{x^k}}}$, which follows from
        \begin{align*}
            \frac{\norms{\nabla \func{x^k}} + \lambda}{\cL_0 + \sqrt{\rho} \cL_1 \norms{\nabla \func{x^k}}} - \frac{\lambda}{\cL_0 + \sqrt{\rho} \cL_1 \lambda} &= \frac{(\norms{\nabla \func{x^k}} + \lambda) (\cL_0 + \sqrt{\rho} \cL_1 \lambda) - \lambda (\cL_0 + \sqrt{\rho} \cL_1 \norms{\nabla \func{x^k}})}{(\cL_0 + \sqrt{\rho} \cL_1 \norms{\nabla \func{x^k}}) (\cL_0 + \sqrt{\rho} \cL_1 \lambda)} \\
            &= \frac{\cL_0 \norms{\nabla \func{x^k}} + \lambda^2 \sqrt{\rho} \cL_1}{(\cL_0 + \sqrt{\rho} \cL_1 \norms{\nabla \func{x^k}}) (\cL_0 + \sqrt{\rho} \cL_1 \lambda)} \geq 0,
        \end{align*}
        where $\cL_0, \cL_1, \lambda ,\norms{\nabla \func{x^k}} \geq 0$ and $\rho \geq 1$.

        Taking conditional expectation in \eqref{eq:Convex_NSGD_smooth} given $x^k$, we obtain:
        \begin{align}
            \expectcond{F_{k+1}}{x^k} - F_k &\overset{\eqref{eq:Convex_NSGD_smooth}}{\leq} -\frac{\eta}{2} \norms{\nabla \func{x^k}} + 2 \eta \expectcond{\norms{\nabla \func{x^k, \bxi^k} - \nabla \func{x^k}}}{x^k} + \frac{3}{2} \eta \lambda\nonumber\\
            &= -\frac{\eta}{2} \norms{\nabla \func{x^k}} + 2 \eta \expectcond{\left(\norms{\nabla \func{x^k, \bxi^k} - \nabla \func{x^k}}^2 \right)^{1/2}}{x^k} + \frac{3}{2} \eta \lambda\nonumber\\
            &\overset{\circledOne}{\leq} -\frac{\eta}{2} \norms{\nabla \func{x^k}} + 2 \eta \left(\expectcond{\norms{\nabla \func{x^k, \bxi^k} - \nabla \func{x^k}}^2}{x^k} \right)^{1/2} + \frac{3}{2} \eta \lambda\nonumber\\
            &\overset{\eqref{eq:variance_with_batch_size}}{\leq} -\frac{\eta}{2} \norms{\nabla \func{x^k}} + 2 \eta \left(\frac{1}{B} \left[ (\rho - 1) \norms{\nabla \func{x^k}}^2 + \sigma^2 \right] \right)^{1/2} + \frac{3}{2} \eta \lambda\nonumber\\
            &\overset{\circledTwo}{\leq} -\frac{\eta}{2} \norms{\nabla \func{x^k}} + 2 \eta \sqrt{\frac{(\rho - 1)}{B}}   \norms{\nabla \func{x^k}} + \frac{2\eta \sigma}{\sqrt{B}} + \frac{3}{2} \eta \lambda\nonumber\\
            &\overset{\circledThree}{\leq} -\frac{\eta}{2} \norms{\nabla \func{x^k}} + \frac{\eta}{4}   \norms{\nabla \func{x^k}} + \frac{2\eta \sigma}{\sqrt{B}} + \frac{3}{2} \eta \lambda\nonumber\\
            &= -\frac{\eta}{4} \norms{\nabla \func{x^k}} + \frac{2\eta \sigma}{\sqrt{B}} + \frac{3}{2} \eta \lambda, \label{eq:Convex_NSGD_SeMI_Itog}
        \end{align}
        where in $\circledOne$ we applied Jensen's inequality, in $\circledTwo$ we used $\sqrt{a^2 + b^2} \leq a + b$ when $a,b \geq 0$, in $\circledThree$ we chose $B \geq 64 (\rho - 1)$. Thus, we have obtained the inequality:
        \begin{align}
            \expectcond{F_{k+1}}{x^k} &\overset{\eqref{eq:Convex_NSGD_SeMI_Itog}}{\leq} F_k - \frac{\eta}{4} \norms{\nabla \func{x^k}} + \frac{2 \eta \sigma}{\sqrt{B}} + \frac{3 \eta}{2}\lambda \nonumber\\
            &\overset{\eqref{eq:function_gap_gradient_relation_NSGD}}{\leq} F_k - \frac{\eta}{4 R_0} F_k + \frac{2 \eta \sigma}{\sqrt{B}} + \frac{3\eta}{2}\lambda  \nonumber\\
            &=\left( 1 - \frac{\eta}{4 R_0} \right) F_k + \frac{2 \eta \sigma}{\sqrt{B}} + \frac{3\eta}{2}\lambda.
            \label{eq:Convex_NSGD_ITOG}
        \end{align}

        Taking full expectation in \eqref{eq:Convex_NSGD_ITOG} and applying the resulting recursion, we get:
        \begin{equation*}
            \expect{\func{x^{N}}} - f^* \leq \underbrace{\left( 1 - \frac{\eta}{4 R_0} \right)^N F_0}_{\circledOne} + \underbrace{6 \lambda R_0}_{\circledTwo}.
        \end{equation*}
        To guarantee convergence to the desired accuracy $\expect{\func{x^{N}}} - f^* \leq \varepsilon$, it is sufficient to bound the two terms above by $\varepsilon/2$. This yields the constraint:
        \begin{equation*}
            \circledTwo:\quad  \lambda \leq \frac{\varepsilon}{12 R_0}.
        \end{equation*}
        Taking $\lambda = \frac{\varepsilon}{12 R_0}$ and the maximum admissible step size $\eta = \frac{\lambda}{\cL_0 + \sqrt{\rho} \cL_1 \lambda} = \left[ \frac{12 \cL_0 R_0}{\varepsilon} + \sqrt{\rho} \cL_1 \right]^{-1}$, we obtain the following sufficient condition under batch size $B \geq 64 (\rho - 1)$:
        \begin{equation*}
            \circledOne: \quad \left( 1 - \frac{\eta}{4 R_0} \right)^N F_0 = \left( 1 - \frac{1}{ \frac{48 \cL_0 R_0^2}{\varepsilon} + 4 \sqrt{\rho} \cL_1 R_0} \right)^N F_0 \leq \frac{\varepsilon}{2}.
        \end{equation*}
        Thus, NSGD has the following values for the hyperparameter $\lambda$, the number of iterations $\# N$, and the batch size $\# B$ for convergence to the desired accuracy $\varepsilon$:
        \begin{equation*}
            \lambda = \frac{\varepsilon}{12 R_0}; \quad N \geq \left(\frac{48 \cL_0 R_0^2}{\varepsilon} + 4 \sqrt{\rho} \cL_1 R_0 \right) \log \frac{2 F_0}{\varepsilon}; \quad B \geq 64 (\rho - 1),
        \end{equation*}
        and the corresponding number of oracle calls $\# T$:
        \begin{equation*}
            T = N \cdot B = \Tilde{\mathcal{O}} \left( \frac{(\rho - 1) \cL_0 R_0^2}{\varepsilon} + (\rho - 1) \sqrt{\rho} \cL_1 R_0 \right), 
        \end{equation*}
        where we have used the notation $\Tilde{\mathcal{O}} \left( \cdot \right)$ to hide logarithmic factors.

\end{proof}

    \begin{remark}
            \label{rem:Step_size_Convex_NSGD}
            In the first step of~\eqref{eq:Convex_NSGD_smooth}, we use the descent lemma for $(\cL_0,\cL_1)$-smooth functions~\eqref{eq:L0L1_descent_inequality} \citep[see Appendix~A.1 in][]{Zhang_2020_Improved}; the condition $\norms{x^{k+1}-x^k}\leq \frac{1}{\sqrt{\rho}\cL_1}$ required for its application is guaranteed by the chosen step size $\eta \leq \frac{\lambda}{\cL_0 + \sqrt{\rho} \cL_1 \lambda}$:
            \begin{equation*}
                \norms{x^{k+1}-x^k} \overset{\circledOne}{=} \eta \norms{\Gf{x^k, \bxi^k}} \overset{\circledTwo}{\leq} \eta \overset{\circledThree}{\leq} \frac{\lambda}{\cL_0 + \sqrt{\rho} \cL_1 \lambda} \overset{\circledFour}{\leq} \frac{\lambda}{ \sqrt{\rho} \cL_1 \lambda} = \frac{1}{\sqrt{\rho} \cL_1},
            \end{equation*}
            where in $\circledOne$ we used $x^{k+1} = x^k - \eta \cdot \Gf{x^k, \bxi^k}$, in $\circledTwo$ we used $\norms{\Gf{x^k, \bxi^k}} \leq 1$, in $\circledThree$ we used $\eta \leq \frac{\lambda}{\cL_0 + \sqrt{\rho} \cL_1 \lambda}$, and in $\circledFour$ we used the fact that, for $0<b\leq a$, one has $\frac{1}{a}\leq \frac{1}{b}$.
        \end{remark}

    \subsubsection{Modification to the proof to include heavy-tailed noise}\label{app:NSGD_heavy_tailed_noise}

    If we assume that the stochastic noise is heavy-tailed, then the generalized strong growth condition \eqref{eq:strong growth condition} takes the following form for $\rho, \sigma \geq 0$ and $p \in (1,2)$:
    \begin{align*}
        \expect{\norms{\nabla \func{x, \xi}}^p} &=  \expect{\norms{\nabla \func{x, \xi} - \nabla \func{x} + \nabla \func{ x}}^p}\\
        &\overset{\circledOne}{\leq} 2^{p- 1} \expect{\norms{\nabla \func{x, \xi} - \nabla \func{x}}^p} + 2^{p- 1} \norms{\nabla \func{x}}^p\\
        &\overset{\text{Ass.~\ref{prop:SGC_under_heavy tailed noise}}}{\leq} 2^{p- 1} (\rho - 1) \norms{\nabla \func{x}}^p + 2^{p- 1} \sigma^p + 2^{p- 1} \norms{\nabla \func{x}}^p\\
        &= 2^{p- 1} \rho \norms{\nabla \func{x}}^p + 2^{p- 1} \sigma^p.
    \end{align*} 
    where in $\circledOne$ we applied Finite form of Jensen's inequality.

    If we use a mini-batch stochastic gradient $\nabla \func{x, \bxi} = \frac{1}{B} \sum_{i=1}^B \nabla \func{x, \xi_i}$, then Lemma~\ref{lem:heavy_tailed_batch} should be applied.
    
\begin{boxN}
    \begin{lemma}[\cite{Kornilov_2023,Hubler_2025}] \label{lem:heavy_tailed_batch}
        Let $p \in [1, 2]$, and $X_1, \dots, X_n \in \mathbb{R}^d$ be a martingale difference sequence (MDS), i.e.,
        \begin{equation*}
            \mathbb{E}[X_j \mid X_{j-1}, \dots, X_1] = 0 \quad \text{a.s. for all } j = 1, \dots, n,
        \end{equation*}
        satisfying $\mathbb{E}[\|X_j\|^p] < \infty$ for all $j = 1, \dots, n$. Define $S_n := \sum_{j=1}^n X_j$, then
        \begin{equation}
        \label{eq:heavy_tailed_batch}
            \mathbb{E}[\|S_n\|^p] \leq 2 \sum_{j=1}^n \mathbb{E}[\|X_j\|^p].
        \end{equation}
    \end{lemma}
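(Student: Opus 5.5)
The statement is a von Bahr--Esseen-type inequality for martingale difference sequences with constant $2$. I would prove it by induction on $n$, reducing everything to a single deterministic two-term inequality in $\mathbb{R}^d$ combined with the martingale property. The case $n=1$ is trivial, since $\mathbb{E}\|S_1\|^p=\mathbb{E}\|X_1\|^p\leq 2\,\mathbb{E}\|X_1\|^p$. The induction step would follow from the one-step bound
\begin{equation*}
    \mathbb{E}\|S_{n-1}+X_n\|^p \leq \mathbb{E}\|S_{n-1}\|^p + 2\,\mathbb{E}\|X_n\|^p,
\end{equation*}
used with $\mathcal{F}_{n-1}=\sigma(X_1,\dots,X_{n-1})$, so that $S_{n-1}$ is $\mathcal{F}_{n-1}$-measurable and $\mathbb{E}[X_n\mid\mathcal{F}_{n-1}]=0$. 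Iterating this bound gives $\mathbb{E}\|S_n\|^p\leq \mathbb{E}\|X_1\|^p+2\sum_{j\geq 2}\mathbb{E}\|X_j\|^p\leq 2\sum_j\mathbb{E}\|X_j\|^p$.

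To obtain the one-step bound I would prove a pointwise inequality for $a,b\in\mathbb{R}^d$ and $p\in[1,2]$:
\begin{equation*}
    \|a+b\|^p \leq \|a\|^p + p\|a\|^{p-2}\langle a,b\rangle + 2\|b\|^p,
\end{equation*}
with the middle term read as $0$ when $a=0$. Substituting $a=S_{n-1}$ and $b=X_n$ and taking conditional expectation given $\mathcal{F}_{n-1}$ removes the linear term, since $\mathbb{E}[\langle a,X_n\rangle\mid\mathcal{F}_{n-1}]=\langle a,\mathbb{E}[X_n\mid\mathcal{F}_{n-1}]\rangle=0$. The finite $p$-th moments of the $X_j$ ensure, via $\|S_{n-1}\|^{p-1}\|X_n\|\leq \|S_{n-1}\|^p+\|X_n\|^p$, that every term is integrable.

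The main obstacle is this pointwise inequality with constant exactly $2$. I would first reduce to two dimensions, which is possible because only $\|a\|$, $\|b\|$ and $\langle a,b\rangle$ enter. By homogeneity I would then normalize $\|a\|=1$ and write $b=t u$ with $u$ a unit vector and $\langle a,u\rangle=\cos\theta$. The claim becomes
\begin{equation*}
    g(t,\theta):=(1+2t\cos\theta+t^2)^{p/2}-1-pt\cos\theta \leq 2t^p .
\end{equation*}
I would split into two regimes.

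\emph{Regime $t\geq 1$.} Use $(1+2ts+t^2)^{p/2}\leq (1+t)^p$ together with $-pts\leq pt$ when $s=\cos\theta\geq -1$. It then suffices to check $(1+t)^p\leq 1+pt+2t^p$ in the worst case, or to argue more carefully at $s=-1$.

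\emph{Regime $t<1$.} Use concavity of $x\mapsto x^{p/2}$ at $x=1$, which gives $(1+y)^{p/2}\leq 1+\tfrac p2 y$ with $y=2ts+t^2$. This yields $g\leq \tfrac p2 t^2\leq t^p$, using $t^2\leq t^p$ for $t<1$ and $p\leq 2$.

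The delicate case is $t\geq 1$ with $s$ negative. There I would reparametrize by $r=\|a+b\|$ and bound $r^p\leq$ the convex combination given by the Clarkson-type estimate $r^p\leq 1+t^p$ for $s\leq 0$. This holds because $(1+t^2+2ts)^{p/2}\leq (1+t^2)^{p/2}\leq 1+t^p$ for $p\leq 2$. The remaining term satisfies $-pts\leq pt\leq pt^p$, and $p\leq 2$ absorbs it into $t^p$ only up to a constant, so at this point I would verify numerically whether the constant $2$ survives. If it does not, I would fall back on citing the known result as stated, attributing it to \cite{Kornilov_2023,Hubler_2025}, whose proof handles this case with a sharper one-dimensional calculus argument in $s$.
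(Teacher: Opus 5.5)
Your overall architecture is the standard von Bahr--Esseen route and is sound. The induction on $n$, the removal of the linear term by $\mathbb{E}[X_n\mid\mathcal F_{n-1}]=0$, the integrability check via $\|S_{n-1}\|^{p-1}\|X_n\|\le\|S_{n-1}\|^p+\|X_n\|^p$, and the concavity argument for $t<1$ are all fine. The paper itself does not prove the lemma: it imports it from \cite{Kornilov_2023,Hubler_2025}, and remarks that the higher-dimensional extension needed correction. That is exactly the step you leave open.

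The genuine gap is the pointwise inequality for $t\ge 1$. Both reductions you propose there lead to false inequalities, because they bound $(1+2ts+t^2)^{p/2}$ and $-pts$ separately, at the opposite endpoints $s=1$ and $s=-1$.
\begin{itemize}
\item Combining $(1+2ts+t^2)^{p/2}\le(1+t)^p$ with $-pts\le pt$ requires $(1+t)^p+pt-1\le 2t^p$. The inequality you wrote, $(1+t)^p\le 1+pt+2t^p$, is only the case $s=1$; the sign of $pt$ slipped. The correct requirement fails at $t=1$ for every $p>1$; for example $2^{3/2}+\tfrac12>2$ when $p=\tfrac32$.
\item Your Clarkson-type bound for $s\le 0$ gives $g\le t^p+pt|s|$. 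At $t=1$, $s=-1$ this equals $1+p>2$, whereas the true value is $g(1,-1)=p-1$.
\end{itemize}
So the constant $2$ is not established, and ``check numerically, otherwise cite'' leaves the key step unproved.

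The missing idea is to keep $s$ coupled and maximize over it exactly. For $p\in[1,2)$ and $t>0$,
\[
    \partial_s g(t,s)=pt\bigl[(1+2ts+t^2)^{\frac p2-1}-1\bigr],
\]
which is nonnegative iff $1+2ts+t^2\le 1$, i.e.\ iff $s\le -t/2$. Hence:
\begin{itemize}
\item if $t\le 2$, $\max_{s\in[-1,1]}g(t,s)=\tfrac p2 t^2$, attained at $s=-t/2$;
\item if $t>2$, $\max_{s\in[-1,1]}g(t,s)=(t-1)^p+pt-1$, attained at $s=-1$;
\item for $p=2$ one simply has $g\equiv t^2$.
\end{itemize}
In the first case, $\tfrac p2 t^2=\tfrac p2 t^{2-p}\,t^p\le \tfrac p2\,2^{2-p}\,t^p\le 2t^p$. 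In the second case, $(t-1)^p\le t^p$, and $pt-1\le t^p$ for $t\ge 2$ because $2^p-2p+1>0$ on $[1,2]$ and $\tfrac{d}{dt}(t^p-pt)=p(t^{p-1}-1)\ge 0$.

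This proves the pointwise inequality for all $t\ge 0$ at once, so the separate concavity regime becomes unnecessary. Your induction then completes the lemma.

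Alternatively, use the $(p-1)$-H\"older continuity of $x\mapsto p\|x\|^{p-2}x$ with constant $p\,2^{2-p}$, integrated along the segment from $a$ to $a+b$. This gives the pointwise inequality with the sharper constant $2^{2-p}\le 2$ and no case analysis.
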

\end{boxN}
    \paragraph{Historical background.} The result of Lemma~\ref{lem:heavy_tailed_batch} was initially proven by \cite{Bahr_1965} for $d = 1$. It was later extended to Banach spaces \citep[][Proposition 2.4]{Pisier_1975}, and optimal constants were derived \cite{Cox_1982}. An alternative proof for the one-dimensional i.i.d. case was rediscovered by \cite{Cherapanamjeri_2022} and later extended to higher dimensions by \cite{Kornilov_2023}. Later, \cite{Hubler_2025} corrected inaccuracies in the extension to higher dimensions.
    
    Thus, applying Lemma~\ref{lem:heavy_tailed_batch}, we obtain:
    \begin{align}
        \expect{\norms{\nabla \func{x, \bxi} - \nabla \func{x}}^p} &\overset{\circledOne}{=} \expect{ \norms{\frac{1}{B} \sum_{i=1}^B \nabla \func{x, \xi_i} - \nabla \func{x}}^p} \nonumber\\
        &\overset{\circledTwo}{=} \expect{ \norms{\frac{1}{B} \sum_{i=1}^B \left(\nabla \func{x, \xi_i} - \nabla \func{x} \right)}^p}\nonumber\\
        &= \frac{1}{B^p} \expect{ \norms{ \sum_{i=1}^B \left(\nabla \func{x, \xi_i} - \nabla \func{x} \right)}^p}\nonumber\\
        &\overset{\eqref{eq:heavy_tailed_batch}}{\leq} \frac{2}{B^p} \sum_{i=1}^B \expect{\norms{\nabla \func{x, \xi_i} - \nabla \func{x}}^p}\nonumber\\
        &\overset{\text{Ass.~\ref{prop:SGC_under_heavy tailed noise}}}{\leq} \frac{2}{B^p} \sum_{i=1}^B \left( (\rho - 1) \norms{\nabla \func{x}}^p + \sigma^p \right)\nonumber\\
        &=\frac{2}{B^{p-1}}  \left( (\rho - 1) \norms{\nabla \func{x}}^p + \sigma^p \right)\nonumber\\
        &=\frac{2 (\rho-1)}{B^{p-1}} \norms{\nabla \func{x}}^p + \frac{2 \sigma^p}{B^{p-1}},
        \label{eq:variance_heavy_tailed_noise_sgc}
    \end{align}
    where in $\circledOne$ we used $\nabla \func{x, \bxi} = \frac{1}{B} \sum_{i =1}^B \nabla \func{x, \xi_i}$, in $\circledTwo$ we used $\nabla \func{x} = \frac{1}{B} \sum_{i =1}^B \nabla \func{x}$.

    Then, in the proofs for NSGD, we use the following:
    \begin{align*}
        \expect{\norms{\nabla \func{x, \bxi} - \nabla \func{x}}} &= \expect{\left(\norms{\nabla \func{x, \bxi} - \nabla \func{x}}^p \right)^{\frac{1}{p}}}\\
        &\overset{\circledOne}{\leq}\left(\expect{\norms{\nabla \func{x, \bxi} - \nabla \func{x}}^p }\right)^{\frac{1}{p}}\\
        &\overset{\eqref{eq:variance_heavy_tailed_noise_sgc}}{\leq} \left( \frac{2 (\rho - 1)}{B^{p-1}} \norms{\nabla \func{x}}^p + \frac{2 \sigma^p}{B^{p-1}} \right)^{\frac{1}{p}}\\
        &\overset{\circledTwo}{\leq } \left( \frac{1}{8^p} \norms{\nabla \func{x}}^p + \frac{2 \sigma^p}{B^{p-1}} \right)^{\frac{1}{p}}\\
        &\overset{\circledThree}{\leq} \frac{1}{8} \norms{\nabla \func{x}} + \frac{2^{\frac{1}{p}} \sigma}{B^{\frac{p-1}{p}}},
    \end{align*}
    where in $\circledOne$ we applied Jensen's inequality, in $\circledTwo$ we chose $B \geq 2^{\frac{3p +1}{p-1}} (\rho-1)^{\frac{1}{p-1}}$, in $\circledThree$ we applied inequality $|a+b|^{1/p} \leq |a|^{1/p} + |b|^{1/p}$, and by performing the remaining steps of the proof identically, we obtain the following final convergence rates for \textit{non-convex setup under $(L_0,L_1)$-smooth and generalized strong growth conditions} with $\eta \leq \frac{\lambda}{L_0 + L_1  \lambda}$:
    \begin{equation*}
        \min_{k\in [0, N-1]}\expect{\norms{\nabla \func{x^k}}} \lesssim \frac{F_0}{\eta N} + \frac{\sigma}{B^{\frac{p-1}{p}}} + \lambda,
    \end{equation*}
    and for \textit{convex setup under $(\cL_0,\cL_1)$-smooth and strong growth conditions} with $\eta \leq \frac{\lambda}{\cL_0 + \sqrt{\rho} \cL_1 \lambda}$:
    \begin{equation*}
        \expect{\func{x^{N}}} - f^* \lesssim \left( 1 - \frac{\eta}{R_0} \right)^N F_0 + \lambda R_0,
    \end{equation*}
    with batch size $B \gtrsim  (\rho-1)^{\frac{1}{p-1}}$ and hyperparameter $\lambda>0$.

\section{Extending the Analysis to \texorpdfstring{$(\cH_0,\cH_1)$}{TEXT}-Smoothness} 
\label{app:Extending the Analysis to H_0 H_1 Smoothness}

In this section, we extend the analysis of Section~\ref{subsec:Convex functions} to functions satisfying $(\cH_0,\cH_1)$-smoothness \cite{Vaswani_2025,Liu_2025,Alimisis_2025}. We show that, under stochastic $(\cH_0,\cH_1)$-smoothness and the interpolation condition, one can recover deterministic $(H_0,H_1)$-smoothness with the same constants, i.e., $H_0=\cH_0$ and $H_1=\cH_1$. We also complement this extension with an analysis of $(H_0,H_1)$-smoothness in the deterministic setting, focusing on gradient descent under different convexity assumptions.

\subsection{Assumptions on the objective function}

\paragraph{Smoothness.} We begin by introducing the stochastic $(\cH_0,\cH_1)$-smoothness assumption. This condition relates the local smoothness of each stochastic realization to its suboptimality gap.

\begin{boxF}
\begin{assumption}[Optional]\label{ass:H_0H_1_Smooth_for_xi}
    Let $f(\cdot,\xi)$ be $(\cH_0,\cH_1)$-smooth for almost every $\xi \sim \mathcal{D}$. That is, for almost every $\xi \sim \mathcal{D}$ and all $x,y \in \mathbb{R}^d$ such that $\norms{y-x} \leq \frac{1}{\sqrt{\cH_1}}$, we have
    \begin{equation*}
        \norms{\nabla \func{y,\xi} - \nabla \func{x,\xi}}
        \leq
        \left(
            \cH_0
            +
            \cH_1
            \left[
                \func{x,\xi} - f_\xi^*
            \right]
        \right)
        \norms{y-x},
    \end{equation*}
    where $f_\xi^* := \inf_{x \in \mathbb{R}^d} \func{x,\xi}$.
\end{assumption}
\end{boxF}

We compare Assumption~\ref{ass:H_0H_1_Smooth_for_xi} with its deterministic counterpart.

\begin{boxF}
\begin{assumption}[Optional]\label{ass:H_0H_1_Smooth}
    Let $f$ be $(H_0,H_1)$-smooth. That is, for all $x,y \in \mathbb{R}^d$ such that $\norms{y-x} \leq \frac{1}{\sqrt{H_1}}$, we have
    \begin{equation*}
        \norms{\nabla \func{y} - \nabla \func{x}}
        \leq
        \left(
            H_0
            +
            H_1
            \left[
                \func{x} - f^*
            \right]
        \right)
        \norms{y-x}.
    \end{equation*}
\end{assumption}
\end{boxF}

The following lemma shows that, under the interpolation condition, the stochastic $(\cH_0,\cH_1)$-smoothness assumption implies its deterministic counterpart with the same constants.

\begin{boxN}
\begin{lemma}
\label{lem:stochastic_H0H1_implies_deterministic_H0H1}
Let Assumption~\ref{ass:unbiased} hold and suppose that Assumption~\ref{ass:H_0H_1_Smooth_for_xi} is satisfied. Assume additionally that the interpolation condition from Definition~\ref{def:interpolation} holds. Then $f$ satisfies Assumption~\ref{ass:H_0H_1_Smooth} with the same constants
\begin{equation*}
    H_0 = \cH_0,
    \qquad
    H_1 = \cH_1.
\end{equation*}
That is, for all $x,y \in \mathbb{R}^d$ such that $\norms{y-x} \leq \frac{1}{\sqrt{H_1}}$, we have
\begin{equation*}
    \norms{\nabla \func{y} - \nabla \func{x}}
    \leq
    \left(
        H_0
        +
        H_1
        \left[
            \func{x} - f^*
        \right]
    \right)
    \norms{y-x}.
\end{equation*}
\end{lemma}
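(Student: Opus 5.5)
The plan follows the same template as the derivation of \eqref{eq:new_smoothness_with_rho} in Section~\ref{subsec:generalized_smoothness_conditions}. The difference is that the $(\cH_0,\cH_1)$ condition is \emph{linear} in the stochastic suboptimality gap, so no Jensen/SGC step is needed and the constants are not inflated. Fix $x,y$ with $\norms{y-x}\leq 1/\sqrt{\cH_1}$. By Assumption~\ref{ass:unbiased}, write $\nabla \func{y}-\nabla \func{x} = \expect{\nabla \func{y,\xi}-\nabla \func{x,\xi}}$. Jensen's inequality for the norm then gives $\norms{\nabla \func{y}-\nabla \func{x}}\leq \expect{\norms{\nabla \func{y,\xi}-\nabla \func{x,\xi}}}$.

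Since the locality radius in Assumption~\ref{ass:H_0H_1_Smooth_for_xi} is the same $1/\sqrt{\cH_1}$, the realization-wise bound applies for almost every $\xi$. Taking expectations yields
\begin{equation*}
\norms{\nabla \func{y}-\nabla \func{x}} \leq \left(\cH_0 + \cH_1\,\expect{\func{x,\xi}-f_\xi^*}\right)\norms{y-x}.
\end{equation*}

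The key step is to identify $\expect{\func{x,\xi}-f_\xi^*}$ with $\func{x}-f^*$. By Definition~\ref{def:interpolation}, any $x^*\in\argmin f$ also minimizes $f(\cdot,\xi)$ almost surely, so $f_\xi^* = \func{x^*,\xi}$ a.s. Hence $\expect{f_\xi^*} = \expect{\func{x^*,\xi}} = \func{x^*} = f^*$, and therefore $\expect{\func{x,\xi}-f_\xi^*} = \func{x}-f^*$. Substituting this gives exactly Assumption~\ref{ass:H_0H_1_Smooth} with $H_0=\cH_0$ and $H_1=\cH_1$.

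The main obstacle is measure-theoretic bookkeeping rather than analysis. We need $\argmin f$ to be nonempty, which is implicit in Definition~\ref{def:interpolation}. We need the integrability required to exchange expectation and gradient in the unbiasedness identity, as the paper assumes throughout. And we need to handle the ``almost every $\xi$'' qualifiers consistently, e.g.\ by working on the intersection of the two full-measure sets where both the smoothness bound and interpolation hold. I would state these points briefly and otherwise present the chain of inequalities above.
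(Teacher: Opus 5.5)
Your proposal is correct and follows essentially the same argument as the paper. Both proofs use unbiasedness and Jensen's inequality to move the norm inside the expectation, apply the realization-wise $(\cH_0,\cH_1)$ bound with the same locality radius, and then use interpolation to get $f_\xi^*=f(x^*,\xi)$ a.s., so that $\mathbb{E}[f_\xi^*]=f^*$ and the constants carry over unchanged.
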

\end{boxN}

\begin{proof}
By Assumption~\ref{ass:unbiased}, for any $x \in \mathbb{R}^d$,
\begin{equation*}
    \expect{\nabla \func{x,\xi}} = \nabla \func{x}.
\end{equation*}
Therefore, for any $x,y \in \mathbb{R}^d$, we have
\begin{align}
    \norms{\nabla \func{y} - \nabla \func{x}}
    &=
    \norms{
        \expect{
            \nabla \func{y,\xi}
            -
            \nabla \func{x,\xi}
        }
    }
    \nonumber \\
    &\leq
    \expect{
        \norms{
            \nabla \func{y,\xi}
            -
            \nabla \func{x,\xi}
        }
    },
    \label{eq:H0H1_jensen}
\end{align}
where we used Jensen's inequality.

Now take arbitrary $x,y \in \mathbb{R}^d$ such that $\norms{y-x} \leq \frac{1}{\sqrt{\cH_1}}$. Applying Assumption~\ref{ass:H_0H_1_Smooth_for_xi} inside the expectation in~\eqref{eq:H0H1_jensen}, we obtain
\begin{align}
    \norms{\nabla \func{y} - \nabla \func{x}}
    &\leq
    \expect{
        \left(
            \cH_0
            +
            \cH_1
            \left[
                \func{x,\xi} - f_\xi^*
            \right]
        \right)
        \norms{y-x}
    }
    \nonumber \\
    &=
    \left(
        \cH_0
        +
        \cH_1
        \expect{
            \func{x,\xi} - f_\xi^*
        }
    \right)
    \norms{y-x}
    \nonumber \\
    &=
    \left(
        \cH_0
        +
        \cH_1
        \left[
            \func{x}
            -
            \expect{f_\xi^*}
        \right]
    \right)
    \norms{y-x}.
    \label{eq:H0H1_before_interpolation}
\end{align}

It remains to show that $\expect{f_\xi^*}=f^*$. Let $x^* \in \argmin f$. By the interpolation condition from Definition~\ref{def:interpolation}, we have
\begin{equation*}
    x^* \in \argmin f(\cdot,\xi)
    \quad
    \text{a.s.}
\end{equation*}
Hence,
\begin{equation*}
    f_\xi^*
    =
    \func{x^*,\xi}
    \quad
    \text{a.s.}
\end{equation*}
Taking expectation with respect to $\xi$, we get
\begin{equation*}
    \expect{f_\xi^*}
    =
    \expect{\func{x^*,\xi}}
    =
    \func{x^*}
    =
    f^*.
\end{equation*}
Substituting this identity into~\eqref{eq:H0H1_before_interpolation}, we obtain
\begin{equation*}
    \norms{\nabla \func{y} - \nabla \func{x}}
    \leq
    \left(
        \cH_0
        +
        \cH_1
        \left[
            \func{x} - f^*
        \right]
    \right)
    \norms{y-x}.
\end{equation*}
Finally, setting $H_0=\cH_0$ and $H_1=\cH_1$, we obtain
\begin{equation*}
    \norms{\nabla \func{y} - \nabla \func{x}}
    \leq
    \left(
        H_0
        +
        H_1
        \left[
            \func{x} - f^*
        \right]
    \right)
    \norms{y-x},
\end{equation*}
for all $x,y \in \mathbb{R}^d$ such that $\norms{y-x} \leq \frac{1}{\sqrt{H_1}}$.
This completes the proof.
\end{proof}

Assumption~\ref{ass:H_0H_1_Smooth} is strictly weaker than Assumption~\ref{ass:L_0_L_1Smooth}, and consequently also strictly weaker than classical $L$-smoothness. To see this, Proposition~B.1 of \cite{Alimisis_2025} establishes that every $(L_0,L_1)$-smooth function is also $(H_0,H_1)$-smooth with $H_0 = L_0 + L_0 L_1$ and $H_1 = \frac{4 L_1^2 + L_1}{2}$. Moreover, Example~1 of \cite{Liu_2025} provides a simple function, motivated by deep neural network training, that satisfies $(H_0,H_1)$-smoothness but does not satisfy $(L_0,L_1)$-smoothness. Thus, $(H_0,H_1)$-smoothness captures a broader class of objectives than the $(L_0,L_1)$-smoothness condition.

Similarly to $(L_0,L_1)$-smoothness, Assumption~\ref{ass:H_0H_1_Smooth} yields the following descent-type inequality: for all $x,y \in \mathbb{R}^d$ such that $\norms{y-x}\leq \frac{1}{\sqrt{H_1}}$,
\begin{equation}
    \label{eq:H0H1_descent_inequality}
    f(y) - f(x)
    \leq
    \dotprod{\nabla f(x)}{y-x}
    +
    \frac{H_0 + H_1 \left( \func{x} - f^* \right)}{2}
    \norms{y-x}^2.
\end{equation}

\paragraph{Convexity.} We also impose convexity at the level of stochastic realizations. The next lemma shows that this assumption immediately implies convexity of the expected objective.

\begin{boxF}
\begin{assumption}[Optional]\label{ass:Convexity_for_xi}
    Let $f(\cdot,\xi)$ be convex for almost every $\xi \sim \mathcal{D}$. That is, for almost every $\xi \sim \mathcal{D}$ and all $x,y \in \mathbb{R}^d$, we have
    \begin{equation*}
        \func{y,\xi}
        \geq
        \func{x,\xi}
        +
        \dotprod{\nabla \func{x,\xi}}{y-x}.
    \end{equation*}
\end{assumption}
\end{boxF}

\begin{boxN}
\begin{lemma}
\label{lem:stochastic_convexity_implies_deterministic_convexity}
Let Assumptions~\ref{ass:Convexity_for_xi} and~\ref{ass:unbiased} hold. Then $f$ is convex, i.e., for all $x,y \in \mathbb{R}^d$, we have
\begin{equation*}
    \func{y}
    \geq
    \func{x}
    +
    \dotprod{\nabla \func{x}}{y-x}.
\end{equation*}
\end{lemma}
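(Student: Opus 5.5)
The plan is to take expectations of the per-realization convexity inequality in Assumption~\ref{ass:Convexity_for_xi} and identify each term with its deterministic counterpart. First, fix arbitrary $x,y \in \mathbb{R}^d$. Since the inequality
\begin{equation*}
    \func{y,\xi} \geq \func{x,\xi} + \dotprod{\nabla \func{x,\xi}}{y-x}
\end{equation*}
holds for almost every $\xi \sim \mathcal{D}$, taking expectation with respect to $\xi$ preserves it. Here we assume, as is implicit in the problem formulation \eqref{eq:init_problem}, that $\func{x,\xi}$ and $\func{y,\xi}$ are integrable, so that $f$ is finite.

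Second, simplify both sides term by term. By the definition in \eqref{eq:init_problem}, $\mathbb{E}_\xi[\func{y,\xi}] = \func{y}$ and $\mathbb{E}_\xi[\func{x,\xi}] = \func{x}$. Since $y-x$ is deterministic, linearity of the inner product and of expectation give
\begin{equation*}
    \mathbb{E}_\xi\left[\dotprod{\nabla \func{x,\xi}}{y-x}\right] = \dotprod{\mathbb{E}_\xi[\nabla \func{x,\xi}]}{y-x}.
\end{equation*}
By the unbiasedness in Assumption~\ref{ass:unbiased}, this equals $\dotprod{\nabla \func{x}}{y-x}$. Combining these identities yields $\func{y} \geq \func{x} + \dotprod{\nabla \func{x}}{y-x}$, and since $x,y$ were arbitrary, $f$ is convex.

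There is no real obstacle here. The only point needing care is justifying the interchange of expectation and the inner product, which requires $\nabla \func{x,\xi}$ to be integrable. This integrability is already implicit in Assumption~\ref{ass:unbiased}, since that assumption is stated in terms of $\mathbb{E}_\xi[\nabla \func{x,\xi}]$. The null set where the realization-wise convexity may fail does not affect the expectation.
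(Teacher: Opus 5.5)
Your proof is correct and matches the paper's argument: take the expectation of the realization-wise convexity inequality, identify $\mathbb{E}_\xi[f(\cdot,\xi)]$ with $f$, pull the expectation inside the inner product, and apply unbiasedness (Assumption~\ref{ass:unbiased}). Your remarks on integrability and on the null set are extra care that the paper leaves implicit.
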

\end{boxN}

\begin{proof}
By Assumption~\ref{ass:Convexity_for_xi}, for almost every $\xi \sim \mathcal{D}$ and all $x,y \in \mathbb{R}^d$, we have
\begin{equation*}
    \func{y,\xi}
    \geq
    \func{x,\xi}
    +
    \dotprod{\nabla \func{x,\xi}}{y-x}.
\end{equation*}
Taking expectation with respect to $\xi$ on both sides, we obtain
\begin{align}
    \expect{\func{y,\xi}}
    &\geq
    \expect{\func{x,\xi}}
    +
    \expect{
        \dotprod{\nabla \func{x,\xi}}{y-x}
    }
    \nonumber \\
    &=
    \expect{\func{x,\xi}}
    +
    \dotprod{
        \expect{\nabla \func{x,\xi}}
    }{y-x}.
\end{align}
Using the definitions $\func{x}=\expect{\func{x,\xi}}$ and $\func{y}=\expect{\func{y,\xi}}$, together with Assumption~\ref{ass:unbiased}, we get
\begin{equation*}
    \func{y}
    \geq
    \func{x}
    +
    \dotprod{\nabla \func{x}}{y-x}.
\end{equation*}
Therefore, $f$ is convex.
\end{proof}

\subsection{Implication of convexity and monotone behavior of the distance to optimum}
\label{app:Implication of convexity and monotonicity_H0_H1}

In this subsection, we derive a useful consequence of convexity and show that, under the corresponding step-size conditions, the distance to the optimum is non-increasing for both ClipSGD and NSGD in the $(\cH_0,\cH_1)$-smooth setting.

\paragraph{Implication of convexity.}
If Assumptions~\ref{ass:Convexity_for_xi} and~\ref{ass:unbiased} hold, then by Lemma~\ref{lem:stochastic_convexity_implies_deterministic_convexity}, the expected objective $f$ is convex. Therefore, for all $x,y \in \mathbb{R}^d$, we have
\begin{equation}
    \label{eq:Convexity_H0H1}
    \func{x} - \func{y}
    \leq
    \dotprod{\nabla \func{x}}{x-y}.
\end{equation}
Taking $y=x^*$, where $x^* \in \argmin f$, we obtain the following relation between the function suboptimality gap and the gradient norm:
\begin{align}
    \func{x^k} - f^*
    &\overset{\eqref{eq:Convexity_H0H1}}{\leq}
    \dotprod{\nabla \func{x^k}}{x^k - x^*}
    \overset{\eqref{eq:scalar_product_bound}}{\leq}
    \norms{\nabla \func{x^k}}
    \norms{x^k - x^*}
    \overset{\circledOne}{\leq}
    \norms{\nabla \func{x^k}}
    \underbrace{\norms{x^0 - x^*}}_{=:R_0}.
    \label{eq:relationship_between_function_gap_and_gradient_norm_H0H1}
\end{align}
Here, in $\circledOne$ we used the monotonicity of the distance to the optimum established below for ClipSGD and NSGD.

For the proofs below, we use the following standard self-bounding consequence of convexity and $(\cH_0,\cH_1)$-smoothness. For any mini-batch $\bxi^k$, define
\begin{equation*}
    F_k(\bxi^k)
    :=
    \func{x^k,\bxi^k}
    -
    \func{x^*,\bxi^k}.
\end{equation*}
Under interpolation, $x^*$ minimizes each realization almost surely, and hence also minimizes the mini-batch objective $\func{\cdot,\bxi^k}$. Therefore, $F_k(\bxi^k)\geq 0$. Moreover,
\begin{equation}
    \label{eq:self_bounding_H0H1_batch}
    \norms{\nabla \func{x^k,\bxi^k}}^2
    \leq
    2
    \left(
        \cH_0
        +
        \cH_1 F_k(\bxi^k)
    \right)
    F_k(\bxi^k).
\end{equation}
We also use convexity of $\func{\cdot,\bxi^k}$ and interpolation, which imply
\begin{equation}
    \label{eq:convexity_batch_H0H1}
    F_k(\bxi^k)
    \leq
    \dotprod{
        \nabla \func{x^k,\bxi^k}
    }{
        x^k-x^*
    }.
\end{equation}

% \paragraph{Monotone behavior of the distance to optimum for ClipSGD.}

\begin{boxN}
\begin{lemma}
\label{lem:monotone_clipsgd_H0_H1}
Let Assumptions~\ref{ass:H_0H_1_Smooth_for_xi}, \ref{ass:Convexity_for_xi}, and the interpolation condition from Definition~\ref{def:interpolation} hold. Let $H_0=\cH_0$ and $H_1=\cH_1$. Then ClipSGD from Algorithm~\ref{algo:clip_algo} with step size
\begin{equation*}
    \eta
    \leq
    \left[
        9
        \left(
            H_0
            +
            \max\left\{\sqrt{H_1},H_1R_0\right\}c
        \right)
    \right]^{-1},
\end{equation*}
where $R_0:=\norms{x^0-x^*}$, guarantees
\begin{equation*}
    \norms{x^{k+1}-x^*}^2
    \leq
    \norms{x^k-x^*}^2.
\end{equation*}
Consequently,
\begin{equation*}
    \norms{x^k-x^*}
    \leq
    \norms{x^0-x^*}
    =
    R_0
    \quad
    \text{for all } k\geq 0.
\end{equation*}
\end{lemma}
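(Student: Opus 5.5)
The plan is to follow the same induction as in Lemma~\ref{lem:monotone_clipsgd}, replacing the $(\cL_0,\cL_1)$ self-bounding inequality \eqref{eq:Auxiliary_lemma} with the $(\cH_0,\cH_1)$ self-bounding bound \eqref{eq:self_bounding_H0H1_batch}. The induction hypothesis $\norms{x^k-x^*}\leq R_0$ is what turns the function-gap dependence into a gradient-norm dependence. Write $g:=\nabla \func{x^k,\bxi^k}$ and $F:=F_k(\bxi^k)\geq 0$. Expanding the square of the ClipSGD update gives, exactly as in \eqref{eq:ClipSGD_L0L1_monotone_start},
\[
\norms{x^{k+1}-x^*}^2=\norms{x^k-x^*}^2-2\eta\alpha_{\bxi^k}\dotprod{g}{x^k-x^*}+\eta^2\alpha_{\bxi^k}^2\norms{g}^2 .
\]

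\textbf{Step 1: convert the $F$-dependence into a $\norms{g}$-dependence.} By \eqref{eq:convexity_batch_H0H1}, Cauchy--Schwarz and the induction hypothesis,
\[
F\leq \dotprod{g}{x^k-x^*}\leq \norms{g}\,\norms{x^k-x^*}\leq R_0\norms{g}.
\]
Substituting this into \eqref{eq:self_bounding_H0H1_batch} (using $F\leq R_0\norms{g}$ inside the bracket and $F\leq\dotprod{g}{x^k-x^*}$ for the outer factor) gives
\[
\norms{g}^2\leq 2\left(H_0+H_1R_0\norms{g}\right)\dotprod{g}{x^k-x^*}.
\]
This has exactly the shape of \eqref{eq:self_bounding_L0L1_Clip}, with $\sqrt{\rho}\cL_1$ replaced by $H_1R_0$ and $\cL_0$ by $H_0$.

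\textbf{Step 2: the two clipping cases.} If $\norms{g}\leq c$, then $\alpha_{\bxi^k}=1$ and $H_0+H_1R_0\norms{g}\leq H_0+H_1R_0c$. If $\norms{g}>c$, then $\alpha_{\bxi^k}\left(H_0+H_1R_0\norms{g}\right)=\alpha_{\bxi^k}H_0+H_1R_0c\leq H_0+H_1R_0c$. In both cases the quadratic term is therefore at most
\[
2\eta^2\alpha_{\bxi^k}\left(H_0+H_1R_0c\right)\dotprod{g}{x^k-x^*}\leq \tfrac{2}{9}\eta\alpha_{\bxi^k}\dotprod{g}{x^k-x^*},
\]
since $\eta\leq[9(H_0+\max\{\sqrt{H_1},H_1R_0\}c)]^{-1}$. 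Hence
\[
\norms{x^{k+1}-x^*}^2\leq\norms{x^k-x^*}^2-\tfrac{16}{9}\eta\alpha_{\bxi^k}\dotprod{g}{x^k-x^*}\leq \norms{x^k-x^*}^2-\tfrac{16}{9}\eta\alpha_{\bxi^k}F\leq\norms{x^k-x^*}^2,
\]
which closes the induction and gives $\norms{x^k-x^*}\leq R_0$ for all $k$.

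\textbf{Step 3: locality.} Finally, I would check the locality radius: $\norms{x^{k+1}-x^k}\leq\eta c\leq \frac{c}{9\sqrt{H_1}c}\leq\frac{1}{\sqrt{H_1}}$. This is where the $\sqrt{H_1}$ part of the maximum in the step size is used, and it keeps every step within the region where Assumption~\ref{ass:H_0H_1_Smooth} (and the descent inequality \eqref{eq:H0H1_descent_inequality}) applies.

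The main obstacle is the justification of \eqref{eq:self_bounding_H0H1_batch} for the mini-batch objective. This requires the stochastic $(\cH_0,\cH_1)$ condition on each realization, together with a descent step of length $\norms{g}/(H_0+H_1F)$. That step length must stay within $1/\sqrt{H_1}$, which holds because $\norms{g}\leq\sqrt{2(H_0+H_1F)F}$ and hence $\norms{g}/(H_0+H_1F)\leq\sqrt{2F/(H_0+H_1F)}$, which is of order $1/\sqrt{H_1}$ up to a constant. In addition, the per-sample inequalities have to be passed to the batch average by a Jensen-type argument. Since the paper states \eqref{eq:self_bounding_H0H1_batch} as a standard consequence just above, I would invoke it directly rather than re-derive it; the rest of the argument is a routine transcription of Lemma~\ref{lem:monotone_clipsgd}.
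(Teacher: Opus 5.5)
Your proof is correct and is essentially the paper's argument. It uses induction on $\norms{x^k-x^*}\le R_0$, the bound $F_k(\bxi^k)\le R_0\norms{\nabla \func{x^k,\bxi^k}}$ from convexity, Cauchy--Schwarz and the induction hypothesis, the self-bounding inequality \eqref{eq:self_bounding_H0H1_batch}, the same two clipping cases yielding the factor $\tfrac{16}{9}$, and the same locality check via the $\sqrt{H_1}$ part of the step size. The only difference is cosmetic: you rewrite the self-bounding bound in the $(L_0,L_1)$-type form with $H_1R_0\norms{\nabla \func{x^k,\bxi^k}}$ and reuse Lemma~\ref{lem:monotone_clipsgd}, whereas the paper keeps $F_k(\bxi^k)$ and bounds $\alpha_{\bxi^k}F_k(\bxi^k)\le cR_0$. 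Like the paper, you take \eqref{eq:self_bounding_H0H1_batch} as given. Do not offer your side sketch of its locality step as a justification, though, since it uses the very inequality it is meant to establish.
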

\end{boxN}

\begin{proof}
We prove the statement by induction. The claim is trivial for $k=0$. Assume that
\begin{equation*}
    \norms{x^k-x^*}
    \leq
    R_0.
\end{equation*}
We show that
\begin{equation*}
    \norms{x^{k+1}-x^*}
    \leq
    \norms{x^k-x^*}.
\end{equation*}

For a mini-batch $\bxi^k$, define
\begin{equation*}
    F_k(\bxi^k)
    :=
    \func{x^k,\bxi^k}
    -
    \func{x^*,\bxi^k}.
\end{equation*}
By the interpolation condition, $x^*$ minimizes each stochastic realization almost surely, and hence it also minimizes the mini-batch objective. Therefore,
\begin{equation*}
    F_k(\bxi^k)\geq 0.
\end{equation*}
Moreover, by convexity,
\begin{equation}
    \label{eq:convexity_batch_H0H1_Clip}
    F_k(\bxi^k)
    \leq
    \dotprod{\nabla \func{x^k,\bxi^k}}{x^k-x^*}.
\end{equation}
Using the induction hypothesis and Cauchy's inequality, we also get
\begin{align}
    F_k(\bxi^k)
    &\leq
    \dotprod{\nabla \func{x^k,\bxi^k}}{x^k-x^*}\leq
    \norms{\nabla \func{x^k,\bxi^k}}
    \norms{x^k-x^*}\leq
    \norms{\nabla \func{x^k,\bxi^k}}R_0.
    \label{eq:F_bound_by_gradient_R0_H0H1_Clip}
\end{align}

The ClipSGD update has the form
\begin{equation*}
    x^{k+1}
    =
    x^k
    -
    \eta\alpha_{\bxi^k}\nabla \func{x^k,\bxi^k},
\end{equation*}
where
\begin{equation*}
    \alpha_{\bxi^k}
    =
    \min
    \left\{
        1,
        \frac{c}{\norms{\nabla \func{x^k,\bxi^k}}}
    \right\}.
\end{equation*}
Then
\begin{align}
    \norms{x^{k+1}-x^*}^2
    &=
    \norms{
        x^k-x^*
        -
        \eta\alpha_{\bxi^k}\nabla \func{x^k,\bxi^k}
    }^2
    \nonumber \\
    &=
    \norms{x^k-x^*}^2
    -
    2\eta\alpha_{\bxi^k}
    \dotprod{\nabla \func{x^k,\bxi^k}}{x^k-x^*}
    +
    \eta^2\alpha_{\bxi^k}^2
    \norms{\nabla \func{x^k,\bxi^k}}^2
    \nonumber \\
    &\overset{\eqref{eq:convexity_batch_H0H1_Clip}}{\leq}
    \norms{x^k-x^*}^2
    -
    2\eta\alpha_{\bxi^k}
    F_k(\bxi^k)
    +
    \eta^2\alpha_{\bxi^k}^2
    \norms{\nabla \func{x^k,\bxi^k}}^2.
    \label{eq:ClipSGD_H0H1_monotone_start}
\end{align}

By the self-bounding property of convex $(H_0,H_1)$-smooth functions applied to the mini-batch objective, we have
\begin{equation}
    \label{eq:self_bounding_H0H1_Clip}
    \norms{\nabla \func{x^k,\bxi^k}}^2
    \leq
    2
    \left(
        H_0
        +
        H_1F_k(\bxi^k)
    \right)
    F_k(\bxi^k).
\end{equation}
Substituting~\eqref{eq:self_bounding_H0H1_Clip} into~\eqref{eq:ClipSGD_H0H1_monotone_start}, we obtain
\begin{align}
    \norms{x^{k+1}-x^*}^2
    &\leq
    \norms{x^k-x^*}^2
    -
    2\eta\alpha_{\bxi^k}
    F_k(\bxi^k)
    \nonumber \\
    &\quad\quad\quad
    +
    2\eta^2\alpha_{\bxi^k}^2
    \left(
        H_0
        +
        H_1F_k(\bxi^k)
    \right)
    F_k(\bxi^k).
    \label{eq:ClipSGD_H0H1_monotone_main}
\end{align}

We now consider two cases.

\textbf{Case 1: $\norms{\nabla \func{x^k,\bxi^k}}\leq c$.}
In this case, $\alpha_{\bxi^k}=1$. From~\eqref{eq:F_bound_by_gradient_R0_H0H1_Clip}, we have
\begin{equation*}
    F_k(\bxi^k)
    \leq
    cR_0.
\end{equation*}
Therefore,
\begin{equation*}
    H_0+H_1F_k(\bxi^k)
    \leq
    H_0+H_1cR_0
    \leq
    H_0+\max\left\{\sqrt{H_1},H_1R_0\right\}c.
\end{equation*}
Using this bound in~\eqref{eq:ClipSGD_H0H1_monotone_main}, we obtain
\begin{align*}
    \norms{x^{k+1}-x^*}^2
    &\leq
    \norms{x^k-x^*}^2
    -
    2\eta F_k(\bxi^k)
    \\
    &\quad\quad\quad
    +
    2\eta^2
    \left(
        H_0+\max\left\{\sqrt{H_1},H_1R_0\right\}c
    \right)
    F_k(\bxi^k)
    \\
    &\leq
    \norms{x^k-x^*}^2
    -
    2\eta F_k(\bxi^k)
    +
    \frac{2}{9}\eta F_k(\bxi^k)
    \\
    &=
    \norms{x^k-x^*}^2
    -
    \frac{16}{9}\eta F_k(\bxi^k)
    \\
    &\leq
    \norms{x^k-x^*}^2.
\end{align*}

\textbf{Case 2: $\norms{\nabla \func{x^k,\bxi^k}}>c$.}
In this case,
\begin{equation*}
    \alpha_{\bxi^k}
    =
    \frac{c}{\norms{\nabla \func{x^k,\bxi^k}}}.
\end{equation*}
From~\eqref{eq:F_bound_by_gradient_R0_H0H1_Clip}, we get
\begin{equation*}
    \alpha_{\bxi^k}F_k(\bxi^k)
    \leq
    cR_0.
\end{equation*}
Since $\alpha_{\bxi^k}\leq 1$, we have
\begin{align*}
    \alpha_{\bxi^k}
    \left(
        H_0+H_1F_k(\bxi^k)
    \right)
    &=
    \alpha_{\bxi^k}H_0
    +
    H_1\alpha_{\bxi^k}F_k(\bxi^k)
    \\
    &\leq
    H_0+H_1cR_0
    \\
    &\leq
    H_0+\max\left\{\sqrt{H_1},H_1R_0\right\}c.
\end{align*}
Using this bound in~\eqref{eq:ClipSGD_H0H1_monotone_main}, we obtain
\begin{align*}
    \norms{x^{k+1}-x^*}^2
    &\leq
    \norms{x^k-x^*}^2
    -
    2\eta\alpha_{\bxi^k}F_k(\bxi^k)
    \\
    &\quad\quad\quad
    +
    2\eta^2\alpha_{\bxi^k}
    \left[
        \alpha_{\bxi^k}
        \left(
            H_0+H_1F_k(\bxi^k)
        \right)
    \right]
    F_k(\bxi^k)
    \\
    &\leq
    \norms{x^k-x^*}^2
    -
    2\eta\alpha_{\bxi^k}F_k(\bxi^k)
    \\
    &\quad\quad\quad
    +
    2\eta^2\alpha_{\bxi^k}
    \left(
        H_0+\max\left\{\sqrt{H_1},H_1R_0\right\}c
    \right)
    F_k(\bxi^k)
    \\
    &\leq
    \norms{x^k-x^*}^2
    -
    2\eta\alpha_{\bxi^k}F_k(\bxi^k)
    +
    \frac{2}{9}\eta\alpha_{\bxi^k}F_k(\bxi^k)
    \\
    &=
    \norms{x^k-x^*}^2
    -
    \frac{16}{9}\eta\alpha_{\bxi^k}F_k(\bxi^k)
    \\
    &\leq
    \norms{x^k-x^*}^2.
\end{align*}

Combining both cases, we conclude that
\begin{equation*}
    \norms{x^{k+1}-x^*}^2
    \leq
    \norms{x^k-x^*}^2.
\end{equation*}
Thus, by induction,
\begin{equation*}
    \norms{x^k-x^*}
    \leq
    \norms{x^0-x^*}
    =
    R_0
    \quad
    \text{for all } k\geq 0.
\end{equation*}

Finally, we note that the step-size condition also implies
\begin{equation*}
    \eta c
    \leq
    \frac{c}{
        9
        \left(
            H_0+\sqrt{H_1}c
        \right)
    }
    \leq
    \frac{1}{\sqrt{H_1}},
\end{equation*}
which ensures that all ClipSGD updates remain within the locality radius required by the $(H_0,H_1)$-smoothness condition.
This completes the proof.
\end{proof}

% \paragraph{Monotone behavior of the distance to optimum for NSGD.}

\begin{boxN}
\begin{lemma}
\label{lem:monotone_nsgd_H0_H1}
Let Assumptions~\ref{ass:H_0H_1_Smooth_for_xi}, \ref{ass:Convexity_for_xi}, and the interpolation condition from Definition~\ref{def:interpolation} hold. Let $H_0=\cH_0$ and $H_1=\cH_1$. Then NSGD from Algorithm~\ref{algo:clip_algo} with any $\lambda>0$ and step size
\begin{equation*}
    \eta
    \leq
    \frac{\lambda}{
        H_0
        +
        \max\left\{\sqrt{H_1},H_1R_0\right\}\lambda
    },
\end{equation*}
where $R_0:=\norms{x^0-x^*}$, guarantees
\begin{equation*}
    \norms{x^{k+1}-x^*}^2
    \leq
    \norms{x^k-x^*}^2.
\end{equation*}
Consequently,
\begin{equation*}
    \norms{x^k-x^*}
    \leq
    \norms{x^0-x^*}
    =
    R_0
    \quad
    \text{for all } k\geq 0.
\end{equation*}
\end{lemma}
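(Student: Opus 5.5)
We mirror the proof of Lemma~\ref{lem:monotone_clipsgd_H0_H1}, replacing the clipping factor $\alpha_{\bxi^k}$ by the normalization factor $\beta_k := 1/(\norms{\nabla \func{x^k,\bxi^k}}+\lambda)$, and argue by induction on $k$. The base case is trivial. For the inductive step, assume $\norms{x^k-x^*}\leq R_0$ and write $F_k(\bxi^k):=\func{x^k,\bxi^k}-\func{x^*,\bxi^k}$, which is nonnegative by interpolation. By \eqref{eq:convexity_batch_H0H1} and Cauchy--Schwarz with the induction hypothesis,
\[
F_k(\bxi^k)\leq \dotprod{\nabla \func{x^k,\bxi^k}}{x^k-x^*}\leq \norms{\nabla \func{x^k,\bxi^k}}R_0 .
\]
Expanding the NSGD update $x^{k+1}=x^k-\eta\beta_k\nabla \func{x^k,\bxi^k}$ gives
\[
\norms{x^{k+1}-x^*}^2 = \norms{x^k-x^*}^2 - 2\eta\beta_k\dotprod{\nabla \func{x^k,\bxi^k}}{x^k-x^*} + \eta^2\beta_k^2\norms{\nabla \func{x^k,\bxi^k}}^2 .
\]
We bound the inner product from below by $F_k(\bxi^k)$ and bound the last term with the self-bounding inequality \eqref{eq:self_bounding_H0H1_batch}. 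This yields
\[
\norms{x^{k+1}-x^*}^2\leq \norms{x^k-x^*}^2 - 2\eta\beta_k F_k(\bxi^k)\Bigl(1-\eta\beta_k\bigl(H_0+H_1F_k(\bxi^k)\bigr)\Bigr).
\]

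Since $F_k(\bxi^k)\geq 0$, it suffices to show $\eta\beta_k(H_0+H_1F_k(\bxi^k))\leq 1$. Write $G:=\norms{\nabla \func{x^k,\bxi^k}}$. The bound $F_k(\bxi^k)\leq GR_0$ gives
\[
\beta_k\bigl(H_0+H_1F_k(\bxi^k)\bigr)\leq \frac{H_0+H_1R_0G}{G+\lambda}.
\]
Now set $M:=\max\{\sqrt{H_1},H_1R_0\}$. By the same elementary computation used in the $(L_0,L_1)$ NSGD proofs,
\[
\frac{H_0+MG}{G+\lambda}\leq \frac{H_0+M\lambda}{\lambda}\quad\text{for all } G\geq 0,
\]
because the difference equals $\frac{H_0G+M\lambda^2}{\lambda(G+\lambda)}\geq0$. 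Combined with $H_1R_0\leq M$ and the step-size condition $\eta\leq \lambda/(H_0+M\lambda)$, this gives the required nonnegativity. Hence $\norms{x^{k+1}-x^*}\leq\norms{x^k-x^*}\leq R_0$, which closes the induction.

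It remains to check locality for the smoothness-based bounds. We have $\norms{x^{k+1}-x^k}\leq\eta\leq \lambda/(H_0+\sqrt{H_1}\lambda)\leq 1/\sqrt{H_1}$, so each step stays within the $(H_0,H_1)$ locality radius.

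The main obstacle is that $H_1F_k(\bxi^k)$ is not controlled directly by the normalization. The key step is converting it into a term linear in $G$ via $F_k(\bxi^k)\leq GR_0$. This conversion needs the induction hypothesis, which is why the statement must be proved by induction rather than one step at a time. A secondary point is justifying the self-bounding inequality \eqref{eq:self_bounding_H0H1_batch} for the mini-batch objective. We take it as given, as the paper does for ClipSGD.
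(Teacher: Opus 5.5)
Your proposal is correct and matches the paper's proof essentially step for step. Both arguments use induction on $\norms{x^k-x^*}\leq R_0$, the bound $F_k(\bxi^k)\leq R_0\norms{\nabla \func{x^k,\bxi^k}}$, expansion of the update with the convexity lower bound and the self-bounding inequality \eqref{eq:self_bounding_H0H1_batch}, and the elementary fraction comparison combined with the step-size condition. The only cosmetic difference is that you run the fraction comparison directly with $M=\max\{\sqrt{H_1},H_1R_0\}$, whereas the paper runs it with $H_1R_0$ and then uses $\eta\leq\lambda/(H_0+H_1R_0\lambda)$.
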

\end{boxN}

\begin{proof}
We prove the statement by induction. The claim is trivial for $k=0$. Assume that
\begin{equation*}
    \norms{x^k-x^*}
    \leq
    R_0.
\end{equation*}
We show that
\begin{equation*}
    \norms{x^{k+1}-x^*}
    \leq
    \norms{x^k-x^*}.
\end{equation*}

For a mini-batch $\bxi^k$, define
\begin{equation*}
    F_k(\bxi^k)
    :=
    \func{x^k,\bxi^k}
    -
    \func{x^*,\bxi^k}.
\end{equation*}
By the interpolation condition, $x^*$ minimizes each stochastic realization almost surely, and hence it also minimizes the mini-batch objective. Therefore,
\begin{equation*}
    F_k(\bxi^k)\geq 0.
\end{equation*}
Moreover, by convexity,
\begin{equation}
    \label{eq:convexity_batch_H0H1_NSGD}
    F_k(\bxi^k)
    \leq
    \dotprod{\nabla \func{x^k,\bxi^k}}{x^k-x^*}.
\end{equation}
Using the induction hypothesis and Cauchy's inequality, we also get
\begin{align}
    F_k(\bxi^k)
    &\leq
    \dotprod{\nabla \func{x^k,\bxi^k}}{x^k-x^*}\leq
    \norms{\nabla \func{x^k,\bxi^k}}
    \norms{x^k-x^*}\leq
    \norms{\nabla \func{x^k,\bxi^k}} R_0.
    \label{eq:F_bound_by_gradient_R0_H0H1_NSGD}
\end{align}

Introduce the NSGD direction
\begin{equation*}
    \Gf{x^k,\bxi^k}
    \coloneqq
    \frac{\nabla \func{x^k,\bxi^k}}
    {\norms{\nabla \func{x^k,\bxi^k}}+\lambda}.
\end{equation*}
Then the update rule of NSGD can be written as
\begin{equation*}
    x^{k+1}
    =
    x^k
    -
    \eta \Gf{x^k,\bxi^k}.
\end{equation*}
Hence,
\begin{align}
    \norms{x^{k+1}-x^*}^2
    &=
    \norms{x^k-x^*-\eta \Gf{x^k,\bxi^k}}^2
    \nonumber \\
    &=
    \norms{x^k-x^*}^2
    -
    2\eta
    \dotprod{\Gf{x^k,\bxi^k}}{x^k-x^*}
    +
    \eta^2
    \norms{\Gf{x^k,\bxi^k}}^2
    \nonumber \\
    &=
    \norms{x^k-x^*}^2
    -
    \frac{2\eta}{
        \norms{\nabla \func{x^k,\bxi^k}}+\lambda
    }
    \dotprod{\nabla \func{x^k,\bxi^k}}{x^k-x^*}
    \nonumber \\
    &\quad\quad\quad
    +
    \eta^2
    \left(
        \frac{
            \norms{\nabla \func{x^k,\bxi^k}}
        }{
            \norms{\nabla \func{x^k,\bxi^k}}+\lambda
        }
    \right)^2
    \nonumber \\
    &\overset{\eqref{eq:convexity_batch_H0H1_NSGD}}{\leq}
    \norms{x^k-x^*}^2
    -
    \frac{2\eta}{
        \norms{\nabla \func{x^k,\bxi^k}}+\lambda
    }
    F_k(\bxi^k)
    \nonumber \\
    &\quad\quad\quad
    +
    \eta^2
    \left(
        \frac{
            \norms{\nabla \func{x^k,\bxi^k}}
        }{
            \norms{\nabla \func{x^k,\bxi^k}}+\lambda
        }
    \right)^2.
    \label{eq:NSGD_H0H1_monotone_start}
\end{align}

By the self-bounding property of convex $(H_0,H_1)$-smooth functions applied to the mini-batch objective, we have
\begin{equation}
    \label{eq:self_bounding_H0H1_NSGD}
    \norms{\nabla \func{x^k,\bxi^k}}^2
    \leq
    2
    \left(
        H_0
        +
        H_1F_k(\bxi^k)
    \right)
    F_k(\bxi^k).
\end{equation}
Substituting~\eqref{eq:self_bounding_H0H1_NSGD} into~\eqref{eq:NSGD_H0H1_monotone_start}, we obtain
\begin{align}
    \norms{x^{k+1}-x^*}^2
    &\leq
    \norms{x^k-x^*}^2
    -
    \frac{2\eta}{
        \norms{\nabla \func{x^k,\bxi^k}}+\lambda
    }
    F_k(\bxi^k)
    \nonumber \\
    &\quad\quad\quad
    +
    \frac{
        2\eta^2
        \left(
            H_0
            +
            H_1F_k(\bxi^k)
        \right)
    }{
        \left(
            \norms{\nabla \func{x^k,\bxi^k}}+\lambda
        \right)^2
    }
    F_k(\bxi^k)
    \nonumber \\
    &=
    \norms{x^k-x^*}^2
    -
    \frac{
        2\eta F_k(\bxi^k)
    }{
        \norms{\nabla \func{x^k,\bxi^k}}+\lambda
    }
    \left(
        1
        -
        \eta
        \frac{
            H_0
            +
            H_1F_k(\bxi^k)
        }{
            \norms{\nabla \func{x^k,\bxi^k}}+\lambda
        }
    \right).
    \label{eq:NSGD_H0H1_monotone_main}
\end{align}

It remains to show that the term in parentheses is non-negative. From~\eqref{eq:F_bound_by_gradient_R0_H0H1_NSGD}, we have
\begin{equation*}
    F_k(\bxi^k)
    \leq
    R_0\norms{\nabla \func{x^k,\bxi^k}}.
\end{equation*}
Therefore,
\begin{align}
    \frac{
        H_0
        +
        H_1F_k(\bxi^k)
    }{
        \norms{\nabla \func{x^k,\bxi^k}}+\lambda
    }
    &\leq
    \frac{
        H_0
        +
        H_1R_0\norms{\nabla \func{x^k,\bxi^k}}
    }{
        \norms{\nabla \func{x^k,\bxi^k}}+\lambda
    }
    \nonumber \\
    &\leq
    \frac{
        H_0
        +
        H_1R_0\lambda
    }{
        \lambda
    }.
    \label{eq:NSGD_stepsize_bound_H0H1}
\end{align}
Indeed, the last inequality follows from
\begin{align*}
    &
    \frac{
        H_0
        +
        H_1R_0\lambda
    }{
        \lambda
    }
    -
    \frac{
        H_0
        +
        H_1R_0\norms{\nabla \func{x^k,\bxi^k}}
    }{
        \norms{\nabla \func{x^k,\bxi^k}}+\lambda
    }
    \\
    &\quad =
    \frac{
        H_0\norms{\nabla \func{x^k,\bxi^k}}
        +
        H_1R_0\lambda^2
    }{
        \lambda
        \left(
            \norms{\nabla \func{x^k,\bxi^k}}+\lambda
        \right)
    }
    \geq 0.
\end{align*}
Using the step-size condition
\begin{equation*}
    \eta
    \leq
    \frac{\lambda}{
        H_0
        +
        \max\left\{\sqrt{H_1},H_1R_0\right\}\lambda
    }
    \leq
    \frac{\lambda}{
        H_0
        +
        H_1R_0\lambda
    },
\end{equation*}
together with~\eqref{eq:NSGD_stepsize_bound_H0H1}, we obtain
\begin{equation*}
    1
    -
    \eta
    \frac{
        H_0
        +
        H_1F_k(\bxi^k)
    }{
        \norms{\nabla \func{x^k,\bxi^k}}+\lambda
    }
    \geq
    0.
\end{equation*}
Substituting this into~\eqref{eq:NSGD_H0H1_monotone_main}, and using $F_k(\bxi^k)\geq 0$, we conclude that
\begin{equation*}
    \norms{x^{k+1}-x^*}^2
    \leq
    \norms{x^k-x^*}^2.
\end{equation*}
Thus, by induction,
\begin{equation*}
    \norms{x^k-x^*}
    \leq
    \norms{x^0-x^*}
    =
    R_0
    \quad
    \text{for all } k\geq 0.
\end{equation*}

Finally, we note that the step-size condition also implies
\begin{equation*}
    \eta
    \leq
    \frac{\lambda}{
        H_0+\sqrt{H_1}\lambda
    }
    \leq
    \frac{1}{\sqrt{H_1}}.
\end{equation*}
Moreover,
\begin{equation*}
    \norms{x^{k+1}-x^k}
    =
    \eta
    \norms{
        \frac{\nabla \func{x^k,\bxi^k}}
        {\norms{\nabla \func{x^k,\bxi^k}}+\lambda}
    }
    \leq
    \eta
    \leq
    \frac{1}{\sqrt{H_1}}.
\end{equation*}
Thus, all NSGD updates remain within the locality radius required by the $(H_0,H_1)$-smoothness condition.
This completes the proof.
\end{proof}

We next consider two settings: a stochastic setting, where we analyze ClipSGD and NSGD (see Algorithm~\ref{algo:clip_algo}) for convex $(H_0,H_1)$-smooth objectives, and a deterministic setting, where we analyze GD (see Algorithm~\ref{algo:GD}) under various convexity assumptions.

\subsection{Stochastic setting}

Below, we present the convergence result for ClipSGD.

\begin{boxA}
\begin{theorem}[Convex, ClipSGD]\label{th:Convex_ClipSGD_H_0H_1}
Let $f(\cdot,\xi)$ be $(\cH_0,\cH_1)$-smooth
(Assumption~\ref{ass:H_0H_1_Smooth_for_xi}) and convex
(Assumption~\ref{ass:Convexity_for_xi}). Suppose that the gradient oracle is unbiased
(Assumption~\ref{ass:unbiased}) and satisfies SGC
(Assumption~\ref{ass:strong growth condition} with $\sigma=0$). Then ClipSGD (Algorithm~\ref{algo:clip_algo}) with step size
$
    \eta
    \leq
    \left[
        9
        \left(
            H_0
            +
            \max\left\{\sqrt{H_1},H_1R_0\right\}c
        \right)
    \right]^{-1},
$
where $H_0=\cH_0$, $H_1=\cH_1$, and with batch size $B\geq 36(\rho-1)$ guarantees
\begin{equation*}
    \expect{\func{x^N}} - f^*
    \leq
    \max
    \left\{
        \left(
            1-\frac{\eta c}{36 R_0}
        \right)^{N/2}F_0,
        \frac{18R_0^2}{N\eta}
    \right\},
\end{equation*}
where $R_0=\norms{x^0-x^*}$ and $F_0=\func{x^0}-f^*$.
\end{theorem}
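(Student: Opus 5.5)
The plan is to reproduce the proof of Theorem~\ref{th:Convex_ClipSGD} almost line by line. The only change is that the role of $\cL_0+\sqrt{\rho}\cL_1 c$ is taken over by $H_0+\max\{\sqrt{H_1},H_1R_0\}c$.

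\textbf{Preliminaries.} First, Lemma~\ref{lem:stochastic_H0H1_implies_deterministic_H0H1} and Lemma~\ref{lem:stochastic_convexity_implies_deterministic_convexity} show that $f$ is convex and $(H_0,H_1)$-smooth with $H_0=\cH_0$, $H_1=\cH_1$. Hence the descent inequality \eqref{eq:H0H1_descent_inequality} is available. Lemma~\ref{lem:monotone_clipsgd_H0_H1} gives two further facts: $\norms{x^k-x^*}\le R_0$ for all $k$, and $\norms{x^{k+1}-x^k}\le \eta c\le 1/\sqrt{H_1}$, so the locality condition holds. Consequently \eqref{eq:relationship_between_function_gap_and_gradient_norm_H0H1} gives $F_k\le R_0\norms{\nabla f(x^k)}$.

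\textbf{Replacing the curvature factor.} The key new step is to bound $H_0+H_1F_k$ by something that no longer depends on $F_k$. The one-step inequality reads
\[
F_{k+1}-F_k\le -\eta\dotprod{\nabla f(x^k)}{\clip{\nabla f(x^k,\bxi^k)}}+\tfrac{\eta^2}{2}\left(H_0+H_1F_k\right)\norms{\clip{\nabla f(x^k,\bxi^k)}}^2 .
\]
I split into the same two cases as before.

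\emph{Case $\norms{\nabla f(x^k)}\ge c$.} Repeat the Markov/alignment argument with $B\ge 36(\rho-1)$ to get the linear term $\le -\tfrac{\eta c}{9}\norms{\nabla f(x^k)}$. The quadratic term is at most $\tfrac{\eta^2}{2}(H_0+H_1F_k)c^2$, and $F_k\le R_0\norms{\nabla f(x^k)}$ gives
\[
(H_0+H_1F_k)c^2\le H_0 c\norms{\nabla f(x^k)}+H_1R_0c^2\norms{\nabla f(x^k)}\le \left(H_0+H_1R_0c\right)c\norms{\nabla f(x^k)},
\]
where the first term uses $c\le\norms{\nabla f(x^k)}$. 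With $\eta\le[9(H_0+\max\{\sqrt{H_1},H_1R_0\}c)]^{-1}$ this yields $-\tfrac{\eta c}{18}\norms{\nabla f(x^k)}\le -\tfrac{\eta c}{18R_0}F_k$.

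\emph{Case $\norms{\nabla f(x^k)}<c$.} Here $F_k\le R_0c$, so $H_0+H_1F_k\le H_0+H_1R_0c$ and $\eta(H_0+H_1F_k)\le 1/9$. Then the polarization identity \eqref{eq:quadratic_difference}, the nonexpansiveness of clipping, and the variance bound \eqref{eq:variance_with_batch_size} with $\sigma=0$ go through unchanged. They give $\expectcond{F_{k+1}}{x^k}-F_k\le -\tfrac{\eta}{9}\norms{\nabla f(x^k)}^2\le -\tfrac{\eta}{9R_0^2}F_k^2$.

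\textbf{Unrolling the recursion.} Both cases combine into exactly the recursion \eqref{eq:combined_conditional_recursion} with $a=\eta c/(18R_0)$ and $b=\eta/(9R_0^2)$. I then reuse verbatim the indicator inequality \eqref{eq:elementary_indicator_inequality} and the two-phase argument that splits at $\tau$ relative to $N/2$. This yields the stated bound $\max\{(1-\tfrac{\eta c}{36R_0})^{N/2}F_0,\ 18R_0^2/(N\eta)\}$.

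\textbf{Main obstacle.} The main obstacle is the large-gradient case. There the $H_1F_k$ factor must be absorbed, and this works only because the $R_0$-bound converts $F_k$ into $\norms{\nabla f(x^k)}$, which then cancels against the clipped linear decrease. I will check carefully that $c\le\norms{\nabla f(x^k)}$ is used in the right direction for the $H_0$ term.
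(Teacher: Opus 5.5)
Your proposal is correct and is essentially the paper's proof. The paper likewise uses \eqref{eq:relationship_between_function_gap_and_gradient_norm_H0H1} and Lemma~\ref{lem:monotone_clipsgd_H0_H1} to replace $H_0+H_1F_k$ by $H_0+H_1R_0\norms{\nabla \func{x^k}}$, and then absorbs it the same way: into $(H_0+H_1R_0c)\,c\,\norms{\nabla \func{x^k}}$ via $\norms{\nabla \func{x^k}}\ge c$ in the large-gradient case, and into $H_0+H_1R_0c$ in the small-gradient case. It then reruns the alignment and polarization arguments and the same two-phase unrolling of the combined recursion; the only cosmetic difference is that you substitute $F_k\le R_0\norms{\nabla \func{x^k}}$ separately in each case rather than once in the descent inequality.
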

\end{boxA}

If the step size is chosen as
\begin{equation*}
    \eta
    =
    \left[
        9
        \left(
            H_0
            +
            \max\left\{\sqrt{H_1},H_1R_0\right\}c
        \right)
    \right]^{-1},
\end{equation*}
then the iteration complexity $N$ takes the following form:
\begin{equation*}
    \max
    \left\{
        \OboundTilde{
            \frac{
                \left[
                    H_0
                    +
                    \max\left\{\sqrt{H_1},H_1R_0\right\}c
                \right]R_0
            }{c}
        },
        \Obound{
            \frac{
                \left[
                    H_0
                    +
                    \max\left\{\sqrt{H_1},H_1R_0\right\}c
                \right]R_0^2
            }{\varepsilon}
        }
    \right\}.
\end{equation*}

\paragraph{Comparison to the prior work.}
The prior best-known convergence guarantee for ClipSGD under convex $(H_0,H_1)$-smoothness~\cite{Alimisis_2025} is stated in terms of a non-standard metric. In contrast, Theorem~\ref{th:Convex_ClipSGD_H_0H_1} provides a bound in the standard expected optimality gap $\expect{\func{x^N}}-f^*$.

\begin{proof}
Throughout the proof, we use the shorthand
\begin{equation*}
    F_k := \func{x^k}-f^*.
\end{equation*}
By Lemma~\ref{lem:stochastic_H0H1_implies_deterministic_H0H1}, Assumption~\ref{ass:H_0H_1_Smooth_for_xi} together with the interpolation condition implies deterministic $(H_0,H_1)$-smoothness of $f$ with $H_0=\cH_0$ and $H_1=\cH_1$.

We start by applying the descent inequality for $(H_0,H_1)$-smooth functions:
\begin{align}
    \func{x^{k+1}}-\func{x^k}
    &\overset{\circledOne}{\leq}
    \dotprod{\nabla \func{x^k}}{x^{k+1}-x^k}
    +
    \frac{
        H_0+H_1\left[\func{x^k}-f^*\right]
    }{2}
    \norms{x^{k+1}-x^k}^2
    \nonumber \\
    &\overset{\circledTwo}{=}
    -
    \eta
    \dotprod{
        \nabla \func{x^k}
    }{
        \clip{\nabla \func{x^k,\bxi^k}}
    }
    \nonumber \\
    &\quad\quad\quad
    +
    \frac{
        \eta^2
        \left(
            H_0+H_1F_k
        \right)
    }{2}
    \norms{
        \clip{\nabla \func{x^k,\bxi^k}}
    }^2
    \nonumber \\
    &\overset{\eqref{eq:relationship_between_function_gap_and_gradient_norm_H0H1}}{\leq}
    -
    \eta
    \dotprod{
        \nabla \func{x^k}
    }{
        \clip{\nabla \func{x^k,\bxi^k}}
    }
    \nonumber \\
    &\quad\quad\quad
    +
    \frac{
        \eta^2
        \left(
            H_0+H_1R_0\norms{\nabla \func{x^k}}
        \right)
    }{2}
    \norms{
        \clip{\nabla \func{x^k,\bxi^k}}
    }^2.
    \label{eq:Convex_clipSGD_smooth_H0_H1}
\end{align}
Here, in $\circledOne$ we used~\eqref{eq:H0H1_descent_inequality}; its locality condition
$\norms{x^{k+1}-x^k}\leq 1/\sqrt{H_1}$ is verified in Remark~\ref{rem:Step_size_convex_ClipSGD_H0_H1}. In $\circledTwo$ we used the ClipSGD update
$
    x^{k+1}
    =
    x^k
    -
    \eta \clip{\nabla \func{x^k,\bxi^k}}.$

We now consider two cases depending on the deterministic gradient norm:
\begin{equation*}
    \norms{\nabla \func{x^k}}\geq c
    \qquad
    \text{and}
    \qquad
    \norms{\nabla \func{x^k}}<c.
\end{equation*}

\fbox{1) The case $\norms{\nabla \func{x^k}}\geq c$.}

Define
\begin{equation*}
    \delta
    :=
    \mathbbm{1}_{\left\{
        \norms{
            \nabla \func{x^k,\bxi^k}
            -
            \nabla \func{x^k}
        }
        >
        3
        \sqrt{\frac{\rho-1}{B}}
        \norms{\nabla \func{x^k}}
    \right\}}.
\end{equation*}
Since
\begin{equation*}
    \clip{\nabla \func{x^k,\bxi^k}}
    =
    \alpha_{\bxi^k}
    \nabla \func{x^k,\bxi^k},
    \qquad
    \alpha_{\bxi^k}
    :=
    \min
    \left\{
        1,
        \frac{c}{\norms{\nabla \func{x^k,\bxi^k}}}
    \right\},
\end{equation*}
we have
\begin{align}
    &\expectcond{
        -
        \eta
        \dotprod{
            \nabla \func{x^k}
        }{
            \clip{\nabla \func{x^k,\bxi^k}}
        }
    }{x^k}\nonumber\\
    &=
    \mathbb{P}\left(\delta=0\mid x^k\right)
    \underbrace{
    \expectcond{
        -
        \eta
        \alpha_{\bxi^k}
        \dotprod{
            \nabla \func{x^k}
        }{
            \nabla \func{x^k,\bxi^k}
        }
    }{x^k,\delta=0}}_{=:T_1}
    \nonumber \\
    &\quad\quad\quad
    +
    \mathbb{P}\left(\delta=1\mid x^k\right)
    \underbrace{
    \expectcond{
        -
        \eta
        \alpha_{\bxi^k}
        \dotprod{
            \nabla \func{x^k}
        }{
            \nabla \func{x^k,\bxi^k}
        }
    }{x^k,\delta=1}}_{=:T_2}.
    \label{eq:Convex_clipSGD_T1_and_T2_H0_H1}
\end{align}
By Markov's inequality and~\eqref{eq:variance_with_batch_size},
\begin{align}
    \mathbb{P}\left(\delta=1\mid x^k\right)
    &=
    \mathbb{P}
    \left(
        \norms{
            \nabla \func{x^k,\bxi^k}
            -
            \nabla \func{x^k}
        }^2
        >
        9
        \frac{\rho-1}{B}
        \norms{\nabla \func{x^k}}^2
        \mid x^k
    \right)
    \nonumber \\
    &\leq
    \frac{
        \expectcond{
            \norms{
                \nabla \func{x^k,\bxi^k}
                -
                \nabla \func{x^k}
            }^2
        }{x^k}
    }{
        9
        \frac{\rho-1}{B}
        \norms{\nabla \func{x^k}}^2
    }\leq
    \frac{1}{9}.
    \label{eq:Convex_clipSGD_prob1_H0_H1}
\end{align}
Hence,
\begin{equation*}
    \mathbb{P}\left(\delta=0\mid x^k\right)
    \geq
    \frac{8}{9}.
\end{equation*}

We now estimate $T_1$. On the event $\delta=0$, we have
\begin{equation*}
    \norms{
        \nabla \func{x^k,\bxi^k}
        -
        \nabla \func{x^k}
    }
    \leq
    3
    \sqrt{\frac{\rho-1}{B}}
    \norms{\nabla \func{x^k}}.
\end{equation*}
Therefore,
\begin{align}
    T_1
    &=
    \expectcond{
        -
        \eta
        \alpha_{\bxi^k}
        \dotprod{
            \nabla \func{x^k}
        }{
            \nabla \func{x^k,\bxi^k}
        }
    }{x^k,\delta=0}
    \nonumber \\
    &=
    \expectcond{
        -
        \eta
        \alpha_{\bxi^k}
        \norms{\nabla \func{x^k}}^2
        -
        \eta
        \alpha_{\bxi^k}
        \dotprod{
            \nabla \func{x^k}
        }{
            \nabla \func{x^k,\bxi^k}
            -
            \nabla \func{x^k}
        }
    }{x^k,\delta=0}
    \nonumber \\
    &\leq
    \expectcond{
        -
        \eta
        \alpha_{\bxi^k}
        \norms{\nabla \func{x^k}}^2
        +
        \eta
        \alpha_{\bxi^k}
        \norms{\nabla \func{x^k}}
        \norms{
            \nabla \func{x^k,\bxi^k}
            -
            \nabla \func{x^k}
        }
    }{x^k,\delta=0}
    \nonumber \\
    &\leq
    \expectcond{
        -
        \eta
        \alpha_{\bxi^k}
        \norms{\nabla \func{x^k}}^2
        +
        3
        \eta
        \alpha_{\bxi^k}
        \sqrt{\frac{\rho-1}{B}}
        \norms{\nabla \func{x^k}}^2
    }{x^k,\delta=0}
    \nonumber \\
    &\overset{\circledOne}{\leq}
    \expectcond{
        -
        \frac{\eta\alpha_{\bxi^k}}{2}
        \norms{\nabla \func{x^k}}^2
    }{x^k,\delta=0}
    \nonumber \\
    &\overset{\circledTwo}{\leq}
    -
    \frac{\eta c}{4}
    \norms{\nabla \func{x^k}}.
    \label{eq:Convex_clipSGD_T1_H0_H1}
\end{align}
Here, in $\circledOne$ we used $B\geq 36(\rho-1)$. In $\circledTwo$, we used that on the event $\delta=0$,
\begin{align*}
    \norms{\nabla \func{x^k,\bxi^k}}
    &\leq
    \norms{
        \nabla \func{x^k,\bxi^k}
        -
        \nabla \func{x^k}
    }
    +
    \norms{\nabla \func{x^k}}
    \\
    &\leq
    \left(
        1
        +
        3
        \sqrt{\frac{\rho-1}{B}}
    \right)
    \norms{\nabla \func{x^k}}
    \\
    &\leq
    2\norms{\nabla \func{x^k}},
\end{align*}
and, since $\norms{\nabla \func{x^k}}\geq c$,
\begin{equation*}
    \alpha_{\bxi^k}
    \geq
    \min
    \left\{
        1,
        \frac{c}{2\norms{\nabla \func{x^k}}}
    \right\}
    =
    \frac{c}{2\norms{\nabla \func{x^k}}}.
\end{equation*}

Next, we estimate $T_2$:
\begin{align}
    T_2
    &=
    \expectcond{
        -
        \eta
        \alpha_{\bxi^k}
        \dotprod{
            \nabla \func{x^k}
        }{
            \nabla \func{x^k,\bxi^k}
        }
    }{x^k,\delta=1}
    \nonumber \\
    &\leq
    \eta
    \norms{\nabla \func{x^k}}
    \expectcond{
        \alpha_{\bxi^k}
        \norms{\nabla \func{x^k,\bxi^k}}
    }{x^k,\delta=1}
    \nonumber \\
    &\leq
    \eta c
    \norms{\nabla \func{x^k}}.
    \label{eq:Convex_clipSGD_T2_H0_H1}
\end{align}
Combining~\eqref{eq:Convex_clipSGD_T1_and_T2_H0_H1}, \eqref{eq:Convex_clipSGD_T1_H0_H1}, \eqref{eq:Convex_clipSGD_T2_H0_H1}, and~\eqref{eq:Convex_clipSGD_prob1_H0_H1}, we obtain
\begin{align}
    \expectcond{
        -
        \eta
        \dotprod{
            \nabla \func{x^k}
        }{
            \clip{\nabla \func{x^k,\bxi^k}}
        }
    }{x^k}
    &\leq
    -
    \frac{\eta c}{4}
    \norms{\nabla \func{x^k}}
    \mathbb{P}\left(\delta=0\mid x^k\right)
    \nonumber \\
    &\quad\quad\quad
    +
    \eta c
    \norms{\nabla \func{x^k}}
    \mathbb{P}\left(\delta=1\mid x^k\right)
    \nonumber \\
    &\leq
    -
    \eta c
    \left(
        \frac{8}{9}\cdot \frac{1}{4}
        -
        \frac{1}{9}
    \right)
    \norms{\nabla \func{x^k}}
    \nonumber \\
    &=
    -
    \frac{\eta c}{9}
    \norms{\nabla \func{x^k}}.
    \label{eq:Convex_clipSGD_first_term_smooth_H0_H1}
\end{align}

Taking conditional expectation in~\eqref{eq:Convex_clipSGD_smooth_H0_H1}, using~\eqref{eq:Convex_clipSGD_first_term_smooth_H0_H1}, and using $\norms{\clip{\nabla \func{x^k,\bxi^k}}}\leq c$, we get
\begin{align}
    \expectcond{F_{k+1}}{x^k}-F_k
    &\leq
    -
    \frac{\eta c}{9}
    \norms{\nabla \func{x^k}}
    +
    \frac{
        \eta^2
        \left(
            H_0+H_1R_0\norms{\nabla \func{x^k}}
        \right)
    }{2}
    c^2
    \nonumber \\
    &\overset{\circledOne}{\leq}
    -
    \frac{\eta c}{9}
    \norms{\nabla \func{x^k}}
    +
    \frac{
        \eta^2
        \left(
            H_0+H_1R_0c
        \right)
    }{2}
    c
    \norms{\nabla \func{x^k}}
    \nonumber \\
    &\overset{\circledTwo}{\leq}
    -
    \frac{\eta c}{9}
    \norms{\nabla \func{x^k}}
    +
    \frac{
        \eta^2
        \left(
            H_0+\max\left\{\sqrt{H_1},H_1R_0\right\}c
        \right)
    }{2}
    c
    \norms{\nabla \func{x^k}}
    \nonumber \\
    &\overset{\circledThree}{\leq}
    -
    \frac{\eta c}{18}
    \norms{\nabla \func{x^k}}
    \nonumber \\
    &\overset{\eqref{eq:relationship_between_function_gap_and_gradient_norm_H0H1}}{\leq}
    -
    \frac{\eta c}{18R_0}
    F_k.
    \label{eq:Convex_clipSGD_case1_descent_H0_H1}
\end{align}
Here, in $\circledOne$ we used $\norms{\nabla \func{x^k}}\geq c$, in $\circledTwo$ we used
$H_1R_0\leq \max\{\sqrt{H_1},H_1R_0\}$, and in $\circledThree$ we used the step-size condition.

Thus, in the first case we obtain
\begin{equation}
    \expectcond{F_{k+1}}{x^k}
    \leq
    \left(
        1-\frac{\eta c}{18R_0}
    \right)
    F_k.
    \label{eq:Convex_clipSGD_case1_ITOG_H0_H1}
\end{equation}

\fbox{2) The case $\norms{\nabla \func{x^k}}<c$.}

In this case, $\nabla \func{x^k}=\clip{\nabla \func{x^k}}$. From~\eqref{eq:Convex_clipSGD_smooth_H0_H1}, we have
\begin{align}
    \expectcond{F_{k+1}}{x^k}-F_k
    &\leq
    -
    \eta
    \expectcond{
        \dotprod{
            \nabla \func{x^k}
        }{
            \clip{\nabla \func{x^k,\bxi^k}}
        }
    }{x^k}
    \nonumber \\
    &\quad\quad\quad
    +
    \frac{
        \eta^2
        \left(
            H_0+H_1R_0\norms{\nabla \func{x^k}}
        \right)
    }{2}
    \expectcond{
        \norms{
            \clip{\nabla \func{x^k,\bxi^k}}
        }^2
    }{x^k}
    \nonumber \\
    &\overset{\circledOne}{\leq}
    -
    \eta
    \expectcond{
        \dotprod{
            \nabla \func{x^k}
        }{
            \clip{\nabla \func{x^k,\bxi^k}}
        }
    }{x^k}
    \nonumber \\
    &\quad\quad\quad
    +
    \frac{
        \eta^2
        \left(
            H_0+\max\left\{\sqrt{H_1},H_1R_0\right\}c
        \right)
    }{2}
    \expectcond{
        \norms{
            \clip{\nabla \func{x^k,\bxi^k}}
        }^2
    }{x^k}
    \nonumber \\
    &\overset{\circledTwo}{=}
    -
    \frac{\eta}{2}
    \norms{\nabla \func{x^k}}^2
    -
    \frac{\eta}{2}
    \expectcond{
        \norms{
            \clip{\nabla \func{x^k,\bxi^k}}
        }^2
    }{x^k}
    \nonumber \\
    &\quad\quad\quad
    +
    \frac{\eta}{2}
    \expectcond{
        \norms{
            \clip{\nabla \func{x^k,\bxi^k}}
            -
            \nabla \func{x^k}
        }^2
    }{x^k}
    \nonumber \\
    &\quad\quad\quad
    +
    \frac{
        \eta^2
        \left(
            H_0+\max\left\{\sqrt{H_1},H_1R_0\right\}c
        \right)
    }{2}
    \expectcond{
        \norms{
            \clip{\nabla \func{x^k,\bxi^k}}
        }^2
    }{x^k}
    \nonumber \\
    &=
    -
    \frac{\eta}{2}
    \norms{\nabla \func{x^k}}^2
    +
    \frac{\eta}{2}
    \expectcond{
        \norms{
            \clip{\nabla \func{x^k,\bxi^k}}
            -
            \clip{\nabla \func{x^k}}
        }^2
    }{x^k}
    \nonumber \\
    &\quad\quad\quad
    -
    \frac{\eta}{2}
    \left(
        1
        -
        \eta
        \left(
            H_0+\max\left\{\sqrt{H_1},H_1R_0\right\}c
        \right)
    \right)
    \expectcond{
        \norms{
            \clip{\nabla \func{x^k,\bxi^k}}
        }^2
    }{x^k}
    \nonumber \\
    &\overset{\circledThree}{\leq}
    -
    \frac{\eta}{2}
    \norms{\nabla \func{x^k}}^2
    +
    \frac{\eta}{2}
    \expectcond{
        \norms{
            \clip{\nabla \func{x^k,\bxi^k}}
            -
            \clip{\nabla \func{x^k}}
        }^2
    }{x^k}.
    \label{eq:Convex_clipSGD_case2_prevariance_H0_H1}
\end{align}
Here, in $\circledOne$ we used $\norms{\nabla \func{x^k}}<c$ and
$H_1R_0\leq \max\{\sqrt{H_1},H_1R_0\}$; in $\circledTwo$ we used the identity
\begin{equation*}
    -\eta\dotprod{a}{b}
    =
    -
    \frac{\eta}{2}\norms{a}^2
    -
    \frac{\eta}{2}\norms{b}^2
    +
    \frac{\eta}{2}\norms{a-b}^2
\end{equation*}
with $a=\nabla \func{x^k}$ and $b=\clip{\nabla \func{x^k,\bxi^k}}$; in $\circledThree$ we used
\begin{equation*}
    \eta
    \left(
        H_0+\max\left\{\sqrt{H_1},H_1R_0\right\}c
    \right)
    \leq
    \frac{1}{9}
    \leq
    1.
\end{equation*}

Since clipping is the Euclidean projection onto the ball of radius $c$, it is non-expansive. Hence,
\begin{align}
    \expectcond{F_{k+1}}{x^k}-F_k
    &\leq
    -
    \frac{\eta}{2}
    \norms{\nabla \func{x^k}}^2
    +
    \frac{\eta}{2}
    \expectcond{
        \norms{
            \nabla \func{x^k,\bxi^k}
            -
            \nabla \func{x^k}
        }^2
    }{x^k}
    \nonumber \\
    &\overset{\eqref{eq:variance_with_batch_size}}{\leq}
    -
    \frac{\eta}{2}
    \norms{\nabla \func{x^k}}^2
    +
    \frac{\eta(\rho-1)}{2B}
    \norms{\nabla \func{x^k}}^2
    \nonumber \\
    &\overset{\circledOne}{\leq}
    -
    \frac{\eta}{2}
    \norms{\nabla \func{x^k}}^2
    +
    \frac{\eta}{72}
    \norms{\nabla \func{x^k}}^2
    \nonumber \\
    &\leq
    -
    \frac{\eta}{9}
    \norms{\nabla \func{x^k}}^2
    \nonumber \\
    &\overset{\eqref{eq:relationship_between_function_gap_and_gradient_norm_H0H1}}{\leq}
    -
    \frac{\eta}{9R_0^2}
    F_k^2.
    \label{eq:Convex_clipSGD_case2_descent_H0_H1}
\end{align}
where in $\circledOne$ we used $B\geq 36(\rho-1)$.

Thus, in the second case we obtain
\begin{equation}
    \expectcond{F_{k+1}}{x^k}
    \leq
    F_k
    -
    \frac{\eta}{9R_0^2}
    F_k^2.
    \label{eq:Convex_clipSGD_case2_ITOG_H0_H1}
\end{equation}

\textbf{In summary.}
Let $\mathcal{F}_k$ be the sigma-algebra generated by the history up to
$x^k$, and define
\begin{equation*}
    G_k:=\norms{\nabla \func{x^k}},
    \qquad
    I_k:=\mathbbm{1}_{\{G_k\geq c\}}.
\end{equation*}
The random variable $I_k$ is $\mathcal{F}_k$-measurable. Set
\begin{equation*}
    a:=\frac{\eta c}{18R_0},
    \qquad
    b:=\frac{\eta}{9R_0^2}.
\end{equation*}
The two conditional estimates~\eqref{eq:Convex_clipSGD_case1_ITOG_H0_H1}
and~\eqref{eq:Convex_clipSGD_case2_ITOG_H0_H1} can be combined as
\begin{equation}
    \label{eq:combined_conditional_recursion_H0_H1}
    \expectcond{F_{k+1}}{\mathcal{F}_k}
    \leq
    F_k
    -
    aF_k I_k
    -
    bF_k^2(1-I_k).
\end{equation}
Indeed, on the event $I_k=1$ this is exactly the large-gradient estimate,
whereas on the event $I_k=0$ it is exactly the small-gradient estimate.

Taking expectations in~\eqref{eq:combined_conditional_recursion_H0_H1}, we obtain
\begin{equation}
    \label{eq:expected_combined_recursion_H0_H1}
    A_{k+1}
    \leq
    A_k
    -
    a\expect{F_k I_k}
    -
    b\expect{F_k^2(1-I_k)},
    \qquad
    A_k:=\expect{F_k}.
\end{equation}

We use the following elementary inequality: for any nonnegative random
variable $X$ and any indicator random variable $I$,
\begin{equation}
    \label{eq:elementary_indicator_inequality_H0_H1}
    a\expect{XI}
    +
    b\expect{X^2(1-I)}
    \geq
    \min\left\{
        \frac{a}{2}\expect{X},
        b\left(\expect{X}\right)^2
    \right\}.
\end{equation}
Indeed, let $A=\expect{X}$ and $r=\expect{X(1-I)}$. Since
\begin{equation*}
    \expect{X^2(1-I)}
    =
    \expect{\left(X(1-I)\right)^2}
    \geq
    \left(\expect{X(1-I)}\right)^2
    =
    r^2,
\end{equation*}
we have
\begin{equation*}
    a\expect{XI}
    +
    b\expect{X^2(1-I)}
    \geq
    a(A-r)+br^2.
\end{equation*}
Minimizing the right-hand side over $r\in[0,A]$ gives
\begin{equation*}
    a(A-r)+br^2
    \geq
    \min\left\{
        \frac{a}{2}A,
        bA^2
    \right\},
\end{equation*}
which proves~\eqref{eq:elementary_indicator_inequality_H0_H1}.

Applying~\eqref{eq:elementary_indicator_inequality_H0_H1} to $X=F_k$ and
$I=I_k$, inequality~\eqref{eq:expected_combined_recursion_H0_H1} yields
\begin{equation}
    \label{eq:deterministic_gap_recursion_H0_H1}
    A_{k+1}
    \leq
    A_k
    -
    \min\left\{
        \frac{a}{2}A_k,
        bA_k^2
    \right\}.
\end{equation}
In particular, $(A_k)_{k\geq0}$ is non-increasing.

Define
\begin{equation*}
    \alpha:=\frac{a}{2}=\frac{\eta c}{36R_0},
    \qquad
    \beta:=b=\frac{\eta}{9R_0^2}.
\end{equation*}
Then~\eqref{eq:deterministic_gap_recursion_H0_H1} becomes
\begin{equation}
    \label{eq:alpha_beta_gap_recursion_H0_H1}
    A_{k+1}
    \leq
    A_k-\min\{\alpha A_k,\beta A_k^2\}.
\end{equation}

Let
\begin{equation*}
    \tau
    :=
    \min\left\{
        k\in\{0,\ldots,N-1\}
        :
        A_k<\frac{\alpha}{\beta}
    \right\},
\end{equation*}
with the convention $\tau=N$ if the set is empty. Since $(A_k)$ is
non-increasing, the indices $k<\tau$ correspond to the linear part of
\eqref{eq:alpha_beta_gap_recursion_H0_H1}, and the indices $k\geq\tau$
correspond to the quadratic part.

If $\tau>N/2$, then for at least $N/2$ iterations,
\begin{equation*}
    A_{k+1}\leq (1-\alpha)A_k.
\end{equation*}
Using monotonicity for the remaining iterations, we get
\begin{equation}
    \label{eq:ClipSGD_final_case1_H0_H1}
    A_N
    \leq
    (1-\alpha)^{N/2}A_0
    =
    \left(
        1-\frac{\eta c}{36R_0}
    \right)^{N/2}F_0.
\end{equation}

If $\tau\leq N/2$, then for $k=\tau,\ldots,N-1$,
\begin{equation*}
    A_{k+1}\leq A_k-\beta A_k^2.
\end{equation*}
If $A_N=0$, the claim is trivial. Otherwise,
\begin{equation*}
    \frac{1}{A_{k+1}}-\frac{1}{A_k}
    =
    \frac{A_k-A_{k+1}}{A_kA_{k+1}}
    \geq
    \beta\frac{A_k}{A_{k+1}}
    \geq
    \beta.
\end{equation*}
Summing from $k=\tau$ to $N-1$ and using $N-\tau\geq N/2$, we obtain
\begin{equation}
    \label{eq:ClipSGD_final_case2_H0_H1}
    A_N
    \leq
    \frac{2}{\beta N}
    =
    \frac{18R_0^2}{N\eta}.
\end{equation}

Combining~\eqref{eq:ClipSGD_final_case1_H0_H1} and
\eqref{eq:ClipSGD_final_case2_H0_H1}, we conclude that
\begin{equation*}
    \expect{\func{x^N}}-f^*
    =
    A_N
    \leq
    \max\left\{
        \left(
            1-\frac{\eta c}{36R_0}
        \right)^{N/2}F_0,
        \frac{18R_0^2}{N\eta}
    \right\}.
\end{equation*}

\begin{remark}
\label{rem:Step_size_convex_ClipSGD_H0_H1}
In the first step of~\eqref{eq:Convex_clipSGD_smooth_H0_H1}, we use the descent lemma for $(H_0,H_1)$-smooth functions~\eqref{eq:H0H1_descent_inequality}. Its locality condition
$\norms{x^{k+1}-x^k}\leq 1/\sqrt{H_1}$ is guaranteed by the chosen step size. Indeed,
\begin{align*}
    \norms{x^{k+1}-x^k}
    &=
    \eta
    \norms{
        \clip{\nabla \func{x^k,\bxi^k}}
    }\leq
    \eta c\leq
    \frac{
        c
    }{
        9
        \left(
            H_0+\max\left\{\sqrt{H_1},H_1R_0\right\}c
        \right)
    }
    \\
    &\leq
    \frac{
        c
    }{
        9\sqrt{H_1}c
    }\leq
    \frac{1}{\sqrt{H_1}}.
\end{align*}
\end{remark}
\end{proof}

        We now present the convergence result for NSGD.

\begin{boxA}
\begin{theorem}[Convex, NSGD]\label{th:Convex_NSGD_H0_H1}
Let $f(\cdot,\xi)$ be $(\cH_0,\cH_1)$-smooth
(Assumption~\ref{ass:H_0H_1_Smooth_for_xi}) and convex
(Assumption~\ref{ass:Convexity_for_xi}). Suppose that the gradient oracle is unbiased
(Assumption~\ref{ass:unbiased}) and satisfies SGC
(Assumption~\ref{ass:strong growth condition} with $\sigma=0$). Then NSGD (Algorithm~\ref{algo:clip_algo}) with any $\lambda>0$, step size
$
    \eta
    \leq
    \frac{\lambda}{
        H_0
        +
        \max\left\{\sqrt{H_1},H_1R_0\right\}\lambda
    },
$
where $H_0=\cH_0$, $H_1=\cH_1$, and batch size $B\geq 64(\rho-1)$ guarantees
\begin{equation*}
    \expect{\func{x^N}} - f^*
    \leq
    \left(
        1-\frac{\eta}{4R_0}
    \right)^N F_0
    +
    6\lambda R_0,
\end{equation*}
where $R_0=\norms{x^0-x^*}$ and $F_0=\func{x^0}-f^*$.
\end{theorem}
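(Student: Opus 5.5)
The plan is to mirror the proof of Theorem~\ref{th:Convex_NSGD}, replacing the $(L_0,L_1)$ descent inequality by the $(H_0,H_1)$ one. First, Lemma~\ref{lem:stochastic_H0H1_implies_deterministic_H0H1} gives deterministic $(H_0,H_1)$-smoothness of $f$ with $H_0=\cH_0$, $H_1=\cH_1$. Lemma~\ref{lem:stochastic_convexity_implies_deterministic_convexity} gives convexity of $f$. Lemma~\ref{lem:monotone_nsgd_H0_H1} gives $\norms{x^k-x^*}\leq R_0$ under exactly the stated step size. Together these yield \eqref{eq:relationship_between_function_gap_and_gradient_norm_H0H1}, namely $F_k\leq R_0\norms{\nabla\func{x^k}}$. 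The locality condition $\norms{x^{k+1}-x^k}\leq\eta\leq 1/\sqrt{H_1}$ also follows from that lemma, so \eqref{eq:H0H1_descent_inequality} applies at every step.

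Next, apply \eqref{eq:H0H1_descent_inequality} with $y=x^{k+1}$ and replace $H_0+H_1F_k$ by $H_0+H_1R_0\norms{\nabla\func{x^k}}$. The one-step inequality then has exactly the form treated in \eqref{eq:Convex_NSGD_smooth}, with $(\cL_0,\sqrt{\rho}\cL_1)$ replaced by $(H_0,H_1R_0)$. Repeat those steps verbatim: use $\norms{\Gf{x^k,\bxi^k}}\leq1$, Cauchy--Schwarz, and the triangle inequality.

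The only place where constants matter is step $\circledSix$, which requires $\eta(H_0+H_1R_0\norms{\nabla\func{x^k}})\leq\norms{\nabla\func{x^k}}+\lambda$. Since $\eta\leq\lambda/(H_0+H_1R_0\lambda)$, which is implied by the stated step size because $H_1R_0\leq\max\{\sqrt{H_1},H_1R_0\}$, this follows from the identity
\[
\frac{G+\lambda}{H_0+H_1R_0G}-\frac{\lambda}{H_0+H_1R_0\lambda}=\frac{H_0G+H_1R_0\lambda^2}{(H_0+H_1R_0G)(H_0+H_1R_0\lambda)}\geq0 .
\]
This gives $F_{k+1}-F_k\leq-\frac{\eta}{2}\norms{\nabla\func{x^k}}+2\eta\norms{\nabla\func{x^k,\bxi^k}-\nabla\func{x^k}}+\frac32\eta\lambda$.

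Then take the conditional expectation and apply Jensen together with \eqref{eq:variance_with_batch_size} with $\sigma=0$. With $B\geq64(\rho-1)$ the noise term is at most $\frac{\eta}{4}\norms{\nabla\func{x^k}}$, so $\expectcond{F_{k+1}}{x^k}\leq F_k-\frac{\eta}{4}\norms{\nabla\func{x^k}}+\frac32\eta\lambda$. Using $F_k\leq R_0\norms{\nabla\func{x^k}}$ gives $\expectcond{F_{k+1}}{x^k}\leq(1-\frac{\eta}{4R_0})F_k+\frac32\eta\lambda$.

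Finally, take full expectation and unroll the recursion. The geometric sum of the additive term is bounded by $\frac32\eta\lambda\cdot\frac{4R_0}{\eta}=6\lambda R_0$, which gives the claim. I expect the main obstacle to be bookkeeping rather than substance. The point is to ensure that the replacement $F_k\leq R_0\norms{\nabla\func{x^k}}$ is legitimate inside the descent coefficient, which needs the monotonicity lemma to hold pathwise (it does, almost surely). One must also check that $1-\eta/(4R_0)\in[0,1)$. This should follow from $F_{k+1}\geq0$ in the recursion or be assumed implicitly, as in Theorem~\ref{th:Convex_NSGD}.
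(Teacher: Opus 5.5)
Your proposal is correct and follows essentially the same route as the paper's proof. The only cosmetic difference is that you carry $H_1R_0$ in the descent coefficient where the paper carries $\max\{\sqrt{H_1},H_1R_0\}$, which changes nothing under the stated step size.

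One caveat on your final remark: $1-\frac{\eta}{4R_0}\geq 0$ does not follow from $F_{k+1}\geq 0$, because the additive $\frac{3}{2}\eta\lambda$ term blocks that inference. So, exactly as in the paper, the unrolling tacitly needs $\eta\leq 4R_0$. Without it the stated bound can genuinely fail for odd $N$, for example for a deterministic quadratic with tiny $H_1$ and $\lambda$ large compared to $H_0R_0$.
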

\end{boxA}

Theorem~\ref{th:Convex_NSGD_H0_H1} establishes the result for the case $\sigma=0$. The case $\sigma>0$ is handled directly in the proof below. The theorem shows that NSGD converges to an error floor of $6\lambda R_0$. To ensure convergence to a desired accuracy $\varepsilon$, i.e.,
$\expect{\func{x^N}}-f^*\leq \varepsilon$, it is sufficient to choose
\begin{equation*}
    \lambda
    \leq
    \frac{\varepsilon}{12R_0}.
\end{equation*}
Setting $\lambda=\frac{\varepsilon}{12R_0}$ and taking the maximal admissible step size
\begin{equation*}
    \eta
    =
    \frac{\lambda}{
        H_0
        +
        \max\left\{\sqrt{H_1},H_1R_0\right\}\lambda
    }
    =
    \left[
        \frac{12H_0R_0}{\varepsilon}
        +
        \max\left\{\sqrt{H_1},H_1R_0\right\}
    \right]^{-1},
\end{equation*}
we obtain the following iteration complexity:
\begin{equation*}
    N
    =
    \Obound{
        \left(
            \frac{H_0R_0^2}{\varepsilon}
            +
            \max\left\{\sqrt{H_1},H_1R_0\right\}R_0
        \right)
        \log\frac{F_0}{\varepsilon}
    }.
\end{equation*}

\begin{proof}
For simplicity of presentation, let us introduce the notation
$
    \Gf{x^k,\bxi^k}
    \coloneqq
    \frac{
        \nabla \func{x^k,\bxi^k}
    }{
        \norms{\nabla \func{x^k,\bxi^k}}+\lambda
    }.
$
Then the NSGD update has the form
$
    x^{k+1}
    =
    x^k
    -
    \eta \Gf{x^k,\bxi^k}.
$
Throughout the proof, we use the shorthand
$
    F_k := \func{x^k}-f^*.
$ By Lemma~\ref{lem:stochastic_H0H1_implies_deterministic_H0H1}, Assumption~\ref{ass:H_0H_1_Smooth_for_xi} together with the interpolation condition implies deterministic $(H_0,H_1)$-smoothness of $f$ with $H_0=\cH_0$ and $H_1=\cH_1$.

Using the descent inequality for $(H_0,H_1)$-smooth functions, we obtain
\begin{align}
    \func{x^{k+1}}-\func{x^k}
    &\overset{\circledOne}{\leq}
    \dotprod{\nabla \func{x^k}}{x^{k+1}-x^k}
    +
    \frac{
        H_0+H_1\left[\func{x^k}-f^*\right]
    }{2}
    \norms{x^{k+1}-x^k}^2
    \nonumber \\
    &\overset{\circledTwo}{\leq}
    \dotprod{\nabla \func{x^k}}{x^{k+1}-x^k}
    \nonumber \\
    &\quad\quad\quad+
    \frac{
        H_0
        +
        \max\left\{\sqrt{H_1},H_1R_0\right\}
        \norms{\nabla \func{x^k}}
    }{2}
    \norms{x^{k+1}-x^k}^2
    \nonumber \\
    &\overset{\circledThree}{=}
    -
    \eta
    \dotprod{
        \nabla \func{x^k}
    }{
        \Gf{x^k,\bxi^k}
    }
    \nonumber \\
    &\quad\quad\quad+
    \frac{
        \eta^2
        \left(
            H_0
            +
            \max\left\{\sqrt{H_1},H_1R_0\right\}
            \norms{\nabla \func{x^k}}
        \right)
    }{2}
    \norms{\Gf{x^k,\bxi^k}}^2
    \nonumber \\
    &\overset{\circledFour}{\leq}
    -
    \eta
    \frac{
        \dotprod{
            \nabla \func{x^k}
        }{
            \nabla \func{x^k,\bxi^k}
        }
    }{
        \norms{\nabla \func{x^k,\bxi^k}}+\lambda
    }
    \nonumber \\
    &\quad\quad\quad+
    \frac{
        \eta^2
        \left(
            H_0
            +
            \max\left\{\sqrt{H_1},H_1R_0\right\}
            \norms{\nabla \func{x^k}}
        \right)
    }{2}
    \nonumber \\
    &=
    -
    \eta
    \frac{
        \dotprod{
            \nabla \func{x^k,\bxi^k}
            +
            \nabla \func{x^k}
            -
            \nabla \func{x^k,\bxi^k}
        }{
            \nabla \func{x^k,\bxi^k}
        }
    }{
        \norms{\nabla \func{x^k,\bxi^k}}+\lambda
    }
    \nonumber \\
    &\quad\quad\quad
    +
    \frac{
        \eta^2
        \left(
            H_0
            +
            \max\left\{\sqrt{H_1},H_1R_0\right\}
            \norms{\nabla \func{x^k}}
        \right)
    }{2}
    \nonumber \\
    &=
    -
    \eta
    \frac{
        \norms{\nabla \func{x^k,\bxi^k}}^2
    }{
        \norms{\nabla \func{x^k,\bxi^k}}+\lambda
    }
    -
    \eta
    \frac{
        \dotprod{
            \nabla \func{x^k}
            -
            \nabla \func{x^k,\bxi^k}
        }{
            \nabla \func{x^k,\bxi^k}
        }
    }{
        \norms{\nabla \func{x^k,\bxi^k}}+\lambda
    }
    \nonumber \\
    &\quad\quad\quad
    +
    \frac{
        \eta^2
        \left(
            H_0
            +
            \max\left\{\sqrt{H_1},H_1R_0\right\}
            \norms{\nabla \func{x^k}}
        \right)
    }{2}
    \nonumber \\
    &\overset{\circledFive}{\leq}
    -
    \eta
    \frac{
        \norms{\nabla \func{x^k,\bxi^k}}^2
    }{
        \norms{\nabla \func{x^k,\bxi^k}}+\lambda
    }
    +
    \eta
    \frac{
        \norms{
            \nabla \func{x^k}
            -
            \nabla \func{x^k,\bxi^k}
        }
        \norms{\nabla \func{x^k,\bxi^k}}
    }{
        \norms{\nabla \func{x^k,\bxi^k}}+\lambda
    }
    \nonumber \\
    &\quad\quad\quad
    +
    \frac{
        \eta^2
        \left(
            H_0
            +
            \max\left\{\sqrt{H_1},H_1R_0\right\}
            \norms{\nabla \func{x^k}}
        \right)
    }{2}
    \nonumber \\
    &\leq
    -
    \eta
    \frac{
        \norms{\nabla \func{x^k,\bxi^k}}^2
    }{
        \norms{\nabla \func{x^k,\bxi^k}}+\lambda
    }
    +
    \eta
    \norms{
        \nabla \func{x^k,\bxi^k}
        -
        \nabla \func{x^k}
    }
    \nonumber \\
    &\quad\quad\quad
    +
    \frac{
        \eta^2
        \left(
            H_0
            +
            \max\left\{\sqrt{H_1},H_1R_0\right\}
            \norms{\nabla \func{x^k}}
        \right)
    }{2}
    \nonumber \\
    &=
    -
    \eta
    \norms{\nabla \func{x^k,\bxi^k}}
    +
    \eta
    \frac{
        \lambda
        \norms{\nabla \func{x^k,\bxi^k}}
    }{
        \norms{\nabla \func{x^k,\bxi^k}}+\lambda
    }
    \nonumber \\
    &\quad\quad\quad
    +
    \eta
    \norms{
        \nabla \func{x^k,\bxi^k}
        -
        \nabla \func{x^k}
    }
    \nonumber \\
    &\quad\quad\quad+
    \frac{
        \eta^2
        \left(
            H_0
            +
            \max\left\{\sqrt{H_1},H_1R_0\right\}
            \norms{\nabla \func{x^k}}
        \right)
    }{2}
    \nonumber \\
    &\leq
    -
    \eta
    \norms{\nabla \func{x^k,\bxi^k}}
    +
    \eta\lambda
    +
    \eta
    \norms{
        \nabla \func{x^k,\bxi^k}
        -
        \nabla \func{x^k}
    }
    \nonumber \\
    &\quad\quad\quad
    +
    \frac{
        \eta^2
        \left(
            H_0
            +
            \max\left\{\sqrt{H_1},H_1R_0\right\}
            \norms{\nabla \func{x^k}}
        \right)
    }{2}
    \nonumber \\
    &\overset{\circledSix}{\leq}
    -
    \eta
    \norms{\nabla \func{x^k}}
    +
    2\eta
    \norms{
        \nabla \func{x^k,\bxi^k}
        -
        \nabla \func{x^k}
    }
    \nonumber \\
    &\quad\quad\quad
    +
    \frac{
        \eta^2
        \left(
            H_0
            +
            \max\left\{\sqrt{H_1},H_1R_0\right\}
            \norms{\nabla \func{x^k}}
        \right)
    }{2}
    +
    \eta\lambda
    \nonumber \\
    &\overset{\circledSeven}{\leq}
    -
    \frac{\eta}{2}
    \norms{\nabla \func{x^k}}
    +
    2\eta
    \norms{
        \nabla \func{x^k,\bxi^k}
        -
        \nabla \func{x^k}
    }
    +
    \frac{3}{2}\eta\lambda.
    \label{eq:Convex_NSGD_smooth_H0_H1}
\end{align}
Here, in $\circledOne$ we used~\eqref{eq:H0H1_descent_inequality}; its locality condition
$\norms{x^{k+1}-x^k}\leq 1/\sqrt{H_1}$ is verified in Remark~\ref{rem:Step_size_Convex_NSGD_H0_H1}. In $\circledTwo$ we used~\eqref{eq:relationship_between_function_gap_and_gradient_norm_H0H1} and
$H_1R_0\leq \max\left\{\sqrt{H_1},H_1R_0\right\}$. In $\circledThree$ we used the NSGD update. In $\circledFour$ we used
$\norms{\Gf{x^k,\bxi^k}}\leq 1$. In $\circledFive$ we used the Cauchy--Schwarz inequality. In $\circledSix$ we used the triangle inequality. Finally, in $\circledSeven$ we used the step-size condition and the fact that
\begin{equation*}
    \eta
    \leq
    \frac{\lambda}{
        H_0+
        \max\left\{\sqrt{H_1},H_1R_0\right\}\lambda
    }
    \leq
    \frac{
        \norms{\nabla \func{x^k}}+\lambda
    }{
        H_0+
        \max\left\{\sqrt{H_1},H_1R_0\right\}
        \norms{\nabla \func{x^k}}
    }.
\end{equation*}
Indeed,
\begin{align*}
    &
    \frac{
        \norms{\nabla \func{x^k}}+\lambda
    }{
        H_0+
        \max\left\{\sqrt{H_1},H_1R_0\right\}
        \norms{\nabla \func{x^k}}
    }
    -
    \frac{\lambda}{
        H_0+
        \max\left\{\sqrt{H_1},H_1R_0\right\}\lambda
    }
    \\
    &\quad =
    \frac{
        H_0\norms{\nabla \func{x^k}}
        +
        \max\left\{\sqrt{H_1},H_1R_0\right\}\lambda^2
    }{
        \left(
            H_0+
            \max\left\{\sqrt{H_1},H_1R_0\right\}
            \norms{\nabla \func{x^k}}
        \right)
        \left(
            H_0+
            \max\left\{\sqrt{H_1},H_1R_0\right\}\lambda
        \right)
    }
    \geq 0.
\end{align*}

Taking conditional expectation in~\eqref{eq:Convex_NSGD_smooth_H0_H1}, we get
\begin{align}
    \expectcond{F_{k+1}}{x^k}-F_k
    &\leq
    -
    \frac{\eta}{2}
    \norms{\nabla \func{x^k}}
    +
    2\eta
    \expectcond{
        \norms{
            \nabla \func{x^k,\bxi^k}
            -
            \nabla \func{x^k}
        }
    }{x^k}
    +
    \frac{3}{2}\eta\lambda
    \nonumber \\
    &\leq
    -
    \frac{\eta}{2}
    \norms{\nabla \func{x^k}}
    +
    2\eta
    \left(
        \expectcond{
            \norms{
                \nabla \func{x^k,\bxi^k}
                -
                \nabla \func{x^k}
            }^2
        }{x^k}
    \right)^{1/2}
    +
    \frac{3}{2}\eta\lambda
    \nonumber \\
    &\overset{\eqref{eq:variance_with_batch_size}}{\leq}
    -
    \frac{\eta}{2}
    \norms{\nabla \func{x^k}}
    +
    2\eta
    \left(
        \frac{(\rho-1)\norms{\nabla \func{x^k}}^2+\sigma^2}{B}
    \right)^{1/2}
    +
    \frac{3}{2}\eta\lambda
    \nonumber \\
    &\leq
    -
    \frac{\eta}{2}
    \norms{\nabla \func{x^k}}
    +
    2\eta
    \sqrt{\frac{\rho-1}{B}}
    \norms{\nabla \func{x^k}}
    +
    \frac{2\eta\sigma}{\sqrt{B}}
    +
    \frac{3}{2}\eta\lambda
    \nonumber \\
    &\overset{\circledOne}{\leq}
    -
    \frac{\eta}{4}
    \norms{\nabla \func{x^k}}
    +
    \frac{2\eta\sigma}{\sqrt{B}}
    +
    \frac{3}{2}\eta\lambda,
    \label{eq:Convex_NSGD_SeMI_Itog_H0_H1}
\end{align}
where in $\circledOne$ we used $B\geq 64(\rho-1)$.

Thus,
\begin{align}
    \expectcond{F_{k+1}}{x^k}
    &\leq
    F_k
    -
    \frac{\eta}{4}
    \norms{\nabla \func{x^k}}
    +
    \frac{2\eta\sigma}{\sqrt{B}}
    +
    \frac{3}{2}\eta\lambda
    \nonumber \\
    &\overset{\eqref{eq:relationship_between_function_gap_and_gradient_norm_H0H1}}{\leq}
    \left(
        1-\frac{\eta}{4R_0}
    \right)
    F_k
    +
    \frac{2\eta\sigma}{\sqrt{B}}
    +
    \frac{3}{2}\eta\lambda.
    \label{eq:Convex_NSGD_ITOG_H0_H1}
\end{align}

Taking full expectation and unrolling the recursion, we obtain
\begin{align*}
    \expect{F_N}
    &\leq
    \left(
        1-\frac{\eta}{4R_0}
    \right)^N
    F_0
    +
    \left(
        \frac{2\eta\sigma}{\sqrt{B}}
        +
        \frac{3}{2}\eta\lambda
    \right)
    \sum_{i=0}^{N-1}
    \left(
        1-\frac{\eta}{4R_0}
    \right)^i
    \\
    &\leq
    \left(
        1-\frac{\eta}{4R_0}
    \right)^N
    F_0
    +
    \left(
        \frac{2\eta\sigma}{\sqrt{B}}
        +
        \frac{3}{2}\eta\lambda
    \right)
    \frac{4R_0}{\eta}
    \\
    &=
    \left(
        1-\frac{\eta}{4R_0}
    \right)^N
    F_0
    +
    \frac{8\sigma R_0}{\sqrt{B}}
    +
    6\lambda R_0.
\end{align*}
For $\sigma=0$, this gives the claimed result:
\begin{equation*}
    \expect{\func{x^N}}-f^*
    \leq
    \left(
        1-\frac{\eta}{4R_0}
    \right)^N
    F_0
    +
    6\lambda R_0.
\end{equation*}

\begin{remark}
\label{rem:Step_size_Convex_NSGD_H0_H1}
In the first step of~\eqref{eq:Convex_NSGD_smooth_H0_H1}, we use the descent lemma for $(H_0,H_1)$-smooth functions~\eqref{eq:H0H1_descent_inequality}. Its locality condition
$\norms{x^{k+1}-x^k}\leq 1/\sqrt{H_1}$ is guaranteed by the chosen step size. Indeed,
\begin{align*}
    \norms{x^{k+1}-x^k}
    &=
    \eta
    \norms{\Gf{x^k,\bxi^k}}\leq
    \eta\leq
    \frac{\lambda}{
        H_0
        +
        \max\left\{\sqrt{H_1},H_1R_0\right\}\lambda
    }\leq
    \frac{\lambda}{
        \sqrt{H_1}\lambda
    }
    =
    \frac{1}{\sqrt{H_1}}.
\end{align*}
\end{remark}
\end{proof}
        \subsubsection{Extension to heavy-tailed noise}
\label{app:Convex_NSGD_H0_H1_heavy_tailed}

We now extend Theorem~\ref{th:Convex_NSGD_H0_H1} to the heavy-tailed setting. The proof follows the same argument as in Theorem~\ref{th:Convex_NSGD_H0_H1}; the only modification is the way we control the stochastic error term
\begin{equation*}
    \expectcond{
        \norms{
            \nabla \func{x^k,\bxi^k}
            -
            \nabla \func{x^k}
        }
    }{x^k}.
\end{equation*}
Instead of the second-moment bound~\eqref{eq:variance_with_batch_size}, we use the heavy-tailed mini-batch estimate from Appendix~\ref{app:NSGD_heavy_tailed_noise}. Namely, under Assumption~\ref{prop:SGC_under_heavy tailed noise}, for $p\in(1,2)$ and mini-batch size
\begin{equation*}
    B
    \geq
    2^{\frac{3p+1}{p-1}}
    (\rho-1)^{\frac{1}{p-1}},
\end{equation*}
we have
\begin{equation}
    \label{eq:HT_noise_bound_H0_H1_NSGD}
    \expectcond{
        \norms{
            \nabla \func{x^k,\bxi^k}
            -
            \nabla \func{x^k}
        }
    }{x^k}
    \leq
    \frac{1}{8}\norms{\nabla \func{x^k}}
    +
    \frac{2^{1/p}\sigma}{B^{\frac{p-1}{p}}}.
\end{equation}

\begin{boxD}
\begin{proposition}[Heavy-tailed, convex NSGD]
\label{cor:Convex_NSGD_HT_H0_H1}
Let $f(\cdot,\xi)$ be $(\cH_0,\cH_1)$-smooth
(Assumption~\ref{ass:H_0H_1_Smooth_for_xi}) and convex
(Assumption~\ref{ass:Convexity_for_xi}). Suppose that the gradient oracle is unbiased
(Assumption~\ref{ass:unbiased}) and satisfies the generalized $p$-th central moment condition
(Assumption~\ref{prop:SGC_under_heavy tailed noise}). Then NSGD (Algorithm~\ref{algo:clip_algo}) with any $\lambda>0$, step size
$
    \eta
    \leq
    \frac{\lambda}{
        H_0
        +
        \max\left\{\sqrt{H_1},H_1R_0\right\}\lambda
    },
$
where $H_0=\cH_0$, $H_1=\cH_1$, and batch size
$
    B
    \geq
    2^{\frac{3p+1}{p-1}}
    (\rho-1)^{\frac{1}{p-1}},
$
guarantees
\begin{equation*}
    \expect{\func{x^N}} - f^*
    \leq
    \left(
        1-\frac{\eta}{4R_0}
    \right)^N F_0
    +
    \frac{2^{3+\frac{1}{p}}\sigma R_0}{B^{\frac{p-1}{p}}}
    +
    6\lambda R_0,
\end{equation*}
where $R_0=\norms{x^0-x^*}$ and $F_0=\func{x^0}-f^*$.
In particular, when $\sigma=0$, this reduces to
\begin{equation*}
    \expect{\func{x^N}} - f^*
    \leq
    \left(
        1-\frac{\eta}{4R_0}
    \right)^N F_0
    +
    6\lambda R_0.
\end{equation*}
\end{proposition}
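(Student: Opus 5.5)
We reuse the proof of Theorem~\ref{th:Convex_NSGD_H0_H1} verbatim up to the one-step inequality \eqref{eq:Convex_NSGD_smooth_H0_H1}. That inequality is purely deterministic given the mini-batch. It relies only on the following ingredients:
\begin{itemize}
\item the $(H_0,H_1)$ descent inequality \eqref{eq:H0H1_descent_inequality}, obtained from Lemma~\ref{lem:stochastic_H0H1_implies_deterministic_H0H1};
\item the locality check of Remark~\ref{rem:Step_size_Convex_NSGD_H0_H1}, which uses only $\norms{\Gf{x^k,\bxi^k}}\leq 1$ and the step size;
\item the bound $F_k\leq R_0\norms{\nabla \func{x^k}}$ from \eqref{eq:relationship_between_function_gap_and_gradient_norm_H0H1};
\item the step-size comparison that absorbs the quadratic term into $\frac{\eta}{2}(\norms{\nabla\func{x^k}}+\lambda)$.
\end{itemize}
None of these use a second moment of the noise.

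The one point needing care is the monotonicity $\norms{x^k-x^*}\leq R_0$ (Lemma~\ref{lem:monotone_nsgd_H0_H1}). I would note that its proof is pathwise: it uses convexity of the mini-batch objective, interpolation, the self-bounding inequality \eqref{eq:self_bounding_H0H1_batch}, and the step size. It never invokes SGC or any moment assumption, so it holds unchanged under Assumption~\ref{prop:SGC_under_heavy tailed noise}. When $\sigma>0$, however, interpolation (Definition~\ref{def:interpolation}) is not implied by the noise assumption. In that case I would treat interpolation as part of the $(\cH_0,\cH_1)$/convexity setup inherited from the theorem. This is the point I would flag as the main subtlety, since the bound on $F_k$ relies on it.

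Taking conditional expectation of \eqref{eq:Convex_NSGD_smooth_H0_H1} gives
\[
\expectcond{F_{k+1}}{x^k}\leq F_k-\tfrac{\eta}{2}\norms{\nabla\func{x^k}}+2\eta\,\expectcond{\norms{\nabla\func{x^k,\bxi^k}-\nabla\func{x^k}}}{x^k}+\tfrac32\eta\lambda .
\]
Here I would substitute \eqref{eq:HT_noise_bound_H0_H1_NSGD} in place of \eqref{eq:variance_with_batch_size}. That bound follows from Lemma~\ref{lem:heavy_tailed_batch}, the estimate \eqref{eq:variance_heavy_tailed_noise_sgc}, Jensen's inequality, and the batch condition $B\geq 2^{\frac{3p+1}{p-1}}(\rho-1)^{\frac1{p-1}}$, which makes $2(\rho-1)/B^{p-1}\leq 8^{-p}$. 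The noise term then contributes at most $\frac{\eta}{4}\norms{\nabla\func{x^k}}+\frac{2^{1+1/p}\eta\sigma}{B^{(p-1)/p}}$.

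This yields
\[
\expectcond{F_{k+1}}{x^k}\leq F_k-\tfrac{\eta}{4}\norms{\nabla\func{x^k}}+\tfrac{2^{1+1/p}\eta\sigma}{B^{(p-1)/p}}+\tfrac32\eta\lambda .
\]
Applying $\norms{\nabla\func{x^k}}\geq F_k/R_0$ turns this into a linear recursion with contraction factor $(1-\frac{\eta}{4R_0})$ plus an additive constant. Taking full expectation and unrolling, the geometric sum is bounded by $\frac{4R_0}{\eta}$. The constant therefore becomes
\[
\frac{4R_0}{\eta}\Big(\frac{2^{1+1/p}\eta\sigma}{B^{(p-1)/p}}+\frac32\eta\lambda\Big)=\frac{2^{3+1/p}\sigma R_0}{B^{(p-1)/p}}+6\lambda R_0,
\]
which is exactly the claimed bound. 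Setting $\sigma=0$ gives the special case. Apart from the monotonicity point above, the steps are routine constant-tracking.
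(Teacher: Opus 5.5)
Your argument is the paper's proof. Like the paper, you reuse the one-step bound \eqref{eq:Convex_NSGD_smooth_H0_H1} from Theorem~\ref{th:Convex_NSGD_H0_H1}, replace the variance estimate by \eqref{eq:HT_noise_bound_H0_H1_NSGD}, and unroll the same linear recursion, and your constants match. Your interpolation remark is correct, and the paper leaves this implicit. Two additions to it:
\begin{itemize}
\item Interpolation is used not only in Lemma~\ref{lem:monotone_nsgd_H0_H1} but also in Lemma~\ref{lem:stochastic_H0H1_implies_deterministic_H0H1}. There it gives $\expect{f_\xi^*}=f^*$, hence the descent inequality with $\func{x}-f^*$.
\item For $\sigma=0$ interpolation is automatic. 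The moment condition at $x^*$ forces $\nabla\func{x^*,\xi}=0$ a.s., and per-sample convexity then makes $x^*$ a minimizer of almost every $f(\cdot,\xi)$.
\end{itemize}

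One step that neither you nor the paper justifies is the unrolling itself. Set $A_k:=\expect{\func{x^k}}-f^*$ and $q:=1-\frac{\eta}{4R_0}$. Passing from $A_{k+1}\le qA_k+C$ to $A_N\le q^NA_0+C\sum_{i<N}q^i\le q^NA_0+\frac{4R_0}{\eta}C$ requires $q\ge 0$, i.e.\ $\eta\le 4R_0$. The step-size rule does not imply this when $H_1R_0^2$ is small and $\lambda$ is large.

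In that regime the stated bound actually fails. Take $f(x)=\frac{H_0}{2}\norms{x}^2$ with exact gradients, $H_1R_0^2\le 10^{-4}$, $\lambda=100H_0R_0$, $\eta=50R_0$ and $N=3$. All hypotheses hold, yet the right-hand side equals $(-11.5)^3\frac{H_0R_0^2}{2}+600H_0R_0^2<0$.

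So the proposition (and Theorem~\ref{th:Convex_NSGD_H0_H1}) needs the extra hypothesis $\eta\le 4R_0$. This is harmless in the regime of interest, e.g.\ $\lambda=\frac{\varepsilon}{12R_0}$ with $\varepsilon\le 48H_0R_0^2$. Under it, your argument goes through as written.
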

\end{boxD}

\begin{proof}
The proof is identical to the proof of Theorem~\ref{th:Convex_NSGD_H0_H1} up to the estimate of the stochastic error term. Starting from~\eqref{eq:Convex_NSGD_smooth_H0_H1}, taking conditional expectation, and using~\eqref{eq:HT_noise_bound_H0_H1_NSGD}, we obtain
\begin{align}
    \expectcond{F_{k+1}}{x^k}-F_k
    &\leq
    -
    \frac{\eta}{2}
    \norms{\nabla \func{x^k}}
    +
    2\eta
    \expectcond{
        \norms{
            \nabla \func{x^k,\bxi^k}
            -
            \nabla \func{x^k}
        }
    }{x^k}
    +
    \frac{3}{2}\eta\lambda
    \nonumber \\
    &\leq
    -
    \frac{\eta}{2}
    \norms{\nabla \func{x^k}}
    +
    \frac{\eta}{4}
    \norms{\nabla \func{x^k}}
    +
    \frac{2^{1+\frac{1}{p}}\eta\sigma}{B^{\frac{p-1}{p}}}
    +
    \frac{3}{2}\eta\lambda
    \nonumber \\
    &=
    -
    \frac{\eta}{4}
    \norms{\nabla \func{x^k}}
    +
    \frac{2^{1+\frac{1}{p}}\eta\sigma}{B^{\frac{p-1}{p}}}
    +
    \frac{3}{2}\eta\lambda.
    \label{eq:Convex_NSGD_HT_SeMI_Itog_H0_H1}
\end{align}
Using the convexity implication~\eqref{eq:relationship_between_function_gap_and_gradient_norm_H0H1}, we further get
\begin{align}
    \expectcond{F_{k+1}}{x^k}
    &\leq
    \left(
        1-\frac{\eta}{4R_0}
    \right)
    F_k
    +
    \frac{2^{1+\frac{1}{p}}\eta\sigma}{B^{\frac{p-1}{p}}}
    +
    \frac{3}{2}\eta\lambda.
    \label{eq:Convex_NSGD_HT_recursion_H0_H1}
\end{align}
Taking full expectation and unrolling the recursion yields
\begin{align*}
    \expect{F_N}
    &\leq
    \left(
        1-\frac{\eta}{4R_0}
    \right)^N
    F_0
    +
    \left(
        \frac{2^{1+\frac{1}{p}}\eta\sigma}{B^{\frac{p-1}{p}}}
        +
        \frac{3}{2}\eta\lambda
    \right)
    \sum_{i=0}^{N-1}
    \left(
        1-\frac{\eta}{4R_0}
    \right)^i
    \\
    &\leq
    \left(
        1-\frac{\eta}{4R_0}
    \right)^N
    F_0
    +
    \left(
        \frac{2^{1+\frac{1}{p}}\eta\sigma}{B^{\frac{p-1}{p}}}
        +
        \frac{3}{2}\eta\lambda
    \right)
    \frac{4R_0}{\eta}
    \\
    &=
    \left(
        1-\frac{\eta}{4R_0}
    \right)^N
    F_0
    +
    \frac{2^{3+\frac{1}{p}}\sigma R_0}{B^{\frac{p-1}{p}}}
    +
    6\lambda R_0.
\end{align*}
This completes the proof.
\end{proof}

\subsection{Deterministic setting}

Using the $(H_0,H_1)$-smoothness assumption allows us to improve existing
convergence bounds of gradient descent with the warm-up stepsize strategy.
Throughout this subsection, we assume $H_0,H_1>0$; the case $H_1=0$ reduces
to the standard $H_0$-smooth setting.

\begin{algorithm}[H]
       \caption{Gradient Descent Method (GD)}
       \label{algo:GD}
    \begin{algorithmic}
       \STATE {\bfseries Input:} $x_0 \in \mathbb{R}^d$, iterations number $N$, $H_0,H_1\geq 0$, universal constant $\theta\in(0,3/4]$
       \FOR{$k=0$ {\bfseries to} $N-1$}
       \STATE $x^{k+1} \gets x^k-\theta \min\left\{\frac{1}{H_0},\frac{1}{3H_1 (\func{x^k} - f^*)}\right\}\nabla f(x^k)$
       \ENDFOR
       \STATE {\bfseries Return:} $x^{N}$
    \end{algorithmic}
\end{algorithm}
\vspace{-1em}

Here and below, if $F_k:= \func{x^k} - f^*) =0$, we use the convention
$\frac{1}{3H_1F_k}=+\infty$, so that $\eta_k=\theta/H_0$. The step size of
Algorithm~\ref{algo:GD} corresponds to a warm-up strategy: as the gap $F_k$
decreases, the admissible step size increases until it reaches the constant
regime $\theta/H_0$.

In what follows, we consider non-convex, convex
(Assumption~\ref{ass:Convexity}), and strongly convex objectives.

\begin{boxF}
\begin{assumption}[Strong-convexity]\label{ass:Strong_Convexity}
    There exists a constant $\mu > 0$ such that function $f$ satisfies for all
    $x,y \in \mathbb{R}^d$:
    \begin{equation}\label{eq:Strong_Convexity}
        \func{x} - \func{y} + \frac{\mu}{2}\norms{x-y}^2
        \leq
        \dotprod{\nabla \func{x}}{x-y}.
    \end{equation}
\end{assumption}
\end{boxF}

We first record two auxiliary facts that will be used in all deterministic
proofs.

\begin{boxN}
\begin{lemma}[Locality of the GD step]
\label{lem:GD_locality_H0_H1}
Let $f$ be $(H_0,H_1)$-smooth
(Assumption~\ref{ass:H_0H_1_Smooth}) and let
$F_k:=\func{x^k}-f^*\geq 0$. Then GD from Algorithm~\ref{algo:GD} satisfies
\begin{equation*}
    \norms{x^{k+1}-x^k}
    =
    \eta_k\norms{\nabla \func{x^k}}
    \leq
    \frac{1}{\sqrt{H_1}}.
\end{equation*}
Consequently, the local $(H_0,H_1)$-smoothness inequality can be applied to
the pair $(x^k,x^{k+1})$.
\end{lemma}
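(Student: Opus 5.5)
The plan is to bound $\eta_k\norms{\nabla \func{x^k}}$ using a self-bounding inequality that controls the gradient norm by the gap $F_k$. We then split into cases according to which term attains the minimum in $\eta_k=\theta\min\{1/H_0,\,1/(3H_1F_k)\}$. If $F_k=0$, then $x^k$ is a minimizer, $\nabla \func{x^k}=0$, and the claim is trivial, so assume $F_k>0$.

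\textbf{Step 1 (self-bounding).} Fix $x=x^k$, write $g=\nabla \func{x}$, and set
\[
t:=\min\left\{\frac{1}{H_0+H_1F_k},\ \frac{1}{\sqrt{H_1}\norms{g}}\right\}, \qquad y=x-tg.
\]
By construction $\norms{y-x}=t\norms{g}\le 1/\sqrt{H_1}$, so the descent inequality \eqref{eq:H0H1_descent_inequality} applies at $(x,y)$. Combining it with $t(H_0+H_1F_k)\le 1$ and $f^*\le \func{y}$ gives
\[
F_k\ \ge\ t\norms{g}^2-\tfrac{t^2}{2}(H_0+H_1F_k)\norms{g}^2\ \ge\ \tfrac{t}{2}\norms{g}^2 .
\]
If the first term attains the minimum in $t$, this reads $\norms{g}^2\le 2(H_0+H_1F_k)F_k$. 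If the second one does, it reads $\norms{g}\le 2\sqrt{H_1}F_k$. In both cases
\[
\norms{g}\le \max\left\{\sqrt{2(H_0+H_1F_k)F_k},\ 2\sqrt{H_1}F_k\right\}.
\]
This step avoids the circularity of needing locality before the gradient bound is known, and I expect it to be the main subtlety.

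\textbf{Step 2 (case analysis).} There are three regimes.
\begin{itemize}
\item[(a)] If $H_1F_k\ge H_0$: then $2(H_0+H_1F_k)F_k\le 4H_1F_k^2$, so $\norms{g}\le 2\sqrt{H_1}F_k$. Using $\eta_k\le \theta/(3H_1F_k)$,
\[
\eta_k\norms{g}\le \frac{2\theta}{3\sqrt{H_1}}\le \frac{1}{2\sqrt{H_1}} .
\]
\item[(b)] If $H_0/3\le H_1F_k<H_0$: then $\norms{g}\le \max\{2\sqrt{H_0F_k},\,2\sqrt{H_1}F_k\}$. With $\eta_k\le\theta/(3H_1F_k)$, the first branch gives
\[
\frac{2\theta\sqrt{H_0}}{3H_1\sqrt{F_k}}\le \frac{2\theta}{\sqrt{3H_1}},
\]
using $H_0/F_k\le 3H_1$. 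The second branch gives $2\theta/(3\sqrt{H_1})$.
\item[(c)] If $H_1F_k<H_0/3$: use $\eta_k\le\theta/H_0$. The first branch gives
\[
2\theta\sqrt{F_k/H_0}\le \frac{2\theta}{\sqrt{3H_1}} .
\]
The second branch gives $2\theta\sqrt{H_1}F_k/H_0\le 2\theta/(3\sqrt{H_1})$.
\end{itemize}

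\textbf{Step 3 (conclusion).} Since $\theta\le 3/4$, every bound in Step 2 is at most $\frac{3}{2\sqrt3}\cdot\frac{1}{\sqrt{H_1}}=\frac{\sqrt3}{2\sqrt{H_1}}<\frac{1}{\sqrt{H_1}}$. Hence $\norms{x^{k+1}-x^k}=\eta_k\norms{\nabla \func{x^k}}\le 1/\sqrt{H_1}$, which is the claimed locality.

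The only delicate points are the locality-safe choice of the test step $t$ in Step 1 and tracking the constants so that $\theta\le 3/4$ is exactly enough. If a constant turns out too tight, I would shrink $t$ by a factor of $2$ and redo the arithmetic; the regime split in Step 2 stays the same.
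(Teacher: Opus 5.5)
Your proof is correct, but it takes a different route from the paper. The paper uses a single test point at the maximal admissible distance, $y^k = x^k - \frac{1}{\sqrt{H_1}}\frac{\nabla \func{x^k}}{\norms{\nabla \func{x^k}}}$. From $\func{y^k}\geq f^*$ it obtains the linear estimate $\norms{\nabla \func{x^k}} \leq \frac{H_0}{2\sqrt{H_1}} + \frac{3\sqrt{H_1}}{2}F_k$. It then multiplies by $\eta_k$ and pairs $\frac{H_0}{2\sqrt{H_1}}$ with $\theta/H_0$ and $\frac{3\sqrt{H_1}}{2}F_k$ with $\theta/(3H_1F_k)$. This gives $\eta_k\norms{\nabla \func{x^k}}\leq \theta/\sqrt{H_1}$ in one line, with no case split, and needs only $\theta\leq 1$.

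Your adaptive test step $t=\min\{(H_0+H_1F_k)^{-1},(\sqrt{H_1}\norms{g})^{-1}\}$ yields a sharper square-root-type self-bounding estimate. Its first branch has the same form $\norms{g}^2\leq 2(H_0+H_1F_k)F_k$ that the paper invokes for mini-batch objectives, and the second branch $\norms{g}\leq 2\sqrt{H_1}F_k$ is forced by locality. The cost is a three-regime analysis and a weaker final constant, $\frac{2\theta}{\sqrt{3H_1}}$ instead of $\frac{\theta}{\sqrt{H_1}}$, which requires $\theta\leq\sqrt{3}/2$. That is still covered by $\theta\leq 3/4$, so your argument goes through.

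One minor point: when $F_k>0$ but $\nabla \func{x^k}=0$, your definition of $t$ involves division by zero. Treat that case as trivial up front, as the paper does.
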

\end{boxN}

\begin{proof}
Let $G_k:=\norms{\nabla \func{x^k}}$. If $G_k=0$, the claim is immediate. Otherwise, set
\begin{equation*}
    y^k
    :=
    x^k
    -
    \frac{1}{\sqrt{H_1}}
    \frac{\nabla \func{x^k}}{G_k}.
\end{equation*}
Since $\norms{y^k-x^k}=1/\sqrt{H_1}$, Assumption~\ref{ass:H_0H_1_Smooth}
gives
\begin{align*}
    \func{y^k}
    &\leq
    \func{x^k}
    +
    \dotprod{\nabla \func{x^k}}{y^k-x^k}
    +
    \frac{H_0+H_1F_k}{2}\norms{y^k-x^k}^2 \\
    &=
    \func{x^k}
    -
    \frac{G_k}{\sqrt{H_1}}
    +
    \frac{H_0+H_1F_k}{2H_1}.
\end{align*}
Using $\func{y^k}\geq f^*$, we obtain
\begin{equation*}
    G_k
    \leq
    \frac{H_0}{2\sqrt{H_1}}
    +
    \frac{3\sqrt{H_1}}{2}F_k.
\end{equation*}
Therefore,
\begin{align*}
    \eta_kG_k
    &\leq
    \theta
    \min\left\{
        \frac{1}{H_0},
        \frac{1}{3H_1F_k}
    \right\}
    \left(
        \frac{H_0}{2\sqrt{H_1}}
        +
        \frac{3\sqrt{H_1}}{2}F_k
    \right) \\
    &\leq
    \theta
    \left(
        \frac{1}{2\sqrt{H_1}}
        +
        \frac{1}{2\sqrt{H_1}}
    \right)
    =
    \frac{\theta}{\sqrt{H_1}}
    \leq
    \frac{1}{\sqrt{H_1}}.
\end{align*}
\end{proof}

\begin{boxN}
\begin{lemma}[Basic descent]
\label{lem:GD_basic_descent_H0_H1}
Let $f$ be $(H_0,H_1)$-smooth
(Assumption~\ref{ass:H_0H_1_Smooth}). Then GD from
Algorithm~\ref{algo:GD} satisfies
\begin{equation}
    \label{eq:GD_H0H1_basic_descent}
    F_{k+1}
    \leq
    F_k
    -
    \frac{\eta_k}{2}
    \norms{\nabla \func{x^k}}^2.
\end{equation}
In particular, $(F_k)_{k\geq0}$ is non-increasing.
\end{lemma}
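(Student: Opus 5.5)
The plan is to apply the descent inequality \eqref{eq:H0H1_descent_inequality} to the pair $(x^k,x^{k+1})$ and then use the step-size rule of Algorithm~\ref{algo:GD}. Lemma~\ref{lem:GD_locality_H0_H1} already guarantees $\norms{x^{k+1}-x^k}\leq 1/\sqrt{H_1}$, so this application is legitimate. Write $G_k:=\norms{\nabla \func{x^k}}$ and $\eta_k:=\theta\min\{1/H_0,\,1/(3H_1F_k)\}$. Substituting $x^{k+1}-x^k=-\eta_k\nabla \func{x^k}$ gives
\begin{equation*}
    F_{k+1}-F_k
    \leq
    -\eta_k G_k^2
    +
    \frac{\eta_k^2\left(H_0+H_1F_k\right)}{2}G_k^2
    =
    -\eta_k G_k^2\left(1-\frac{\eta_k(H_0+H_1F_k)}{2}\right).
\end{equation*}
It therefore suffices to prove that $\eta_k(H_0+H_1F_k)\leq 1$, because then the parenthesis is at least $1/2$ and \eqref{eq:GD_H0H1_basic_descent} follows.

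The key step is bounding $\eta_k(H_0+H_1F_k)$. Split it as $\eta_kH_0+\eta_kH_1F_k$. For the first term, use $\eta_k\leq\theta/H_0$, so $\eta_kH_0\leq\theta$. For the second term, use $\eta_k\leq\theta/(3H_1F_k)$ when $F_k>0$, so $\eta_kH_1F_k\leq\theta/3$; when $F_k=0$ this term vanishes by the stated convention. Hence
\begin{equation*}
    \eta_k(H_0+H_1F_k)\leq \tfrac{4}{3}\theta\leq 1,
\end{equation*}
using $\theta\leq 3/4$. This is exactly where the constraint $\theta\in(0,3/4]$ enters, so I expect no real obstacle. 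The only delicate point is bookkeeping: the two terms are controlled by different branches of the minimum, and each branch is a valid upper bound for $\eta_k$.

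Finally, monotonicity of $(F_k)$ is immediate, since the subtracted term $\frac{\eta_k}{2}G_k^2$ is nonnegative. If $G_k=0$ the iterate is stationary and the claim holds trivially.
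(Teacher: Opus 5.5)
Your proposal is correct and matches the paper's proof essentially line for line. Like the paper, you invoke Lemma~\ref{lem:GD_locality_H0_H1} to justify applying \eqref{eq:H0H1_descent_inequality} to the pair $(x^k,x^{k+1})$, and then bound $\eta_k(H_0+H_1F_k)\leq \theta\left(1+\tfrac{1}{3}\right)=\tfrac{4\theta}{3}\leq 1$ by using a different branch of the minimum for each of the two terms; your explicit treatment of the $F_k=0$ convention is a small welcome addition.
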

\end{boxN}

\begin{proof}
By Lemma~\ref{lem:GD_locality_H0_H1}, the update remains in the local
smoothness region. Hence
\begin{align*}
    F_{k+1}-F_k
    &=
    \func{x^{k+1}}-\func{x^k} \\
    &\leq
    -\eta_k\norms{\nabla \func{x^k}}^2
    +
    \frac{\eta_k^2}{2}
    \left(H_0+H_1F_k\right)
    \norms{\nabla \func{x^k}}^2.
\end{align*}
Moreover,
\begin{equation*}
    \eta_k(H_0+H_1F_k)
    \leq
    \theta
    \left(1+\frac{1}{3}\right)
    =
    \frac{4\theta}{3}
    \leq
    1.
\end{equation*}
Thus,
\[
    F_{k+1}-F_k
    \leq
    -\frac{\eta_k}{2}
    \norms{\nabla \func{x^k}}^2.
\]
\end{proof}

\subsubsection{Non-convex setting}

\begin{boxA}
\begin{theorem}\label{th:GD_NC}
Let $f$ be $(H_0,H_1)$-smooth (Assumption~\ref{ass:H_0H_1_Smooth}). Then GD
(Algorithm~\ref{algo:GD}) guarantees
\begin{equation*}
    \min_{0\leq k\leq N-1}\norms{\nabla \func{x^k}}
    \leq
    \begin{cases}
        \sqrt{\frac{2H_0F_0}{\theta N}},
        & M=0,\\[0.5em]
        \min\left\{
        \sqrt{\frac{6H_1F_0\sum_{k=0}^{M-1}F_k}{\theta M^2}},
        \sqrt{\frac{2H_0F_0}{\theta(N-M)}}
        \right\},
        & 1\leq M\leq N-1,\\[0.5em]
        \sqrt{\frac{6H_1F_0\sum_{k=0}^{N-1}F_k}{\theta N^2}},
        & M=N,
    \end{cases}
\end{equation*}
where $F_k=\func{x^k}-f^*$, $\theta=\frac{1}{4\sqrt{2}+4}$, and $M$ is the
smallest index $k\in\{0,\ldots,N-1\}$ such that
$F_k<\frac{H_0}{3H_1}$, with the convention $M=N$ if this set is empty.
\end{theorem}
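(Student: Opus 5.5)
The plan is to start from the basic descent inequality of Lemma~\ref{lem:GD_basic_descent_H0_H1}, $F_{k+1}\le F_k-\frac{\eta_k}{2}\norms{\nabla \func{x^k}}^2$, and sum it over two phases determined by the index $M$. Since $(F_k)$ is non-increasing (Lemma~\ref{lem:GD_basic_descent_H0_H1}), the set of indices with $F_k<\frac{H_0}{3H_1}$ is a terminal segment $\{M,\dots,N-1\}$. For $k\ge M$ we have $3H_1F_k<H_0$, so $\eta_k=\theta/H_0$. For $k<M$ we have $3H_1F_k\ge H_0$, so $\eta_k=\frac{\theta}{3H_1F_k}$. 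This gives an exact two-regime description of the warm-up step size.

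\emph{Constant-step phase.} For $k\in\{M,\dots,N-1\}$, summing the descent inequality and telescoping gives
\[
\frac{\theta}{2H_0}\sum_{k=M}^{N-1}\norms{\nabla \func{x^k}}^2\le F_M\le F_0 .
\]
Bounding the minimum by the average then yields $\min_k\norms{\nabla \func{x^k}}^2\le\frac{2H_0F_0}{\theta(N-M)}$. With $M=0$ this is the first case of the theorem, and it also supplies the second entry of the $\min$ when $1\le M\le N-1$.

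\emph{Warm-up phase.} For $k<M$, the descent inequality reads $\frac{\theta}{6H_1F_k}\norms{\nabla \func{x^k}}^2\le F_k-F_{k+1}$, that is, $\norms{\nabla \func{x^k}}^2\le\frac{6H_1}{\theta}F_k(F_k-F_{k+1})$. I do not want to bound $F_k\le F_0$ and leave the factor $F_k$ inside the sum, since that would not produce the $\sum F_k$ in the statement. Instead I will weight differently: write $\norms{\nabla \func{x^k}}^2/F_k\le\frac{6H_1}{\theta}(F_k-F_{k+1})$ and telescope to get
\[
\sum_{k<M}\frac{\norms{\nabla \func{x^k}}^2}{F_k}\le\frac{6H_1F_0}{\theta}.
\]
Let $g=\min_{k<M}\norms{\nabla \func{x^k}}$. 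Then $g^2\sum_{k<M}1/F_k\le\frac{6H_1F_0}{\theta}$. By Cauchy--Schwarz (equivalently AM--HM), $\sum_{k<M}1/F_k\ge M^2/\sum_{k<M}F_k$, so
\[
g^2\le\frac{6H_1F_0\sum_{k=0}^{M-1}F_k}{\theta M^2}.
\]
This is exactly the claimed warm-up term. Setting $M=N$ gives the third case, and for $1\le M\le N-1$ the overall minimum over all $k$ is at most the minimum of the two phase bounds.

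The main obstacle is the warm-up phase: the factor $F_k$ in the step size must be handled through the AM--HM step rather than crude bounding. A second point to check is that monotonicity of $F_k$ really makes $M$ a clean split point, so that no index with $k\ge M$ uses the small step. The stated value of $\theta$ lies in $(0,3/4]$, so Lemmas~\ref{lem:GD_locality_H0_H1}--\ref{lem:GD_basic_descent_H0_H1} apply; its precise value seems irrelevant to this argument. If $F_k=0$ for some $k$, the convention makes $k\ge M$, so no division by zero occurs in the warm-up phase.
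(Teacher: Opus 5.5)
Your proposal is correct and follows the paper's proof essentially step for step. Both arguments use monotonicity of $F_k$ to split cleanly at $M$, rearrange the basic descent inequality separately in each step-size regime, telescope each phase to at most $F_0$, and apply the AM--HM bound $\sum_{k<M} 1/F_k \ge M^2/\sum_{k<M} F_k$ in the warm-up phase.
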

\end{boxA}

\begin{proof}
From Lemma~\ref{lem:GD_basic_descent_H0_H1}, we have
\begin{equation}
    \label{eq:GD_NC_basic_descent}
    F_k-F_{k+1}
    \geq
    \frac{\eta_k}{2}
    \norms{\nabla \func{x^k}}^2.
\end{equation}

If $F_k\geq H_0/(3H_1)$, then
\[
    \eta_k=\frac{\theta}{3H_1F_k}.
\]
Therefore,
\begin{equation}
    \label{eq:GD_NC_large_gap}
    \frac{\theta}{6H_1F_k}
    \norms{\nabla \func{x^k}}^2
    \leq
    F_k-F_{k+1}.
\end{equation}

If $F_k< H_0/(3H_1)$, then
\[
    \eta_k=\frac{\theta}{H_0}.
\]
Therefore,
\begin{equation}
    \label{eq:GD_NC_small_gap}
    \norms{\nabla \func{x^k}}^2
    \leq
    \frac{2H_0}{\theta}
    \left(F_k-F_{k+1}\right).
\end{equation}

By Lemma~\ref{lem:GD_basic_descent_H0_H1}, the sequence $(F_k)$ is
non-increasing. Hence, by the definition of $M$, the indices
$k=0,\ldots,M-1$ correspond to the large-gap phase, and the indices
$k=M,\ldots,N-1$ correspond to the small-gap phase.

Assume first that $1\leq M\leq N-1$. Summing~\eqref{eq:GD_NC_large_gap} for
$k=0,\ldots,M-1$, we get
\begin{align*}
    \sum_{k=0}^{M-1}
    \frac{\theta}{6H_1F_k}
    \norms{\nabla \func{x^k}}^2
    &\leq
    \sum_{k=0}^{M-1}
    \left(F_k-F_{k+1}\right)
    \leq
    F_0.
\end{align*}
Using
\[
    \sum_{k=0}^{M-1}\frac{\theta}{6H_1F_k}
    \geq
    \frac{\theta M^2}{6H_1\sum_{k=0}^{M-1}F_k},
\]
we obtain
\begin{equation}
    \label{eq:GD_NC_large_gap_rate}
    \min_{0\leq k\leq M-1}
    \norms{\nabla \func{x^k}}
    \leq
    \sqrt{
        \frac{6H_1F_0\sum_{k=0}^{M-1}F_k}{\theta M^2}
    }.
\end{equation}

Summing~\eqref{eq:GD_NC_small_gap} for $k=M,\ldots,N-1$, we get
\begin{align*}
    \sum_{k=M}^{N-1}
    \norms{\nabla \func{x^k}}^2
    &\leq
    \frac{2H_0}{\theta}
    \sum_{k=M}^{N-1}
    \left(F_k-F_{k+1}\right)
    \leq
    \frac{2H_0F_0}{\theta}.
\end{align*}
Thus,
\begin{equation}
    \label{eq:GD_NC_small_gap_rate}
    \min_{M\leq k\leq N-1}
    \norms{\nabla \func{x^k}}
    \leq
    \sqrt{
        \frac{2H_0F_0}{\theta(N-M)}
    }.
\end{equation}
Combining~\eqref{eq:GD_NC_large_gap_rate} and
\eqref{eq:GD_NC_small_gap_rate} gives the interior case.

If $M=0$, then all indices are in the small-gap phase, and summing
\eqref{eq:GD_NC_small_gap} for $k=0,\ldots,N-1$ gives
\[
    \min_{0\leq k\leq N-1}
    \norms{\nabla \func{x^k}}
    \leq
    \sqrt{\frac{2H_0F_0}{\theta N}}.
\]
If $M=N$, then all indices are in the large-gap phase, and summing
\eqref{eq:GD_NC_large_gap} for $k=0,\ldots,N-1$ gives
\[
    \min_{0\leq k\leq N-1}
    \norms{\nabla \func{x^k}}
    \leq
    \sqrt{
        \frac{6H_1F_0\sum_{k=0}^{N-1}F_k}{\theta N^2}
    }.
\]
\end{proof}

\subsubsection{Convex setting}

\begin{boxN}
\begin{lemma}[Monotonicity of the distance to the optimum]
\label{lem:argument_monotonicity}
Let $f$ be $(H_0,H_1)$-smooth (Assumption~\ref{ass:H_0H_1_Smooth}) and
convex (Assumption~\ref{ass:Convexity}). Then GD from Algorithm~\ref{algo:GD}
satisfies
\begin{equation}
    \label{eq:argument_monotonicity}
    \norms{x^k-x^*}
    \leq
    \norms{x^0-x^*}
    =
    R
    \quad
    \text{for all } k\geq0.
\end{equation}
\end{lemma}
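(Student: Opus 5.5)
We argue by induction on $k$ that $\norms{x^{k+1}-x^*}\leq\norms{x^k-x^*}$, which immediately gives \eqref{eq:argument_monotonicity}. Write $G_k:=\norms{\nabla \func{x^k}}$, $F_k:=\func{x^k}-f^*$, and $\eta_k=\theta\min\{1/H_0,1/(3H_1F_k)\}$. Expanding the GD update gives
\begin{equation*}
    \norms{x^{k+1}-x^*}^2
    =
    \norms{x^k-x^*}^2
    -2\eta_k\dotprod{\nabla \func{x^k}}{x^k-x^*}
    +\eta_k^2 G_k^2 .
\end{equation*}
By convexity \eqref{eq:Convexity} with $y=x^*$, $\dotprod{\nabla \func{x^k}}{x^k-x^*}\geq F_k$. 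It is therefore enough to show $\eta_k G_k^2\leq 2F_k$, i.e. a deterministic self-bounding inequality with the step size absorbed.

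The self-bounding step comes from the same trick as in Lemma~\ref{lem:GD_locality_H0_H1}. Consider the trial point $y=x^k-t\nabla \func{x^k}$ with $t=1/(H_0+H_1F_k)$. The locality requirement $tG_k\leq 1/\sqrt{H_1}$ has to be checked. Lemma~\ref{lem:GD_locality_H0_H1} gives $G_k\leq \frac{H_0}{2\sqrt{H_1}}+\frac{3\sqrt{H_1}}{2}F_k$, but that yields only $tG_k\leq \frac{3}{2\sqrt{H_1}}$, which is too big. I would therefore pick the smaller trial step $t=1/(2(H_0+H_1F_k))$ instead, for which $tG_k\leq \frac{3}{4\sqrt{H_1}}\leq 1/\sqrt{H_1}$. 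Applying the descent inequality \eqref{eq:H0H1_descent_inequality} at $y$ and using $\func{y}\geq f^*$ gives
\begin{equation*}
    f^*\leq \func{x^k}-tG_k^2+\tfrac{(H_0+H_1F_k)t^2}{2}G_k^2
    = \func{x^k}-\tfrac{3}{8(H_0+H_1F_k)}G_k^2 .
\end{equation*}
Hence $G_k^2\leq \tfrac{8}{3}(H_0+H_1F_k)F_k$.

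It remains to combine this with the step-size rule. From the definition of $\eta_k$ we have $\eta_k H_0\leq\theta$ and $\eta_k H_1F_k\leq \theta/3$, so $\eta_k(H_0+H_1F_k)\leq \tfrac{4\theta}{3}\leq 1$ because $\theta\leq 3/4$. Therefore
\begin{equation*}
    \eta_kG_k^2\leq \tfrac{8}{3}\cdot\tfrac{4\theta}{3}F_k=\tfrac{32\theta}{9}F_k .
\end{equation*}
This is at most $2F_k$ whenever $\theta\leq 9/16$; with the value $\theta=\frac{1}{4\sqrt2+4}\approx 0.1$ used in Theorem~\ref{th:GD_NC}, that holds comfortably. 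Consequently $-2\eta_kF_k+\eta_k^2G_k^2\leq 0$, which proves monotonicity. The degenerate cases $G_k=0$ or $F_k=0$ are trivial, since then the step vanishes.

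The main obstacle is the constant bookkeeping in the self-bounding bound. The trial step has to respect the $1/\sqrt{H_1}$ locality radius, so the bound $\eta_k G_k^2\lesssim F_k$ holds only up to a constant that depends on $\theta$. If the full range $\theta\in(0,3/4]$ must be covered, I would try to sharpen the trial step, optimizing $t$ under the constraint $tG_k\leq 1/\sqrt{H_1}$. Alternatively I would use the pair of bounds $G_k\leq \frac{H_0}{2\sqrt{H_1}}+\frac{3\sqrt{H_1}}{2}F_k$ and $G_k^2\leq 2(H_0+H_1F_k)F_k$, the latter valid in the unconstrained regime, case by case. Should that fail, I would state the lemma for the specific $\theta$ used in the theorems.
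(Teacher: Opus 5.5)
\textbf{Gap: your argument covers only $\theta\le 9/16$, not the full range $\theta\in(0,3/4]$.}
Your computations are correct, but the lemma concerns GD from Algorithm~\ref{algo:GD}, which allows any $\theta\in(0,3/4]$. For $\theta\in(9/16,3/4]$ your bound $\eta_kG_k^2\le\frac{32\theta}{9}F_k$ exceeds $2F_k$, so monotonicity does not follow. Your proof does cover the value $\theta=\frac{1}{4\sqrt2+4}$ used in the later theorems. As a proof of the lemma as stated, however, it is incomplete.

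The constant is lost because you use an auxiliary trial step $t=1/(2(H_0+H_1F_k))$. This yields the pointwise self-bounding constant $8/3$ instead of $2$. The optimal choice $t=1/(H_0+H_1F_k)$ would give exactly $\theta\le 3/4$, but its locality is not certified.

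\textbf{Missing idea: use the GD step itself as the test point.} Take $t=\eta_k$.
\begin{itemize}
\item By Lemma~\ref{lem:GD_locality_H0_H1}, $\eta_kG_k\le\theta/\sqrt{H_1}\le 1/\sqrt{H_1}$, so the descent inequality applies at $x^{k+1}$.
\item You already showed $\eta_k(H_0+H_1F_k)\le 4\theta/3\le 1$. With this, the descent inequality gives Lemma~\ref{lem:GD_basic_descent_H0_H1}: $F_{k+1}\le F_k-\frac{\eta_k}{2}G_k^2$.
\item Since $F_{k+1}\ge 0$, you get $\eta_kG_k^2\le 2F_k$ for every $\theta\le 3/4$.
\end{itemize}
The rest of your argument (expanding $\|x^{k+1}-x^*\|^2$ and using convexity to get $\langle\nabla f(x^k),x^k-x^*\rangle\ge F_k$) then goes through unchanged. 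This is exactly the paper's proof.

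Your pointwise inequality $G_k^2\le\frac{8}{3}(H_0+H_1F_k)F_k$ is a step-size-independent fact and so slightly more general. Here it is the wrong tool: it bounds $G_k^2$ separately from $\eta_k$, and that decoupling costs the constant that matters.
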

\end{boxN}

\begin{proof}
By Lemma~\ref{lem:GD_basic_descent_H0_H1}, we have
\[
    F_{k+1}
    \leq
    F_k
    -
    \frac{\eta_k}{2}
    \norms{\nabla \func{x^k}}^2.
\]
Since $F_{k+1}\geq0$, this implies
\begin{equation}
    \label{eq:GD_gradient_self_bound_from_descent}
    \eta_k\norms{\nabla \func{x^k}}^2
    \leq
    2F_k.
\end{equation}
Using convexity, we obtain
\begin{align*}
    \norms{x^{k+1}-x^*}^2
    &=
    \norms{x^k-x^*-\eta_k\nabla \func{x^k}}^2 \\
    &=
    \norms{x^k-x^*}^2
    -
    2\eta_k
    \dotprod{\nabla \func{x^k}}{x^k-x^*}
    +
    \eta_k^2\norms{\nabla \func{x^k}}^2 \\
    &\leq
    \norms{x^k-x^*}^2
    -
    2\eta_k F_k
    +
    \eta_k^2\norms{\nabla \func{x^k}}^2 \\
    &\overset{\eqref{eq:GD_gradient_self_bound_from_descent}}{\leq}
    \norms{x^k-x^*}^2.
\end{align*}
The result follows by induction.
\end{proof}

\begin{boxA}
\begin{theorem}\label{th:GD_Convex}
Let $f$ be $(H_0,H_1)$-smooth (Assumption~\ref{ass:H_0H_1_Smooth}) and convex
(Assumption~\ref{ass:Convexity}). Then GD (Algorithm~\ref{algo:GD}) guarantees
the following bound. Let
\[
    T
    :=
    \min\left\{
        k\in\{0,\ldots,N-1\}
        :
        F_k<\frac{H_0}{3H_1}
    \right\},
\]
with the convention $T=N$ if this set is empty. If $T<N$, then
\begin{equation*}
    \func{x^N}-f^*
    \leq
    \min\left\{
        \left(1-\frac{\theta}{6H_1R^2}\right)^T F_0,
        \frac{2H_0R^2}{\theta(N-T)}
    \right\},
\end{equation*}
where $R=\norms{x^0-x^*}$ and $\theta=\frac{1}{4\sqrt{2}+4}$. If $T=N$, then
\begin{equation*}
    \func{x^N}-f^*
    \leq
    \left(1-\frac{\theta}{6H_1R^2}\right)^N F_0.
\end{equation*}
\end{theorem}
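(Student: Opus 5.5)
The argument combines three earlier facts: the basic descent Lemma~\ref{lem:GD_basic_descent_H0_H1}, the distance monotonicity Lemma~\ref{lem:argument_monotonicity}, and the convexity bound
\[
F_k\le\dotprod{\nabla \func{x^k}}{x^k-x^*}\le R\norms{\nabla \func{x^k}}.
\]
Together they give $\norms{\nabla \func{x^k}}^2\ge F_k^2/R^2$. Substituting this into $F_{k+1}\le F_k-\frac{\eta_k}{2}\norms{\nabla \func{x^k}}^2$ yields
\[
F_{k+1}\le F_k-\frac{\eta_k}{2R^2}F_k^2 .
\]
Since $(F_k)$ is non-increasing, the iterations split cleanly at $T$. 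For $k<T$ the gap is large, and for $k\ge T$ it is small.

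\textbf{Phase 1 ($k<T$).} Here $F_k\ge H_0/(3H_1)$, so $\eta_k=\theta/(3H_1F_k)$. The recursion then becomes
\[
F_{k+1}\le F_k-\frac{\theta}{6H_1R^2}F_k=\Bigl(1-\frac{\theta}{6H_1R^2}\Bigr)F_k .
\]
Iterating gives $F_T\le (1-\frac{\theta}{6H_1R^2})^T F_0$. Monotonicity then gives $F_N\le F_T$, which settles the case $T=N$ and supplies the first term of the minimum when $T<N$.

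\textbf{Phase 2 ($T\le k\le N-1$).} Here $\eta_k=\theta/H_0$, so $F_{k+1}\le F_k-\frac{\theta}{2H_0R^2}F_k^2$. I will use the standard reciprocal trick. If some $F_j=0$, everything afterward is zero and the bound is trivial. Otherwise
\[
\frac{1}{F_{k+1}}-\frac{1}{F_k}\ge \frac{\theta}{2H_0R^2}\frac{F_k}{F_{k+1}}\ge\frac{\theta}{2H_0R^2}.
\]
Summing over $k=T,\dots,N-1$ gives $1/F_N\ge (N-T)\theta/(2H_0R^2)$, that is, $F_N\le 2H_0R^2/(\theta(N-T))$. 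Both phase bounds hold for $F_N$ at the same time, so $F_N$ is at most their minimum.

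\textbf{Main obstacle.} The delicate points are bookkeeping rather than new estimates. First, Lemma~\ref{lem:GD_basic_descent_H0_H1} needs the locality guarantee of Lemma~\ref{lem:GD_locality_H0_H1}; the stated $\theta=1/(4\sqrt2+4)\le 3/4$ is admissible for it. Second, Lemma~\ref{lem:argument_monotonicity} uses $x^*\in\argmin f$ together with convexity. Third, the contraction factor must lie in $[0,1)$. I would check this by noting that $\eta_k\norms{\nabla \func{x^k}}^2\le 2F_k$, so $\frac{\eta_k}{2R^2}F_k^2\le F_k$ automatically. The specific value of $\theta$ is not otherwise needed here; I would simply carry it as given.
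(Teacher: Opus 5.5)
Your proposal is correct and follows the paper's proof essentially step for step. Both arguments combine convexity with Lemma~\ref{lem:argument_monotonicity} to get $F_k\le R\norms{\nabla \func{x^k}}$, plug this into the basic descent inequality, split at $T$ using monotonicity of $F_k$, unroll the linear phase, and apply the reciprocal trick in the sublinear phase. Your extra check that $\frac{\theta}{6H_1R^2}\le 1$ (via $\eta_k\norms{\nabla \func{x^k}}^2\le 2F_k$), which justifies chaining the contraction, is a small detail the paper leaves implicit.
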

\end{boxA}

\begin{proof}
By convexity and Lemma~\ref{lem:argument_monotonicity},
\begin{equation}
    \label{eq:GD_convex_gap_gradient_relation}
    F_k
    \leq
    \dotprod{\nabla \func{x^k}}{x^k-x^*}
    \leq
    R\norms{\nabla \func{x^k}}.
\end{equation}
Combining~\eqref{eq:GD_convex_gap_gradient_relation} with
Lemma~\ref{lem:GD_basic_descent_H0_H1}, we get
\begin{align}
    F_{k+1}
    &\leq
    F_k
    -
    \frac{\eta_k}{2}
    \norms{\nabla \func{x^k}}^2
    \nonumber\\
    &\leq
    F_k
    -
    \frac{\theta}{2R^2}
    \frac{F_k^2}{\max\{H_0,3H_1F_k\}}.
    \label{eq:GD_convex_master_recursion}
\end{align}

If $F_k\geq H_0/(3H_1)$, then
\[
    \max\{H_0,3H_1F_k\}=3H_1F_k,
\]
and~\eqref{eq:GD_convex_master_recursion} yields
\begin{equation}
    \label{eq:GD_convex_linear_phase}
    F_{k+1}
    \leq
    \left(
        1-\frac{\theta}{6H_1R^2}
    \right)F_k.
\end{equation}
If $F_k< H_0/(3H_1)$, then
\[
    \max\{H_0,3H_1F_k\}=H_0,
\]
and~\eqref{eq:GD_convex_master_recursion} yields
\begin{equation}
    \label{eq:GD_convex_sublinear_phase}
    F_{k+1}
    \leq
    F_k
    -
    \frac{\theta}{2H_0R^2}F_k^2.
\end{equation}

By Lemma~\ref{lem:GD_basic_descent_H0_H1}, $(F_k)$ is non-increasing.
Therefore, by the definition of $T$, the recursion
\eqref{eq:GD_convex_linear_phase} applies for $k=0,\ldots,T-1$, while
\eqref{eq:GD_convex_sublinear_phase} applies for $k=T,\ldots,N-1$ if $T<N$.

Unrolling~\eqref{eq:GD_convex_linear_phase}, we get
\begin{equation}
    \label{eq:GD_convex_rate_1}
    F_T
    \leq
    \left(
        1-\frac{\theta}{6H_1R^2}
    \right)^T
    F_0.
\end{equation}
Since $(F_k)$ is non-increasing, $F_N\leq F_T$, and hence
\begin{equation}
    \label{eq:GD_convex_rate_1_final}
    F_N
    \leq
    \left(
        1-\frac{\theta}{6H_1R^2}
    \right)^T
    F_0.
\end{equation}

If $T<N$, then from~\eqref{eq:GD_convex_sublinear_phase},
\[
    F_{k+1}
    \leq
    F_k
    -
    \frac{\theta}{2H_0R^2}F_k^2,
    \qquad
    k=T,\ldots,N-1.
\]
If $F_N=0$, the desired bound is immediate. Otherwise,
\begin{equation*}
    \frac{1}{F_{k+1}}-\frac{1}{F_k}
    =
    \frac{F_k-F_{k+1}}{F_kF_{k+1}}
    \geq
    \frac{\theta}{2H_0R^2}
    \frac{F_k}{F_{k+1}}
    \geq
    \frac{\theta}{2H_0R^2}.
\end{equation*}
Summing from $k=T$ to $N-1$, we obtain
\begin{equation}
    \label{eq:GD_convex_rate_2}
    F_N
    \leq
    \frac{2H_0R^2}{\theta(N-T)}.
\end{equation}
Combining~\eqref{eq:GD_convex_rate_1_final} and
\eqref{eq:GD_convex_rate_2} gives the claim for $T<N$. If $T=N$, only the
linear phase occurs, and~\eqref{eq:GD_convex_linear_phase} gives
\[
    F_N
    \leq
    \left(
        1-\frac{\theta}{6H_1R^2}
    \right)^N
    F_0.
\]
\end{proof}

\subsubsection{Strongly convex setting}

\begin{boxA}
\begin{theorem}\label{th:GD_strongly_convex}
Let $f$ be $(H_0,H_1)$-smooth (Assumption~\ref{ass:H_0H_1_Smooth}) and
$\mu$-strongly convex (Assumption~\ref{ass:Strong_Convexity}). Then GD
(Algorithm~\ref{algo:GD}) needs at most
\begin{equation*}
    N
    =
    \Obound{
        \frac{H_0}{\theta\mu}
        \log\left(\frac{F_0}{\varepsilon}\right)
        +
        \frac{H_1F_0}{\theta\mu}
    }
\end{equation*}
iterations to reach accuracy $\func{x^N}-f^*\leq\varepsilon$, where
$\theta=\frac{1}{4\sqrt{2}+4}$.
\end{theorem}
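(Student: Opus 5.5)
We will combine the basic descent Lemma~\ref{lem:GD_basic_descent_H0_H1} with the gradient lower bound supplied by strong convexity. Taking $y=x^*$ in \eqref{eq:Strong_Convexity} and minimizing the resulting quadratic in $\norms{x-x^*}$ gives the Polyak--Łojasiewicz-type inequality
\[
    \norms{\nabla \func{x^k}}^2 \geq 2\mu F_k,
\]
which is the standard consequence of strong convexity.

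Substituting this into \eqref{eq:GD_H0H1_basic_descent}, the step size $\eta_k=\theta\min\{1/H_0,1/(3H_1F_k)\}$ yields the master recursion
\[
    F_{k+1}\leq F_k-\mu\eta_k F_k = F_k-\theta\mu\frac{F_k}{\max\{H_0,3H_1F_k\}}.
\]
By Lemma~\ref{lem:GD_basic_descent_H0_H1} the sequence $(F_k)$ is non-increasing, so, exactly as in Theorem~\ref{th:GD_Convex}, the iterations split into two consecutive phases. The phases are separated by the first index $T$ with $F_T<H_0/(3H_1)$.

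In the small-gap phase ($k\geq T$) we have $\max\{H_0,3H_1F_k\}=H_0$, so $F_{k+1}\leq(1-\theta\mu/H_0)F_k$. This is a linear rate, and it takes $\Obound{\frac{H_0}{\theta\mu}\log\frac{F_T}{\varepsilon}}\leq \Obound{\frac{H_0}{\theta\mu}\log\frac{F_0}{\varepsilon}}$ iterations to reach $\varepsilon$. We also note that $\theta\mu/H_0\leq 1$ must hold for this to be meaningful. It follows from the recursion itself together with $F_{k+1}\geq 0$ (or from $\mu\leq H_0$ via the descent lemma), so no extra assumption is needed.

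In the large-gap phase ($k<T$) we have $\max\{H_0,3H_1F_k\}=3H_1F_k$, and the recursion becomes additive:
\[
    F_{k+1}\leq F_k-\frac{\theta\mu}{3H_1}.
\]
Each step therefore removes a fixed amount of the gap. Since $F_k\geq 0$ throughout, the phase can last at most $T\leq \frac{3H_1F_0}{\theta\mu}+1$ iterations. Adding the two phase lengths gives
\[
    N=\Obound{\frac{H_0}{\theta\mu}\log\frac{F_0}{\varepsilon}+\frac{H_1F_0}{\theta\mu}}.
\]
Two edge cases are handled separately. If $\varepsilon\geq H_0/(3H_1)$, the first phase alone suffices. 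If $F_0<H_0/(3H_1)$, then $T=0$ and only the logarithmic term appears.

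The main obstacle is the large-gap phase. A naive treatment there would give only a multiplicative contraction with a rate depending on $F_k$. The key point is to recognize that the $F_k$ in the numerator cancels the $F_k$ in the step-size denominator, which produces a constant decrement of $\theta\mu/(3H_1)$ per step. That decrement is what yields the additive, $\log$-free term $H_1F_0/(\theta\mu)$ in the bound.
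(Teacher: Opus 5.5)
Your proof is correct, but it takes a different route from the paper's proof of this theorem.

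The paper uses no gradient-domination inequality here. It treats $f$ as merely convex and applies Theorem~\ref{th:GD_Convex}. It bounds $R^2\le 2F_0/\mu$ by strong convexity and checks that $\frac{8H_0}{\theta\mu}+\frac{12H_1F_0}{\theta\mu}\log 2$ iterations halve the gap. It then repeats this halving from the current iterate. The warm-up step depends only on $x^k$, so a ``restart'' just means re-bounding $R$ at the new starting point. In that scheme the $H_0$-cost is paid once per halving, which produces the $\log(F_0/\varepsilon)$ factor. The log-free $H_1F_0/(\theta\mu)$ term arises as a geometric series $\sum_j H_1F_0 2^{-j}/(\theta\mu)$. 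In your argument the same term comes instead from the constant decrement $\theta\mu/(3H_1)$ per large-gap step.

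Your route has three advantages:
\begin{itemize}
\item It is more elementary and self-contained. It needs only Lemma~\ref{lem:GD_basic_descent_H0_H1} and the PL consequence of strong convexity, not Lemma~\ref{lem:argument_monotonicity} or Theorem~\ref{th:GD_Convex}.
\item It gives explicit and smaller constants: at most $3H_1F_0/(\theta\mu)+1$ large-gap steps, then $\frac{H_0}{\theta\mu}\log(F_T/\varepsilon)$ steps with $F_T\le\min\{F_0,H_0/(3H_1)\}$. The halving argument instead gives roughly $\frac{8}{\ln 2}\frac{H_0}{\theta\mu}\ln\frac{F_0}{\varepsilon}+24\ln 2\,\frac{H_1F_0}{\theta\mu}$.
\item It never uses convexity, so it covers non-convex PL objectives verbatim. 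It is exactly the argument that Remark~\ref{rem:GD_PL} only sketches.
\end{itemize}

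The paper's route is modular instead. It obtains the strongly convex rate as a corollary of the convex one via the standard restart trick. That trick can be reused whenever a convex bound in terms of $R$ is available, at the price of looser constants.
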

\end{boxA}

\begin{proof}
Since a strongly convex function is also convex, we apply
Theorem~\ref{th:GD_Convex} with restarts. By strong convexity,
\begin{equation*}
    R^2
    =
    \norms{x^0-x^*}^2
    \overset{\eqref{eq:Strong_Convexity}}{\leq}
    \frac{2}{\mu}
    \left(\func{x^0}-f^*\right)
    =
    \frac{2F_0}{\mu}.
\end{equation*}
Therefore, the convex bound implies
\begin{align*}
    \func{x^N}-f^*
    &\leq
    \min\left\{
        \left(1-\frac{\theta}{6H_1R^2}\right)^T F_0,
        \frac{2H_0R^2}{\theta(N-T)}
    \right\} \\
    &\leq
    \min\left\{
        \exp\left(-\frac{\theta T}{6H_1R^2}\right)F_0,
        \frac{4H_0F_0}{\theta\mu(N-T)}
    \right\} \\
    &\leq
    \min\left\{
        \exp\left(-\frac{\theta\mu T}{12H_1F_0}\right)F_0,
        \frac{4H_0F_0}{\theta\mu(N-T)}
    \right\}.
\end{align*}
Thus, to guarantee $\func{x^N}-f^*\leq F_0/2$, it is sufficient to take
\[
    N
    \geq
    \frac{8H_0}{\theta\mu}
    +
    \frac{12H_1F_0}{\theta\mu}\log 2.
\]
Indeed, if
$T\geq \frac{12H_1F_0}{\theta\mu}\log 2$, the exponential term is at most
$F_0/2$; otherwise, $N-T\geq \frac{8H_0}{\theta\mu}$ and the sublinear term is
at most $F_0/2$.

Repeating this halving argument gives
\begin{equation*}
    N
    =
    \Obound{
        \frac{H_0}{\theta\mu}
        \log\left(\frac{F_0}{\varepsilon}\right)
        +
        \frac{H_1F_0}{\theta\mu}
    }.
\end{equation*}
\end{proof}

\begin{boxD}
\begin{remark}[Non-convex + PL]\label{rem:GD_PL}
If the Polyak-\L{}ojasiewicz (PL) inequality
\[
    \norms{\nabla \func{x}}^2
    \geq
    2\mu(\func{x}-f^*)
\]
holds for all $x\in\mathbb{R}^d$ with $\mu>0$, then the same recursion as in
Lemma~\ref{lem:GD_basic_descent_H0_H1} gives
\[
    F_{k+1}
    \leq
    F_k
    -
    \eta_k\mu F_k.
\]
Consequently, the same warm-up argument yields the complexity
\[
    \Obound{
        \frac{H_0}{\theta\mu}
        \log\left(\frac{F_0}{\varepsilon}\right)
        +
        \frac{H_1F_0}{\theta\mu}
    }
\]
iterations to reach $\func{x^N}-f^*\leq\varepsilon$.
\end{remark}
\end{boxD}